\newtheorem*{theorem*}{Theorem}
\newtheorem{theorem}{Theorem}
\newtheorem{lemma}[theorem]{Lemma}
\newtheorem*{lemma*}{Lemma}
\newtheorem{proposition}[theorem]{Proposition}
\newtheorem{corollary}[theorem]{Corollary}
\newtheorem{conjecture}[theorem]{Conjecture}
\newtheorem{remark}[theorem]{Remark}
\newtheorem{definition}[theorem]{Definition}
\newtheorem{notation}[theorem]{Notation}
\def\E{\mathcal{E}}
\def\cD{\mathcal{D}}
\def\R{\mathcal{R}}
\def\L{\mathcal{L}}
\def\F{\mathcal{F}}
\def\Id{\mathrm{Id}}
\def\rr{\rho}
\def\kk{\kappa}
\def\de{\delta}
\def\wt{\widetilde}
\def\Diff{\mathrm{Diff}}
\def\L{\mathcal{L}}
\def\R{\mathcal{R}}
\def\F{\mathcal{F}}
\def\Id{\mathrm{Id}}
\def\S{\mathcal{S}}
\def\T{\mathbb{T}}
\def\ii{\mathrm{i}}
\def\nn{\mathrm{n}}
 \author{
   Andrew Clarke
   \and
   Jacques Fejoz
   \and
   Marcel Guardia
 }
\newcommand{\Addresses}{{
  \bigskip
  \footnotesize

  \textsc{Andrew Clarke, \newline Departament de Matem\`atiques i Inform\`atica, Universitat de Barcelona,   Gran Via, 585, 08007 Barcelona, Spain}\par\nopagebreak
  \textit{E-mail address}: \texttt{andrew.clarke@ub.edu}

  \medskip

  \textsc{Jacques Fejoz, \newline Universit\'e Paris Dauphine PSL, CEREMADE,  Place du Mar\'echal de Lattre de Tassigny,  75016 Paris, France \newline  Observatoire de Paris PSL, IMCCE,  77 avenue Denfert Rochereau, 75014 Paris, France}\par\nopagebreak
  \textit{E-mail address}: \texttt{jacques.fejoz@dauphine.psl.eu}

  \medskip

  \textsc{Marcel Guardia, \newline Departament de Matem\`atiques i Inform\`atica, Universitat de Barcelona,
  Gran Via, 585, 08007 Barcelona, Spain \newline 
  Centre de Recerca Matem\`atica, 
  Edifici C, Campus Bellaterra,  08193 Bellaterra, Spain}\par\nopagebreak
  \textit{E-mail address}: \texttt{guardia@ub.edu}

}}
\date{\today}
\title{Why are inner planets not inclined?}
\begin{document}
\maketitle

\begin{abstract}
  \label{sec:abstract}
  Poincar\'e's work more than one century ago, or Laskar's numerical simulations from the 1990's on, have irrevocably impaired the long-held belief that the Solar System should be stable. But mathematical mechanisms explaining this instability have remained mysterious. In 1968, Arnold conjectured the existence of ``Arnold diffusion'' in celestial mechanics. We prove Arnold's conjecture in the planetary spatial $4$-body problem as well as in the corresponding hierarchical problem (where the bodies are increasingly separated), and show that this diffusion leads, on a long time interval, to some large-scale instability. Along the diffusive orbits, the mutual inclination of the two inner planets is close to $\pi/2$, which hints at why even marginal stability in planetary systems may exist only when inner planets are not inclined. 

More precisely, consider the normalised angular momentum of the second planet, obtained by rescaling the angular momentum by the square root of its semimajor axis and by an adequate mass factor (its direction and norm give the plane of revolution and the eccentricity of the second planet). It is a vector of the unit $3$-ball. We show that any finite sequence in this ball may be realised, up to an arbitrary precision, as a sequence of values of the normalised angular momentum in the $4$-body problem. For example, the second planet may flip from prograde nearly horizontal revolutions to retrograde ones. As a consequence of the proof, the non-recurrent set of any finite-order secular normal form accumulates on circular motions -- a weak form of a celebrated conjecture of Herman.



\end{abstract}

\tableofcontents

\section{Introduction}
\label{sec:intro}

\subsection{A case for instability in the solar system}
\label{sec:solar-system}

Hook's and Newton's discovery of universal attraction in the \textsc{xvii} century masterly reconciles two seemingly contradictory physical principles: the principle of inertia, put forward by Galileo and Descartes in terrestrial mechanics, and the laws of Kepler, governing the elliptical motion of planets around the Sun \cite{Albouy:2013, Arnold:1990, Newton:1687}.  The unforeseen mathematical consequence of Hook’s and Newton's discovery was to question the belief that the solar system be stable: it was no longer obvious that planets kept moving immutably, without collisions or ejections, because of their mutual (``universal'') attraction. Newton himself, in an additional and staggering tour de force, estimated the first order effect on Mars of the attraction of other planets. But infinitesimal calculus was in its infancy and the necessary mathematical apparatus to understand the long-term influence of mutual attractions did not exist.

In their study of Jupiter's and Saturn's motions, Lagrange, Laplace, Poisson and others laid the foundations of the Hamiltonian theory of the variation of constants. They  managed to compute the secular dynamics, i.e. the slow deformations of Keplerian ellipses, at the first order with respect to the masses, eccentricities and inclinations of the planets. This level of approximation is still integrable, and Keplerian ellipses have slow but non vanishing precession and rotation frequencies, thus departing from the dynamical degeneracy decribed by Bertrand's theorem. Besides, the analysis of the spectrum of the linearised vector field entailed a resounding stability theorem for the solar system: the observed variations of adiabatic invariants in the motion of Jupiter and Saturn come from resonant terms of large amplitude and long period, but with zero average~(\cite[p.~164]{Laplace:1785}, \cite{laskar2006Lagrange}). Yet it is a mistake, which Laplace made, to infer the topological stability of the non-truncated planetary system.

In the \textsc{xviii} and \textsc{xix} centuries, mathematicians spent an inordinate amount of energy trying to prove the stability of the Solar system... until Poincar\'e discovered a remarkable set of arguments strongly speaking \emph{against} stability: generic divergence of perturbation series, non-integrability of the three-body problem, and entanglement of the stable and unstable manifolds of the Lagrange relative equilibrium in the restricted 3-body problem~\cite{poincare1892methodes}. 

In the mid \textsc{xx} century, Siegel and Kolmogorov still proved that, respectively for the linearisation problem of a one-dimensional complex map and for the perturbation of an invariant torus of fixed frequency in a Hamiltonian system, perturbation series do converge, albeit non uniformly, under some arithmetic assumption of Diophantine type, ensuring that the frequencies of the motion are far from low order resonances, in a quantitative way. The obtained solutions are quasiperiodic and densely fill Lagrangian invariant tori. They form a large set in the measure theoretic sense, but a small set from the topological viewpoint. Besides, starting from dimension $6$, invariant tori do not separate energy levels and thus do not confine neighboring motions, so, outside invariant tori, nothing prevents adiabatic invariants to drift. Kolmogorov's theorem was successfully adapted to the planetary system, despite the numerous degeneracies of the latter, and assuming that the masses of the planets are very small~\cite{Arnold:1963,Chierchia:2011,fejoz2004arnold,Robutel:1995}. The obtained solutions are small perturbations of (Diophantine) Laplace-Lagrange motions.

\medskip Soon afterward, Arnold imagined an example of dynamical instability in a near-integrable Hamiltonian system with many degrees of freedom, where action variables may drift (for some well chosen orbits), by an amount uniform with respect to the smallness of the perturbation~\cite{arnold1964instability}. Of course, the drifting time tends to infinity as the size of the perturbation tends to $0$, consistently with the continuity of the time-$t$ map of the flow  with respect to parameters. Drifting orbits shadow the stable and unstable manifolds of a chain of hyperbolic invariant tori (``transition chain''). This phenomenon has been called \emph{Arnold diffusion}, since Chirikov coined the phrase, referring to the (in part conjectural) stochastic properties of such a dynamics~\cite{Chirikov:1959}. In fact, in this seminal paper, Arnold conjectured the following.

\begin{conjecture}[Arnold~\cite{arnold1964instability}]
  \label{conj:arnold}
  The mechanism of transition chains [...] is also applicable to the case of general Hamiltonian systems (for example, to the problem of three bodies).
\end{conjecture}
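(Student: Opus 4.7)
The plan is to prove Arnold's conjecture in the planetary spatial $4$-body problem by constructing explicit diffusive orbits in the secular dynamics and then shadowing them in the full system. First I would write the Hamiltonian in Poincaré coordinates attached to each planet, splitting it as $H=H_\mathrm{Kep}+\eps H_\mathrm{per}$, where $\eps$ encodes the mass ratios (and, in the hierarchical regime, a further small parameter $\de$ measuring the ratio of semimajor axes). The Keplerian part is integrable and properly degenerate, so one cannot hope to apply Arnold's original geometric construction directly to $H$.

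The second step is a secular normalisation: average $H$ to any desired order in $\eps$ with respect to the two fast Keplerian angles. The resulting secular Hamiltonian $H_\mathrm{sec}$ lives on a reduced phase space parametrised by the two eccentricity-inclination vectors (or equivalently the normalised angular momenta of the two planets), with the semimajor axes frozen as parameters. In the hierarchical regime one expands $H_\mathrm{sec}$ in $\de$: the leading quadrupolar term gives the classical Lidov--Kozai integrable system, and the first non-integrable correction is the octupolar term. Near mutual inclination $\pi/2$ the quadrupolar flow has a hyperbolic equilibrium of the inner binary, which persists under the octupolar perturbation as a normally hyperbolic invariant manifold $\Lambda$ whose internal dynamics is governed (to leading order) by the slow motion of the outer planet's angular momentum.

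The core geometric step is to show that stable and unstable manifolds of $\Lambda$ intersect transversally and that the associated scattering map, restricted to $\Lambda$, possesses a rich dynamics: I would aim to verify that it is not confined to any proper invariant set of the reduced phase space, so that iterates of the scattering map can carry the normalised angular momentum of the second planet to a neighbourhood of any prescribed finite sequence in the unit $3$-ball. Combined with a shadowing lemma in the spirit of the scattering-map approach of Delshams--de la Llave--Seara, this produces orbits of $H_\mathrm{sec}$ whose angular momenta visit any prescribed sequence of points in the ball, up to arbitrary precision. The rotational $\mathrm{SO}(3)$ symmetry is used to reduce and to interpret the $3$-ball as the reduced secular phase space.

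Finally, one must return from the secular normal form to the genuine $4$-body Hamiltonian. Here I would use Nekhoroshev-type estimates on the remainder of the averaging, together with the robustness of the hyperbolic structure of $\Lambda$: provided the drift time of the constructed secular orbit is shorter than the Nekhoroshev stability time, the true system admits a nearby shadowing orbit with the same qualitative behaviour. The consequence about Herman's conjecture follows from the same construction, since the diffusing orbits accumulate near circular motions whenever the chain of hyperbolic tori does. The main obstacle, as I see it, is the third step: checking in a quantitative way the transversality of the invariant manifolds of $\Lambda$ and, above all, verifying that the scattering map is dynamically rich enough to cover the entire $3$-ball of normalised angular momenta in the presence of all the degeneracies proper to the planetary problem (rotational symmetries, slow secular frequencies, and near-resonances between secular modes). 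Everything else — averaging, KAM for the Keplerian fast motion, shadowing — is essentially technology that has been developed for such problems, whereas the existence of a transition chain whose projection sweeps out the full ball is the genuinely new geometric content.
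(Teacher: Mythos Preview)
Your outline broadly matches the paper's strategy---averaging to a secular Hamiltonian, locating a normally hyperbolic cylinder near mutual inclination $\pi/2$, Melnikov analysis of the scattering map, and shadowing---but there is a structural gap. Although you invoke the $4$-body problem at the outset, your description of the secular phase space (``two fast Keplerian angles'', ``two eccentricity-inclination vectors'', ``the outer planet's angular momentum'') is that of a hierarchical \emph{three}-body secular system. This is not a cosmetic slip: the paper states explicitly that in the $3$-body secular problem the hyperbolic cylinder exists and carries symbolic dynamics, but there are \emph{not enough slow degrees of freedom for Arnold diffusion}. The third planet is essential as an angular momentum reservoir; it adds the coordinates $(\tilde\psi_1,\tilde\Psi_1,\tilde\gamma_3,\tilde\Gamma_3)$ to the inner dynamics on $\Lambda$, and the drift occurs precisely in these actions (equivalently, in $e_2$ and $\theta_{23}$). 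Your proposal must say what the inner map on $\Lambda$ is and exhibit at least two action directions along which the scattering map moves---this is where the quadrupolar and octupolar terms of the $2$--$3$ interaction enter, and it is the content of the paper's Sections~5--7. Relatedly, the reduction by rotations for $N\ge 4$ cannot be done by Jacobi's elimination of the node; the paper uses Deprit's coordinates, which are what make the $1$--$2$ quadrupolar Hamiltonian coincide with its three-body Delaunay form and allow the Melnikov computation to be imported.

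Your last step, passing from the secular normal form to the full problem via ``Nekhoroshev-type estimates'', is not what the paper does and would need further argument. Nekhoroshev confines the Keplerian actions $L_j$ for exponentially long times; it does not give shadowing of a prescribed secular transition chain by a full orbit. The obstruction is that the splitting of separatrices in the directions conjugate to the mean anomalies is exponentially small in the semimajor-axis ratios, so the scattering map cannot be computed there. The paper instead invokes a shadowing theorem (from its companion paper) tailored to this situation: it requires transversality of the scattering map to the foliation only in the secular directions where Melnikov theory provides it, and merely that the fast actions be preserved to sufficiently high order under the return map. This is a genuine ingredient of the proof, not off-the-shelf technology.
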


Arnold's example has proved difficult to generalise because of the so-called \emph{large gap problem}: usually the transition chain is a (totally disconnected) Cantor set of hyperbolic tori and it is not obvious whether there exist orbits shadowing these tori. A better strategy has emerged, consisting in shadowing normally hyperbolic cylinders (whether they contain invariant tori or not). Nearly integrable Hamiltonian systems are usually classified as \emph{a priori unstable} and \emph{a priori stable} \cite{CherchiaG94}. A priori unstable models are those whose integrable approximation presents some hyperbolicity (the paradigmatic example being a pendulum weakly coupled with several rotators). In this case, the unperturbed model has a normally hyperbolic invariant manifold with attached invariant manifolds that one can use, for the perturbed model, as a ``highway'' for diffusing orbits. The existence of Arnold diffusion generically in these models is nowadays rather well understood, at least for two and a half degrees of freedom (see \cite{moeckel2002drift,ChengY04,Treschev04,delshams2006biggaps,gidea2006topological,Bernard08, DelshamsH05}, or \cite{Treschev:2012,MR3479576,MR4033892} for results in higher dimension).

A priori stable systems are those whose integrable approximation is foliated by quasiperiodic invariant Lagrangian tori. Since the unperturbed Hamiltonian does not possess hyperbolic invariant objects, in order to construct the diffusing ``highway'' one has to rely on a first perturbation and face involved singular perturbation problems. One of the difficulties is that one cannot avoid double resonances, where the system is intrinsically non-integrable. Arnold conjecture refers to these models. The work of Mather on minimizing measures has been deeply influential. In the finite smoothness category, the papers \cite{Kaloshin:2016,Cheng:2017,Kaloshin:2020} show the typicality (in the cusped residual sense as defined by J. Mather) of Arnold diffusion in a priori stable Hamiltonian systems of 3 degrees of freedom. Yet, many questions remain unsolved. In particular, the original Arnold conjecture on the typicality of Arnold diffusion for analytic non degenerate nearly integrable Hamiltonian systems of 3 or more degrees of freedom remains open (see however \cite{MR1784083,MR1806373}).\footnote{One can also consider the so called \emph{a priori chaotic case}, where the unperturbed Hamiltonian presents ``local non-integrability''. In particular, it has a first integral and a periodic orbit with transverse homoclinics at each energy level. Examples of such settings are certain geodesic flows with a time dependent potential, see \cite{Bolotin:1999,clarke2022arnold,DelshamsLS00,DelshamsLS06b,Gelfreich:2008,GelfreichTuraev2017}.}

\medskip In the 1990s, with extensive numerical computations Laskar showed that over the physical life span of the Sun, or even over a few hundred million years, collisions and ejections of inner planets occur with some probability~\cite{laskar1989chaos,laskar2010}.\footnote{Such long term computations are checked to pass various consistency tests (e.g. the preservation of first integrals). But due to the exponential divergence of solutions, they are statistical in nature: an uncertainty of a few centimeters on the initial position of the Earth leads to an uncertainty of the size of the Solar System after a few hundred millions years. But one likes to believe that such Hamiltonian systems have good shadowing properties, i.e. that any finite-time pseudo-orbit (as computed numerically) is shadowed by orbits.}  Our solar system is now believed only marginally stable. This has been corroborated by abundant numerical evidence, as overviewed in Morbidelli's book~\cite{morbidelli2002}. In particular, the effect of mean motion (Keplerian) resonances in the asteroid belt has been described by \cite{lecar2001chaos}. Numerical evidence has also been suggesting that secular resonances are a major source of chaos in the Solar system~\cite{Laskar:1993:chaotic, Laskar:2008:chaotic, Froeschle:1989}. For example, astronomers have established that Mercury's eccentricity is chaotic and can increase so much that collisions with Venus or the Sun become possible, as a result from an intricate network of secular resonances~\cite{Boue:2012:simple}. On the other hand, that Uranus's obliquity ($97^o$) is essentially stable, is explained, to a large extent, by the absence of any low-order secular resonance~\cite{Boue:2010:collisionless, Laskar:1993:chaotic}. The effects of secular resonances of the inner planets have later been studied systematically using both computer algebra and numerics and the main ``sources of chaos'' in the inner solar system have been identified~\cite{batygin2015chaotic,Mogavero:2022}.

\medskip The mathematical theory of instability remains in its infancy and the Astronomers' instability mechanisms are still mysterious. A matter of discontent with Arnold diffusion is that the time needed for actions to drift looks larger than in other, far from integrable, instability mechanisms that astronomers observe. Resonance overlapping, a phenomenon described by Chirikov~\cite{Chirikov:1959}, would be a fantastic competing mechanism. But to our knowledge it lacks mathematical explanation (see however a simple example in~\cite{Fejoz:2018:overlap}).

Regarding ``the oldest problem in dynamical systems'', in his ICM lecture~\cite{herman1998icm} Herman formulated the following two precise conjectures.  Consider the $N$-body problem in space, with $N\geq 3$. Assume that the center of mass is fixed at the origin and that on the energy surface of level $e$ we $C^\infty$-reparametrise the flow by a $C^\infty$ function $\varphi_e$ such that 
the collisions now occur only in infinite time ($\varphi_e > 0$ is a $C^\omega$ function outside collisions).\footnote{Here indeed it is natural to count the non-wandering set without collision orbits, since positions do not necessarily go to infinity at collisions. Herman furthermore claims that this reparametrised flow is complete. This is not clear due to the potential presence of non-collision singularities. But conjecture~\ref{conj:global} remains relevant with a possibly incomplete flow.}

\begin{conjecture}[Global instability]
  \label{conj:global}
  Is for every $e$ the non-wandering set of the Hamiltonian flow of $H_e$ on $H_e^{-1} (0)$ nowhere dense in $H_e^{-1} (0)$?
\end{conjecture}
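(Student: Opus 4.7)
The plan is to reduce Conjecture~\ref{conj:global} to showing density of wandering orbits and then construct such orbits using the Arnold diffusion mechanism developed in this paper. Concretely, the non-wandering set of $H_e$ is nowhere dense in $H_e^{-1}(0)$ if and only if every nonempty open set $U\subset H_e^{-1}(0)$ contains a point $x$ admitting a neighborhood $V\ni x$ and a time $T>0$ such that $\varphi_e^t(V)\cap V = \emptyset$ for all $t>T$. The task is therefore to exhibit a dense set of wandering points on each energy surface.

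First I would stratify $H_e^{-1}(0)$ by the Keplerian semimajor axes $(a_1,\dots,a_N)$ of the bodies. Given any open set $U$, one can slightly perturb the initial condition within $U$ so that the $a_j$ become well separated, $a_j/a_{j+1}\ll 1$, bringing the dynamics into the hierarchical regime in which the main theorem of this paper applies. That theorem produces, arbitrarily close to any prescribed state in $U$, an orbit realising any chosen finite itinerary of the normalised angular momentum vector. Picking an itinerary whose first and last values are macroscopically different yields a wandering orbit: the time-$t$ flow maps a small neighborhood of the initial condition to a neighborhood of a configuration far from it, and the only possible returns are governed by the slow secular drift, which the diffusion itinerary prevents from reversing. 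This establishes density of wandering points on the hierarchical stratum.

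Next one must cover the complementary strata: mean-motion resonances, neighborhoods of elliptic periodic orbits and KAM tori, and near-collision configurations. Near mean-motion resonances I would implement a resonant averaging followed by an a priori stable diffusion argument in the spirit of \cite{Kaloshin:2016,Cheng:2017,Kaloshin:2020}, using as source of hyperbolicity the secular unstable cylinders exhibited in the present paper. Near elliptic periodic orbits a Birkhoff normal form reduces matters to a lower-dimensional a priori stable problem; near KAM tori, the large-gap techniques of \cite{delshams2006biggaps,Bernard08} should allow drift through the Cantor set of tori. Near the collision set, McGehee-type blow-up coordinates produce a hyperbolic structure that can be fed into the same diffusion scheme, yielding wandering parabolic motions.

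The main obstacle is that the diffusion mechanism developed here crucially exploits two small parameters: the mass ratios of the planets to the Sun, and the semimajor axis ratio $a_1/a_2$ between the inner planets. Making it effective \emph{uniformly} across $H_e^{-1}(0)$, in particular for $O(1)$ mass ratios or in nearly coplanar regimes (where Herman's secular normal form is resonantly degenerate and the normally hyperbolic cylinders collapse), would require a fundamentally new perturbative regime. Moreover, non-collision singularities may spoil the very notion of non-wandering for the reparametrised flow, and exotic recurrent motions such as Sitnikov oscillatory orbits or elliptic periodic orbits branching off the Lagrange relative equilibria are non-wandering but do not obviously form a nowhere dense subset. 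These obstructions are why the present paper only establishes a \emph{weak} form of Conjecture~\ref{conj:global}, pertaining to finite-order secular normal forms rather than to the full $N$-body flow.
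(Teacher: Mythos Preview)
This statement is a \emph{conjecture}, not a theorem: the paper does not prove it, and in fact states immediately after it that ``The conjecture is wide open.'' There is therefore no ``paper's own proof'' to compare against. What the paper actually establishes is the much weaker Theorem~\ref{thm:WLHC}, which concerns only the \emph{secular} dynamics at finite order (not the full $N$-body flow) and only the closure of the non-recurrent set near circular motions (not nowhere-density of the non-wandering set globally).

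Your proposal is not a proof but a research programme, and you acknowledge as much in your final paragraph. The first concrete gap is already in your opening reduction: you cannot, within a fixed energy surface $H_e^{-1}(0)$ and a given open set $U$, ``slightly perturb the initial condition within $U$ so that the $a_j$ become well separated.'' The semimajor axis ratios are determined by the point itself; if $U$ sits in a region where $a_1\sim a_2$, no perturbation confined to $U$ will reach the hierarchical regime $a_1\ll a_2$ required by Theorem~\ref{thm:main}. Second, even where the diffusion mechanism applies, it produces \emph{non-recurrent} orbits, not \emph{wandering domains}: the paper is explicit that passing from non-recurrence to wandering is an open problem (cf.\ the discussion around Conjecture~\ref{conj:local} and the reference to~\cite{Lazzarini19}). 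Your claim that ``the only possible returns are governed by the slow secular drift, which the diffusion itinerary prevents from reversing'' does not control an open neighbourhood, only a single orbit. The remaining strata you list (mean-motion resonances, KAM tori, near-collision sets) each constitute major open problems in their own right; invoking the cited references does not close them.
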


This would imply that bounded orbits are nowhere dense and no topological stability occurs.\footnote{This incidentally contradicts a conjecture of Poincar\'e, that periodic orbits are dense \cite[End of section 36]{poincare1892methodes}.} The conjecture is wide open, and we are still at the stage of looking for non-recurrent orbits having negative energy (due to the Lagrange-Jacobi identity, non-wandering orbits have negative energy) \cite{Fejoz:2015:nbp}. A growing number of such orbits are known to exist, most often associated with some kind of symbolic dynamics: close to parabolic motions~\cite{Alekseev68,delshams2019instability, Guardia:2013:oscillatory, guardia2016oscillatory, GuardiaPSV20, LlibreS80, moser1973stableunstable, SimoL80, Sitnikov60}, close to triple collisions~\cite{moeckel1989chaotic, moeckel2007symbolicdynamics}, close to Poincar\'e's periodic orbits of second species \cite{bolotin2006symbolic}, close to the Euler-Lagrange points \cite{Capinski12}, on the submanifold of zero angular momentum of the $3$-body problem~\cite{Montgomery:2007, Montgomery:2019}, and so on. It is part of the richness of the $N$-body problem to include such diverse kinds of behavior.

Yet, scarce mathematical mechanisms have been described regarding more astronomical regimes, which would be plausible for subsystems of solar or extra-solar systems. Within the domain of negative energy, of special astronomical relevance is the planetary problem, where planets with small masses revolve around the Sun.\footnote{Jupiter, the largest planet of the solar system, weighs roughly $1/1000$ the mass of the Sun.} Another well-known problem is the hierarchical problem, an extension to $N$-bodies of the so-called Hill problem or lunar problem, where one body (the Sun) revolves far away around the other two (the Earth and the Moon).\footnote{The lunar distance is roughly $1/400$ the distance from Earth to Sun.} These problems are rendered difficult by the proximity to a degenerate (``super-integrable'') integrable system of two uncoupled Kepler problems. As mentioned above, the recurrent set has positive Lebesgue measure due to Arnold-Herman's invariant tori theorem, while the existence of unstable orbits relies on the so-called Arnold diffusion. Herman further argues that \emph{What seems not an unreasonable question to ask (and possibly prove in a finite time with a lot of technical details) is that:}

\begin{conjecture}[Planetary instability]
  \label{conj:local}
  If one of the masses is fixed ($m_0 = 1$) and the other masses $m_j =\rr \widetilde m_j$, $1 \leq j \leq n - 1$, $\widetilde m_j > 0$, $\rr > 0$, then in any neighbourhod of fixed different circular orbits around $m_0$ moving in the same direction in a plane, when $\rho$ is small, there are wandering domains. 
\end{conjecture}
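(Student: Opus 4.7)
The plan is to derive Conjecture~\ref{conj:local} from the ``realization theorem'' announced in the abstract: any finite sequence in the unit $3$-ball may be realised, up to arbitrary precision, as successive values of the normalised angular momentum of the second planet. A fixed circular coplanar prograde configuration corresponds to a single point $v_0$ on the boundary of this ball. Given any neighbourhood $U$ of $v_0$, I would pick a finite sequence starting at some $v_1 \in U$ close to $v_0$ and ending at some $v_2$ definitely far from $v_0$ (say, retrograde or highly eccentric). The realization theorem then produces orbits, depending continuously on parameters, whose normalised angular momentum visits $v_1$ and $v_2$; an open subset of these orbits is non-recurrent because the prescribed target is separated from $v_1$ by a definite amount and a further extension of the sequence can force the orbit into a region from which it does not return, yielding the required wandering domain.

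The real work is thus the realization theorem. First, introduce Poincar\'e variables so that the planetary Hamiltonian splits as the sum of $n-1$ Kepler problems plus a disturbing function of size $\rr$. Fix Diophantine semimajor axes and average out the fast Keplerian angles to a high finite order, obtaining a secular Hamiltonian on the space of eccentricities and inclinations, with exponentially small remainder. After reduction by the $SO(3)$ symmetry, I would study the reduced secular dynamics in the regime where the two inner planets have nearly mutually perpendicular orbital planes. At suitable order this system possesses a normally hyperbolic invariant cylinder, the diffusion ``highway'', with transversely intersecting stable and unstable manifolds. This hyperbolicity is absent at first order (Laplace--Lagrange) and emerges from the second-order secular Hamiltonian precisely because of the inclination close to $\pi/2$, which explains why the mutual inclination $\simeq \pi/2$ appears in the abstract and why coplanar configurations are dynamically rigid.

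Next, I would implement an Arnold-diffusion scheme along this cylinder in the spirit of \cite{DelshamsLS06b, Bernard08, Treschev:2012}: compute the scattering (Melnikov) map, verify a non-degeneracy condition by expanding the disturbing function in Legendre polynomials, and show that its orbits can be freely steered across the reduced secular phase space so as to track any prescribed sequence of angular-momentum values. Shadowing such pseudo-orbits by true orbits of the secular Hamiltonian uses normal hyperbolicity of the cylinder, and the exponentially small averaging remainder is absorbed without destroying the prescribed itinerary. Lifting back through the averaging transformation yields orbits of the full planetary Hamiltonian that realise the given sequence; the same scheme adapts to the hierarchical regime after the appropriate rescaling, giving both results of the abstract in parallel.

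The main obstacle is that the planetary problem is a priori stable and extremely degenerate: the integrable Kepler approximation admits no hyperbolic object, so the entire diffusion highway must be created by the higher-order secular terms, and only in a narrow inclination regime. As the angular momentum traverses the $3$-ball, one unavoidably crosses the singular locus of circular motions (where Poincar\'e coordinates degenerate), double resonances of the secular frequencies, and regions where the cylinder changes qualitative type (large-gap problem). Keeping the Melnikov, scattering and shadowing estimates uniform through all these transitions, while retaining the freedom to prescribe an arbitrary angular-momentum itinerary, is the technical heart of the proof; it forces the full a priori stable Arnold-diffusion machinery to be redesigned to fit the specific degeneracies of the spatial $4$-body problem.
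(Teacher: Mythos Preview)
The statement you are trying to prove is a \emph{conjecture}, not a theorem of the paper. The paper does not prove Conjecture~\ref{conj:local}; it explicitly flags the passage to wandering domains as open, remarking that the only known construction of wandering domains by Arnold-diffusion mechanisms is \cite{Lazzarini19} in the Gevrey category, with no analytic analogue. What the paper establishes is the weaker Theorem~\ref{thm:WLHC}: circular orbits lie in the closure of the non-recurrent set of the \emph{secular} dynamics at any finite order. That is two retreats from the conjecture: non-recurrent orbits in place of wandering domains, and the truncated secular system in place of the full $N$-body problem.

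Your proposal reproduces the paper's realization theorem (Theorem~\ref{thm:main}) reasonably well at the heuristic level, but the step ``an open subset of these orbits is non-recurrent \ldots\ yielding the required wandering domain'' is not a proof. The realization/shadowing argument produces individual orbits hitting a prescribed finite itinerary; it does not produce an open set $U$ with $\phi^t(U)\cap U=\emptyset$ for all large $t$. Nearby points may well return, and nothing in the shadowing machinery prevents this. Indeed, for the full four-body problem the paper's transition chains are only \emph{finite} (Theorem~\ref{theorem_main2} versus Theorem~\ref{theorem_main1}), so you cannot even force a single orbit to stay away forever; infinite chains are obtained only for the secular truncation (Proposition~\ref{prop:SecularChains}), which is why Theorem~\ref{thm:WLHC} is stated for the secular system. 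There is also a regime mismatch you gloss over: the diffusing orbits have $\theta_{12}$ close to $\pi/2$, not coplanar, so reaching ``any neighbourhood of fixed different circular orbits \ldots\ in a plane'' is itself an issue the paper handles only by taking closures, not by placing the mechanism there.
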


Even if Arnold in his seminal paper conjectured that his Arnold diffusion mechanism via transition chains should be present in the 3 body problem, even nowadays the results in this direction are rather scarce. Indeed, as far as the authors know, the \emph{only complete analytical} proof of Arnold diffusion in Celestial Mechanics, prior to the present work, is contained in an article by  Delshams, Kaloshin, de la Rosa and Seara~\cite{delshams2019instability}. In this paper, the authors consider the Restricted Planar Elliptic 3 Body Problem and construct orbits with large drift in angular momentum, assuming the mass ratio and the eccentricities of the primaries are sufficiently small. A key point of this paper is that the body of zero mass is close to the so called parabolic motions. That is, it relies on the invariant manifolds of infinity which are already present in the two body problem. They then perform a delicate perturbative analysis of the model. 

Some works have uncovered instability mechanisms in celestial mechanics related to Arnold diffusion, relying on computer assisted computations (see \cite{capinski2017diffusion,fejoz2016kirkwood} and also \cite{CapinskiGidea} which relies on computer assisted proofs) or conditionally to a plausible transversality hypothesis~\cite{Xue:2014:4bp}. The article~\cite{fejoz2016secular} proves the existence of some hyperbolic invariant set with symbolic dynamics in the hierarchical truncated secular spatial $3$-body problem, with no hope of finding Arnold diffusion in the full $3$-body problem because of the lack of additional slow degrees of freedom.

Regarding Conjecture \ref{conj:local},  the construction of wandering domains using Arnold diffusion-like mechanisms is a difficult problem. As far as we know, the only positive result in this direction is in \cite{Lazzarini19}, where wandering domains for Gevrey nearly integrable symplectic maps are constructed. The methods used in that paper to construct such domains do not admit an immediate extension to the analytic category.

The present work 
\begin{itemize}
\item proves Arnold's Conjecture~\ref{conj:arnold} in the planetary spatial $4$-body problem (see Theorem \ref{thm:main} below)
\item and proves a weak local version of Herman's Conjecture~\ref{conj:local}, dealing with non-recurrent orbits instead of wandering orbits
(Theorem~\ref{thm:WLHC}), for the corresponding secular dynamics.
\end{itemize}

\subsection{Main results}
\label{sec:main-short}

Consider the $4$-body problem, that is $4$  bodies numbered from $0$ to $3$ moving in $3$-dimensional space according to the Newtonian gravitational law, 
\[
 \ddot x_j=\sum_{\substack {0 \leq i \leq 3  \\ i\neq j}} m_i\frac{x_i-x_j}{\|x_i-x_j\|^3},
\]
where $x_j \in \mathbb{R}^3$ is the position and $m_j>0$ is the mass of body $j$ for $j=0,1,2,3$.

For the sake of simplicity, let us first focus on the ``hierarchical regime'' where body $2$ revolves around and far away from bodies $0$ and $1$, while body $3$ revolves around and even farther away from bodies $0$, $1$ and $2$. Each body thus primarily undergoes the attraction of one other body: bodies $0$ and $1$ are close to being isolated, body $2$ primarily undergoes the attraction of a fictitious body located at the center of mass of $0$ and $1$, and body $3$ primarily undergoes the attraction of a fictitious body located at the center of mass of $0$, $1$ and $2$. We think of body $0$ as the Sun and of the three other bodies as planets. The Jacobi coordinates are well suited for this regime (Figure~\ref{fig:Jacobi}), but we defer their definition to a later stage. Assuming that the center of mass is fixed, the small displacements of the Sun may be recovered from the positions of the planets.

The fast dynamics consists in those planets moving along Keplerian ellipses according to the above approximation, with elliptical elements as first integrals, in addition to the total energy and the total angular momentum $C = C_1 + C_2 + C_3$, where $C_i$ is the angular momentum of planet $i$. 
Let  $a_1$, $a_2$ and $a_3$ be the semimajor axes. In our regime, 
\[a_1\ll a_2\ll a_3\]
(we will later make a more specific hypothesis on how the semimajor axis ratios $a_i/a_{i+1}$ compare to each other).  Let also $e_1$, $e_2$ and $e_3$ be the eccentricities. The angular momentum $C_i$, seen as a vector in space, is normal to the plane of ellipse $i$ and its length is $\sqrt{a_i(1-e_i^2)}$, up to a mass factor (see \eqref{def:C2tilde} below). Let $\theta_{ij}$ be the mutual inclinations, i.e. the oriented angle $\alpha_{C_i \times C_j}(C_i,C_j)$ between the angular momenta of planets $i$ and $j$, the orientation being defined by the normal vector $C_i \times C_j$ (here assumed non-zero).

The so-called secular dynamics describes the slow evolution of the three Keplerian ellipses. At the first order of approximation, it is governed by the vector field obtained by averaging out the mean anomalies, thus defining a dynamical system on the ``secular space'' of triples of Keplerian ellipses with fixed semimajor axes. In the hierarchical regime, the dominating term is what is usually called the ``quadrupolar'' Hamiltonian $F^{12}_{\mathrm{quad}}$ of the two inner planets. It was introduced in various particular cases by  Lidov and then Kozai~\cite{Kozai:1962, Lidov:1962, Ziglin:1975}. It may come as a surprise that $F^{12}_{\mathrm{quad}}$ is integrable (defined in \eqref{eq_quad12unscaled} in Section  \ref{section_secularexpansion}), as noticed by Harrington~\cite{Harrington:1968}, due to the fact that it does not depend on the argument of the outer pericenter $g_2$. This dynamics was later studied more globally in the secular space by Lidov, Kozai and others (see a review in \cite{Naoz:2016}).

Our analysis follows from a higher order, non-integrable approximation of the system (it will rely also on the quadrupolar Hamiltonian of planets $2$ and $3$, and the octupolar term of planets $1$ and $2$, as introduced later). Precisely understanding the various time scales within the secular dynamics will be key to our analysis. At this stage, let us only loosely describe the different roles played by the three planets:
\begin{itemize}
\item Since the semimajor axis $a_3$ is so large, it is planet $3$ which most contributes to the total angular momentum $C$, so planet $3$ cannot change substantially in eccentricity or inclination. Yet, planet $3$ is a source of angular momentum for the two inner planets, and a minor change of $C_3$ results in major changes of the elliptical elements of planets $1$ or $2$. 

\item The elliptical elements of the first (inner) planet  vary faster than the elements of the second one. So the approximate conservation of $F^{12}_{\mathrm{quad}}$ introduces a coupling between the eccentricity and the inclination of the first planet. It can drive an initially near-circular orbit to arbitrarily high eccentricity, and flip an initially moderately inclined orbit between a prograde and a retrograde motion. This is an integrable, quasiperiodic dynamics, reportedly discovered by Lidov and thus called the \emph{Kozai mechanism}.\footnote{We will call it the Lidov-Kozai effect, as suggested by A. Neishtadt and~\cite{Naoz:2016}.} After reduction by the symmetry of rotations, the existence of an elliptic secular fixed point, which is the continuation (for non-zero eccentricities and inclinations) of the singularity known to Lagrange and Laplace, explains the oscillation of the argument of the inner pericenter.\footnote{The Lidov-Kozai mechanism has had useful many applications to a variety of systems from planetary and stellar scales to supermassive black holes. The orbits' eccentricity can reach extreme values, leading to a nearly radial motion, which can further evolve into short orbit periods and merging binaries. Furthermore, the orbits' mutual inclinations may change dramatically from pure prograde to pure retrograde, leading to misalignment and a wide range of inclinations. These dynamics are accessible from a large part of the triple-body parameter space and can be applied to a diverse range of astrophysical settings and used to gain insights into many puzzles~\cite{Naoz:2016}. But since it is an integrable behavior, it cannot bring light to Herman's conjectures.}

For our part, we will instead localise in a region where the two inner planets are mutually highly inclined, i.e. $\theta_{12}$ is close to $\pi/2$. In this region, there is a  hyperbolic secular singularity, which was used for instance by Jefferys-Moser to prove the existence of normally hyperbolic invariant tori~\cite{jefferys1966}. This singularity, when lifted to the full phase space, yields a normally hyperbolic cylinder which will be crucial to our construction. In particular we will show that its codimension-$1$ stable and unstable invariant manifolds split, and we will need to control the splitting of the underlying foliations more or less carefully depending on directions. The motions of interest to us will occasionally shadow the stable and unstable foliations of the cylinder, thus moving away from the cynlinder itself, to a homoclinic channel and then back close to the cylinder (along a so-called homoclinic excursion). So $\theta_{12}$ will vary substantially but with no drift. On the other hand, it is planet $1$ which most contributes to the secular Hamiltonian, so, because of the near conservation of the latter, the eccentricity $e_1$ will be bounded within a small interval.

\item In contrast, no first integral prevents the eccentricity $e_2$ or the mutual inclination $\theta_{23}$ to vary, even dramatically. It is the goal of this article to prove that these two quantities do vary arbitrarily, since any finite sequence of points in the $(e_2,\theta_{12})$-cylinder is shadowed by the projection of some solutions of the $4$-body problem.
\end{itemize}

The following theorem is a more precise statement of some of these assertions, in terms of the normalised angular momentum vector
\begin{equation}\label{def:C2tilde}
\tilde C_2 = \frac{\sqrt{m_0+m_1+m_2}}{m_2 (m_0+m_1)\sqrt{ a_2}} C_2
\end{equation}
of planet $2$, which lies in the unit Euclidean ball $B^3$ since $|\tilde C_2| = \sqrt{1-e_2^2}$ and $e_2\in (0,1)$). An even more precise statement will be given in Section~\ref{sec:DepritResults}. Fix $0 < \eta \ll 1$ and choose masses in the set
\begin{equation}
  \label{eq:M}
  \mathcal{M}_\eta = \left\{ (m_0,m_1,m_2,m_3) \in \, (0,+\infty)^4, \; m_0+m_1+m_2+m_3=1, \; |m_0 - m_1| \geq \eta, \; |m_0+m_1 \neq m_2| \geq \eta \right\};
\end{equation}
the sum of masses is fixed in order to avoid non-compactness issues, which are artificial due to the invariance of the dynamics with respect to a change of mass units; and the two inequalities are meant to avoid some degeneracies in the secular dynamics. (Note that in the planetary regime these degeneracies do not exist.)

\begin{theorem}[Main result]\label{thm:main}
Fix $\eta>0$.  Consider $(m_0,m_1,m_2,m_3) \in \mathcal{M}_\eta$ and  any finite sequence of points in $B^3$. For every $0<\delta\ll 1$, there exists an orbit whose normalised angular momentum $\tilde C_2$ passes  successively $\delta$-close to each point of the prescribed itinerary. 
\end{theorem}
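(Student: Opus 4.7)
The plan is to first reduce the $4$-body problem to a tractable secular approximation, then construct Arnold diffusion inside that reduced system, and finally transfer the result back to the full Newtonian flow. Using Jacobi coordinates and Poincaré/Delaunay elliptic elements, I would exploit the three well-separated time scales that the hierarchical condition $a_1 \ll a_2 \ll a_3$ induces (the fast Keplerian motions, the intermediate quadrupolar interaction between planets $1$ and $2$, and the slower evolution of the elements of planets $2$ and $3$) and perform successive averagings of the three mean anomalies, keeping track of the remainder in the smallness parameters $a_1/a_2$, $a_2/a_3$. The dominant averaged piece is the integrable quadrupolar Hamiltonian $F^{12}_{\mathrm{quad}}$; the next orders, namely the quadrupolar $2$-$3$ term and the octupolar $1$-$2$ term, will serve as the non-integrable perturbation driving diffusion.

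\textbf{Normally hyperbolic cylinder and scattering maps.} In the region $\theta_{12}$ near $\pi/2$, the integrable $F^{12}_{\mathrm{quad}}$ possesses the Jefferys--Moser hyperbolic secular singularity. Parametrising by the slow elliptic elements of planets $2$ and $3$ (and reducing by the rotational symmetry attached to the total angular momentum $C$), this singularity unfolds to a normally hyperbolic invariant cylinder $\Lambda$ for the secular flow, with codimension-$1$ stable and unstable manifolds $W^{s}(\Lambda)$, $W^{u}(\Lambda)$. I would then derive two complementary dynamical ingredients: the \emph{inner} dynamics on $\Lambda$, computed from the higher-order averaged terms, governing the slow evolution of $(e_2,\theta_{23}, g_2)$; and the \emph{outer} dynamics obtained by choosing a homoclinic channel for $\Lambda$ and defining the associated scattering map, whose existence rests on a Melnikov-type computation showing that $W^s(\Lambda)$ and $W^u(\Lambda)$ split transversally along a submanifold of adequate dimension. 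The main content is to prove that the scattering map acts non-trivially on the variables parametrising $\tilde C_2$, and that, combined with the inner flow, the two maps generate a pseudo-group whose orbits are $\delta$-dense in the open subset of $\Lambda$ corresponding to $\tilde C_2 \in B^3$.

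\textbf{Shadowing and return to the Newtonian flow.} Given any prescribed itinerary $(p_1,\dots,p_N) \in B^3$ and any $\delta>0$, I would use the transitivity established above to build a pseudo-orbit of the secular system that alternates pieces of inner dynamics with homoclinic excursions prescribed by the scattering map, passing in succession $\delta/2$-close to each prescribed value of $\tilde C_2$. A standard shadowing argument in the a priori unstable setting (in the spirit of Gidea--de la Llave, Delshams--de la Llave--Seara, or Bernard--Kaloshin--Zhang) then promotes this pseudo-orbit to a genuine orbit of the secular Hamiltonian realising the itinerary to within $\delta/2$. Finally I would undo the averaging: since for a fixed itinerary and fixed $\delta$ the total diffusion time stays bounded in the secular time scale, and since the averaging remainder is of higher order in the small parameters, standard Gronwall and KAM-stability estimates lift this solution to a true orbit of the $4$-body problem whose $\tilde C_2$ still visits each $p_j$ within $\delta$.

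\textbf{Main obstacles.} The principal difficulty is the analysis of the splitting of $W^{s,u}(\Lambda)$ and the verification that the resulting scattering map moves $\tilde C_2$ freely. The sizes of the relevant Melnikov integrals depend sensitively on the scales $a_1/a_2$, $a_2/a_3$ and on the mass parameters, and one has to choose their ordering so that the splitting dominates the remainders of the successive averagings while still yielding a non-degenerate geometry; the rotational invariance and the presence of circular/coplanar singularities (where action--angle coordinates are singular and the eccentricity $e_1$ must be kept bounded away from $0$ and $1$) add further layers of singular-perturbation analysis. The second serious obstacle is the large gap problem for the inner dynamics on $\Lambda$: resonances in the slow frequencies destroy portions of the invariant KAM family on the cylinder, and one has to combine inner and scattering maps carefully to cross these gaps. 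The precise matching between analytic estimates on the splitting, the inner dynamics, and the quantitative shadowing is what makes the proof technically demanding, and it is there that the bulk of the work will lie.
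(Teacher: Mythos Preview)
Your outline matches the paper's architecture closely: hierarchical regime, averaging of mean anomalies, quadrupolar $1$--$2$ Hamiltonian as integrable backbone, Jefferys--Moser hyperbolic singularity yielding a normally hyperbolic cylinder, Melnikov-type splitting and scattering maps, inner/outer combination, and shadowing. Two places where your description diverges from what the paper actually does are worth flagging. First, ``Poincar\'e/Delaunay'' coordinates with Jacobi's reduction of the nodes do not extend to the $4$-body problem; the paper instead uses Deprit's variables (rediscovered by Chierchia--Pinzari), which are precisely what makes the full rotational reduction and the computations of $F^{12}_{\mathrm{quad}}$, $F^{12}_{\mathrm{oct}}$, $F^{23}_{\mathrm{quad}}$, $F^{23}_{\mathrm{oct}}$ tractable --- and the octupolar $2$--$3$ term (not only $F^{12}_{\mathrm{oct}}$) is needed to move $\tilde\Gamma_3$ under the scattering map. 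Second, your final step ``undo the averaging via Gronwall/KAM'' is not how the lift to the full problem works: the diffusion time is polynomially long in the small parameters and the splitting in the fast Keplerian directions is exponentially small, so naive Gronwall control would lose. The paper instead bakes the fast $(\ell_j,L_j)$ variables into the shadowing argument itself (a tailored theorem from \cite{clarke2022topological}), exploiting that transversality need only be checked in the slow secular directions while the $L_j$ remain almost invariant to the required order. With those two substitutions, your plan is the paper's plan.
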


The statement calls for a few comments:
\begin{enumerate}
\item The orbits of interest will be found first in the \emph{hierarchical regime} $a_1 \ll a_2 \ll a_3$, with fixed masses such that
  \[m_0 \neq m_1 \quad \mbox{and} \quad m_0+m_1 \neq m_2.\]

\item Now consider the \emph{planetary regime}, where $m_0=1$ and $m_i = \rho \tilde m_i$ ($i=1,2,3$), with $\rho \to 0$. A more precise statement of the Theorem~\ref{thm:main} in Section~\ref{sec:DepritResults} will show that the semimajor axes may be chosen independently from $0 \leq \rho \leq 1$, i.e. the conclusion holds in the planetary problem. 

\item Let us be more specific on the orbit of the conclusion. Let $\tilde C_2^0,..., \tilde C_2^N \in B^3$ be the prescribed itinerary. These points determine values 
  \begin{itemize}
  \item $e_2^k$ of the eccentricity
  \item $\theta_{23}^k$ of the inclination between planets $2$ and $3$ (the inclination of planet $3$ being nearly fixed)
  \item and $h_2^k$ of the longitude of the node of planet $2$.
  \end{itemize}
  The proof will show that there exist times $t_0< t_1 < \cdots < t_N$ such that the osculating orbital elements satisfy, as stated,
\begin{equation}\label{def:drifte2theta23}
 \begin{cases}
   |e_2(t_k)-e_2^k|&\leq \delta\\
   |\theta_{23}(t_k)-\theta_{23}^k|&\leq \delta\\
   |h_2(t_k)-h_2^k|&\leq \delta \quad (k=0,...,N).
 \end{cases}
\end{equation}
Moreover, the orbit can be chosen so that
\begin{equation}\label{def:drifte1theta12}
 \begin{cases}
   |e_1(t_k)|&\leq \delta\\
   |\theta_{12}(t_k)-\theta_{12}^k|&\leq \delta \quad (k=0,...,N),
\end{cases}
\end{equation}
where $\theta_{12}^k\in (0,\pi)$ is determined by the relation 
\begin{equation}\label{def:inclincation12relation}
  (1-(e_2^0)^2)^{3/2} \cos\theta_{12}^k=(1-(e_2^k)^2)^{3/2}\cos\theta_{12}^0 
\end{equation}
whereas 
\begin{equation}\label{def:drifte3aj}
\begin{cases}
   |e_3(t)-e_3^0|&\leq \delta\\
   |a_j(t)-a_j^0|&\leq \delta \quad \text{for}\quad (j=1,2,3,  \; t\in [0,t_N]).  
\end{cases}
\end{equation}
Note that we can choose any initial condition $e_3^0\in (0,1)$ and $e_3$ remains almost constant along the trajectories we consider.

\item We also obtain estimates for the drifting time $T=t_N$. In the hierarchical regime where the fixed masses belong to $\mathcal{M}_\eta$, we have 
  \begin{equation}\label{def:timefinal}
  T = C(m_0,m_1,m_2,m_3)  \, \frac{N}{\delta^{\kappa} }
  \end{equation}
  where $C$ is a constant depending only on the masses, for some exponent $\kappa$ which does not depend on $N$ or the itinerary. To be more precise, call $\alpha_i = a_i/a_{i+1}$, $i=1,2$, the semimajor axis ratios. As $\delta$ tends to zero, the $\alpha_i$'s will be chosen polynomially smaller, and the drifting time itself depends polynomially on the $\alpha_i$'s. 
  
  In the planetary regime where $m_j = \rho \, \tilde m_j$ with $\rho$ small, the drifting time satisfies
    \begin{equation}\label{def:timefinal2}
  T = C(m_0,\tilde m_1,\tilde m_2,\tilde m_3)  \, \frac{N}{\delta^{\kappa} \rho^2}
  \end{equation}
  for some exponent $\kappa$ which does not depend on $N$ or the itinerary.

\item One may consider the setting in the present paper as a mix between a priori stable and a priori unstable.  Indeed, this is a model with multiple time scales and degeneracies. This implies that some directions can be treated as a priori unstable, when one has to face a regular perturbation problem. This are the so-called secular variables. Other directions, which encapsulate the Keplerian mean motions, are much faster. In the present paper we manage to obtain drifting orbits along the a priori unstable directions, that is in the secular actions (angular momenta and inclinations), with a sufficiently robust mechanism, so that that fast directions do not interfere with the slow ones. Drift in the actions conjugated to the mean anomalies, that is the semimajor axis, would require a deeper analysis since would fall into an a priori stable regime. In particular, the drift in actions obtained in Theorem \ref{thm:main} requires time scales which are polynomial in the perturbative parameter. On the contrary, drift of the semimajor axis must take an exponentially long time since Nekhoroshev Theory applies along these directions (see \cite{MR1419468}).
  
\end{enumerate}

Here are some noteworthy consequences.
\begin{enumerate}
  
\item Why are inner planets not inclined? If the two inner planets have their mutual inclination close to $\pi/2$, Theorem~\ref{thm:main} proves that the next planet might be subject to large instabilities. In reality, where semimajor axes are fixed, there is a competition between the semimajor axes ratios and $e_2$, which we do not control here. We conjecture that this mechanism leads $e_2$ to become so large that planets $1$ and $2$ may collide. 

  In the Solar System, indeed most planets have relatively small mutual inclinations. On the other hand, the two largest  dwarf planets Pluto and Eris are trans-Neptunian (i.e. they play the role of our third planet) and have large inclinations to the ecliptic (17$^{\circ}$ and 44$^{\circ}$ respectively).

\item Orbits described by Theorem~\ref{thm:main} may flip from prograde to retrograde.

  In the Solar System, the orbits around the Sun of all planets and most other objects, except many comets, are prograde.\footnote{They orbit around the Sun in the same direction as the sun rotates about its axis, consistently with the most probable scenarios of formation of the Solar System.}
  However, protoplanetary disks can collide with or steal material from molecular clouds and this can lead to disks and their resulting planets having retrograde orbits around their stars. Retrograde motion may also result from the Lidov-Kozai mechanism, as already mentioned. Here we thus provide another mechanism possibly explaining the existence of some retrograde planets. 

\item As far as the authors know, up to the present paper there has been no result (analytical or otherwise) on Arnold diffusion in a (non-restricted) $N$-body problem. As we have already explained, it relies on the existence of normally hyperbolic structure along secular resonances. We consider a purely elliptic regime, that is, at short time scales the bodies perform aproximate ellipses, and therefore no hyperbolic invariant objects exist at first order. We rely on the analysis of the perturbed secular dynamics to detect such objects and use its invariant manifolds to obtain drifting orbits. This allows us to obtain Arnold diffusion in the classical planetary regime.

Note that our diffusion mechanism is robust in the following sense; if we consider the $N$-body problem and assume that the initial conditions of the first four bodies are as in Theorem \ref{thm:main} and the remaining $N-4$ bodies  revolve sufficiently far away, the conclusion of Theorem \ref{thm:main} still holds.
\end{enumerate}

We now consider implications of our main theorem regarding Herman's local conjecture~\ref{conj:local}, in the case of the spatial $4$-body problem. We need the following complement to the main theorem, which follows from our construction and \cite[Remark~2.13]{clarke2022topological}, because in the secular directions aligned windows are bounded. 

\begin{proposition}\label{prop:SecularChains}
  Transition chains obtained for the secular system (thus diregarding the fast, Keplerian dynamics) have infinite length and the subsequent shadowing orbits are defined for all future times.
\end{proposition}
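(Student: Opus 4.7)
The plan is to promote the finite transition chains built for Theorem~\ref{thm:main} to chains of infinite length, and then to apply a forward shadowing lemma for infinite sequences of bounded windows.

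First, I would set the stage by noting that the secular system lives on a much more tractable phase space than the full $4$-body problem. After averaging over the fast Keplerian angles, the semimajor axes $a_1,a_2,a_3$ are first integrals, and with them fixed the remaining secular variables parameterise a compact real-analytic manifold (essentially a product of spheres after rotational reduction, away from the collision and circular loci). Throughout the proof of Theorem~\ref{thm:main} one stays in a fixed compact subset of this manifold, bounded away from the degenerate strata; in particular the secular vector field is well defined and complete there, and none of the Nekhoroshev-type difficulties on the semimajor axes that obstruct the analogous statement for the full $4$-body flow are present.

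Next, I would recall that the transition chain underlying Theorem~\ref{thm:main} is assembled from a normally hyperbolic invariant cylinder (produced by the hyperbolic secular singularity of $F^{12}_{\mathrm{quad}}$ near $\theta_{12}=\pi/2$) and from a finite sequence of correctly aligned windows alternating between segments on the cylinder and homoclinic excursions through a fixed homoclinic channel. The quantitative observation alluded to in the statement is that, in the secular variables, these windows have a \emph{uniformly bounded} size: they are carved out of a fixed neighborhood of the cylinder and of the splitting of its stable and unstable foliations, so their diameters are controlled independently of the index of the window.

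Given any infinite forward sequence of target points in $B^3$, iterating the finite-chain construction of Theorem~\ref{thm:main} produces a corresponding infinite sequence of correctly aligned bounded windows. By \cite[Remark~2.13]{clarke2022topological}, any such sequence is shadowed by an orbit of the secular flow; and because the windows are uniformly bounded and the ambient phase space is compact, the shadowing orbit remains in a fixed compact region and is therefore defined for all $t\geq 0$. The only non-routine point is to verify that the windows of the finite-chain construction satisfy the uniform boundedness hypothesis needed for the infinite shadowing lemma, but this uniformity is inherited from the fact that the construction is local around the same normally hyperbolic cylinder at every step, inside the fixed compact region of secular phase space.
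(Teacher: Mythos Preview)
Your proposal is correct and follows the same line as the paper: the key input is that the aligned windows in the secular directions are uniformly bounded (they sit in a fixed compact neighbourhood of the cylinder inside the compact secular phase space), so the infinite-chain shadowing statement from \cite[Remark~2.13]{clarke2022topological} applies.

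One presentational remark: there is no need to ``promote'' finite chains to infinite ones by iterating the construction of Theorem~\ref{thm:main}. In the paper, the shadowing theorem for the secular Poincar\'e map (Theorem~\ref{theorem_main1} in Appendix~\ref{appendix_shadowing}) already accepts an \emph{infinite} sequence of leaves $\{p_j^*\}_{j=1}^\infty$ and outputs an infinite shadowing orbit; the finiteness constraint $N\leq \epsilon^{-K}$ only enters in Theorem~\ref{theorem_main2}, which handles the extra fast Keplerian directions $(\ell,L)$ of the full four-body problem. So for the secular system alone the infinite chain is obtained directly, not by concatenation, and the paper simply records (Section~\ref{section_applicshadowingresults}) that verifying assumptions [A1--3] together with the boundedness of the windows completes the proof of Proposition~\ref{prop:SecularChains}.
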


We can thus conclude the following.

\begin{theorem}[Weak local Herman conjecture]\label{thm:WLHC} 
  Under the hypotheses of Theorem~\ref{thm:main}, circular orbits are in the closure of the non-recurrent set (as well as of the recurrent set) of the secular dynamics at any finite order.
\end{theorem}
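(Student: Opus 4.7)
The claim splits into two independent parts, of which the recurrent one is standard. Near any circular, coplanar, direct configuration, the finite-order secular normal form is a small perturbation of its Laplace--Lagrange linearisation, and standard KAM arguments for the planetary problem---such as the Arnold--Herman invariant tori theorem invoked in Section~\ref{sec:solar-system}, see for instance~\cite{Arnold:1963,fejoz2004arnold,Chierchia:2011}---produce a positive-measure family of quasiperiodic Lagrangian tori accumulating on the equilibrium. Their orbits are recurrent, so circular configurations lie in the closure of the recurrent set.

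For the non-recurrent half, the plan is to apply Proposition~\ref{prop:SecularChains} directly to the finite-order secular system. Fix $\delta>0$ and a circular configuration $p^*$; its normalised second-planet angular momentum $\tilde C_2^*$ lies on $\partial B^3$. I would prescribe an infinite itinerary $(\tilde C_2^k)_{k\geq 0}\subset B^3$ with $\tilde C_2^0=\tilde C_2^*$ and $|\tilde C_2^k-\tilde C_2^*|\geq 2\delta$ for every $k\geq 1$; for concreteness, take $\tilde C_2^k$ equal to a single target $\tilde C_2^\infty\in B^3$ bounded away from $\tilde C_2^*$. By the infinite-chain extension of Theorem~\ref{thm:main} provided by Proposition~\ref{prop:SecularChains}, this itinerary is realised by a secular orbit $\gamma(t)$ defined for all $t\geq 0$, whose initial point lies within $\delta$ of $p^*$.

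The transition chain is assembled from correctly aligned secular windows $(W_k)_{k\geq 0}$ whose $\tilde C_2$-projections are $\delta$-neighbourhoods of the successive targets $\tilde C_2^k$. Topological shadowing through correctly aligned windows confines $\gamma(t)$ to a small tube around each $W_k$ during the corresponding time interval; in particular, for every $t\geq t_1$ the $\tilde C_2$-projection of $\gamma(t)$ remains at distance $\geq\delta$ from $\tilde C_2^*$. Hence $\gamma(0)$ does not lie in its own $\omega$-limit set, so it is a non-recurrent point within $\delta$ of $p^*$. Letting $\delta\to 0$ and varying $p^*$ concludes the argument.

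The principal obstacle, already addressed by Proposition~\ref{prop:SecularChains} together with \cite[Remark~2.13]{clarke2022topological}, is to extend the finite transition chain of Theorem~\ref{thm:main} to an infinite chain whose shadowing orbits are defined for all forward times; the relevant point is that the widths of the aligned windows in the secular actions remain uniformly bounded along the chain, so the iterative alignment construction does not break off at finite stage. Once this is granted, the non-recurrent conclusion reduces to the topological fact that the starting window $W_0$ is disjoint from the tube around the fixed far target $\tilde C_2^\infty$. By contrast, at the level of the full spatial 4-body flow the fast Keplerian angles would introduce resonant gaps and potentially finite-time escape, which is precisely why the weak local Herman statement is formulated only at the level of the secular dynamics at finite order.
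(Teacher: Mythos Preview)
Your overall strategy matches the paper's implicit argument: Proposition~\ref{prop:SecularChains} supplies infinite secular transition chains, and by choosing the itinerary to start near $\partial B^3$ (small $e_2$) and thereafter remain at a fixed interior target, the shadowing orbit is non-recurrent and starts near a circular motion.

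There is, however, one genuine overreach. You fix an \emph{arbitrary} circular configuration $p^*$ and claim to produce a non-recurrent orbit within $\delta$ of it, arguing only through the $\tilde C_2$-component. But the diffusing orbits are constrained to a neighbourhood of the cylinder $\Lambda$, where $e_1\approx 0$ and, by \eqref{eq_cosi12exp} together with $\Gamma_1=L_1$ on $\Lambda$ and the restriction \eqref{eq_isoenernondeg}, one has $\cos\theta_{12}\approx -\tilde\Gamma_2/L_1\in(-1/\sqrt{3},0)$. The inner mutual inclination is therefore pinned to be obtuse, near $\pi/2$, and cannot be freely prescribed; controlling $\tilde C_2$ alone does not place $\gamma(0)$ near a given $p^*$ in the remaining secular variables. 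What the construction actually yields---and all the paper claims---is that \emph{some} circular orbits (namely those with $\theta_{12}$ compatible with $\Lambda$, and with $e_3$ made small through the choice of $\Gamma_3^0$) are accumulated by non-recurrent orbits. The logical order should be reversed: build the diffusing orbit first, observe that its initial point has $e_1,e_2,e_3$ all small, and let $p^*$ be the nearby circular motion.

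A smaller point on the recurrent half: your Laplace--Lagrange KAM tori sit near \emph{coplanar} circular motions, which are not the highly inclined circular orbits of the previous paragraph. If the two halves of the statement are read as independent assertions this is harmless; if you want the same circular orbits in both closures, invoke KAM instead for the near-integrable twist map $f$ on $\hat\Lambda$ (Theorem~\ref{theorem_reductiontomap}), whose invariant tori are recurrent secular orbits with $e_1=0$ and $e_2,e_3$ arbitrarily small.
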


\medskip The main theorem above (Theorem \ref{thm:main}) will be reworded more precisely as two theorems (Theorems \ref{thm:MainHierarch:Deprit} and \ref{thm:PlanetHierarch:Deprit} below) which imply Theorem \ref{thm:main} above. These new theorems are more detailed and analyse the dynamics in a different set of coordinates called Deprit variables. The key point of these coordinates is that they are symplectic and moreover adapted to the symplectic reduction the 4 body problem with respect to all its symmetries (translation and rotation).

\subsection{Main ideas of the proof of Theorem \ref{thm:main}}
 
We first prove the theorem for the hierarchical regime (and fixed values of the masses), and then give a continuation argument which allows to vary the masses within $\mathcal{M}_\eta$ (see \eqref{eq:M}). 

The starting point of our construction is a now classical scheme: 
\begin{enumerate}
\item Establish the existence of a normally hyperbolic invariant manifold (see Appendix~\ref{sec:heuristics} for the definition).
\item Using Poincar\'e-Melnikov theory, prove that its stable and unstable manifolds have a transverse homoclinic intersection.
\item Compute the first order of the scattering maps in certain variables.
\item Using a shadowing theorem, find true orbits that shadow those of the scattering maps.
\end{enumerate}
However there are many caveats. Since we assume that the semimajor axes of the Keplerian ellipses are of different orders, there are numerous time scales. In particular the mean anomalies revolve much faster than the other angles (see below for a description of the angles), and so the splitting of separatrices in the directions of their symplectic conjugate variables is exponentially small. This makes the computation of the scattering maps in those directions a significant challenge. The shadowing results of \cite{clarke2022topological} are well equipped to dealing with problems of this nature; actually, this paper was precisely written for the purpose of its application to the present problem. Therefore we proceed by checking that the assumptions of that paper (summarised in Appendix \ref{appendix_shadowing}) are satisfied by the secular system (defined in Section \ref{section_secularexpansion}), and thus the full four-body problem.
 
Beginning with the usual Hamiltonian (see \eqref{eq_4bpham} in Section \ref{sec:DepritResults}) of the four-body problem, we have a conservative system with 12 degrees of freedom. It is well known that this system possesses many integrals of motion. We first pass to Jacobi coordinates to perform the symplectic reduction by translational invariance, which removes 3 degrees of freedom. For a three-body problem, the next step would usually be to pass to orbital elements (i.e. Delaunay variables) in order to perform Jacobi's classical reduction of the nodes. It is known however that this approach does not extend to the $N$-body problem when $N \geq 4$. Instead, we use the so-called \emph{Deprit variables}. These were introduced by Deprit in the paper \cite{deprit1983}, but he stated ``Whether the new phase variables... are practical in the General Theory of Perturbations is an open question. At least, for planetary theories, the answer is likely to be in the negative''. The variables were subsequently forgotten for a number of years, until they were rediscovered by Chierchia and Pinzari \cite{chierchia2011deprit,Chierchia:2011}, who pointed out that they are in fact \emph{very} useful, by defying Deprit's advice and nonetheless implementing the coordinates in the planetary problem. The use of these coordinates reduces a further 2 degrees of freedom from our system, resulting in a system with 7 degrees of freedom. To our knowledge, this paper represents the first use of the Deprit variables since the papers of Chierchia and Pinzari. 

Denote by $C_j$ the angular momentum of the $j^{\mathrm{th}}$ fictitious Keplerian body. The (Deprit) coordinates we are left with after the symplectic reduction are as follows: the mean anomalies $\ell_1, \ell_2, \ell_3$ and their symplectic conjugates $L_1, L_2, L_3$ which are proportional to the square root of the semimajor axes; the arguments of the perihelia $\gamma_1, \gamma_2, \gamma_3$ (\emph{not} with respect to the ascending node, but rather some different nodes; see Section \ref{sec:DepritResults} for a precise definition) and their symplectic conjugates the absolute angular momenta $\Gamma_1=|C_1|$, $\Gamma_2=|C_2|$, $\Gamma_3=|C_3|$; an abstractly defined angle $\psi_1$ and its symplectic conjugate $\Psi_1 = | C_1 + C_2|$. 

In these coordinates the Hamiltonian can be written $H = F_{\mathrm{Kep}} + F_{\mathrm{per}}$ where the \emph{Keplerian function} $F_{\mathrm{Kep}}$ describes the motion of three uncoupled two-body problems, and the \emph{perturbing function} $F_{\mathrm{per}}$ describes the gravitational forces between the planets. The assumptions we make on the semimajor axes of the Keplerian ellipses imply that the mean anomalies evolve much faster than the other angles, and so we can use the averaging theory to make a near-to-the-identity symplectic transformation that averages the angles $\ell_j$ out of $F_{\mathrm{per}}$ up to arbitrarily high order. The Hamiltonian resulting from averaging $F_{\mathrm{per}}$ over the angles $\ell_j$ is called the \emph{secular} Hamiltonian and is denoted by $F_{\mathrm{sec}}$. Since we assume that the semimajor axes are of different orders we have $L_1 \ll L_2 \ll L_3$ and so, using the Legendre polynomials, we can expand $F_{\mathrm{sec}}$ in powers of $\frac{L_1}{L_2}$ and $\frac{L_2}{L_3}$. The first order terms in the expansion of the secular Hamiltonian are referred to as the \emph{quadrupolar} and \emph{octupolar} Hamiltonians of the interactions between bodies 1 and 2 ($F_{\mathrm{quad}}^{12}$ and $F_{\mathrm{oct}}^{12}$), and the interactions between bodies 2 and 3 ($F_{\mathrm{quad}}^{23}$ and $F_{\mathrm{oct}}^{23}$); all other terms are of higher order, and are not required for our analysis. All parts of the proof described so far are contained in Section \ref{section_secularexpansion}. 

In order to continue, we observe that the actions are of different orders; indeed, $\Gamma_1 = O (L_1)=O(1)$, $\Gamma_2 = O(L_2)$, $\Gamma_3 = O(L_3)$, $\Psi_1=O(L_2)$. As it is more convenient to deal with actions of order 1, we perform a linear symplectic coordinate transformation and we ``chop'' the new action space in rectangles with sizes of order 1. We perform our analysis in each of these rectangles. Since the assumptions in \cite{clarke2022topological} are local, one can verify them in each rectangle separately.
This coordinate transformation leaves the variables $\gamma_1$, $\Gamma_1$ unchanged, and we denote by $\tilde{\gamma}_j$, $\tilde{\Gamma}_j$, $\tilde{\psi}_1$, $\tilde{\Psi}_1$ the new variables. Substituting these variables into the Hamiltonians $F_{\mathrm{quad}}^{12}$, $F_{\mathrm{oct}}^{12}$, $F_{\mathrm{quad}}^{23}$, and $F_{\mathrm{oct}}^{23}$ illuminates the time scales of our problem, and allows us to perform further Taylor expansions of each of these Hamiltonians, keeping a careful account of the term in which each variable appears for the first time (see Proposition \ref{proposition_secularexpansion}). This linear change, localization and expansion of the secular Hamiltonian are also performed in Section \ref{section_secularexpansion}. 

\begin{figure}[h]
    \centering
        \includegraphics[width=0.7\textwidth]{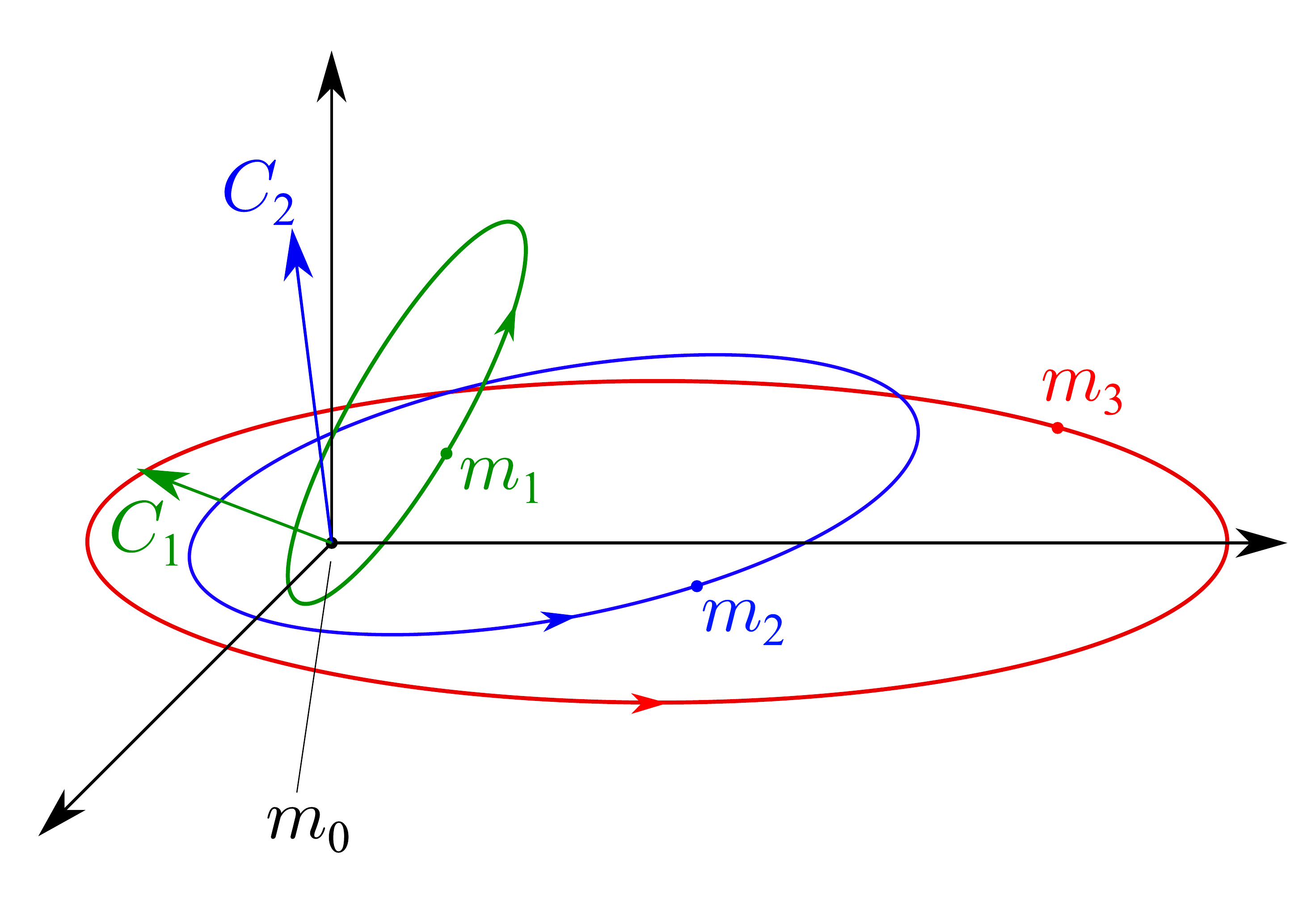}   
    \caption{We assume that the semimajor axes of the Keplerian ellipses are of different orders. Moreover, assuming that the mutual inclination of bodies 1 and 2 (i.e. the angle between the angular momentum vectors $C_1$ and $C_2$ in the figure) is sufficiently large, the first order term in the expansion of the secular Hamiltonian has a saddle periodic orbit. Our main theorems say that, given any predetermined itinerary of mutual inclinations between bodies 2 and 3, and eccentricities of the $2^{\mathrm{nd}}$ ellipse, there exist trajectories of the four-body problem shadowing that itinerary.}\label{figure_4bpellipses}
\end{figure}

Part of the beauty of the Deprit variables is that the Hamiltonians $F_{\mathrm{quad}}^{12}$ and $F_{\mathrm{oct}}^{12}$ are identical to the quadrupolar and octupolar Hamiltonians (respectively) in the three-body problem expressed in Delaunay coordinates (see \cite{fejoz2016secular}, for example). Moreover, we choose the  coordinate transformation in Section \ref{section_secularexpansion} carefully to ensure that the first-order term in the expansion of $F_{\mathrm{quad}}^{12}$ is identical to the first-order term in the expansion of the quadrupolar Hamiltonian in \cite{fejoz2016secular} (modulo some errata; see Appendix \ref{appendix_errata}). This Hamiltonian, which we denote by $H_0^{12}$, possesses a well-known saddle with a separatrix (see for example \cite{jefferys1966}) whenever the mutual inclination between bodies 1 and 2 is sufficiently large (see Figure \ref{figure_4bpellipses}); moreover, the saddle is present for an interval of values of the energy $H_0^{12}$. Collecting these saddles on this interval of energy levels, we obtain a normally hyperbolic invariant manifold $\Lambda$ (see Appendix~\ref{sec:heuristics} for definitions), whose stable and unstable manifolds coincide. Furthermore, Fenichel theory implies that the secular Hamiltonian $F_{\mathrm{sec}}$ inherits the normally hyperbolic cylinder $\Lambda$ \cite{fenichel1971persistence,fenichel1974asymptotic,fenichel1977asymptotic}. The results of \cite{fejoz2016secular} imply that the octupolar Hamiltonian splits the separatrix in the three-body problem; as the equations are identical, $F_{\mathrm{oct}}^{12}$ splits the stable and unstable manifolds of $\Lambda$ in our case too. The manifold $\Lambda$ for the secular Hamiltonian is written as a graph over the variables $\tilde{\gamma}_2$, $\tilde{\Gamma}_2$, $\tilde{\psi}_1$, $\tilde{\Psi}_1$, $\tilde{\gamma}_3$, $\tilde{\Gamma}_3$, and so has the structure of a cylinder $\T^3 \times [0,1]^3$. This is described in detail in Section \ref{section_analysisofh0}. 

In Section \ref{sec:innerdyn} we analyse the inner dynamics: the restriction of the secular flow to the normally hyperbolic cylinder $\Lambda$. The primary goal of this section is to find a new system of coordinates on the cylinder $\Lambda$, denoted by $(\hat \gamma_2, \hat \Gamma_2, \hat \psi_1, \hat \Psi_1, \hat \Gamma_3, \hat \Gamma_3)$, in which we can continue our analysis. This system is constructed in two steps. The first step produces a coordinate transformation that brings the restriction to $\Lambda$ of the symplectic structure into its canonical form, using Moser's trick from his proof of Darboux's theorem. The second step uses standard methods from
averaging theory to push the dependence of the inner Hamiltonian $\left. F_{\mathrm{sec}} \right|_{\Lambda}$ on the angles $\hat \gamma_2, \hat \psi_1, \hat \gamma_3$ to higher order terms. 

In Section \ref{sec:scattering} we prove that there are two homoclinic channels relative to $\Lambda$ that give rise to two globally defined scattering maps (see Appendix~\ref{sec:heuristics} for definitions). Moreover we compute the first order of the jumps in the scattering maps in the variables $\hat \Psi_1$, $\hat \Gamma_3$. This section relies on the computation of three Poincar\'e-Melnikov integrals: the first of these is identical to the computation in \cite{fejoz2016secular} (see also Appendix \ref{appendix_errata}), whereas the computation of the remaining two integrals is performed in Section \ref{section_melnikovcomp} by considering complex values of time and integrating over certain contours using the residue theorem. Since these computations are conducted in `tilde' variables, we then have to do further computations to determine the jumps in the scattering maps in the `hat' variables (Lemma \ref{lemma_scatteringhatcoords}). 

In Section \ref{sec:mapreduction} we consider the return map to the Poincar\'e section $\{ \hat \gamma_2 = 0 \}$ in an energy level of the secular Hamiltonian. The restriction to an energy level eliminates $\hat \Gamma_2$, and so we are left with a map of a four-dimensional cylinder. We prove that the map satisfies a twist condition, and we show that the jumps in the scattering maps in the $\hat \Psi_1$, $\hat \Gamma_3$ directions are the same as those for the scattering maps corresponding to the flow. We deduce from the formulas for the scattering maps that there are pseudo-orbits (i.e. orbits of the iterated function system consisting of the Poincar\'e map and the two scattering maps) that follow any itinerary in the actions $\hat \Psi_1$, $\hat \Gamma_3$, and that the scattering maps map tori corresponding to constant values of $\hat \Psi_1$, $\hat \Gamma_3$ transversely across other such tori . These are the assumptions of the first main theorem of \cite{clarke2022topological} (these results have been summarised in Appendix \ref{appendix_shadowing} -- see Theorem \ref{theorem_main1}), and so we obtain orbits of the secular Hamiltonian that follow any predetermined itinerary of the variables $\hat \Psi_1, \hat \Gamma_3$ (thus proving Proposition \ref{prop:SecularChains}). As a consequence, we can then show that the full four-body problem therefore satisfies the assumptions of the second main theorem in \cite{clarke2022topological} (see Theorem \ref{theorem_main2} in Appendix \ref{appendix_shadowing}), and so we obtain an analogous result for the full four-body problem.
Furthermore, the shadowing methods provide us with (non-optimal) time estimates, which are contained in Section \ref{section_applicshadowingresults}. 

In Section \ref{sec:planetary} we explain how we can pass from the hierarchical  to the planetary case, where the masses of the planets are arbitrarily small of order $0<\rho\ll 1$. We show that the separation of the semimajor axes of the planets can be taken independently of the smallness of the masses of the planets. This effectively amounts to a demonstration that we can rescale the planetary Hamiltonian by $\rho^{-1}$ and time by a factor of $\rho^{-2}$ in order to obtain the hierarchical Hamiltonian, where $\rho$ is the small mass parameter.

\subsection*{Acknowledgments}
A. Clarke and M. Guardia are supported by the European Research Council (ERC) under the European Union's Horizon 2020 research and innovation programme (grant agreement No. 757802). M. Guardia is also supported by the Catalan Institution for Research and Advanced Studies via an ICREA Academia Prize 2019. This work is also supported by the Spanish State Research Agency, through the Severo Ochoa and María de Maeztu Program for Centers and Units of Excellence in R\&D (CEX2020-001084-M).

This work is also partially supported by the project of the French Agence Nationale pour la Recherche CoSyDy (ANR-CE40-0014).


\section{Main results in Deprit coordinates}\label{sec:DepritResults}
In Section \ref{sec_setup} we introduce the Hamiltonian of the four-body problem, perform the reduction by translational symmetry, state our assumptions precisely, and expand the perturbing function using the Legendre polynomials. In Section \ref{sec:deprit} we recall the definition of the Deprit coordinates. Then, in Section \ref{sec:thm:deprit} we give  more detailed versions of Theorem
\ref{thm:main}
in terms of Deprit coordinates.

\subsection{Setting up the problem}\label{sec_setup}

Consider four point masses in space, interacting via gravitational attraction in the sense of Newton. Denote by $m_j$ the mass, by $x_j \in \mathbb{R}^3$ the position, and by $y_j \in \mathbb{R}^3$ the linear momentum of body $j$ for each $j=0,1,2,3$. This system is described by the flow of the Hamiltonian function
\begin{equation}\label{eq_4bpham}
H= \sum_{0 \leq j \leq 3} \frac{y_j^2}{2m_j} - \sum_{0 \leq i < j \leq 3} \frac{m_i m_j}{\| x_j - x_i \|}.
\end{equation}
Hamilton's equations of motion give a system of 24 differential equations. It is well-known that this system has (at least) 10 integrals of motion: the integral $H$ corresponds to conservation of energy; 6 integrals correspond to the motion of the barycenter (translational symmetry); and 3 integrals correspond to conservation of angular momentum (rotational symmetry). By making suitable changes of coordinates we can make these integrals visible, thus reducing the equations of motion. First, we pass to Jacobi coordinates to perform the symplectic reduction by translational symmetry, and then use Deprit coordinates to reduce by rotational symmetry.

\paragraph{Reduction by Translational Symmetry}
Let
\begin{equation}
M_j = \sum_{i=0}^j m_i, \quad \sigma_{ij} = \frac{m_i}{M_j},
\end{equation}
and define the Jacobi coordinates $(q_j,p_j) \in \mathbb{R}^3 \times \mathbb{R}^3$ for $j=0,1,2,3$ by
\begin{equation}
\begin{cases}
q_0 = x_0 \\
q_1 = x_1 - x_0 \\
q_2 = x_2 - \sigma_{01} \, x_0 - \sigma_{11} \, x_1 \\
q_3 = x_3 - \sigma_{02} \, x_0 - \sigma_{12} \, x_1 - \sigma_{22} \, x_2
\end{cases}
\quad
\begin{cases}
p_0 = y_0 + y_1 + y_2 + y_3 \\
p_1 = y_1 + \sigma_{11} \, y_2 +\sigma_{11} \, y_3 \\
p_2 = y_2 + \sigma_{22} \, y_3 \\
p_3 = y_3.
\end{cases}
\end{equation}

\begin{figure}[h]
  \centering
  \begin{tikzpicture}
    \draw [->] (0,0) node[anchor=north east] {$O$}
    -- (1,1) node[anchor=north west] {$q_0$}
    -- (2,2) node[anchor=west] {$x_0$};
    \draw [->] (2,2)
    -- (1.5,2.5) node[anchor=north east] {$q_1$}
    -- (1,3) node[anchor=south east] {$x_1$};
    \draw [->] (1.5,2.5)
    -- (2,4) node[anchor=south east] {$q_2$}
    -- (2.5,5.5) node[anchor=south] {$x_2$};
    \draw [->] (2,4)
    -- (3.5,4.5) node[anchor=south east] {$q_3$}
    -- (6.5,5.5) node[anchor=west] {$x_3$};
  \end{tikzpicture}
  \caption{Jacobi coordinates}
  \label{fig:Jacobi}
\end{figure}
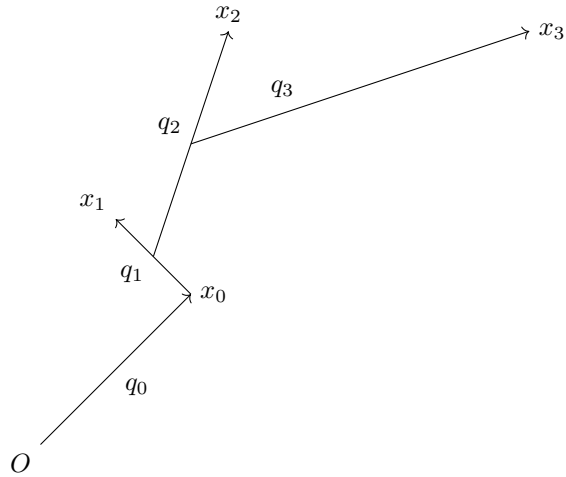

This is a symplectic change of coordinates, and the variable $q_0$ does not appear in the Hamiltonian~\eqref{eq_4bpham}. The reduced phase space has coordinates $(q_j,p_j)_{j=1,2,3}$, and, without loss of generality, we may restrict to $p_0=0$. Thus write
\begin{equation}\label{def:HKepplusper}
H = F_{\mathrm{Kep}} + F_{\mathrm{per}}
\end{equation}
where
\begin{equation}
F_{\mathrm{Kep}} = \sum_{j=1}^3 \left( \frac{p_j^2}{2 \mu_j} - \frac{\mu_j M_j}{\| q_j \|} \right)
\end{equation}
and
\begin{align}
F_{\mathrm{per}} ={}& \sum_{j=2}^3 \frac{\mu_j M_j}{\| q_j \|} - \frac{m_0 \, m_2}{\| q_2 + \sigma_{11} \, q_1 \|} - \frac{m_0 \, m_3}{\| q_3 + \sigma_{22} \, q_2 + \sigma_{11} \, q_1 \|} - \frac{m_1 \, m_2}{\| q_2 - \sigma_{01} \, q_1 \|}  \\
& - \frac{m_1 \, m_3}{\| q_3 + \sigma_{22} \, q_2 + (\sigma_{11} - 1) \, q_1 \|} - \frac{m_2 \, m_3}{\| q_3 + (\sigma_{22} - 1) \, q_2 
\|}
\end{align}
with the reduced masses $\mu_j$ defined by
\begin{equation}
\mu_j^{-1} = M_{j-1}^{-1} + m_j^{-1} \quad (j=1,2,3).
\end{equation}
The function $F_{\mathrm{Kep}}$ is an integrable Hamiltonian describing the motion of 3 uncoupled \emph{Kepler problems}, and $F_{\mathrm{per}}$ is the so-called \emph{perturbing function}.

\paragraph{Assumptions and Expansion in Legendre Polynomials}
In this paper, we assume that each term of $F_{\mathrm{Kep}}$ is negative, so each of the 3 uncoupled Keplerian trajectories is elliptical. We assume that the orbit of body 2 is far away from the orbits of the first two bodies. We assume moreover that the orbit of body 3 lies far from the orbits of the first 2 bodies, in a range dictated by the position of body 2. More precisely, if we denote by $a_j$ the semimajor axis of the $j^{\mathrm{th}}$ Keplerian ellipse, our assumptions are:
\begin{equation} \label{eq_assumption1}
O(1) = a_1 \ll a_2  \to \infty \qquad\text{and}\qquad
a_2^{\frac{11}{6}}  \ll a_3 \ll a_2^2.
\end{equation} 
The purpose of these assumptions is twofold. First, the fact that $a_1 \ll a_2 \ll a_3 \ll a_2^2$ implies that the three mean anomalies are the fastest three angles, and so we can average out these three angles (see below in this section). Second, the fact that $a_2^{11/6} \ll a_3$ implies both that the first two terms in the expansion of the secular Hamiltonian come from the quadrupolar Hamiltonian of bodies 1 and 2, and that the first nontrivial term in the Melnikov potential comes from the octupolar Hamiltonian of bodies 1 and 2 (see later).

We are in a near-integrable setting, as $F_{\mathrm{per}}$ is smaller than $F_{\mathrm{Kep}}$. Indeed, denote by $\zeta_{j}$ the angle between $q_j$ and $q_{j+1}$ for $j=1,2$, and denote by $P_n$ the Legendre polynomial of degree $n$. Our assumptions imply that $\| q_1 \| \ll \| q_2 \| \ll \| q_3 \|$. Therefore we can write the perturbing function as 
\begin{equation}\label{eq_fperdef}
F_{\mathrm{per}} = F_{\mathrm{per}}^{12} + F_{\mathrm{per}}^{23} + O \left( \frac{\| q_1 \|^2}{\| q_3 \|^3} \right)
\end{equation}
where
\begin{equation}
F_{\mathrm{per}}^{12} =\frac{\mu_2 M_2}{\| q_2 \|} - \frac{m_0 \, m_2}{\| q_2 + \sigma_{11} \, q_1 \|} - \frac{m_1 \, m_2}{\| q_2 - \sigma_{01} \, q_1 \|} 
=- \frac{\mu_1 m_2}{\| q_2 \|} \sum_{n=2}^{\infty} \tilde{\sigma}_{1,n} P_n (\cos \zeta_{1}) \left( \frac{\| q_1 \|}{\| q_2 \|} \right)^n \label{eq_perfn12}
\end{equation}
is the perturbing function from the 3-body problem (see \cite{fejoz2016secular}, for example), and where 
\begin{equation} \label{eq_perfn23}
F_{\mathrm{per}}^{23} = - \frac{\mu_2 m_3}{\| q_3 \|} \sum_{n=2}^{\infty} \tilde{\sigma}_{2,n} P_n (\cos \zeta_{2}) \left( \frac{\| q_2 \|}{\| q_3 \|} \right)^n
\end{equation}
with
\begin{equation}
\tilde{\sigma}_{1,n} = \sigma_{01}^{n-1} + (-1)^n \sigma_{11}^{n-1}, \quad \tilde{\sigma}_{2,n} = (\sigma_{02} + \sigma_{12})^{n-1} + (-1)^n \sigma_{22}^{n-1}.
\end{equation}

\subsection{The Deprit coordinates and reduction by rotational symmetry}\label{sec:deprit}

To prove Theorem \ref{thm:main} and to take advantage of the rotational symmetry, we rely on Deprit coordinates. These coordinates were discovered originally by Deprit \cite{deprit1983} and rediscovered recently by Chierchia and Pinzari \cite{chierchia2011deprit}.

Denote by 
\begin{equation}
C_j = q_j \times p_j
\end{equation} 
the angular momentum of the $j^{\mathrm{th}}$ fictitious Keplerian body (Keplerian refering to $F_{\mathrm{Kep}}$), and let $k_j$ be the $j^{\mathrm{th}}$ element of the standard orthonormal basis of $\mathbb{R}^3$. Define the nodes $\nu_j$ by
\begin{equation}
\nu_1=\nu_2=C_1 \times C_2, \quad \nu_3 = (C_1 + C_2) \times C_3, \quad \nu_4=k_3 \times C
\end{equation}
where
\begin{equation}
C = C_1 + C_2 + C_3
\end{equation}
is the total angular momentum vector. For a non-zero vector $z \in \mathbb{R}^3$ and two non-zero vectors $u,v$ lying in the plane orthogonal to $z$, denote by $\alpha_z (u,v)$ the oriented angle between $u,v$, with orientation defined by the right hand rule with respect to $z$. Denote by $\Pi_j$ the pericenter of $q_j$ on its Keplerian ellipse.
The Deprit variables $(\ell_j,L_j,\gamma_j,\Gamma_j,\psi_j,\Psi_j)_{j=1,2,3}$ are defined as follows:
\begin{itemize}
\item $\ell_j$ is the mean anomaly of $q_j$ on its Keplerian ellipse;
\item
$L_j = \mu_j \sqrt{M_j a_j}$;
\item
$\gamma_j= \alpha_{C_j}(\nu_j,\Pi_j)$;
\item
$\Gamma_j = \left\| C_j \right\|$;
\item
$\psi_1 = \alpha_{(C_1+C_2)}(\nu_3,\nu_2)$, $\psi_2=\alpha_C(\nu_4,\nu_3)$, $\psi_3 = \alpha_{k_3} (k_1, \nu_4)$;
\item
$\Psi_1 = \| C_1 + C_2 \|$, $\Psi_2 = \| C_1 + C_2 +C_3 \| = \| C \|$, $\Psi_3 = C \cdot k_3$.
\end{itemize}
The Deprit variables are analytic over the open subset $\cD$ over which the $3$ terms of $F_{\mathrm{Kep}}$ are negative, the eccentricities of the Keplerian ellipses lie strictly between 0 and 1, and the nodes $\nu_j$ are nonzero.
 
\begin{figure}[h]
\centering
\tdplotsetmaincoords{60}{50}
  \begin{tikzpicture}[tdplot_main_coords,scale=4] 
     \draw[thick,->] (0,0,0) -- (1.8,0,0) node[anchor=north east]{$x$};
     \draw[thick,->] (0,0,0) -- (0,2,0) node[anchor=north west]{$y$};
     \draw[thick,->] (0,0,0) -- (0,0,1.6) node[anchor=south]{$z$};
 
     \draw (1,0,0) -- (1,-1,.5) -- (-1.5,-1,.5) -- (-1.5,0,0);
    \draw (1,0,0) -- (1,1,-.5) -- (.2,1,-.5);
     \draw[dashed] (1,0,0) -- (1,1,-.5) -- (-1.5,1,-.5) -- (-1.5,0,0);
 
     \draw[->] (-1,0.5,0.25) -- (-1,0,1) node[anchor=south west]{$C_j$};
  \draw[->] (0.8,0.8,-0.4) -- (0.8,1,0) node[anchor=west]{$
       \begin{cases}
         C_2 &(j=1)\\
         C_1 &(j=2)\\
         C_1+C_2 &(j=3)
       \end{cases}
       $};
 
     \draw (1,0,0) -- (-1.5,0,0);
     \draw[very thick,->] (0,0,0) -- (0.7,0,0) node[anchor=south west]{$\nu_j$};
 
     \draw (0.15,0.06,0) node [anchor=west] {$\gamma_j$};
 
     \draw (1,0,0) -- (1,1,.5) -- (-1.5,1,.5) -- (-1.5,0,0);
     \draw (1,0,0) -- (1,-1,-.5) -- (-1.5,-1,-.5) -- (-1.5,0,0);
 
     \input{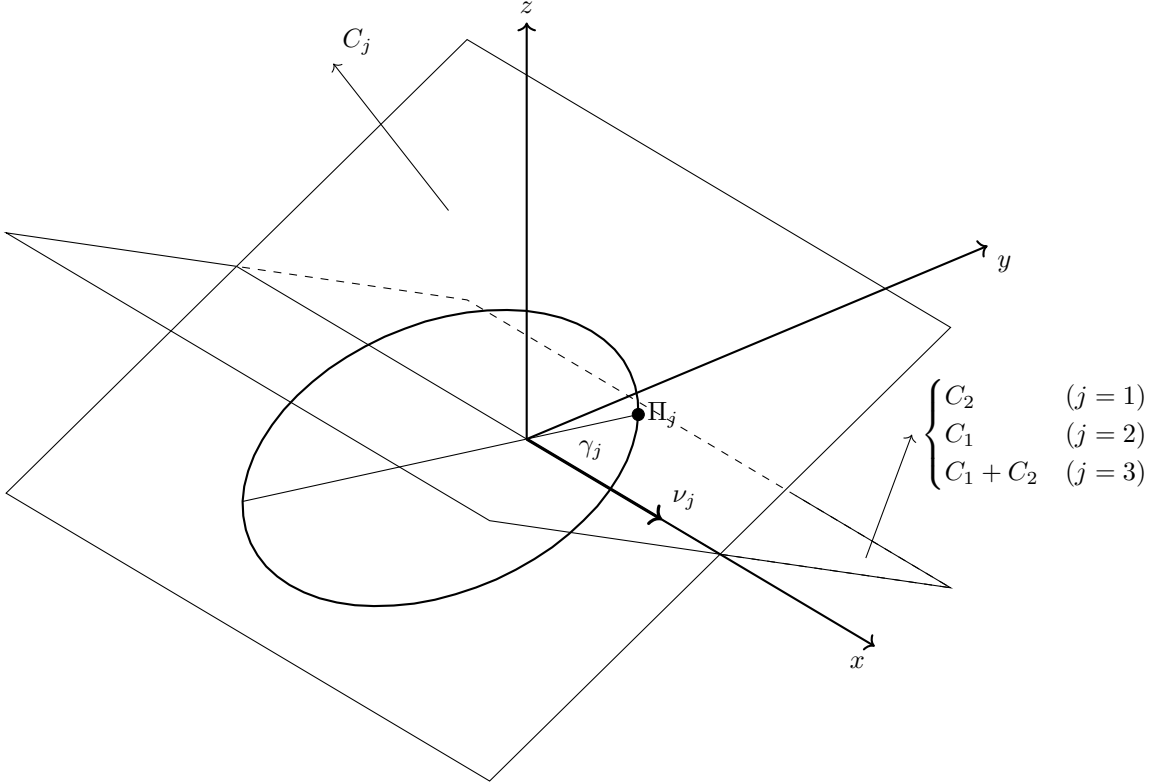}
 
   \end{tikzpicture}
   \caption{The node $\nu_j$ and the argument $\gamma_j$ of pericenter $\Pi_j$}
   \label{fig:gammaj}
 \end{figure}

The Deprit variables form a symplectic coordinate system over $\cD$. In Appendix \ref{sec:app:Deprit} we give an alternative proof of its symplecticity from that in \cite{deprit1983,chierchia2011deprit}. 

In these coordinates the Keplerian Hamiltonian becomes
\begin{equation}
F_{\mathrm{Kep}} = - \sum_{j=1}^3 \frac{\mu_j^3 M_j^2}{2 L_j^2}
\end{equation}
and so the mean anomalies $\ell_j$ undergo a rigid rotation with frequency $\sqrt{\frac{M_j}{a_j^3}}$, while all other variables remain fixed. Moreover  the variables $\psi_3, \Psi_2, \Psi_3$ are integrals of motion for the full 4-body problem (due to conservation of angular momentum), and so the Hamiltonian does not depend on $\Psi_3, \psi_2, \psi_3$. 

Note that one can express the orbital elements in terms of Deprit as follows. The osculating eccentricities are defined by
\begin{equation}\label{def:eccentricity}
 e_i=\sqrt{1-\frac{\Gamma_j^2}{L_j^2}},\qquad j=1,2,3.
\end{equation}
The mutual inclination between the bodies $1$ and $2$ is given by
 $i_{12}$, which is defined by
\begin{equation}\label{def:inclinationi12}
\cos i_{12} = \frac{\Gamma_1^2 + \Gamma_2^2 - \Psi_1^2}{2 \, \Gamma_1 \, \Gamma_2}.
\end{equation}
For the third planet one can measure its inclination with respect to the plane orthogonal to the vector $S_1=C_1+C_2$, that is with respect to the plane orthogonal to the angular momentum given by planet 1 and 2. If one denotes this angle by $i_{23}$, one has
\begin{equation}\label{def:inclinationi23}
 \cos i_{23}=\frac{\Gamma_3^2+\Psi_1^2-\Psi_2^2}{2\Gamma_3\Psi_1}.
\end{equation}
Note that we are in a regime where $L_2\gg L_1$ and therefore the angular momentum of the first two planets is essentially carried by planet 2.

\subsection{Arnold diffusion in Deprit coordinates}\label{sec:thm:deprit}
In this paper we consider both the hierarchical and planetary regimes. We first state the results in the hierarchical regime, which imply Theorem \ref{thm:main} for fixed values of masses. Later, we consider the planetary regime, that is we take masses of the planets arbitrarily small.

To this end, let us specify what is the hierarchical regime in Deprit coordinates. For now we assume that the masses $m_0,m_1,m_2,m_3>0$ are fixed and belong to $\mathcal{M}_\eta$ (see \eqref{eq:M}).

Then, we consider that semimajor axes of the bodies are well separated
\[
 a_1\ll a_2\ll a_3
\]
and equivalently
\[
 L_1\ll L_2\ll L_3.
\]
Note that we assume that the eccentricities of the bodies are uniformily bounded away from 0 and 1 and therefore 
\begin{equation}\label{def:psigamma}
 \Gamma_i\sim L_i\quad \text{for}\quad i=1,2,3\qquad\text{and}\qquad \Psi_i\sim \Gamma_{i+1}\qquad \text{for}\qquad i=1,2
\end{equation}
(recall that $\Psi_2$ is the total angular momentum which is a conserved quantity).

In this regime the semimajor axes are very stable. The same happens for $\Gamma_3$ due to the conservation of angular momentum vector. However a ``small'' transfer of angular momentum from the third planet to the second can have a big effect on the eccentricty and inclination of the second planet. 

In particular,  $\Gamma_3$
may transition from 
\begin{equation}\label{def:Gamma3size}
 \Gamma_3\sim \Psi_2-\Psi_1 \qquad\text{to}\qquad  \Gamma_3\sim \Psi_2+\Psi_1.
\end{equation}
This corresponds to having $C_3$ and $C_1+C_2$ close to parallel and either with the same sense or opposite sense. Such transition is equivalent to make a transition in the inclination $\theta_{23}$ (see Section \ref{sec:FromDepritToElements} below).

On the other hand, note that $0<\Gamma_2<L_2$. So the maximal transition $\Gamma_2$ can make is
\begin{equation}\label{def:Gamma2size}
 \Gamma_2\sim L_2  \qquad\text{to}\qquad \Gamma_2\sim  0.
\end{equation}
By \eqref{def:eccentricity}, this corresponds to the second planet transitioning from a close to circular orbit ($e_2\sim 0$) to a highly eccentric one ($e_2\sim 1$). 

The next theorem shows that such transitions are possible and that one  can freely  vary $\Gamma_3$ and $\Gamma_2$ within there admitted ranges. In the constructed orbits the changes in the other actions is determined by those two.

\begin{theorem}[Hierarchical regime]\label{thm:MainHierarch:Deprit}
Fix masses $m_0, m_1, m_2, m_3>0$ such that 
\begin{equation}\label{def:massconditions:2}
m_0\neq m_1\qquad \text{ and }\qquad m_0+m_1\neq m_2. 
\end{equation}
There exists $0<\kappa\ll 1$, $\alpha>0$, $\beta>0$, such that the following is satisfied.

Fix $N\geq 1$ any $\{\nu^k\}_{k=0}^N\subset (0,1)$, $\{\eta^k\}_{k=0}^N\subset (-1,1)$ and constants $\Gamma_3^0\in [\kappa L_3,(1-\kappa)L_3]$, $\Psi_1^0,\Gamma_2^0\in  [\kappa L_2,(1-\kappa)L_2]$ such that 
 $|\Psi_1^0-\Gamma_2^0|\leq \kappa$ , $\Psi_2^0$ such that $\Psi_2^0\in[\Gamma_3^0-(1-\kk)\Psi_1^0,\Gamma_3^0+(1-\kk)\Psi_1^0] $
and 
$L^0=(L_1^0,L_2^0,L_3^0)$ satisfying $L_1^0\in [1/2,2]$ and
 \begin{equation}\label{eq_semimajorassumptionsinlvariable}
  L_1^0\ll L_2^0 \quad\text{and}\quad (L_2^0)^\frac{11}{6}\ll L_3^0\ll (L_2^0)^2.
 \end{equation}
 Then, 
 there exists an orbit 
 of the Hamiltonian $H$ in \eqref{def:HKepplusper} expressed in Deprit coordinates  and  times $\{t_k\}_{k=0}^N$ satisfying
 \[
t_0=0 \qquad\text{and}\qquad  |t_k|\leq \left(L_2^0\right)^\alpha,\, k\geq 1
 \]
 such that
 \[
   |\Gamma_2(t_k)-\nu_k L_2^0|\leq  (L_2^0)^{-\beta}
   ,\qquad 
 |\Psi_2^0-\Gamma_3(t_k)-\eta_k\Psi_1(t_k)|\leq  (L_2^0)^{-\beta}
 \]
 and
 \[
   |\Gamma_1(t_k)-L_1^0|\leq  (L_2^0)^{-\beta}
   ,\qquad\qquad
 |\Psi_1(t_k)-\Gamma_2(t_k)-M_k|\leq  (L_2^0)^{-\beta}
 \]
where $M_k\in (0,\kappa)$ is determined by
\[
\frac{M_k^2}{(L_2^0-\Gamma_2(t_k))^{3/2}}=\frac{M_0^2}{(L_2^0-\Gamma_2^0)^{3/2}}  \qquad \text{and}\qquad M_0=\Psi_1^0-\Gamma_2^0
\]
whereas for all $t\in [0,t_N]$,
\[
   |\Gamma_3(t)-\Gamma_3^0|\leq 2L_2^0
   , \qquad  \text{and}\qquad 
   |L_j(t)-L_j^0|\leq (L_2^0)^{-\beta}
   \quad \text{for}\quad j=1,2,3.
   \]

\end{theorem}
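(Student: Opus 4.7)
The plan is to implement the four-step Arnold diffusion scheme outlined in the introduction, by verifying the hypotheses of the abstract shadowing theorems of \cite{clarke2022topological} (recalled in Appendix \ref{appendix_shadowing}) for our system. The workflow has a normal-form/reduction stage, a scattering-map computation stage, and a combinatorial shadowing stage.

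First, I would pass to Deprit coordinates (Section \ref{sec:deprit}) to perform the symplectic reduction by rotational symmetry, then average out the three mean anomalies $\ell_1,\ell_2,\ell_3$ up to sufficiently high order using a Lie series, producing the secular Hamiltonian $F_{\mathrm{sec}}$. Using the separation of scales \eqref{eq_semimajorassumptionsinlvariable} I would expand $F_{\mathrm{sec}}$ in Legendre polynomials and extract the leading quadrupolar pieces $F_{\mathrm{quad}}^{12}, F_{\mathrm{quad}}^{23}$ together with the octupolar $F_{\mathrm{oct}}^{12}$; the mass conditions $m_0 \neq m_1$ and $m_0+m_1 \neq m_2$ enter precisely to ensure that the coefficients $\tilde\sigma_{1,3}, \tilde\sigma_{2,3}$ are non-zero and the octupolar terms survive. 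After a linear rescaling of actions to order $1$ and a localisation in small rectangles one recognises $F_{\mathrm{quad}}^{12}$ as the quadrupolar Hamiltonian of the 3-body problem treated in \cite{fejoz2016secular}: for mutual inclination $i_{12}$ close to $\pi/2$ it has the Jefferys--Moser hyperbolic fixed point with coincident separatrices. Collecting these fixed points over the other slow variables yields a normally hyperbolic invariant cylinder $\Lambda \cong \mathbb{T}^3 \times [0,1]^3$ for the quadrupolar truncation, which persists for the full $F_{\mathrm{sec}}$ by Fenichel's theorem.

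Second, I would compute the splitting of $W^s(\Lambda)$ and $W^u(\Lambda)$ by evaluating Poincar\'e--Melnikov integrals against the higher-order terms. The $F_{\mathrm{oct}}^{12}$ integral is exactly the one computed in \cite{fejoz2016secular} and controls the splitting in the variables symplectically conjugate to $(\gamma_2, \Gamma_2)$. The harder integrals, which determine the jumps in the third-body variables $\Psi_1$ and $\Gamma_3$, involve $F_{\mathrm{quad}}^{23}$ and require complex-time deformation of the integration contour and a residue computation; the hypothesis $L_2^{11/6} \ll L_3$ is what guarantees that these octupolar and quadrupolar-$23$ contributions dominate the exponentially small remainders coming from the fast angles $\ell_j$. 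The outcome is two well-defined scattering maps $S^\pm$ attached to two homoclinic channels. In parallel, I would produce convenient ``hat'' coordinates on $\Lambda$ using Moser's trick (to bring the restricted symplectic form to Darboux normal form) and one further averaging step on the inner dynamics, after which $F_{\mathrm{sec}}|_\Lambda$ depends on the angles only through higher-order terms and the inner flow is a small perturbation of a completely integrable twist system. A short extra computation then converts the tilde-coordinate formulas for $S^\pm$ into hat-coordinate formulas.

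Third, I would fix an energy level of $F_{\mathrm{sec}}$ and take the Poincar\'e section $\{\hat\gamma_2 = 0\}$, eliminating $\hat\Gamma_2$ and yielding an exact symplectic map of a four-dimensional cylinder that carries both the inner return map and the two scattering maps, acting transversally in the $(\hat\Psi_1, \hat\Gamma_3)$ plane. The leading-order formulas for $S^\pm$ are strong enough to realise any prescribed finite itinerary in $(\hat\Psi_1, \hat\Gamma_3)$ as a pseudo-orbit of the iterated function system generated by these three maps, and to verify the twist hypothesis needed by the abstract shadowing theorem. Applying Theorem \ref{theorem_main1} of \cite{clarke2022topological} then produces true secular orbits shadowing these pseudo-orbits, and Theorem \ref{theorem_main2} lifts the conclusion to the full four-body flow. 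Tracking the prescribed $\nu_k, \eta_k$ through the changes of variables, and using the near-conservation of $F_{\mathrm{sec}}|_\Lambda$, of $L_j$ and of $\Gamma_1$, one recovers the estimates stated in the theorem; the bound $|t_k| \leq (L_2^0)^\alpha$ follows from the quantitative shadowing estimates in \cite{clarke2022topological} combined with the polynomial size of the Melnikov splitting. The main obstacle throughout is the exponentially small splitting in the directions conjugate to the fast mean anomalies $\ell_j$: it rules out any classical transition-chain argument and forces the use of the shadowing machinery of \cite{clarke2022topological}, and it is the reason drift must be confined to the slow actions $\hat\Psi_1, \hat\Gamma_3$ while the $L_j$ remain nearly constant, in accordance with Nekhoroshev stability along those directions.
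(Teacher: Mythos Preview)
Your proposal is correct and follows essentially the same approach as the paper: Sections~\ref{section_secularexpansion}--\ref{sec:mapreduction} implement precisely the secular expansion, Fenichel persistence of the Jefferys--Moser cylinder, Moser-trick plus averaging to hat coordinates, Melnikov computation by residues, Poincar\'e reduction to $\{\hat\gamma_2=0\}$ with twist verification, and finally the application of Theorems~\ref{theorem_main1} and~\ref{theorem_main2} from \cite{clarke2022topological}. Two small sharpenings: the Melnikov term controlling the $\hat\Gamma_3$ jump comes from the octupolar $F_{\mathrm{oct}}^{23}$ (specifically $H_5^{23}$), not $F_{\mathrm{quad}}^{23}$; and the role of the hypothesis $L_2^{11/6}\ll L_3$ is to order the secular expansion so that $F_{\mathrm{quad}}^{12}$ gives the leading two terms and $F_{\mathrm{oct}}^{12}$ the leading Melnikov contribution, rather than to beat the exponentially small remainders from the mean anomalies (that latter issue is handled separately by the structure of the shadowing theorems).
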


One can obtain an analogous statement in the planetary regime, that is when one considers arbitrarily small masses for the bodies 1,2,3. That is, 
$ m_0\sim 1$ whereas $m_i=\rr \tilde m_i$, $i=1,2,3$, with $\tilde m_i\sim 1$ and $0<\rr\ll 1$. 
Note that when $\rr$ tends to $0$, the actions $(L,\Gamma, \Psi)$ all satisfy
\[
 L,\Gamma,\Psi\to 0.
\]
Therefore, to be able to capture the drift in actions it is convenient to perform a symplectic scaling
\[
 L=\rr \wt L, \quad \Gamma=\rr \wt \Gamma, \quad  \Psi=\rr \wt \Psi, \quad  
\]
Note that all the orbital elements are homogeneous functions of degree 0 of the Deprit coordinates (see \eqref{def:eccentricity}, \eqref{def:inclinationi12}, \eqref{def:inclinationi23}). Therefore, the drift in the scaled coordinates will determine the behavior described in Theorem \ref{thm:main} for the planetary regime.

\begin{theorem}[Planetary regime]\label{thm:PlanetHierarch:Deprit}
Fix $m_0,\wt m_1,\wt m_2,\wt m_3>0$  and consider the Hamiltonian $H$ in \eqref{def:HKepplusper} expressed in Deprit coordinates with  masses $m_0, m_j=\rr \wt m_j$ with $j=1,2,3$.  There exists $0<\kk_0\ll 1$, $\alpha>0$, $\beta>0$, such that the following is satisfied for any $\kk\in (0,\kk_0)$.

Fix $N\geq 1$ any $\{\nu^k\}_{k=0}^N\subset (0,1)$, $\{\eta^k\}_{k=0}^N\subset (-1,1)$ and constants
  $\wt \Psi_1^0,\wt \Psi_2^0,\wt \Gamma_2^0,\wt \Gamma_3^0,\wt L_1^0,\wt L_2^0,\wt L_3^0$ such that 
 \[
\wt\Psi_1^0,\wt\Gamma_2^0\in  [\kappa L_2,(1-\kappa)L_2],\, |\wt \Psi_1^0-\wt \Gamma_2^0|\leq \kappa ,\,\wt \Gamma_3^0\in (\kappa \wt L_3,(1-\kappa)\wt L_3) \quad\text{and}\quad  \wt\Psi_2^0\in[\wt\Gamma_3^0-(1-\kk)\wt\Psi_1^0,\wt\Gamma_3^0+(1-\kk)\wt\Psi_1^0] 
 \]
and 
 \[
\wt L_1^0\in [1/2,2],\quad \wt L_1^0\ll\wt L_2^0 \quad\text{and}\quad (\wt L_2^0)^\frac{11}{6}\ll\wt L_3^0\ll (\wt L_2^0)^2.
\]

 Then, there exists $\rr_0$ such that for any $\rr\in (0,\rr_0)$,
 there exists an orbit  of the Hamiltonian $H$ in \eqref{def:HKepplusper} expressed in scaled Deprit coordinates
 and  times $\{t_k\}_{k=0}^N$ satisfying $t_0=0$ and 
 \[
  |t_k|\leq C\frac{(L_2^0)^\alpha}{\rr^2}
 \]
 with $C>0$ independent of $\rr$ and $L_2^0$, such that 
 \[
   |\wt\Gamma_2(t_k)-\nu_k \wt L_2^0|\leq (\wt L_2^0)^{-\beta}
   ,\qquad 
 |\wt\Psi_2^0-\wt\Gamma_3(t_k)-\eta_k\wt\Psi_1(t_k)|\leq  (\wt L_2^0)^{-\beta}
 \]
 and
 \[
   |\wt\Gamma_1(t_k)-\wt L_1^0|\leq  (\wt L_2^0)^{-\beta}
   ,\qquad\qquad
 |\wt\Psi_1(t_k)-\wt\Gamma_2(t_k)-M_k|\leq  (\wt L_2^0)^{-\beta}
 \]
where $M_k\in (0,\kappa)$, $k=1\ldots N$, is determined by
\[
M_k^2=\frac{(\wt\Psi_1^0-\wt\Gamma_2^0)^2}{(\wt L_2^0-\wt \Gamma_2^0)^{3/2}}(\wt L_2^0-\wt \Gamma_2(t_k)^2)^{3/2}
\]
whereas for all $t\in [0,t_N]$,
\[
   |\wt\Gamma_3(t)-\wt\Gamma_3^0|\leq 2\wt L_2^0
   \qquad  \text{and}\qquad 
   |\wt L_j(t)-\wt L_j^0|\leq  (\wt L_2^0)^{-\beta}
   \quad \text{for}\quad j=1,2,3.
   \]
\end{theorem}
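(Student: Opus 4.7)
The plan is to deduce Theorem \ref{thm:PlanetHierarch:Deprit} from Theorem \ref{thm:MainHierarch:Deprit} by an explicit symplectic rescaling that reduces the planetary four-body problem to a near-hierarchical one with effective masses $\tilde m_j$. This is the program outlined in Section \ref{sec:planetary}.

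First, I would implement the linear symplectic rescaling $L_j = \rho \wt L_j$, $\Gamma_j = \rho \wt \Gamma_j$, $\Psi_j = \rho \wt \Psi_j$, leaving the angles fixed. Since this transformation multiplies the canonical symplectic form by $\rho$, the generating Hamiltonian with respect to the canonical form in the tilded variables is $\mathcal{H} := \rho^{-1} H$. Using $m_j = \rho \tilde m_j$ for $j \geq 1$, the identity $\mu_j^{-1} = M_{j-1}^{-1} + m_j^{-1}$ gives $\mu_j = \rho \tilde \mu_j (1 + O(\rho))$, with $\tilde \mu_j$ depending only on the $\tilde m_k$, and $M_j = 1 + O(\rho)$. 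Combined with $a_j = L_j^2/(\mu_j^2 M_j) = \wt L_j^2/\tilde \mu_j^2 + O(\rho)$, which shows the positions $q_j$ are $\rho$-independent to leading order, a direct substitution into \eqref{def:HKepplusper} yields
\[
\mathcal{H} = -\sum_{j=1}^3 \frac{\tilde \mu_j^3}{2 \wt L_j^2} + \rho \, \wt F_{\mathrm{per}}(\wt L, \wt \Gamma, \wt \Psi, \mathrm{angles}) + O(\rho^2),
\]
where $\wt F_{\mathrm{per}}$ has precisely the same functional form as the hierarchical perturbing function built from the effective masses $m_0 = 1, \tilde m_1, \tilde m_2, \tilde m_3$. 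This is because the leading contributions to $F_{\mathrm{per}}$ are products $m_i m_j/\|\cdots\|$ with $i,j \geq 1$, automatically of order $\rho^2$; and the $O(\rho)$ remainder coming from the Keplerian part depends only on actions, so after averaging the mean anomalies it merely perturbs the Kepler frequencies without affecting the secular dynamics.

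Second, I would match the rescaled system against the hypotheses of Theorem \ref{thm:MainHierarch:Deprit}. The non-degeneracy conditions \eqref{def:massconditions:2} become $1 - \rho \tilde m_1 \neq 0$ and $1 + \rho(\tilde m_1 - \tilde m_2) \neq 0$, both trivially satisfied for small $\rho$. The initial data $\wt L_j^0, \wt \Gamma_j^0, \wt \Psi_j^0$ lie in the admissible set of Theorem \ref{thm:MainHierarch:Deprit} applied to the rescaled Hamiltonian, and the separation $(\wt L_2^0)^{11/6} \ll \wt L_3^0 \ll (\wt L_2^0)^2$ is preserved by the scaling. Applying Theorem \ref{thm:MainHierarch:Deprit} to the leading-order part of $\mathcal{H}$ produces a trajectory with times $\tau_k$ obeying $|\tau_k| \leq (\wt L_2^0)^\alpha$ and realising any prescribed itinerary in $\wt \Gamma_2, \wt \Psi_1, \wt \Gamma_3$. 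Reverting to physical time $t$ accounts for the $\rho^{-2}$ factor announced in the statement: one power of $\rho^{-1}$ arises from the Hamiltonian rescaling $\mathcal{H} = \rho^{-1} H$, and another from the intrinsic $O(\rho)$ secular time scale of the planetary perturbation in the tilded variables.

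The principal technical obstacle is \emph{uniformity in} $\rho$ of all the quantitative estimates underlying the hierarchical proof: persistence of the normally hyperbolic cylinder $\Lambda$ of Section \ref{section_analysisofh0}, the Melnikov splitting of Sections \ref{sec:scattering}--\ref{section_melnikovcomp}, the twist and transversality of the Poincar\'e map of Section \ref{sec:mapreduction}, and the shadowing time estimates of Section \ref{section_applicshadowingresults}. Since the $O(\rho)$ residual in $\mathcal{H}$ is a regular perturbation of a hierarchical Hamiltonian with well-defined limiting masses $(m_0, \tilde m_1, \tilde m_2, \tilde m_3)$, standard Fenichel persistence together with a careful inspection of the dependence of each constant appearing in Sections \ref{section_secularexpansion}--\ref{section_applicshadowingresults} on the masses should yield the required uniformity. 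Once this continuation argument is secured, one chooses $\rho_0 > 0$ small enough that the conclusion of Theorem \ref{thm:MainHierarch:Deprit} applied to the leading-order rescaled Hamiltonian survives the $O(\rho)$ correction, producing the desired drifting orbit of the planetary four-body problem in scaled Deprit coordinates.
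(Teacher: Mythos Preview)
Your overall strategy—symplectically rescale the actions by $\rho$, then argue uniformity in $\rho$ of the hierarchical construction—is the paper's approach (Section~\ref{sec:planetary} and the last paragraph of Section~1.3). However, two points in your execution are incorrect and obscure the actual mechanism.

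First, your justification for $F_{\mathrm{per}}=O(\rho^2)$ is wrong. The perturbing function contains terms such as $m_0 m_2/\|q_2+\sigma_{11}q_1\|$ with $m_0\sim 1$, which are individually $O(\rho)$, not $O(\rho^2)$. The correct argument uses the Legendre form~\eqref{eq_perfn12}: the overall prefactor is $\mu_1 m_2\sim\rho^2$, and the coefficients $\tilde\sigma_{1,n}=\sigma_{01}^{n-1}+(-1)^n\sigma_{11}^{n-1}$ satisfy $\tilde\sigma_{1,n}\to 1$ as $\rho\to 0$ (since $\sigma_{01}\to 1$, $\sigma_{11}\to 0$). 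The analogous statement holds for $F_{\mathrm{per}}^{23}$.

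Second, and more importantly, the rescaled perturbation $\wt F_{\mathrm{per}}$ is \emph{not} the hierarchical perturbing function built from masses $(1,\tilde m_1,\tilde m_2,\tilde m_3)$: the limiting Legendre coefficients are all equal to $1$, whereas for the hierarchical problem with those masses they are $(1/(1+\tilde m_1))^{n-1}+\cdots$. So you cannot invoke Theorem~\ref{thm:MainHierarch:Deprit} as a black box on any fixed-mass limit. What the paper actually checks is that $\tilde\sigma_{j,n}\sim 1$ uniformly in $\rho$; this is precisely what guarantees that the \emph{relative} sizes of the quadrupolar, octupolar, and higher terms in Proposition~\ref{proposition_secularexpansion} are unchanged as $\rho\to 0$. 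Consequently every construction in Sections~\ref{section_secularexpansion}--\ref{sec:mapreduction} (the cylinder, the Melnikov splitting via $F_{\mathrm{oct}}^{12}$, the twist, the transversality of foliations) persists with the same $L_2,L_3$-orders, the whole secular Hamiltonian simply acquiring a global factor $\rho^2$. Your final paragraph gestures at this uniformity, but it is the core of the proof, not a technical afterthought: without verifying $\tilde\sigma_{j,3}\sim 1$, the octupolar Melnikov contribution could be $\rho$-suppressed relative to the quadrupolar term and the splitting would degenerate.
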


Sections \ref{section_secularexpansion} - \ref{sec:mapreduction} are devoted to prove Theorem \ref{thm:MainHierarch:Deprit}. Then, in Section \ref{sec:planetary} we explain how to extend the proof in the planetary regime.

\section{The averaging procedure and the secular Hamiltonian}\label{section_secularexpansion}
The purpose of this section is to define the secular Hamiltonian, and compute all  terms relevant for our later analysis. This involves averaging the perturbing function \eqref{eq_fperdef} in Deprit coordinates, and performing a Taylor expansion of the resulting expressions in powers of the small parameters $\frac{1}{L_2}$ and $\frac{L_2}{L_3}$. In Section \ref{sec_averagingandsecular} we define the secular Hamiltonian, and compute the first two terms in its expansion. In Section \ref{subsection_secularexpansion}, we introduce affine coordinate transformations to split the action space in  substrips such that the new action variables in each strip are of order 1. This makes the multiple time scales of the problem visible, and allows us to perform further Taylor expansions. 

\subsection{The averaging procedure and the secular Hamiltonian}\label{sec_averagingandsecular}

It can be seen in equations \eqref{eq_perfn12}, \eqref{eq_perfn23} that $F_{\mathrm{per}}^{12}$, $F_{\mathrm{per}}^{23}$ are of order $O \left(\frac{1}{a_2^{3}} \right)$, $O \left( \frac{a_2^2}{a_3^{3}} \right)$ respectively. Therefore our assumption \eqref{eq_assumption1} implies that the mean anomalies $\ell_j$ have faster frequencies than the other angles, and moreover that the frequencies of the mean anomalies are of different orders. As a result we can apply the normal form theory (see \cite{fejoz2002quasiperiodic,zhao2014quasi}): we may construct an arbitrary number $k$ of non-resonant normal forms and find a change of coordinates close to the identity in order to obtain the Hamiltonian 
\begin{equation}\label{eq_4bphamafterave}
F = F_{\mathrm{Kep}} + F_{\mathrm{sec},k} +O \left(\left(\frac{1}{a_2} \right)^{k+1}, \, \frac{a_2^k}{a_3^{k+1}} \right)
\end{equation}
where $F_{\mathrm{sec},k}$ is the \emph{secular Hamiltonian} of order $k$, defined by 
\begin{equation} \label{eq_secularhamiltoniandef}
F_{\mathrm{sec},k} = F_{\mathrm{sec},k}^{12} + F_{\mathrm{sec},k}^{23}  + O \left( \frac{1}{a_3^3} \right)
\end{equation}
with
\begin{equation} \label{eq_secularhamiltonian12}
F_{\mathrm{sec},k}^{12} = \frac{1}{(2 \pi)^2} \int_{\mathbb{T}^2} F_{\mathrm{per}}^{12} \, d \ell_1 \, d \ell_2 + O \left( \frac{1}{a_2^{9/2}}  \right), 
\end{equation}
and
\begin{equation} \label{eq_secularhamiltonian23}
 F_{\mathrm{sec},k}^{23} = \frac{1}{(2 \pi)^2} \int_{\mathbb{T}^2} F_{\mathrm{per}}^{23} \, d \ell_2 \, d \ell_3 + O \left( \frac{a_2^4}{a_3^{9/2}} \right).
\end{equation}
In what follows we drop the $k$ subscript and simply write $F_{\mathrm{sec}}^{12}$ and $F_{\mathrm{sec}}^{23}$ to simplify notation. 

Now, the first term in the expansions \eqref{eq_perfn12}, \eqref{eq_perfn23} after averaging is called the \emph{quadrupolar} Hamiltonian (with respect to bodies 1 and 2, and with respect to bodies 2 and 3, respectively), and the second is called the \emph{octupolar} Hamiltonian (following the terminology of electrostatic multipoles). Since $\tilde{\sigma}_{j,2}=1$ for $j=1,2$, we write
\begin{equation}\label{eq_sechamexpansion}
F_{\mathrm{sec}}^{j,j+1}= - \frac{\mu_j \, m_{j+1}}{(2 \pi)^2} \left( F_{\mathrm{quad}}^{j,j+1} + \tilde{\sigma}_{j,3} \, F_{\mathrm{oct}}^{j,j+1} + O \left( \frac{ a_j^4}{a_{j+1}^5} \right) \right)\qquad \text{for $j=1,2$. }
\end{equation}
The eccentricity of the $j^{\mathrm{th}}$ Keplerian ellipse is given in terms of Deprit coordinates by
\begin{equation} \label{eq_eccentricity}
e_j = \sqrt{1 - \frac{\Gamma_j^2}{L_j^2}}.
\end{equation}

\begin{lemma}\label{lemma_quadoct12comp}
The quadrupolar and octupolar Hamiltonians of bodies 1 and 2 are given by
\begin{equation} \label{eq_quad12unscaled}
F_{\mathrm{quad}}^{12}= \frac{a_1^2}{8 \, a_{2}^3 \, (1-e_{2}^2)^{\frac{3}{2}}} \left( \left( 15 \, e_1^2 \cos^2 \gamma_1 - 12 \, e_1^2 - 3 \right) \sin^2 i_{12} + 3 e_1^2 + 2 \right)
\end{equation}
and 
\begin{equation} \label{eq_oct12unscaled}
\begin{split} 
F_{\mathrm{oct}}^{12} ={}& - \frac{15}{64} \frac{a_1^3}{a_2^4} \frac{e_1 \, e_2}{\left(1 -e_2^2 \right)^{\frac{5}{2}}} \\
& \times \left\{
\begin{split}
\cos \gamma_1 \, \cos \gamma_2 \left[
\begin{split}
\frac{\Gamma_1^2}{L_1^2} \left(5 \, \sin^2 i_{12} \left(6-7 \cos^2 \gamma_1 \right) - 3 \right) \\
-35 \, \sin^2 \gamma_1 \, \sin^2 i_{12} + 7
\end{split}
\right] \\
+ \sin \gamma_1 \, \sin \gamma_2 \, \cos i_{12} \left[
\begin{split}
\frac{\Gamma_1^2}{L_1^2} \left(5 \, \sin^2 i_{12} \left(4 - 7 \, \cos^2 \gamma_1 \right) - 3 \right) \\
- 35 \sin^2 \gamma_1 \, \sin^2 i_{12} + 7
\end{split}
\right]
\end{split}
\right\} 
\end{split}
\end{equation}
respectively, where $e_j$ is the eccentricity of the $j^{\mathrm{th}}$ Keplerian ellipse, and $i_{12}$ is the mutual inclination of Keplerian bodies $1$ and $2$, defined by
\begin{equation}\label{eq_cosi12def}
\cos i_{12} = \frac{\Gamma_1^2 + \Gamma_2^2 - \Psi_1^2}{2 \, \Gamma_1 \, \Gamma_2}.
\end{equation}
\end{lemma}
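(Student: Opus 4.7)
The formulas \eqref{eq_quad12unscaled} and \eqref{eq_oct12unscaled} are the $n=2$ and $n=3$ averages of the Legendre expansion \eqref{eq_perfn12}, and the plan is to compute them directly. Identifying coefficients in \eqref{eq_sechamexpansion}, \eqref{eq_secularhamiltonian12} and \eqref{eq_perfn12}, one finds (up to the overall factor $-\mu_1 m_2/(2\pi)^2$ already extracted in \eqref{eq_sechamexpansion})
\[
F_{\mathrm{quad}}^{12} = \int_{\T^2} \frac{\|q_1\|^2}{\|q_2\|^3}\, P_2(\cos \zeta_1)\, d\ell_1\, d\ell_2, \qquad
F_{\mathrm{oct}}^{12} = \int_{\T^2} \frac{\|q_1\|^3}{\|q_2\|^4}\, P_3(\cos \zeta_1)\, d\ell_1\, d\ell_2,
\]
so the whole lemma reduces to evaluating these two integrals and rewriting the result in Deprit variables.

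The first step is geometric. I would parameterise each $q_j$ in the orthonormal frame built from its pericenter direction $\Pi_j$ and angular momentum $C_j$, which makes $\|q_j\|$ and the components of $q_j$ explicit functions of the eccentric anomaly $u_j$ alone. To handle $\cos \zeta_1 = q_1 \cdot q_2 / (\|q_1\| \|q_2\|)$, I would write the rotation between the two orbital frames as an Euler composition. The crucial feature of the Deprit construction is that the two nodes coincide, $\nu_1 = \nu_2 = C_1 \times C_2$, so this rotation decomposes cleanly as a rotation by $\gamma_1$ about $C_1$, then by $i_{12}$ (defined through \eqref{eq_cosi12def}) about the common node, then by $\gamma_2$ about $C_2$, with no residual longitude-of-node angle. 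This is exactly the same reduction that appears in the reduced three-body problem written in Delaunay variables.

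The averaging is then a standard if tedious calculation. Using $d\ell_j = (1 - e_j \cos u_j)\, du_j$, the integrals reduce to integer moments of $\cos u_j$, $\sin u_j$ against $(1 - e_j \cos u_j)^{-k}$ with small $k$, all of which are classical Keplerian integrals; for the quadrupolar term only even moments survive, producing the celebrated independence of $F_{\mathrm{quad}}^{12}$ from $\gamma_2$ that ultimately makes it integrable. These computations have been carried out in Delaunay coordinates in \cite{fejoz2016secular}. Because the Deprit variables $(\ell_j, L_j, \gamma_j, \Gamma_j)_{j=1,2}$ together with $\Psi_1 = \|C_1+C_2\|$ parameterise the shape and relative orientation of the inner pair in exactly the same way as the reduced Delaunay variables do, with \eqref{eq_cosi12def} playing the role of the Delaunay mutual-inclination formula, the expressions obtained in \cite{fejoz2016secular} transfer essentially verbatim. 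The only real difficulty is not analytic but notational: one must carefully align the orientation conventions on the nodes, the ordering of the Euler decomposition, and the overall sign and normalisation implicit in \eqref{eq_sechamexpansion}, and incorporate the errata recorded in Appendix \ref{appendix_errata}. Once this bookkeeping is done, the claimed formulas \eqref{eq_quad12unscaled} and \eqref{eq_oct12unscaled} drop out.
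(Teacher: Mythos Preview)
Your proposal is correct and follows essentially the same approach as the paper: set up the integrals, exploit the key geometric fact that $\nu_1=\nu_2$ so that the change between the two orbital frames is a single rotation by $i_{12}$ about the common node, and then invoke the standard Keplerian averaging computation (the paper cites Appendix~C of \cite{fejoz2002quasiperiodic}, you cite \cite{fejoz2016secular}, but the substance is the same).
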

\begin{proof}
The quadrupolar and octupolar Hamiltonians are defined by
\begin{equation}
F_{\mathrm{quad}}^{12} = \int_{\mathbb{T}^2} P_2 \left( \cos \zeta_{1} \right) \frac{\| q_1 \|^2}{\| q_2 \|^3} \, d \ell_1 \, d \ell_{2}, \quad F_{\mathrm{oct}}^{12} = \int_{\mathbb{T}^2} P_3 \left( \cos \zeta_{1} \right) \frac{\| q_1 \|^3}{\| q_2 \|^4} \, d \ell_1 \, d \ell_{2}.
\end{equation}
In order to measure $\cos \zeta_1$, it is sufficient to derive expressions for $q_1,q_2$ in Deprit coordinates with respect to any orthonormal basis of $\mathbb{R}^3$. Recall the definition of the nodes $\nu_i$. As in Claim 1 of \cite{chierchia2011deprit} we consider the \emph{orbital basis} $B_i = (k_{i,1},k_{i,2},k_{i,3})$ for $i=1,2$, where
\begin{equation}
k_{i,1} = \frac{\nu_i}{\| \nu_i \|}, \quad k_{i,3} = \frac{C_i}{\| C_i \|},
\end{equation}
and where $k_{i,2}$ is chosen to make the basis orthonormal. We assume that $q_i \in \mathbb{R}^3 \setminus \{ 0 \}$. Let $\bar{q}_i = \| q_i \|^{-1} q_i$. Then, by the definition of the angle $\gamma_i$, the point $\bar{q}_i$ is written with respect to $B_i$ as $\bar{Q}_i=(\cos (\gamma_i+v_i), \sin (\gamma_i + v_i), 0)$, where $v_i$ is the true anomaly corresponding to the mean anomaly $\ell_i$. Define the standard rotation matrix by an angle $\theta \in \mathbb{T}$ around the $x$-axis by
\begin{equation}
\R_1 (\theta) = \left(
\begin{matrix}
1 & 0 & 0 \\
0 & \cos \theta & - \sin \theta \\
0 & \sin \theta & \cos \theta 
\end{matrix}
\right).
\end{equation}
Since $\nu_1=\nu_2$, the change of basis matrix from $B_1$ to $B_2$ is $\R_1 (i_{12})$, and so
\begin{equation}
\cos \zeta_1 = \R_1 (i_{12}) \bar{Q}_1 \cdot \bar{Q}_2.
\end{equation}
A standard computation (see Appendix C of \cite{fejoz2002quasiperiodic}; compare also equations (6), (7) of \cite{fejoz2016secular}) completes the proof.
\end{proof}

\begin{remark}
Analogous expressions exist for $F_{\mathrm{quad}}^{23}$ and $F_{\mathrm{oct}}^{23}$. However these expressions are long, and difficult to interpret. In order to transform these expressions into a more easily understandable form, we first perform a coordinate transformation that allows us to perform a further Taylor expansion. This coordinate transformation and expansion is carried out in Section \ref{subsection_secularexpansion}. 
\end{remark}

\subsection{Expansion of the secular Hamiltonian}\label{subsection_secularexpansion}

In what follows, we perform a perturbative analysis with respect to the small parameters $\frac{1}{L_2}$, $\frac{L_2}{L_3}$, $\frac{1}{L_3}$. In order to do this, first notice that the actions $\Gamma_1, \Gamma_2, \Psi_1, \Gamma_3$ are of different order; we therefore make an affine symplectic change of variables that results in actions of order 1. Indeed, our assumptions regarding the semi-major axes imply that $\Gamma_2, \Psi_1$ are of order $L_2$, while $\Gamma_3, \Psi_2$ are of order $L_3$. Recall that $\Psi_2$ is the norm of the total angular momentum, and therefore it is a first integral. We fix it as 
\[
 \Psi_2 = \delta_2 \, L_3
\]
for some fixed $\delta_2 \in (0,1)$ independent of $L_2$ and $L_3$. Note that different values of $\delta_2$ give different (approximate) values for $\Gamma_3$ (of order $L_3$) and therefore different (approximate) values for the osculating eccentricity of the ellipse of the third planet.

We  make the symplectic change of variables (with respect to the symplectic form $\Omega=\sum_{j=1}^3 d\Gamma_i\wedge d\gamma_i+d\Psi_1\wedge d\psi_1$):
\begin{equation} \label{eq_changeofcoordstilde}
\begin{dcases}
\tilde{\Psi}_1 = \Psi_1 - \delta_1 \, L_2, \quad & \tilde{\psi}_1 = \psi_1 + \gamma_2 \\
\tilde{\Gamma}_2=\Psi_1-\Gamma_2, \quad & \tilde{\gamma}_2=- \gamma_2 \\
\tilde{\Gamma}_3 = \Psi_2 - \Gamma_3 - \delta_3 \, L_2, \quad & \tilde{\gamma}_3 =-  \gamma_3 
\end{dcases}
\end{equation}
where $\delta_1 \in (0,1)$ and $\delta_3 \in (-1,1)$ are constant with respect to the secular Hamiltonian. Note that this symplectic transformation does not modify the variables $\gamma_1,\Gamma_1$.

We assume that
\begin{equation} \label{eq_gamma2positive}
\tilde{\Gamma}_2 > 0
\end{equation}
as the case where $\tilde{\Gamma}_2$ is negative can be treated analogously.
Moreover, we assume that the new actions $\Gamma_1, \tilde{\Gamma}_2, \tilde{\Gamma}_3, \tilde{\Psi}_1$ live in a compact set away from the origin which is independent of $L_2$ and $L_3$. Indeed, we can choose $(\tilde{\Gamma}_3, \tilde{\Psi}_1)\in [-1,1]^2$. Then, choosing a discrete set of pairs $(\delta_1, \delta_3)$ appropriately, we can cover the whole domain that we want to analyse (see \eqref{def:psigamma}, \eqref{def:Gamma3size},\eqref{def:Gamma2size}). Since all the analysis we have to perform can be done locally, it is then enough to do it in each of these rectangles to achieve Arnold diffusion in the whole range of actions.

The rest of this section is dedicated to a Taylor expansion of the secular Hamiltonian in powers of $\frac{L_2}{L_3}$ and $\frac{1}{L_2}$; observe that these quantities are both small as a result of our assumption \eqref{eq_assumption1}. The following proposition summarises the results of the rest of this section, and its proof is effectively contained in the subsequent Lemmas \ref{lemma_quad12expansion}, \ref{lemma_oct12expansion}, \ref{lemma_quad23exp}, and \ref{lemma_oct23exp} in which we expand $F_{\mathrm{quad}}^{12}$, $F_{\mathrm{oct}}^{12}$, $F_{\mathrm{quad}}^{23}$, and $F_{\mathrm{oct}}^{23}$ respectively. The proposition is important as it tells us the order of the speed of each variable $\gamma_1, \tilde{\gamma}_2, \tilde{\psi}_1, \tilde{\gamma}_3, \Gamma_1, \tilde{\Gamma}_2, \tilde{\Psi}_1, \tilde{\Gamma}_3$. It also tells us the first order terms containing products of trigonometric functions of $\tilde{\psi}_1$ (resp. $\tilde{\gamma}_3$) with functions of $\gamma_1, \Gamma_1, \tilde{\gamma}_2$; this will be of great significance later on, as these will be the lowest order terms that give a nontrivial Poincar\'e-Melnikov potential in $\tilde{\psi}_1$ (resp. $\tilde{\gamma}_3$; see Proposition \ref{proposition_melnikovtildeexp} in Section \ref{sec:scattering}, as well as Section \ref{section_melnikovcomp}).

\begin{proposition}\label{proposition_secularexpansion}
The secular Hamiltonian \eqref{eq_secularhamiltoniandef} has the form
\begin{equation}\label{eq_secularhamexpansions}
F_{\mathrm{sec}} = c +  \sum_{i,j=0}^{\infty} \epsilon^i \mu^j F_{ij}
\end{equation}
where $\epsilon = \frac{1}{L_2}$ and $\mu = \frac{L_2}{L_3}$. 
Moreover the terms in this expansion satisfy the following properties.
\begin{enumerate}

\item \label{item_propquad12}
The first two nontrivial terms in the expansion are $F_{6,0} = \alpha_0^{12} \, H_0^{12}$, $F_{7,0} = \alpha_1^{12} \, H_1^{12}$ where $\alpha_i^{12}$ are nontrivial constants, and where the Hamiltonians $H_0^{12}$, $H_1^{12}$ are defined by \eqref{eq_H012def} and \eqref{eq_H112def} respectively, are integrable, and do not depend on the masses. The Hamiltonians $H_0^{12}$, $H_1^{12}$ are the first order terms from $F_{\mathrm{quad}}^{12}$ (see \eqref{eq_quad12unscaled}). The variables $\gamma_1, \tilde{\gamma}_2, \tilde{\Gamma}_2$ appear in $H_0^{12}$, and the action $\tilde{\Psi}_1$ first appears in $H_1^{12}$. 
\item \label{item_propoct12}
The angle $\tilde{\gamma}_2$ first appears in $H_2^{12}$, which is contained in $F_{8,0}$. The Hamiltonian $H_2^{12}$ is defined by \eqref{eq_foct12firstorder}, and is the first order term in the expansion of $F_{\mathrm{oct}}^{12}$ (see \eqref{eq_oct12unscaled}). 

\item \label{item_propquad23}
The angle $\tilde{\psi}_1$ first appears in $F_{2,6} = H_0^{23}$ where the Hamiltonian $H_0^{23}$ is defined by \eqref{eq_H023H123def}, and is the first order term coming from $F_{\mathrm{quad}}^{23}$ (see \eqref{eq_sechamexpansion}). 

\item \label{item_propquad23p2}
The term $H_2^{23}$, contained in $F_{3,6}$, is the first order term containing products of trigonometric functions of $\tilde{\psi}_1$ with functions of $\gamma_1, \Gamma_1, \tilde{\gamma}_2$. The Hamiltonian $H_2^{23}$ is defined by \eqref{eq_H223def}, and comes from $F_{\mathrm{quad}}^{23}$. 

\item \label{item_propactiongamma3}
The action $\tilde{\Gamma}_3$ first appears in $\tilde{H}_3$, which is contained in $F_{3,6}$. The Hamiltonian $\tilde{H}_3$ is defined by \eqref{eq_lowestorderGamma3}, and comes from $F_{\mathrm{quad}}^{23}$. 

\item \label{item_propoct23}
The angle $\tilde{\gamma}_3$ first appears in $H_3^{23}$, which is contained in $F_{2,8}$. The Hamiltonian $H_3^{23}$ is defined by \eqref{eq_H323def}, and is the first order term from $F_{\mathrm{oct}}^{23}$ (see \eqref{eq_sechamexpansion}). 

\item \label{item_propoct23p2}
The term $H_5^{23}$, contained in $F_{3,8}$, is the first order term containing products of trigonometric functions of $\tilde{\gamma}_3$ with functions of $\gamma_1, \Gamma_1, \tilde{\gamma}_2$. The Hamiltonian $H_5^{23}$ is defined by \eqref{eq_H523def} and comes from $F_{\mathrm{oct}}^{23}$. 

\end{enumerate}
\end{proposition}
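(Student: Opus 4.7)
The proof is a careful bookkeeping argument, and I would organize it as four subsidiary lemmas --- one each for the expansions of $F_{\mathrm{quad}}^{12}$, $F_{\mathrm{oct}}^{12}$, $F_{\mathrm{quad}}^{23}$, $F_{\mathrm{oct}}^{23}$ in the tilde coordinates \eqref{eq_changeofcoordstilde} --- followed by an inspection of leading terms to verify items (\ref{item_propquad12})--(\ref{item_propoct23p2}).

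The starting points are Lemma \ref{lemma_quadoct12comp} for the inner pair and its analogue for the outer pair. For the latter, one derives expressions of the same shape as \eqref{eq_quad12unscaled}--\eqref{eq_oct12unscaled} with indices $1,2$ replaced by $2,3$; the crucial bookkeeping is that the argument of pericenter of body $2$ appearing in $F_{\mathrm{quad}}^{23}$ and $F_{\mathrm{oct}}^{23}$ must be measured with respect to the node $\nu_3=(C_1+C_2)\times C_3$, not $\nu_2=C_1\times C_2$. Since the two nodes differ by a rotation in the $(C_1+C_2)$-plane of angle $\psi_1$, the Deprit angle $\gamma_2$ gets shifted to $\psi_1+\gamma_2$, which is exactly $\tilde\psi_1$. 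This is the raison d'\^etre of the change of variables \eqref{eq_changeofcoordstilde} and it explains how $\tilde\psi_1$ (and only $\tilde\psi_1$, not $\psi_1$ or $\gamma_2$ separately) enters the $2$-$3$ secular terms. I would carry out the corresponding orbital-basis computation exactly as in the proof of Lemma \ref{lemma_quadoct12comp}, using the rotation $\mathcal R_1(i_{23})$ between the bases attached to $C_1+C_2$ and $C_3$.

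Next I substitute $a_j=L_j^2/(\mu_j^2M_j)$ and read off the overall sizes
\[
\frac{a_1^n}{a_2^{n+1}}=O\bigl(\epsilon^{2(n+1)}\bigr),\qquad \frac{a_2^n}{a_3^{n+1}}=O\bigl(\epsilon^2\mu^{2(n+1)}\bigr),
\]
which immediately pin the quadrupolar and octupolar prefactors to the indices $(6,0),(8,0),(2,6),(2,8)$ respectively. The finer $\epsilon$-expansion comes from the inclination factors. For the inner pair, using $\Gamma_2=\Psi_1-\tilde\Gamma_2=\delta_1/\epsilon+\tilde\Psi_1-\tilde\Gamma_2$, one computes
\[
\cos i_{12}=\frac{\Gamma_1^2-2\Psi_1\tilde\Gamma_2+\tilde\Gamma_2^2}{2\Gamma_1(\Psi_1-\tilde\Gamma_2)}=-\frac{\tilde\Gamma_2}{\Gamma_1}+O(\epsilon),
\]
where the $O(\epsilon)$ correction is the first place $\tilde\Psi_1$ appears; combined with $1-e_2^2=(\delta_1+\epsilon(\tilde\Psi_1-\tilde\Gamma_2))^2$ this gives, after a Taylor expansion in $\epsilon$, a series whose $\epsilon^0$-piece is the claimed $H_0^{12}$ (independent of $\tilde\Psi_1$) and whose $\epsilon^1$-piece is $H_1^{12}$ (the first to depend on $\tilde\Psi_1$). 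Items (\ref{item_propquad12}) and (\ref{item_propoct12}) follow, the latter using that the leading term of $F_{\mathrm{oct}}^{12}$ is already of order $\epsilon^8$ and does involve $\tilde\gamma_2=-\gamma_2$ through the $\cos\gamma_2,\sin\gamma_2$ factors in \eqref{eq_oct12unscaled}.

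For the outer pair one computes, analogously, using $\Gamma_3=\delta_2 L_3-\tilde\Gamma_3-\delta_3 L_2$,
\[
\cos i_{23}=-\frac{\delta_3}{\delta_1}+\frac{\epsilon}{\delta_1}\bigl(\tilde\Psi_1-\tilde\Gamma_3/\mu\cdot(\cdots)\bigr)+\cdots,
\]
so that the leading order is $\tilde\Gamma_3$-independent while $\tilde\Gamma_3$ enters at the next order in $\epsilon$. Coupled with the fact that $e_2^2$ depends nontrivially on $\tilde\Psi_1,\tilde\Gamma_2$ already at $\epsilon^0$ and that $\tilde\psi_1$ enters through the cosine/sine factors of the pericenter argument of body $2$, this places the leading $F_{\mathrm{quad}}^{23}$ term ($H_0^{23}$) in $F_{2,6}$ (giving item (\ref{item_propquad23})), the first term combining trigonometric functions of $\tilde\psi_1$ with $\gamma_1,\Gamma_1,\tilde\gamma_2$ in $F_{3,6}$ (item (\ref{item_propquad23p2})), and the first appearance of $\tilde\Gamma_3$ also in $F_{3,6}$ (item (\ref{item_propactiongamma3})). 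The same strategy applied to the octupolar term $F_{\mathrm{oct}}^{23}$ yields items (\ref{item_propoct23}) and (\ref{item_propoct23p2}), since the octupolar formula for the outer pair contains factors $e_2 e_3\cos\tilde\gamma_3\cos(\cdots)$ and $\sin\tilde\gamma_3$ from the product expansion of $P_3(\cos\zeta_2)$.

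The main technical obstacle is not any single computation but the careful tracking of \emph{first appearances}: one must verify that the orbital-basis rotation used to measure $\cos\zeta_2$ does not secretly introduce $\tilde\psi_1$ or $\tilde\Gamma_3$ at a lower order than claimed, and that the Taylor expansion of the inclination functions in $\epsilon$ genuinely has the vanishing leading terms stated. This is done by expanding each trigonometric factor and each $(1-e^2)^{-s}$ factor explicitly to the required order and invoking that the constants $\delta_1,\delta_2,\delta_3$ are $O(1)$, so no spurious cancellations can promote a term to a lower $\epsilon$-order.
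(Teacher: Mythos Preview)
Your overall strategy---four subsidiary lemmas for $F_{\mathrm{quad}}^{12}$, $F_{\mathrm{oct}}^{12}$, $F_{\mathrm{quad}}^{23}$, $F_{\mathrm{oct}}^{23}$, followed by the $\epsilon$--$\mu$ ordering---is exactly the paper's approach: its proof of the proposition is two lines citing those four lemmas (Lemmas~\ref{lemma_quad12expansion}--\ref{lemma_oct23exp}) together with the chain of inequalities \eqref{eq_ordersofmagnitude}, and your size calculus for the prefactors $a_1^n/a_2^{n+1}$, $a_2^n/a_3^{n+1}$ reproduces the right indices.

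There is, however, one place where your sketch is not merely imprecise but would actually fail, namely items~(\ref{item_propquad23p2}) and~(\ref{item_propoct23p2}). You propose to compute $\cos\zeta_2$ via a single rotation $\mathcal{R}_1(i_{23})$ ``between the bases attached to $C_1+C_2$ and $C_3$'', treating the $2$--$3$ interaction as a relabelled copy of the $1$--$2$ one. But $q_2$ lies in the plane orthogonal to $C_2$, not to $C_1+C_2$, and $\gamma_2$ is measured from $\nu_2=C_1\times C_2$. The correct decomposition (the paper's Lemma~\ref{lemma_quad23exp}) is a \emph{chain} of rotations,
\[
\cos\zeta_2=\bigl(\mathcal{R}_1(\tilde{i}_2)\,\mathcal{R}_3(\psi_1)\,I_3\,\mathcal{R}_1(i_2)\,\bar Q_2\bigr)\cdot\bigl(I_3\,\mathcal{R}_1(i_3)\,\bar Q_3\bigr),
\]
with the extra small rotation $\mathcal{R}_1(i_2)$ taking the $C_2$-frame to the $(C_1+C_2)$-frame. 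Since $\cos i_2=(\Gamma_2^2+\Psi_1^2-\Gamma_1^2)/(2\Psi_1\Gamma_2)$, one finds $\sin i_2=\epsilon\,\delta_1^{-1}\sqrt{\Gamma_1^2-\tilde\Gamma_2^2}+O(\epsilon^2)$. This $O(\epsilon)$ rotation is \emph{the only} source of dependence of $F_{\mathrm{quad}}^{23}$ and $F_{\mathrm{oct}}^{23}$ on $\Gamma_1$ and $\tilde\gamma_2$: the resulting correction $\epsilon\,W_2$ in $\cos\zeta_2$ is what produces, after averaging, $H_2^{23}=\sqrt{\Gamma_1^2-\tilde\Gamma_2^2}\,(c_1\cos\tilde\psi_1\cos\tilde\gamma_2+c_2\sin\tilde\psi_1\sin\tilde\gamma_2)$ in $F_{3,6}$, and similarly $H_5^{23}$ in $F_{3,8}$. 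With only a single $\mathcal{R}_1(i_{23})$ rotation, $F_{\mathrm{quad}}^{23}$ would depend on $\tilde\psi_1,\tilde\Psi_1,\tilde\Gamma_2,\tilde\Gamma_3$ but \emph{never} on $\Gamma_1$ or $\tilde\gamma_2$, and items~(\ref{item_propquad23p2}) and~(\ref{item_propoct23p2}) could not be established. The rest of your outline is sound.
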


\begin{proof}
Notice that, as a result of our assumption \eqref{eq_assumption1} on the semimajor axes, we have the inequalities
\begin{equation} \label{eq_ordersofmagnitude}
\frac{1}{L_2^6} \gg \frac{1}{L_2^7} \gg \frac{L_2^4}{L_3^6} \gg \frac{L_2^5}{L_3^7} \gg \frac{L_2^3}{L_3^6}, \qquad \frac{1}{L_2^8} \gg \frac{L_2^3}{L_3^6} \gg \frac{L_2^6}{L_3^8} 
\end{equation}
Combining these inequalities with the contents of Lemmas \ref{lemma_quad12expansion}, \ref{lemma_oct12expansion}, \ref{lemma_quad23exp}, and \ref{lemma_oct23exp} completes the proof of the proposition. 
\end{proof}

\begin{notation}
Throughout this paper, in order to simplify notation, we use ellipsis to mean the following. Fix some sufficiently large integer $r \in \mathbb{N}$. The notation $F = \epsilon^i \, \mu^j \, G + \cdots $ means that there are $\eta_1, \, \eta_2 \in \mathbb{N}_0$, not both 0, and a positive constant $C$ such that
\[
\left\| F - \epsilon^i \, \mu^j \, G \right\|_{C^r} \leq C \, \epsilon^{i + \eta_1} \, \mu^{j + \eta_2}. 
\]
Moreover, we use the expression \emph{nontrivial constant} to mean a constant depending only on the masses and the parameters $\delta_j$ that is nonzero for all $m_0, \, m_1, \, m_2, \, m_3 >0$, all $\delta_1, \, \delta_2 \in (0,1)$, and all $\delta_3 \in (-1,1)$. 
\end{notation}

\begin{lemma}\label{lemma_quad12expansion}
The Hamiltonian $F_{\mathrm{quad}}^{12}$ can be written in the  variables \eqref{eq_changeofcoordstilde} as 
\begin{equation}
F_{\mathrm{quad}}^{12} = \tilde{c}_0^{12} + \frac{1}{L_2^6} \,  \alpha_0^{12} \,  H_0^{12} + \frac{1}{L_2^7} \, \alpha_1^{12} \, H_1^{12} + \frac{1}{L_2^8} \, \tilde{\alpha}_2 \, \tilde{H}_2 + \cdots
\end{equation}
where
\begin{align}
H_0^{12} ={}& \left( 1 -\frac{\Gamma_1^2}{L_1^2} \right) \left[ 2 - 5 \left(1 - \frac{\tilde{\Gamma}_2^2}{\Gamma_1^2} \right) \sin^2 \gamma_1 \right] + \frac{\tilde{\Gamma}_2^2}{L_1^2} \label{eq_H012def} \\
H_1^{12} ={}& \left( 3 H_0^{12} \left( \gamma_1, \Gamma_1, \tilde{\Gamma}_2 \right) - 1 \right) \tilde{\Psi}_1 - 4 \tilde{\Gamma}_2 H_0^{12} \left( \gamma_1, \Gamma_1, \tilde{\Gamma}_2 \right) + 3 \tilde{\Gamma}_2 - \frac{\Gamma_1^2 \tilde{\Gamma}_2}{L_1^2} \label{eq_H112def} \\
\tilde{H}_2 ={}& \left( 3 H_0^{12} \left( \gamma_1, \Gamma_1, \tilde{\Gamma}_2 \right) - 1 \right) \tilde{\Psi}_1^2 + \left( 6 - 8 H_0^{12} \left( \gamma_1, \Gamma_1, \tilde{\Gamma}_2 \right) - 2 \frac{\Gamma_1^2}{L_1^2} \right) \tilde{\Gamma}_2 \tilde{\Psi}_1 \\
& \quad + \frac{1}{8}  \Bigg[ \sin^2 \gamma_1 \, \left( 5\,\Gamma_{1}^2 - {{5\,\Gamma_{1}^4}\over{\,L_{1}^2}}+{{210\,
\Gamma_{1}^2 \,  \tilde{\Gamma}_{2}^2}\over{\,L_{1}^2}}-{{205\,
 \tilde{\Gamma}_{2}^4}\over{\,L_{1}^2}}+{{205\,\tilde{\Gamma}_{2}^4}\over{
 \,\Gamma_{1}^2}} \right) \label{eq_quad12termoforderepsilonsquared}\\
 & \quad + {{\Gamma_{1}^4}\over{\,L_{1}^2}}-{{66\,\Gamma_{1}^2 \,\tilde{\Gamma}_{2}^2}\over{
 \,L_{1}^2}}+{{41\,\tilde{\Gamma}_{2}^4}\over{\,L_{1}^2}}+40\,\tilde{\Gamma}_{2}^2  \Bigg]
\end{align}
and
\begin{equation}
\alpha_0^{12} = {{3\,L_{1}^4\,M_{2}^3\,\mu_{2}^6}\over{8\,M_{1}^2\,\delta_{1}^3\,
 \mu_{1}^4}}, \quad \alpha_1^{12} = -{{3\,L_{1}^4\,M_{2}^3\,\mu_{2}^6}\over{8\,M_{1}^2\,\delta_{1}^4\,
 \mu_{1}^4}}, \quad \tilde{\alpha}_2 = {{3\,L_{1}^4\,M_{2}^3\,\mu_{2}^6}\over{4\,M_{1}^2\,\delta_{1}^5\,
 \mu_{1}^4}}. 
\end{equation}
Moreover $F_{\mathrm{quad}}^{12}$ is integrable. 
\end{lemma}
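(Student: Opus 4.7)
The plan is to substitute the Deprit expressions $e_j^2 = 1 - \Gamma_j^2/L_j^2$, $a_j = L_j^2/(\mu_j^2 M_j)$ and the formula \eqref{eq_cosi12def} for $\cos i_{12}$ into \eqref{eq_quad12unscaled}, and then apply the affine symplectic change \eqref{eq_changeofcoordstilde}. The key substitutions are $\Psi_1 = \delta_1 L_2 + \tilde{\Psi}_1$ and $\Gamma_2 = \Psi_1 - \tilde{\Gamma}_2 = \delta_1 L_2 + \tilde{\Psi}_1 - \tilde{\Gamma}_2$, so that $\Psi_1$ and $\Gamma_2$ are both of the form $\delta_1 L_2\,(1 + O(1/L_2))$, while $\Gamma_1, \tilde{\Gamma}_2, \tilde{\Psi}_1$ remain $O(1)$. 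Before any expansion I would already observe integrability: the change \eqref{eq_changeofcoordstilde} modifies only $\psi_1, \gamma_2, \gamma_3$ and not $\gamma_1$; since \eqref{eq_quad12unscaled} involves only $\gamma_1$ as an angle (the rest of its dependence goes through $e_1, e_2, \cos i_{12}$, which are functions of actions alone), in the tilde variables $F_{\mathrm{quad}}^{12}$ is a one-degree-of-freedom Hamiltonian in $(\gamma_1, \Gamma_1)$ parameterised by the actions $L_j, \tilde{\Gamma}_2, \tilde{\Psi}_1$, hence integrable.

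For the $1/L_2$-expansion I first handle the prefactor: $a_1^2/(8 a_2^3 (1-e_2^2)^{3/2}) = L_1^4 \mu_2^6 M_2^3/(8 \mu_1^4 M_1^2 L_2^3 \Gamma_2^3)$, and expanding $\Gamma_2^{-3} = (\delta_1 L_2)^{-3}\bigl(1 + (\tilde{\Psi}_1-\tilde{\Gamma}_2)/(\delta_1 L_2)\bigr)^{-3}$ by the geometric series produces an overall $L_2^{-6}$ factor with successive coefficients $1/\delta_1^3, -3/\delta_1^4, 6/\delta_1^5, \ldots$. Setting $K := L_1^4 M_2^3 \mu_2^6/(8 M_1^2 \mu_1^4)$, these match $\alpha_0^{12} = 3K/\delta_1^3$, $\alpha_1^{12} = -3K/\delta_1^4$, $\tilde{\alpha}_2 = 6K/\delta_1^5$, up to combinatorial factors absorbed into the definitions of the $H_i^{12}$.

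Next I expand the bracket $(15 e_1^2 \cos^2 \gamma_1 - 12 e_1^2 - 3)\sin^2 i_{12} + 3 e_1^2 + 2$. Using the identity $\Gamma_1^2 + \Gamma_2^2 - \Psi_1^2 = \Gamma_1^2 + \tilde{\Gamma}_2^2 - 2 \Psi_1 \tilde{\Gamma}_2$ exhibits the cancellation of the $(\delta_1 L_2)^2$ terms in the numerator of $\cos i_{12}$, and expanding in powers of $1/L_2$ yields $\cos i_{12} = -\tilde{\Gamma}_2/\Gamma_1 + (\Gamma_1^2-\tilde{\Gamma}_2^2)/(2\delta_1 \Gamma_1 L_2) + O(1/L_2^2)$, whence $\sin^2 i_{12} = 1 - \tilde{\Gamma}_2^2/\Gamma_1^2 + O(1/L_2)$ at leading order. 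Substituting this together with $e_1^2 = 1 - \Gamma_1^2/L_1^2$ (which carries no $L_2$ dependence) and the identity $\cos^2\gamma_1 = 1 - \sin^2\gamma_1$, elementary algebra shows that the leading bracket equals $3 H_0^{12} - 1$; multiplying by the leading prefactor gives $\alpha_0^{12} H_0^{12}/L_2^6 - \alpha_0^{12}/(3 L_2^6)$, and the constant is absorbed in $\tilde{c}_0^{12}$. Assembling the contributions at order $L_2^{-7}$ from prefactor(next)$\times$bracket(leading) and prefactor(leading)$\times$bracket(next) produces $\alpha_1^{12} H_1^{12}$ after a rearrangement that exploits the recognisable block $(3 H_0^{12} - 1)$ appearing inside $H_1^{12}$; pushing one further order through the same recipe yields $\tilde{\alpha}_2 \tilde{H}_2$.

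The main obstacle is book-keeping, not conceptual: recovering $\tilde{H}_2$ exactly requires expanding $\Gamma_2^{-3}$ to third order, $\cos i_{12}$ to second order, tracking the resulting monomials in $\tilde{\Psi}_1, \tilde{\Gamma}_2, \Gamma_1, L_1, \sin\gamma_1$, and verifying the cross terms via the $(3 H_0^{12} - 1)$ identity that organised $H_1^{12}$. I would carry this out with a computer algebra system and verify the stated formula for $\tilde{H}_2$ coefficient by coefficient. All remaining contributions are of order $L_2^{-9}$ or smaller, which justifies the ellipsis in the statement.
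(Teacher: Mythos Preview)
Your proposal is correct and follows essentially the same approach as the paper: substitute the Deprit expressions for $a_j$, $e_j$ and $\cos i_{12}$ into \eqref{eq_quad12unscaled}, apply the change \eqref{eq_changeofcoordstilde}, and expand in powers of $1/L_2$; your expansions of $\cos i_{12}$ and of the prefactor match the paper's \eqref{eq_oneminuse2sqexp}--\eqref{eq_cosi12exp}, and your integrability argument (only $\gamma_1$ appears as an angle) is equivalent to the paper's observation that $\gamma_2$ is absent so $\Gamma_2$ is conserved. Your additional remark that the leading bracket equals $3H_0^{12}-1$ is a correct organising identity that the paper does not state explicitly but which indeed streamlines the bookkeeping at orders $L_2^{-7}$ and $L_2^{-8}$.
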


\begin{proof}
Recall from the definition of the Deprit variables in Section \ref{sec:DepritResults} that $a_j = \frac{L_j^2}{\mu_j^2 \, M_j}$, the eccentricity $e_j$ is defined by \eqref{eq_eccentricity}, and the inclination $i_{12}$ is defined via its cosine in \eqref{eq_cosi12def}. It follows from \eqref{eq_changeofcoordstilde} that 
\begin{equation}
\Gamma_2 = \delta_1 \, L_2 + \tilde{\Psi}_1 - \tilde{\Gamma}_2, \quad \Psi_1 = \delta_1 \, L_2 + \tilde{\Psi}_1. 
\end{equation}
Therefore
\begin{equation}\label{eq_oneminuse2sqexp}
1 - e_2^2 = \frac{\Gamma_2^2}{L_2^2} = \delta_1^2 + \frac{1}{L_2} \, 2 \delta_1 \, \left( \tilde{\Psi}_1 - \tilde{\Gamma}_2 \right) + \frac{1}{L_2^2} \left( \tilde{\Psi}_1 - \tilde{\Gamma}_2 \right)^2
\end{equation}
and
\begin{equation}\label{eq_cosi12exp}
\cos i_{12} = \frac{\Gamma_1^2 + \Gamma_2^2 - \Psi_1^2}{2 \, \Gamma_1 \, \Gamma_2} = - \frac{\tilde{\Gamma}_2}{\Gamma_1} + \frac{1}{L_2} \, \frac{\Gamma_1^2 - \tilde{\Gamma}_2^2}{2 \, \delta_1 \, \Gamma_1} - \frac{1}{L_2^2} \, \frac{\left( \Gamma_1^2 - \tilde{\Gamma}_2^2 \right) \tilde{\Psi}_1 - \Gamma_1^2 \, \tilde{\Gamma}_2 + \tilde{\Gamma}_2^3}{2 \, \delta_1^2 \, \Gamma_1} + O \left( \frac{1}{L_2^3} \right). 
\end{equation}
Combining these formulas with \eqref{eq_quad12unscaled} and expanding in powers of $\frac{1}{L_2}$ yields the formulas \eqref{eq_H012def}, \eqref{eq_H112def}, and \eqref{eq_quad12termoforderepsilonsquared}. Finally, the integrability of $F_{\mathrm{quad}}^{12}$ is due to the fact that it does not depend on $\gamma_2$, and therefore $\Gamma_2$ is a constant of motion. Indeed, it is thus a Hamiltonian system with two degrees of freedom and two integrals of motion. 
\end{proof}

\begin{lemma}\label{lemma_oct12expansion}
The Hamiltonian $F_{\mathrm{oct}}^{12}$ can be written in the rescaled variables \eqref{eq_changeofcoordstilde} as
\begin{equation}
F_{\mathrm{oct}}^{12} = \frac{1}{L_2^8} \, \alpha_2^{12} \, H_2^{12} + \cdots 
\end{equation}
where
\begin{equation}
\begin{split} 
H_{2}^{12} ={}& \sqrt{1 - \frac{\Gamma_1^2}{L_1^2}}  \left\{
\begin{split}
\cos \gamma_1 \, \cos \tilde{\gamma}_2 \left[
\begin{split}
\frac{\Gamma_1^2}{L_1^2} \left(5 \, \left( 1 - \frac{\tilde{\Gamma}_2^2}{\Gamma_1^2}\right) \left(6-7 \cos^2 \gamma_1 \right) - 3 \right) \\
-35 \, \sin^2 \gamma_1 \, \left( 1 - \frac{\tilde{\Gamma}_2^2}{\Gamma_1^2}\right) + 7
\end{split}
\right] \\
+ \frac{\tilde{\Gamma}_2}{\Gamma_1} \, \sin \gamma_1 \, \sin \tilde{\gamma}_2 \,  \left[
\begin{split}
\frac{\Gamma_1^2}{L_1^2} \left(5 \, \left( 1 - \frac{\tilde{\Gamma}_2^2}{\Gamma_1^2}\right) \left(4 - 7 \, \cos^2 \gamma_1 \right) - 3 \right) \\
- 35 \sin^2 \gamma_1 \, \left( 1 - \frac{\tilde{\Gamma}_2^2}{\Gamma_1^2}\right) + 7
\end{split}
\right]
\end{split}
\right\} \label{eq_foct12firstorder}
\end{split}
\end{equation}
and
\begin{equation}
\alpha_2^{12} = - \frac{15}{64} \frac{L_1^6 \mu_2^8 M_2^4}{\mu_1^6 M_1^3} \frac{\sqrt{1 - \delta_1^2}}{\delta_1^5}.
\end{equation}
\end{lemma}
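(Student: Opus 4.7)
The proof parallels that of Lemma \ref{lemma_quad12expansion}: substitute the change of variables \eqref{eq_changeofcoordstilde} into the closed-form expression \eqref{eq_oct12unscaled} for $F_{\mathrm{oct}}^{12}$ and extract the leading order in $\epsilon=1/L_2$. The plan is therefore mechanical, but one must be careful to track the multiple sign conventions built into the Deprit reduction.

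First, I would rewrite the prefactor. Using $a_j = L_j^2/(\mu_j^2 M_j)$ gives
\[
\frac{a_1^3}{a_2^4} = \frac{L_1^6 \, \mu_2^8 \, M_2^4}{\mu_1^6 \, M_1^3} \cdot \frac{1}{L_2^8},
\]
which produces the overall $\epsilon^8$ and combines with the $-15/64$ to form $\alpha_2^{12}$ after multiplication by the leading term of $e_1 \, e_2 / (1-e_2^2)^{5/2}$. From \eqref{eq_eccentricity} one has $e_1 = \sqrt{1-\Gamma_1^2/L_1^2}$ exactly, while \eqref{eq_oneminuse2sqexp} gives $1-e_2^2 = \delta_1^2 + O(1/L_2)$, so
\[
\frac{e_2}{(1-e_2^2)^{5/2}} = \frac{\sqrt{1-\delta_1^2}}{\delta_1^5} + O\!\left(\tfrac{1}{L_2}\right).
\]
This yields the stated $\alpha_2^{12}$ and the $\sqrt{1-\Gamma_1^2/L_1^2}$ factor in front of the brace.

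Next, I would expand the bracketed expression. The transformation \eqref{eq_changeofcoordstilde} gives $\gamma_2 = -\tilde\gamma_2$, hence $\cos\gamma_2 = \cos\tilde\gamma_2$ and $\sin\gamma_2 = -\sin\tilde\gamma_2$. Using \eqref{eq_cosi12exp}, the leading contributions are
\[
\cos i_{12} = -\frac{\tilde\Gamma_2}{\Gamma_1} + O\!\left(\tfrac{1}{L_2}\right), \qquad \sin^2 i_{12} = 1 - \frac{\tilde\Gamma_2^2}{\Gamma_1^2} + O\!\left(\tfrac{1}{L_2}\right).
\]
Substituting these into the brace of \eqref{eq_oct12unscaled}, the first summand $\cos\gamma_1\cos\gamma_2[\cdots]$ becomes, at leading order, exactly the first summand in \eqref{eq_foct12firstorder}, since $\cos\gamma_2 \mapsto \cos\tilde\gamma_2$ and $\sin^2 i_{12} \mapsto 1-\tilde\Gamma_2^2/\Gamma_1^2$. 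The second summand carries both sign flips: $\sin\gamma_2 \mapsto -\sin\tilde\gamma_2$ combines with $\cos i_{12} \mapsto -\tilde\Gamma_2/\Gamma_1$ to produce $+\,(\tilde\Gamma_2/\Gamma_1)\sin\gamma_1\sin\tilde\gamma_2$ with the bracket expression as in \eqref{eq_foct12firstorder}. All higher order corrections go into the ellipsis.

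There is no genuinely hard step here; the only place where a routine calculation could be derailed is the sign-tracking for the node-based angle $\gamma_2$ versus $\tilde\gamma_2$ and for the leading sign of $\cos i_{12}$, which is negative (recall that in the rescaled variables $\tilde\Gamma_2>0$ is assumed in \eqref{eq_gamma2positive}, so the two inner planets sit in the retrograde branch of the Kozai cylinder). Once those signs are correctly bookkept, collecting the $O(\epsilon^8)$ terms immediately yields the formula \eqref{eq_foct12firstorder} and the constant $\alpha_2^{12}$, completing the proof.
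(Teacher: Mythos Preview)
Your proof is correct and follows exactly the approach of the paper, which simply says to combine the formulas for $a_j$, $e_j$, and the expansions \eqref{eq_oneminuse2sqexp}, \eqref{eq_cosi12exp} with \eqref{eq_oct12unscaled} and expand in powers of $1/L_2$. Your version is in fact more explicit than the paper's, carefully tracking the prefactor $a_1^3/a_2^4$, the leading order of $e_2/(1-e_2^2)^{5/2}$, and the double sign cancellation from $\sin\gamma_2 = -\sin\tilde\gamma_2$ and $\cos i_{12} = -\tilde\Gamma_2/\Gamma_1 + O(1/L_2)$.
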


\begin{proof}
Similarly to the proof of Lemma \ref{lemma_quad12expansion}, we combine the formulas for $a_j$, the eccentricity $e_j$, as well as the expansions \eqref{eq_oneminuse2sqexp} and \eqref{eq_cosi12exp} with the formula \eqref{eq_oct12unscaled} for $F_{\mathrm{oct}}^{12}$ and expand in powers of $\frac{1}{L_2}$ to obtain \eqref{eq_foct12firstorder}. 
\end{proof}

\begin{lemma}\label{lemma_quad23exp}
The Hamiltonian $F_{\mathrm{quad}}^{23}$ can be written in the rescaled variables \eqref{eq_changeofcoordstilde} as
\begin{equation}
F_{\mathrm{quad}}^{23} = \tilde{c}_0^{23} + \frac{L_2^4}{L_3^6} \, \alpha_0^{23} \, H_0^{23} + \frac{L_2^5}{L_3^7} \, \alpha_1^{23} \, H_1^{23} + \frac{L_2^3}{L_3^6} \, \alpha_2^{23} \, H_2^{23} + \cdots
\end{equation}
with
\begin{equation} \label{eq_H023H123def}
H_0^{23} = \cos^2 \tilde{\psi}_1 + O \left( \frac{L_2}{L_3}, \frac{1}{L_2} \right), \quad H_1^{23} = \sin^2 \tilde{\psi}_1 + O \left( \frac{L_2}{L_3},  \frac{1}{L_2} \right)
\end{equation}
where the higher order terms depend only on $\tilde{\psi}_1, \tilde{\Psi}_1, \tilde{\Gamma}_2, \tilde{\Gamma}_3$, and
\begin{equation}\label{eq_H223def}
H_2^{23} = \sqrt{\Gamma_1^2 - \tilde{\Gamma}_2^2} \, \left( c_1 \, \cos \tilde{\psi}_1 \, \cos \tilde{\gamma}_2 + c_2 \, \sin \tilde{\psi}_1 \, \sin \tilde{\gamma}_2 \right) + O \left( \frac{L_2}{L_3},  \frac{1}{L_2} \right)
\end{equation}
where
\begin{equation} \label{eq_c1c2def}
c_1 = \delta_1^2, \quad c_2=-(5-4 \, \delta_1^2).
\end{equation}
Moreover, the term of order $L_2^{-1}$ in the expansion of $H_0^{23}$ is $\frac{1}{L_2} \, \tilde{c}_3 \,  \tilde{H}_3$ where $\tilde{c}_3$ is a nontrivial constant, and 
\begin{equation}
\begin{split} \label{eq_lowestorderGamma3}
\tilde{H}_3 ={}& \tilde{\Psi}_1 \left[ 5  (\delta_3^2 - \delta_1^4)\, \cos^2 \tilde{\psi}_1  - 5 \delta_3^2 + 3 \delta_1^4 \right]  + \tilde{\Gamma}_2 \left[ 5 ( \delta_1^4 - \delta_1^2 \delta_3^2) \, \cos^2 \tilde{\psi}_1 + 4 \delta_1^2 \delta_3^2 - 3 \delta_1^4 \right] \\
& \qquad + \tilde{\Gamma}_3 \left[ 5 (\delta_1^3 \delta_3 - \delta_1 \delta_3) \, \cos^2 \tilde{\psi}_1 + 5 \delta_1 \delta_3 - 4 \delta_1^3 \delta_3 \right].
\end{split}
\end{equation}
\end{lemma}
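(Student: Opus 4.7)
My plan is to mirror the strategy used in Lemmas \ref{lemma_quad12expansion} and \ref{lemma_oct12expansion}, now applied to the $2$-$3$ interaction. First, I would derive, by an argument parallel to Lemma \ref{lemma_quadoct12comp}, a closed-form expression for $F_{\mathrm{quad}}^{23}$ in Deprit variables. Starting from
\[
F_{\mathrm{quad}}^{23} = \int_{\mathbb{T}^2} P_2(\cos\zeta_2) \, \frac{\|q_2\|^2}{\|q_3\|^3} \, d\ell_2 \, d\ell_3,
\]
the new ingredient compared to Lemma \ref{lemma_quadoct12comp} is that the orbital bases $B_2$ and $B_3$ are not related by a single rotation: the nodes $\nu_2 = C_1\times C_2$ and $\nu_3 = (C_1+C_2)\times C_3$ differ. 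To handle this, one decomposes the change of basis $B_2 \to B_3$ as a composition of three elementary rotations reflecting the angular geometry of the Deprit reduction: a rotation by (a function of) $i_{12}$ that aligns the vertical axis of $B_2$ with $C_1+C_2$, a rotation by $\psi_1$ about $C_1+C_2$, and a rotation by $i_{23}$ that aligns with $C_3$. Taking the inner product $(\mathcal R\,\bar Q_2)\cdot \bar Q_3$ and averaging over $\ell_2,\ell_3$ via the classical integrals in Appendix~C of \cite{fejoz2002quasiperiodic} yields a polynomial expression in $e_2, e_3, \cos\gamma_2, \sin\gamma_2, \cos\psi_1, \sin\psi_1$ and the trigonometric functions of $i_{12}$ and $i_{23}$.

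Next I would substitute the change of variables \eqref{eq_changeofcoordstilde}, so that
\[
\Gamma_2 = \delta_1 L_2 + \tilde\Psi_1 - \tilde\Gamma_2, \qquad \Gamma_3 = \delta_2 L_3 - \delta_3 L_2 - \tilde\Gamma_3,
\]
together with $\gamma_2 = -\tilde\gamma_2$ and $\psi_1 = \tilde\psi_1 + \tilde\gamma_2$, and expand every orbital element in powers of $\epsilon = 1/L_2$ and $\mu = L_2/L_3$. The expansions \eqref{eq_oneminuse2sqexp}--\eqref{eq_cosi12exp} cover $1-e_2^2$ and $\cos i_{12}$. Two further expansions are needed: one for
\[
1-e_3^2 \;=\; \Gamma_3^2/L_3^2 \;=\; \delta_2^2 - 2\delta_2\delta_3\,\mu - 2\delta_2\,\tilde\Gamma_3/L_3 + O(\mu^2),
\]
and one for $\cos i_{23} = (\Gamma_3^2+\Psi_1^2-\Psi_2^2)/(2\Gamma_3\Psi_1)$, using $\Psi_2=\delta_2 L_3$; this latter expansion is precisely the source of the $\tilde\Gamma_3$-dependence of $F_{\mathrm{quad}}^{23}$.

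To identify the individual coefficient functions I would then read off the contributions order by order. The dominant $\mu^6$-piece arises when one sets $\tilde\Psi_1=\tilde\Gamma_2=\tilde\Gamma_3=0$ and $\epsilon=0$: the $\gamma_2$-dependence drops out by the Laplace--Lagrange integrability of the quadrupole, leaving only $\psi_1=\tilde\psi_1$, which after averaging collapses to a constant plus a multiple of $\cos^2\tilde\psi_1$; this produces $H_0^{23}$ and the nontrivial prefactor $\alpha_0^{23}$. The $\mu^7$-correction $H_1^{23}$ comes from the first $\mu$-correction of $\cos i_{23}$, producing $\sin^2\tilde\psi_1$ at leading order. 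The cross term $H_2^{23}$ of order $\epsilon\mu^6$ is extracted from the $O(\epsilon)$ correction of $\cos i_{12}$ and $\sin i_{12}$: the latter contains the radical $\sqrt{1-\cos^2 i_{12}}$, which yields the factor $\sqrt{\Gamma_1^2-\tilde\Gamma_2^2}/\Gamma_1$, and combines with the identity $\psi_1=\tilde\psi_1+\tilde\gamma_2$ to produce the mixed trigonometric combinations $\cos\tilde\psi_1\cos\tilde\gamma_2$ and $\sin\tilde\psi_1\sin\tilde\gamma_2$; the explicit coefficients $c_1=\delta_1^2$ and $c_2=-(5-4\delta_1^2)$ in \eqref{eq_c1c2def} are then obtained by tracking the $\delta_1$-dependence. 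Finally, $\tilde H_3$ is the $O(\epsilon)$ part of $H_0^{23}$ at leading order $\mu^6$, assembled from the $\tilde\Psi_1,\tilde\Gamma_2,\tilde\Gamma_3$-contributions to $\cos i_{23}$, $1-e_2^2$, and $1-e_3^2$ respectively.

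The main obstacle is the sheer volume of symbolic bookkeeping. Unlike the $1$-$2$ case, here we must simultaneously track two expansion parameters $\epsilon,\mu$ and three angles $\tilde\psi_1,\tilde\gamma_2,\tilde\gamma_3$ woven together through the three-rotation composition relating $B_2$ to $B_3$. Retaining exactly the relevant terms at each prescribed order, and verifying that the various non-dominant contributions are properly absorbed in the error symbols $O(L_2/L_3,1/L_2)$, would almost certainly require a computer algebra verification of the kind already used in \cite{fejoz2016secular, chierchia2011deprit}.
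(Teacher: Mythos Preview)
Your overall strategy---express $\cos\zeta_2$ via a composition of rotations, expand the inclination angles in $\epsilon$ and $\mu$, integrate using the formulas of Appendix~C of \cite{fejoz2002quasiperiodic}, then read off coefficients---is exactly what the paper does. But there is a concrete misidentification of the angles that would make your computation fail.

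The rotation aligning the third axis of $B_2$ with $C_1+C_2$ is \emph{not} by $i_{12}$. The angle $i_{12}$ (between $C_1$ and $C_2$) is of order one here---indeed, close to $\pi/2$ in our regime---and does not enter $F_{\mathrm{quad}}^{23}$ at all. The correct angle is $i_2$, the inclination of $C_2$ to $C_1+C_2$, defined by $\cos i_2 = (\Gamma_2^2+\Psi_1^2-\Gamma_1^2)/(2\Psi_1\Gamma_2)$. This angle is \emph{small}: $\sin i_2 = \sqrt{\Gamma_1^2-\tilde\Gamma_2^2}/(\delta_1 L_2) + O(L_2^{-2})$. The cross term $H_2^{23}$ comes from the \emph{leading} contribution of $\sin i_2$ (which is itself $O(\epsilon)$), not from an $O(\epsilon)$ correction to $\sin i_{12}$. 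That the radical $\sqrt{\Gamma_1^2-\tilde\Gamma_2^2}$ happens to appear in both $\sin i_{12}$ and $\sin i_2$ is a coincidence; the prefactors differ ($1/\Gamma_1$ versus $1/(\delta_1 L_2)$), and tracing the $\delta_1$-dependence your way would not produce the correct $c_1,c_2$.

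A second, less serious point: the paper does not use your single rotation by $i_{23}$. Following Pinzari's explicit formula (Proposition~4.1 of \cite{pinzari2009kolmogorov}), it passes through an intermediate frame aligned with the total angular momentum $C$, splitting the remaining rotation into an order-one angle $\tilde i_2$ (between $C_1+C_2$ and $C$) and a small angle $i_3=O(\mu)$ (between $C_3$ and $C$). Since $i_{23}=\tilde i_2+i_3$, your description is equivalent in principle, but the paper's splitting makes the $\mu$-hierarchy visible at the level of the rotation matrices themselves: writing $\R_1(i_2)=\mathrm{Id}+L_2^{-1}M_2+\cdots$ and $\R_1(i_3)=\mathrm{Id}+(L_2/L_3)M_3+\cdots$ immediately decomposes $\cos\zeta_2=W_0+\mu W_1+\epsilon W_2+\cdots$, so that $H_0^{23},H_1^{23},H_2^{23}$ fall out of $P_2(W_0)$, $3W_0W_1$, $3W_0W_2$ respectively after averaging.
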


\begin{proof}
By definition, we have
\begin{equation}\label{eq_quad23beforeaver}
F_{\mathrm{quad}}^{23} = \int_{\mathbb{T}^2} P_2 \left( \cos \zeta_{2} \right) \frac{\| q_2 \|^2}{\| q_3 \|^3} \, d \ell_2 \, d \ell_{3}.
\end{equation}
Denote by $\R_1(\theta)$, $\R_3(\theta)$ the rotation matrix by an angle $\theta$ around the $x, z$-axis respectively, and let $I_3 = \R_3 (\pi)$. Write $\bar{q}_j= \| q_j \|^{-1} \, q_j$, and $\bar{Q}_j = (\cos (\gamma_j + v_j), \sin(\gamma_j + v_j),0)$ where $v_j$ is the true anomaly corresponding to the mean anomaly $\ell_j$. By Proposition 4.1 of \cite{pinzari2009kolmogorov}, we have
\begin{equation}
\bar{q}_2 = \R_3 (\psi_3) \, \R_1 (i) \, \R_3 (\psi_2) \, \R_1 (\tilde{i}_2) \, \R_3 (\psi_1) \, I_3 \, \R_1 (i_2) \, \bar{Q}_2
\end{equation}
and
\begin{equation}
\bar{q}_3 = \R_3 (\psi_3) \, \R_1 (i) \, \R_3 (\psi_2) \, I_3 \, \R_1 (i_3) \, \bar{Q}_3
\end{equation}
where
\begin{equation}\label{eq_inclinationsdef1}
\cos i = \frac{\Psi_3}{\Psi_2}, \quad \cos \tilde{i}_2 = \frac{\Psi_2^2 + \Psi_1^2 - \Gamma_3^2}{2 \, \Psi_1 \, \Psi_2}, 
\end{equation}
\begin{equation}\label{eq_inclinationsdef2}
\cos i_2 = \frac{\Gamma_2^2 + \Psi_1^2 - \Gamma_1^2}{2 \, \Psi_1 \, \Gamma_2}, \quad \cos i_3 = \frac{\Gamma_3^2 + \Psi_2^2 - \Psi_1^2}{2 \, \Psi_2 \, \Gamma_3}.
\end{equation}
Since the last 3 rotations performed in each expression $\bar{q}_2, \bar{q}_3$ are the same, they can be ignored in the computation of $\cos \zeta_2 = \bar{q}_2 \cdot \bar{q}_3$. 

First, we focus on the rotations by the angles $i_2, i_3$. Observe that, in our rescaled variables,
\begin{equation}
\cos i_2 = 1 - \frac{1}{L_2^{2}} \frac{\Gamma_1^2 - \tilde{\Gamma}_2^2}{2 \, \delta_1^2} + O \left(\frac{1}{ L_2^{3}} \right), \quad \sin i_2 =\frac{1}{ L_2} \frac{ \sqrt{\Gamma_1^2 - \tilde{\Gamma}_2^2}}{\delta_1} + O \left(\frac{1}{L_2^2} \right),
\end{equation}
\begin{equation}
\cos i_3 = 1 - \left( \frac{L_2}{L_3} \right)^2 \frac{\delta_1^2 - \delta_3^2}{2 \, \delta_2^2} + O \left( \left( \frac{L_2}{L_3} \right)^3, \frac{L_2}{L_3^2}  \right), 
\end{equation}
\begin{equation}
\sin i_3 = \frac{L_2}{L_3} \frac{ \sqrt{ \delta_1^2 - \delta_3^2}}{\delta_2} + O \left( \left( \frac{L_2}{L_3} \right)^2, \frac{L_2}{L_3^2} \right).
\end{equation}
The square roots in the above terms are real-valued: indeed, the triangle inequality implies that
\begin{equation}
\Psi_1 \leq \Gamma_1 + \Gamma_2, \quad \Psi_2 \leq \Psi_1 + \Gamma_3.
\end{equation}
Inserting the rescaled variables into these inequalities and using \eqref{eq_gamma2positive} yields
\begin{equation}
\Gamma_1 \geq \tilde{\Gamma}_2 > 0, \quad \delta_1 > \delta_3 > 0.
\end{equation}
Therefore we can write
\begin{equation}
\R_1 (i_2) = \Id + L_2^{-1} M_2 + O \left( L_2^{-2} \right), \quad \R_1 (i_3) = \Id + \frac{L_2}{L_3} M_3 + O \left( \left( \frac{L_2}{L_3} \right)^2, \frac{L_2}{L_3^2} \right)
\end{equation}
where
\begin{equation}
M_j = \left(
\begin{matrix}
0 & 0 & 0 \\
0 & 0 & -b_j \\
0 & b_j & 0
\end{matrix}
\right) \quad \mathrm{with} \quad 
\begin{dcases}
b_2 = \frac{\sqrt{\Gamma_1^2 - \tilde{\Gamma}_2^2}}{\delta_1} \\
b_3 = \frac{\sqrt{\delta_1^2 - \delta_3^2}}{\delta_2}.
\end{dcases}
\end{equation}
We thus obtain the expression 
\begin{equation}\label{eq_coszeta2exp}
\cos \zeta_2 = \bar{q}_2 \cdot \bar{q}_3 = W_0 + \frac{L_2}{L_3} W_1 + \frac{1}{L_2} W_2 + O \left( \left( \frac{L_2}{L_3} \right)^2, \frac{1}{L_3}, \frac{1}{L_2^2} \right)
\end{equation}
where
\begin{align}
W_0& = \R_1 (\tilde{i}_2) \, \R_3 (\psi_1) \, I_3 \, \bar{Q}_2 \cdot I_3 \, \bar{Q}_3,\label{eq_sec23expw0}\\
W_1 &= \R_1 (\tilde{i}_2) \, \R_3 (\psi_1) \, I_3 \, \bar{Q}_2 \cdot I_3 \, M_3 \, \bar{Q}_3,\label{eq_sec23expw1}\\
W_2 &= \R_1 (\tilde{i}_2) \, \R_3 (\psi_1) \, I_3 \, M_2 \, \bar{Q}_2 \cdot I_3 \, \bar{Q}_3.\label{eq_sec23expw2}
\end{align}
Recall the Legendre polynomial of degree 2 is $P_2 (x)= \frac{1}{2} \left( 3 \, x^2 - 1 \right)$. Thus
\begin{equation}
P_2 (\cos \zeta_2) = P_2 (W_0)+ 3 \, \frac{L_2}{L_3} \, W_0 \, W_1 + 3 \, \frac{1}{L_2} \, W_0 \, W_2 + O \left( \left( \frac{L_2}{L_3} \right)^2 \right).
\end{equation}
Integrating these 3 terms separately (using again the technique introduced in Appendix C of \cite{fejoz2002quasiperiodic}) and applying the change of coordinates \eqref{eq_changeofcoordstilde}, we obtain
\begin{equation}
F_{\mathrm{quad}}^{23} = \frac{a_2^2}{a_3^3 \, (1 - e_3^2)^{\frac{3}{2}}} \left( K_0 + \frac{L_2}{L_3} K_1 + \frac{1}{L_2} K_2 + O \left( \left( \frac{L_2}{L_3} \right)^2 \right) \right)
\end{equation}
where
\begin{equation}
\begin{split}
K_0& = \frac{1}{8} \left(\left(15\,e_{2}^2\,\cos ^2 \tilde{\psi}_1-12\,e_{2}^2-3 \right)\,\sin ^2\tilde{i}_2+3\,e_{2}^2+2 \right),\\
K_1& = - \frac{3 \, b_3}{4} \,\cos \tilde{i}_2\,\sin \tilde{i}_2\, \left( (1-e_2^2) +5 \, e_2^2 \,  \sin^2 \tilde{\psi}_1  \right), \\
K_2 &  = 
\frac{3 \, b_2}{4} \,\cos \tilde{i}_2\,\sin \tilde{i}_2\,\left(\left(1-e_{2}^2 \right) \,\cos \tilde{\psi}_{1} \cos \tilde{\gamma}_2 - (1+4 \,e_{2}^2) \,\sin  \tilde{\psi}_{1} \,\sin \tilde{\gamma}_{2}
 \right).
 \end{split}
\end{equation}
Finally, combining the above equations with \eqref{eq_sechamexpansion} and observing that, due to \eqref{eq_eccentricity},
\begin{equation}
e_2^2 = (1 - \delta_1^2) + \frac{1}{L_2} \, 2 \delta_1 \left( \tilde{\Gamma}_2 - \tilde{\Psi}_1 \right) + O \left( L_2^{-2} \right), \quad e_3^2 = 1 - \delta_2^2 + O \left( \frac{L_2}{L_3} \right),
\end{equation}
\begin{equation}
\cos \tilde{i}_2 = \frac{\delta_3}{\delta_1} + \frac{L_2}{L_3} \frac{\delta_1^2 - \delta_3^2}{2 \delta_1 \delta_2} + \frac{1}{L_2} \frac{\delta_1 \tilde{\Gamma}_3 - \delta_3 \tilde{\Psi}_1}{\delta_1^2} + O \left( \frac{1}{L_3} \right), 
\end{equation}
and
\begin{equation}
\sin^2 \tilde{i}_2 = \left( 1 - \frac{\delta_3^2}{\delta_1^2} \right) - \frac{L_2}{L_3} \frac{2 \delta_3}{\delta_1} \frac{\delta_1^2 - \delta_3^2}{2 \delta_1 \delta_2} - \frac{1}{L_2} \frac{ 2 \delta_3}{\delta_1} \frac{ \delta_1 \tilde{\Gamma}_3 - \delta_3 \tilde{\Psi}_1}{\delta_1^2}  + O \left( \left(\frac{L_2}{L_3} \right)^2, \frac{1}{L_3} \right)
\end{equation}
where the higher-order terms depend only on $\tilde{\Psi}_1, \tilde{\Gamma}_2, \tilde{\Gamma}_3$, we obtain the expressions \eqref{eq_H023H123def} and \eqref{eq_H223def}. Moreover, expanding $K_0$ up to terms of order $L_2^{-1}$ and using the above formulas, we see that the lowest-order term containing $\tilde{\Gamma}_3$ is the expression $\tilde{H}_3$ defined by \eqref{eq_lowestorderGamma3}. 
\end{proof}

\begin{lemma}\label{lemma_oct23exp}
The Hamiltonian $F_{\mathrm{oct}}^{23}$ can be written in the rescaled variables \eqref{eq_changeofcoordstilde} as
\begin{equation}
F_{\mathrm{oct}}^{23} = \tilde{c}_1^{23} + \frac{L_2^6}{L_3^8} \, \alpha_3^{23} \, H_3^{23}+  \frac{L_2^7}{L_3^9} \, \alpha_4^{23} \, H_4^{23} + \frac{L_2^5}{L_3^8} \, \alpha_5^{23} \, H_5^{23} + \cdots
\end{equation}
where:
\begin{itemize}
\item
The Hamiltonian $H_3^{23}$ is defined by 
\begin{equation}\label{eq_H323def}
H_3^{23} = \nu_0 \, \cos \left( \tilde{\gamma}_3 + 3 \tilde{\psi}_1 \right) + \nu_1 \, \cos \left( \tilde{\gamma}_3 + \tilde{\psi}_1 \right) + \nu_2 \, \cos \left( \tilde{\gamma}_3 -  \tilde{\psi}_1 \right) + \nu_3 \, \cos \left( \tilde{\gamma}_3 - 3 \tilde{\psi}_1 \right)
\end{equation}
where
\begin{equation}\label{eq_constantsmuline1}
\nu_0 = \frac{35}{8 \, \delta_1^3} \, \left(\delta_1^2 - 1 \right) \left(\delta_3-\delta_1 \right) \left(\delta_3+\delta_1 \right)^2, \quad \nu_1 = - \frac{1}{8 \, \delta_1^3} \, \left(3 \delta_1^2-7 \right) \left(\delta_3+\delta_1 \right) \left(15 \delta_3^2-10 \delta_1 \delta_3 - \delta_1^2 \right),
\end{equation}
\begin{equation} \label{eq_constantsmuline2}
\nu_2 = \frac{1}{8 \, \delta_1^3} \, \left(3 \delta_1^2-7 \right) \left(\delta_3-\delta_1 \right) \left(15 \delta_3^2 +10 \delta_1 \delta_3 - \delta_1^2 \right), \quad \nu_3 = - \frac{35}{8 \, \delta_1^3} \, \left(\delta_1^2 - 1 \right) \left(\delta_3+\delta_1 \right) \left(\delta_3-\delta_1 \right)^2.
\end{equation}
Moreover $H_3^{23}$ is the lowest-order term containing $\tilde{\gamma}_3$.
\item
The Hamiltonian $H_4^{23}$ does not depend on $\gamma_1, \Gamma_1, \tilde \gamma_2$.
\item
The Hamiltonian $H_5^{23}$ is given by 
\begin{equation}\label{eq_H523def}
H_5^{23} = \sqrt{ \Gamma_1^2 - \tilde{\Gamma}_2^2 } \, \left( J_1 \left( \tilde{\psi}_1, \tilde{\gamma}_3 \right) \, \cos \tilde{\gamma}_2 + J_2 \left( \tilde{\psi}_1, \tilde{\gamma}_3 \right) \, \sin \tilde{\gamma}_2 \right)
\end{equation}
where
\[
\begin{aligned}
J_1 \left( \tilde{\psi}_1, \tilde{\gamma}_3 \right) ={}& 
30\,\delta_{3}^2\,\sin \tilde{\gamma}_{3}\,\cos \tilde{\psi}_{1}\,\sin 
 \tilde{\psi}_{1}-10\,\delta_{1}^2\,\sin \tilde{\gamma}_{3}\,\cos \tilde{\psi}_{1}\,
 \sin \tilde{\psi}_{1}-20\,\delta_{1}\,\delta_{3}\,\cos \tilde{\gamma}_{3}\,\cos ^2
 \tilde{\psi}_{1}+10\,\delta_{1}\,\delta_{3}\,\cos \tilde{\gamma}_{3} \\
J_2 \left( \tilde{\psi}_1, \tilde{\gamma}_3 \right) ={}& 
-50\,\delta_{1}\,\delta_{3}\,\cos \tilde{\gamma}_{3}\,\cos \tilde{\psi}_{1}\,\sin 
 \tilde{\psi}_{1}+ \frac{70\,\delta_{3}}{\delta_1}\,\cos \tilde{\gamma}_{3}\,\cos \tilde{\psi}_{1}\,
 \sin \tilde{\psi}_{1}+\frac{105\,\delta_{3}^2}{\delta_1^2}\,\sin 
 \tilde{\gamma}_{3}\,\cos ^2\tilde{\psi}_{1} \\
 & -75\,\delta_{3}^2\,
 \sin \tilde{\gamma}_{3}\,\cos ^2\tilde{\psi}_{1}+25\,\delta_{1}^2\,\sin \tilde{\gamma}_{3}\,\cos 
 ^2\tilde{\psi}_{1}-35\,\sin \tilde{\gamma}_{3}\,\cos ^2\tilde{\psi}_{1}- \frac{105\,
 \delta_{3}^2}{\delta_1^2} \,\sin \tilde{\gamma}_{3} \\
 & +60\,\delta_{3}^2\,
 \sin \tilde{\gamma}_{3}-17\,\delta_{1}^2\,\sin \tilde{\gamma}_{3}+28\,\sin \tilde{\gamma}_{3}.
\end{aligned}
\]
\end{itemize}
\end{lemma}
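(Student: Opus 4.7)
The plan is to mirror the strategy of Lemma \ref{lemma_quad23exp}, replacing the Legendre polynomial $P_2$ by $P_3(x)=\tfrac{1}{2}(5x^3-3x)$ in the defining integral
\[
F_{\mathrm{oct}}^{23} \;=\; \int_{\mathbb{T}^2} P_3(\cos\zeta_2)\,\frac{\|q_2\|^3}{\|q_3\|^4}\,d\ell_2\,d\ell_3.
\]
I would reuse the decomposition $\cos\zeta_2 = W_0 + \tfrac{L_2}{L_3}W_1 + \tfrac{1}{L_2}W_2 + \cdots$ established in \eqref{eq_coszeta2exp}--\eqref{eq_sec23expw2}, cube it, and collect terms up to the desired order:
\[
P_3(\cos\zeta_2) = P_3(W_0) + P_3'(W_0)\!\left(\tfrac{L_2}{L_3}W_1 + \tfrac{1}{L_2}W_2\right) + \cdots.
\]
The prefactor $\|q_2\|^3/\|q_3\|^4$ contributes the global scaling $a_2^3/a_3^4 \sim L_2^6/L_3^8$ after averaging out the two mean anomalies (here I would invoke once more the technique of Appendix~C of \cite{fejoz2002quasiperiodic} to turn $\ell_j$-integrals into closed expressions in the orbital elements), which explains all three orders $L_2^6/L_3^8$, $L_2^7/L_3^9$, $L_2^5/L_3^8$ that appear in the statement.

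For the leading term $H_3^{23}$, I would expand $P_3(W_0)$ using $W_0 = R_1(\tilde{i}_2)R_3(\psi_1)I_3\bar{Q}_2 \cdot I_3\bar{Q}_3$. Because $\bar Q_3 = (\cos(\gamma_3+v_3),\sin(\gamma_3+v_3),0)$ and $P_3$ is odd of degree $3$, averaging in $\ell_3$ produces exactly the Fourier modes $e^{\pm i\gamma_3}$ and $e^{\pm 3 i\gamma_3}$ with the other factor in $\ell_2$ producing modes $e^{\pm i\psi_1}$ and $e^{\pm 3 i\psi_1}$; the change of variables \eqref{eq_changeofcoordstilde} turns these into $\cos(\tilde\gamma_3 \pm k\tilde\psi_1)$ with $k\in\{1,3\}$. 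Substituting the expansions of $\cos\tilde{i}_2$, $\sin^2\tilde{i}_2$ and $e_2^2, e_3^2$ in powers of $1/L_2$ and $L_2/L_3$ and keeping the leading constant term in each coefficient yields the four amplitudes $\nu_0,\nu_1,\nu_2,\nu_3$ displayed in \eqref{eq_constantsmuline1}--\eqref{eq_constantsmuline2}. The fact that these are the \emph{lowest-order} terms containing $\tilde\gamma_3$ is immediate from the observation that $\gamma_3$ enters the Hamiltonian only through $\bar Q_3$, which only sits inside the averaging of $P_3(\cos\zeta_2)$ and not in the scalar factor $\|q_2\|^3/\|q_3\|^4$.

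For $H_4^{23}$ I would use the $\tfrac{L_2}{L_3}W_1$ contribution: since $W_1$ involves $M_3$ acting only on $\bar Q_3$, the dependence of the integrand on $\bar Q_2$ is unchanged, so the Keplerian variables $\gamma_1, \Gamma_1$ (which enter only via $b_2=\sqrt{\Gamma_1^2-\tilde\Gamma_2^2}/\delta_1$) and the angle $\tilde\gamma_2=-\gamma_2$ cannot appear in $H_4^{23}$. For $H_5^{23}$, one picks up the $\tfrac{1}{L_2}W_2$ term, in which $M_2$ acts on $\bar Q_2$; this is precisely where $b_2$ emerges, producing the overall $\sqrt{\Gamma_1^2-\tilde\Gamma_2^2}$ factor. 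Isolating the Fourier modes in $\gamma_2$ one gets exactly a $\cos\tilde\gamma_2$-piece and a $\sin\tilde\gamma_2$-piece, with the $\tilde\psi_1,\tilde\gamma_3$-dependent coefficients $J_1, J_2$ listed in the lemma; these are obtained by a direct but tedious expansion of $P_3'(W_0)\cdot W_2$ and subsequent averaging in $\ell_2$ and $\ell_3$.

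The main obstacle is strictly computational: the symbolic expansion of $P_3'(W_0)W_2$ in the rescaled variables produces a large number of trigonometric monomials in $(\gamma_1,\tilde\gamma_2,\tilde\psi_1,\tilde\gamma_3,v_2,v_3)$, and one must average each one carefully against the Keplerian factors $\|q_2\|^3/\|q_3\|^4$ before collecting terms at the prescribed orders in $1/L_2$ and $L_2/L_3$. I would handle this with a computer algebra system, as is standard for such Legendre-polynomial averagings, and verify the resulting formulas \eqref{eq_H323def}--\eqref{eq_H523def} by cross-checking that $H_3^{23}|_{\tilde\gamma_3=0}$ is consistent with the symmetry properties of the three-body octupolar term and that the limits $\delta_3\to\pm\delta_1$ (coplanar prograde/retrograde configurations) reproduce the expected degeneracies.
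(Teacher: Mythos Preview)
Your proposal is correct and follows essentially the same route as the paper: expand $P_3(\cos\zeta_2)$ using the decomposition \eqref{eq_coszeta2exp} as $P_3(W_0)+P_3'(W_0)\bigl(\tfrac{L_2}{L_3}W_1+\tfrac{1}{L_2}W_2\bigr)+\cdots$, integrate each piece via the technique of Appendix~C of \cite{fejoz2002quasiperiodic}, and apply the change \eqref{eq_changeofcoordstilde}. One small slip in your heuristic: the $\ell_3$-average against $\|q_3\|^{-4}$ kills the $e^{\pm 3i\gamma_3}$ modes (only first harmonics in $\tilde\gamma_3$ survive, as visible in \eqref{eq_H323def}), though this does not affect your overall argument since you defer the explicit computation to computer algebra anyway.
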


\begin{proof}
Similarly to the proof of Lemma \ref{lemma_quad23exp}, we use formula \eqref{eq_coszeta2exp} for $\cos \zeta_2$ and the Legendre polynomial $P_3 (x) = \frac{1}{2} (5 x^3 - 3x)$ to obtain
\begin{equation}
P_3 \left( \cos \zeta_2 \right) = P_3 \left( W_0 \right) + \frac{L_2}{L_3} \, \frac{1}{2} \, \left( 15 \, W_0^2 - 3 \right) \, W_1 + \frac{1}{L_2} \frac{1}{2} \, \left( 15 \, W_0^2 - 3 \right) \, W_2 + O \left( \left( \frac{L_2}{L_3} \right)^2 \right).
\end{equation}
Computing this using formulas \eqref{eq_sec23expw0}, \eqref{eq_sec23expw1}, and \eqref{eq_sec23expw2} for $W_0, W_1, W_2$, substituting the result into 
\begin{equation}
F_{\mathrm{oct}}^{23} = \int_{\mathbb{T}^2} P_3 \left( \cos \zeta_{2} \right) \frac{\| q_2 \|^3}{\| q_3 \|^4} \, d \ell_2 \, d \ell_{3},
\end{equation}
integrating these 3 terms separately using the technique introduced in Appendix C of \cite{fejoz2002quasiperiodic}, and applying the change of coordinates \eqref{eq_changeofcoordstilde} completes the proof of the lemma. 
\end{proof}

\section{Analysis of the first-order Hamiltonian}
\label{section_analysisofh0}
%
%
%

The purpose of this section is to analyse the first order term $H_0^{12}$ in the expansion of the secular Hamiltonian. We establish the existence of a saddle periodic orbit (in an interval of energy levels), the stable and unstable manifolds of which coincide. Collecting the saddle periodic orbits in this interval of energy levels yields a normally hyperbolic invariant manifold with a separatrix. In addition, we obtain an explicit time parametrisation of the separatrix. 

It follows from Proposition \ref{proposition_secularexpansion} that the first term in the expansion of the secular Hamiltonian $F_{\mathrm{sec}}$ is $H_0^{12}$. A convenient property of the Deprit coordinates is that the quadrupolar Hamiltonian of the interaction between planets 1 and 2 in the 4-body problem (and indeed the $N$-body problem) coincides with the quadrupolar Hamiltonian from the 3-body problem, expressed in Delaunay coordinates. Furthermore, $H_0^{12}$ corresponds precisely to its counterpart from the 3-body problem \cite{fejoz2016secular} (see also Appendix \ref{appendix_errata}). In this section we recall results from \cite{fejoz2016secular} regarding the existence of 2 hyperbolic periodic orbits of $H_0^{12}$, connected by a separatrix. 

Since $H_0^{12}$ does not depend on $\tilde{\gamma}_2$, we may consider $\tilde{\Gamma}_2$ as a parameter. Differentiating \eqref{eq_H012def}, we see that Hamilton's equations of motion are
\begin{equation}
\begin{dcases}
\dot{\gamma}_1 = \frac{\partial H_0^{12}}{\partial \Gamma_1} = \frac{2 \, \Gamma_1}{L_1^2} \left[ 5 \, \left(1 - \frac{\tilde{\Gamma}_2^2}{\Gamma_1^2} \right) \, \sin^2 \gamma_1 - 2 \right] - 10 \, \left( 1 - \frac{\Gamma_1^2}{L_1^2} \right) \, \frac{\tilde{\Gamma}_2^2}{\Gamma_1^3} \, \sin^2 \gamma_1 \\
\dot{\Gamma}_1 = - \frac{\partial H_0^{12}}{\partial \gamma_1} = 5 \, \left(1 - \frac{\Gamma_1^2}{L_1^2} \right) \, \left( 1 - \frac{\tilde{\Gamma}_2^2}{\Gamma_1^2} \right) \, \sin 2 \gamma_1.
\end{dcases}
\end{equation}
We seek equilibria of the Hamiltonian vector field. Although we have assumed that the eccentricities $e_j$ satisfy $0 < e_j < 1$, which implies that $\Gamma_1 \in (0,L_1)$, the Hamiltonian $H_0^{12}$ is analytic on a neighbourhood of the cylinder $(\gamma_1, \Gamma_1) \in \mathbb{T} \times (0,L_1)$ in $\mathbb{T} \times \mathbb{R}$. Observe that $\dot{\Gamma}_1=0$ if $\Gamma_1=L_1$. In this case we have $\dot{\gamma}_1=0$ if and only if
\begin{equation} \label{eq_equilibriumcondition1}
\sin^2 \gamma_1 = \frac{2}{5 \, \left(1 - \frac{\tilde{\Gamma}_2^2}{L_1^2} \right)}.
\end{equation}
Assuming that
\begin{equation} \label{eq_equilibriumcondition2}
\left| \tilde{\Gamma}_2 \right| < L_1 \sqrt{\frac{3}{5}},
\end{equation}
equation \eqref{eq_equilibriumcondition1} has two solutions in the interval $\gamma_1 \in (0,\pi)$ (and two more for $\gamma_1\in(\pi,2\pi)$). One of these solutions, which we denote by $\gamma_1^{\mathrm{min}}$ lies in the interval $\left(0, \frac{\pi}{2} \right)$, and the other solution is $\gamma_1^{\mathrm{max}} = \pi - \gamma_1^{\mathrm{min}}$. 

We have thus found two equilibria $(\gamma_1, \Gamma_1) = (\gamma_1^{\mathrm{min,max}},L_1)$. These equilibria correspond to circular ellipses, and it can be shown by making a suitable change of coordinates that they are hyperbolic \cite{fejoz2016secular}. Lifting the equilibria to the full phase space $(\gamma_1, \Gamma_1, \tilde{\gamma}_2, \tilde{\Gamma}_2)$ of $H_0^{12}$, we obtain the two hyperbolic periodic orbits
\begin{equation}
Z^0_{\mathrm{min,max}} \left(t, \tilde{\gamma}_2^0 \right) = \left( \gamma_1^{\mathrm{min,max}}, L_1, \tilde{\gamma}_2^0 + \tilde{\gamma}_2^1 (t), \tilde{\Gamma}_2 \right)
\end{equation}
where $\tilde{\gamma}_2^0\in\mathbb{T}$ is the initial condition, and
\begin{equation} \label{eq_gammafrequency}
\tilde{\gamma}_2^1 (t) = \frac{2 \, \tilde{\Gamma}_2}{L_1^2}t
\end{equation}
is determined by differentiating \eqref{eq_H012def} with respect to $\tilde{\Gamma}_2$ and setting $\Gamma_1=L_1$. 

Suppose \eqref{eq_equilibriumcondition2} holds, and recall moreover we have assumed in  \eqref{eq_gamma2positive} that $\tilde{\Gamma}_2>0$. Define the positive constants
\begin{equation} \label{eq_chia2def}
\chi = \sqrt{\frac{2}{3}} \frac{\tilde{\Gamma}_2}{L_1} \frac{1}{\sqrt{1 - \frac{5}{3} \frac{\tilde{\Gamma}_2^2}{L_1^2}}}, \quad A_2 = \frac{6}{L_1} \sqrt{ \frac{2}{3}} \sqrt{1 - \frac{5}{3} \frac{\tilde{\Gamma}_2^2}{L_1^2}}.
\end{equation}
The proof of the following result is identical to the proof of Lemma 3.1 in \cite{fejoz2016secular}. 

\begin{lemma}\label{lemma_separatrixformulas}
There is a heteroclinic orbit of $H_0^{12}$ joining $Z^0_{\mathrm{max}}$ and $Z^0_{\mathrm{min}}$ backward and forward in time respectively. It is defined by the equation
\begin{equation}
\left(1 - \frac{\tilde{\Gamma}_2^2}{\Gamma_1^2} \right) \, \sin^2 \gamma_1 = \frac{2}{5}
\end{equation}
where $\gamma_1 \in (\gamma_1^{\mathrm{min}}, \gamma_1^{\mathrm{max}}) \subset (0, \pi)$, and its time parametrisation is given by
\begin{equation}
Z^0 (t, \tilde{\gamma}_2^0) = \left( \gamma_1 (t), \Gamma_1 (t), \tilde{\gamma}_2 (t), \tilde{\Gamma}_2 \right)
\end{equation}
where
\begin{equation} \label{eq_cosgamma1separatrix}
\cos \gamma_1 (t) = \sqrt{ \frac{3}{5}} \, \frac{ \sinh (A_2 \, t)}{\sqrt{\chi^2 + (1+\chi^2) \, \sinh^2 (A_2 \, t)}},
\end{equation}
\begin{equation} \label{eq_Gamma1separatrix}
\Gamma_1 (t) = \tilde{\Gamma}_2 \sqrt{\frac{5}{3}} \, \frac{\sqrt{1 + \frac{3}{5} \frac{L_1^2}{\tilde{\Gamma}_2^2} \, \sinh^2 (A_2 \, t)}}{\cosh (A_2 \, t)},
\end{equation}
and
\begin{equation}\label{eq_gamma22separatrix}
\tilde{\gamma}_2 (t) = \tilde{\gamma}_2^0 + \tilde{\gamma}_2^1 (t) + \tilde{\gamma}_2^2 (t),
\qquad\text{with}\qquad
\tilde{\gamma}_2^2 (t) = \arctan \left( \chi^{-1} \tanh (A_2 \, t) \right).
\end{equation}
\end{lemma}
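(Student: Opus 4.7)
The strategy follows the standard template for a conservative one-degree-of-freedom Hamiltonian possessing a saddle connection: identify the separatrix as a connected component of a critical level set, reduce the flow along it to a quadrature, and integrate explicitly. Since $H_0^{12}$ does not depend on $\tilde{\gamma}_2$, the action $\tilde{\Gamma}_2$ is a first integral and may be treated as a parameter. Once the orbit in $(\gamma_1,\Gamma_1)$ is found, $\tilde{\gamma}_2(t)$ is recovered by integrating $\dot{\tilde{\gamma}}_2 = \partial_{\tilde{\Gamma}_2} H_0^{12}$ along the solution.

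The first step is to pin down the separatrix equation. Evaluating $H_0^{12}$ at the equilibrium $(\gamma_1^{\min,\max},L_1)$ (where the first summand in \eqref{eq_H012def} vanishes) gives the energy value $h_\star = \tilde{\Gamma}_2^2/L_1^2$. The level set $\{H_0^{12}=h_\star\}$ therefore factors as
\[
\left(1-\frac{\Gamma_1^2}{L_1^2}\right)\left[\,2-5\left(1-\frac{\tilde{\Gamma}_2^2}{\Gamma_1^2}\right)\sin^2\gamma_1\right]=0 .
\]
The first factor cuts out the circle $\Gamma_1=L_1$ containing the two equilibria, while the second factor yields the advertised curve $(1-\tilde{\Gamma}_2^2/\Gamma_1^2)\sin^2\gamma_1=2/5$. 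Because $\sin^2\gamma_1\geq 2/5$ on this curve, $\gamma_1$ ranges in exactly the interval $(\gamma_1^{\min},\gamma_1^{\max})$; assumption \eqref{eq_equilibriumcondition2} ensures this interval is nonempty and that the squared-root arguments stay non-negative.

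Next I would solve for the time parametrisation on the second component. From the separatrix equation,
\[
\sin^2\gamma_1=\frac{2}{5}\,\frac{\Gamma_1^2}{\Gamma_1^2-\tilde{\Gamma}_2^2},\qquad \cos^2\gamma_1=\frac{3\Gamma_1^2-5\tilde{\Gamma}_2^2}{5(\Gamma_1^2-\tilde{\Gamma}_2^2)}.
\]
Substituting into Hamilton's equation
\[
\dot{\Gamma}_1=5\left(1-\frac{\Gamma_1^2}{L_1^2}\right)\left(1-\frac{\tilde{\Gamma}_2^2}{\Gamma_1^2}\right)\sin 2\gamma_1
\]
produces an autonomous ODE in $\Gamma_1$ alone, separable and integrable in closed form. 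I would then verify that the ansatz \eqref{eq_Gamma1separatrix} solves this ODE: the rate $A_2$ is exactly the linearised hyperbolic exponent at $(\gamma_1^{\min,\max},L_1)$ and the constant $\chi$ is fixed by the normalisation $\Gamma_1(0)=\tilde{\Gamma}_2\sqrt{5/3}\sqrt{1+\chi^2}\,/\sqrt{\cdots}$ chosen so that $\gamma_1(0)=\pi/2$. Formula \eqref{eq_cosgamma1separatrix} follows from $\cos\gamma_1=\sqrt{(3\Gamma_1^2-5\tilde{\Gamma}_2^2)/[5(\Gamma_1^2-\tilde{\Gamma}_2^2)]}$ after substituting the formula for $\Gamma_1(t)$ and simplifying; the sign convention is fixed by choosing the orbit traveling from $\gamma_1^{\max}$ at $t=-\infty$ to $\gamma_1^{\min}$ at $t=+\infty$.

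Finally, for $\tilde{\gamma}_2(t)$, differentiating \eqref{eq_H012def} in $\tilde{\Gamma}_2$ and using the separatrix equation to eliminate $\sin^2\gamma_1$ yields
\[
\dot{\tilde{\gamma}}_2 = \frac{2\tilde{\Gamma}_2}{L_1^2}+\frac{4\tilde{\Gamma}_2}{\Gamma_1^2-\tilde{\Gamma}_2^2}\left(1-\frac{\Gamma_1^2}{L_1^2}\right).
\]
The constant piece integrates immediately to $\tilde{\gamma}_2^1(t)$ in \eqref{eq_gammafrequency}. For the second piece, inserting \eqref{eq_Gamma1separatrix} and performing the substitution $u=\tanh(A_2 t)$ converts the integrand into a rational function of $u$ whose antiderivative is $\arctan(\chi^{-1}u)$, producing $\tilde{\gamma}_2^2(t)$. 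The main obstacle, such as it is, lies in this last integration: the algebraic simplifications needed to recognise the closed-form arctangent require careful use of the identities linking $\chi$, $A_2$ and $\tilde{\Gamma}_2/L_1$, but no new idea beyond direct quadrature is needed — as remarked in the excerpt, the proof is formally identical to that of Lemma 3.1 of \cite{fejoz2016secular}.
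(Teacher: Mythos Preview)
Your proposal is correct and follows exactly the approach the paper intends: the paper's own ``proof'' simply refers to Lemma~3.1 of \cite{fejoz2016secular}, and your outline (factor the critical level set, reduce to a quadrature in $\Gamma_1$ along the branch $(1-\tilde{\Gamma}_2^2/\Gamma_1^2)\sin^2\gamma_1=2/5$, then recover $\tilde{\gamma}_2$ by integrating $\partial_{\tilde{\Gamma}_2}H_0^{12}$) is precisely that computation. Your expression for $\dot{\tilde{\gamma}}_2$ on the separatrix and the asymptotics matching $\gamma_1^{\max}$ at $t\to-\infty$, $\gamma_1^{\min}$ at $t\to+\infty$ are both right; the only cosmetic slip is the description of $\Gamma_1(0)$, which from \eqref{eq_Gamma1separatrix} is simply $\tilde{\Gamma}_2\sqrt{5/3}$, consistent with $\gamma_1(0)=\pi/2$.
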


Even though the Hamiltonian function $H_0^{12}$ is analytic near $\{ \Gamma_1=L_1 \}$, the Deprit coordinates, as is the case with Delaunay coordinates, are singular on this hypersurface. Indeed, on the circular ellipse $\Gamma_1=L_1$, the argument $\gamma_1$ of the perihelion is without meaning. We therefore introduce the Poincar\'e variables
\begin{equation} \label{eq_poincarevariables}
\xi = \sqrt{2 \, (L_1 - \Gamma_1)} \, \cos \gamma_1, \quad \eta = - \sqrt{2 \, (L_1 - \Gamma_1)} \, \sin \gamma_1.
\end{equation}
This is a symplectic change of variables, in the sense that
\begin{equation} \label{eq_poincarevariablesaresymplectic}
d \xi \wedge d \eta = d \Gamma_1 \wedge d \gamma_1. 
\end{equation}
In these variables, the Hamiltonian $H_0^{12}$ becomes
\begin{equation}\label{eq_H012inpoincarevariables}
\tilde{H}_0^{12} = \frac{1}{L_1} \left[ 2 \, \xi^2 - \left( 3 - 5 \frac{\tilde{\Gamma}_2^2}{L_1^2} \right) \eta^2 \right] + \frac{\tilde{\Gamma}_2^2}{L_1^2} + O_2 \left( \xi^2 + \eta^2 \right)
\end{equation}
and the entire hypersurface $\{ \Gamma_1 = L_1 \}$ becomes a single hyperbolic periodic orbit
\begin{equation}
\left(\xi, \eta, \tilde{\gamma}_2, \tilde{\Gamma}_2 \right) = \left(0,0, \tilde{\gamma}_2^0 + \tilde{\gamma}_2^1 (t), \tilde{\Gamma}_2 \right).
\end{equation}
Moreover, the heteroclinic connection established in Lemma \ref{lemma_separatrixformulas} becomes a homoclinic connection to this hyperbolic periodic orbit. 

On the hyperbolic periodic orbit and the separatrix, the energy is given by $\frac{\tilde{\Gamma}_2^2}{L_1^2}$. It follows that we have a hyperbolic periodic orbit and a homoclinic connection for each positive value of $\tilde{\Gamma}_2$ satisfying \eqref{eq_equilibriumcondition2}. In other words, the Hamiltonian $\tilde{H}_0^{12}$ has a normally hyperbolic invariant manifold given by
\begin{equation} \label{eq_nhim0}
\Lambda_0 = \left\{ \left(\xi, \eta, \tilde{\gamma}_2, \tilde{\Gamma}_2 \right) :(\xi,\eta)=(0,0),  \tilde{\gamma}_2\in\mathbb{T}, \tilde{\Gamma}_2  \in [\zeta_1, \zeta_2] \right\}
\end{equation}
where $\zeta_1, \zeta_2$ satisfy 
\begin{equation}\label{eq_nhimparametersdef}
0<\zeta_1<\zeta_2<L_1 \sqrt{\frac{3}{5}}. 
\end{equation}
Moreover the stable and unstable manifolds of $\Lambda_0$ coincide.

\section{The inner dynamics}
\label{sec:innerdyn}
The goal of this section is to establish the existence of a normally hyperbolic invariant manifold $\Lambda$ for the secular Hamiltonian, and to determine a set of coordinates in which the inner dynamics (i.e. the dynamics carried by $\Lambda$) can be analysed. 

In Section \ref{sec:cylinder}, we consider a graph parameterization of the normally hyperbolic invariant cylinder and analyse the pull back of the Hamiltonian $F_{\mathrm{sec}}$ defined in \eqref{eq_secularhamexpansions} into the cylinder. We also average this Hamiltonian and show that it is very close to integrable. Finally, in Section~\ref{sec:Hessian}, we compute the first and second derivatives of this Hamiltonian with respect to the actions. Such analysis will be fundamental in Section~\ref{sec:mapreduction} to prove that this Hamiltonian has torsion.

\subsection{The parametrisation of the cylinder and the inner Hamiltonian}\label{sec:cylinder}
The normally hyperbolic invariant manifold $\Lambda_0$ defined by \eqref{eq_nhim0} can be lifted to the full secular phase space simply by including the remaining variables, to obtain 
\begin{equation} \label{eq_nhim0tilde}
\tilde{\Lambda}_0 = \left\{ \left(\xi, \eta, \tilde{\gamma}_2, \tilde{\Gamma}_2, \tilde{\psi}_1, \tilde{\Psi}_1, \tilde{\gamma}_3, \tilde{\Gamma}_3 \right): \xi=\eta=0,\, \tilde{\gamma}_2, \tilde{\psi}_1,  \tilde{\gamma}_3\in\mathbb{T}, \, \tilde{\Gamma}_2\in [\zeta_1,\zeta_2], \, \tilde{\Psi}_1 \in [-1,1], \, \tilde{\Gamma}_3\in [-1,1]\right\}.
\end{equation}
This set obviously remains a normally hyperbolic invariant manifold for $H_0^{12}$. It is diffeomorphic to $\mathbb{T}^3 \times \left[ 0,1 \right]^3$ and its stable and unstable manifolds, each of dimension 7, coincide. In a neighbourhood of $\tilde{\Lambda}_0$, the symplectic form is
\begin{equation}
\Omega = d \xi \wedge d \eta + d \tilde{\Gamma}_2 \wedge d \tilde{\gamma}_2 + d \tilde{\Psi}_1 \wedge d \tilde{\psi}_1 + d \tilde{\Gamma}_3 \wedge d \tilde{\gamma}_3
\end{equation}
due to \eqref{eq_poincarevariablesaresymplectic}.
The restriction of $\Omega$ to $\tilde{\Lambda}_0$ is
\begin{equation} \label{eq_omega0}
\Omega_0 = \left. \Omega \right|_{\tilde{\Lambda}_0} = d \tilde{\Gamma}_2 \wedge d \tilde{\gamma}_2 + d \tilde{\Psi}_1 \wedge d \tilde{\psi}_1 + d \tilde{\Gamma}_3 \wedge d \tilde{\gamma}_3.
\end{equation}

Note that, as a result of our assumption \eqref{eq_assumption1} on the semimajor axes of the Keplerian ellipses, we have
\begin{equation}
\frac{L_2^{11}}{L_3^6} \ll 1.
\end{equation}
 The following is the main result of this section. 

\begin{theorem} \label{theorem_innerdynamics}
For any $r \geq 2$ there is $L_2^* > 0$ such that for all $L_2 \geq L_2^*$ and all $L_3 \in \left[ L_3^-, L_3^+ \right]$ (where $L_3^+ >L_3^- >0$ depend on $L_2$) the following holds.
\begin{enumerate}
\item
There is a $C^r$ smooth normally hyperbolic invariant manifold $\Lambda$ of $F_{\mathrm{sec}}$ that is $O \left( L_2^{-1} \right)$ close to $\tilde{\Lambda}_0$ in the $C^r$ topology. Moreover the variables $\left(\tilde{\gamma}_2, \tilde{\Gamma}_2, \tilde{\psi}_1, \tilde{\Psi}_1, \tilde{\gamma}_3, \tilde{\Gamma}_3 \right)$ define coordinates on $\Lambda$ with respect to which the restriction of the symplectic form $\Omega$ to $\Lambda$ is closed and non-degenerate (but not necessarily in Darboux form). 
\item
Fix any $k_1, k_2 \in \mathbb{N}$. Then there is a coordinate transformation 
\begin{equation}\label{eq_tildehatcoordtransf}
\left(\tilde{\gamma}_2, \tilde{\Gamma}_2, \tilde{\psi}_1, \tilde{\Psi}_1, \tilde{\gamma}_3, \tilde{\Gamma}_3 \right) \mapsto \left(\hat{\gamma}_2, \hat{\Gamma}_2, \hat{\psi}_1, \hat{\Psi}_1, \hat{\gamma}_3, \hat{\Gamma}_3 \right)
\end{equation}
on $\Lambda$ that is $O \left( \frac{L_2^{11}}{L_3^6}\right)$ close to the identity in the $C^r$ topology such that $\left. \Omega \right|_{\Lambda}$ becomes the standard symplectic form, and the secular Hamiltonian $F_{\mathrm{sec}}$, when restricted to $\Lambda$, becomes
\begin{equation}\label{eq_innerhamiltonianave}
\hat{F} = \hat{F}_0 \left(\hat{\Gamma}_2, \hat{\Psi}_1, \hat{\Gamma}_3; \epsilon, \mu \right) + \epsilon^{k_1} \mu^{k_2} \hat{F}_1 \left( \hat{\gamma}_2, \hat{\Gamma}_2, \hat{\psi}_1, \hat{\Psi}_1, \hat{\gamma}_3, \hat{\Gamma}_3; \epsilon, \mu \right)
\end{equation}
where $\hat{F}_0 = \epsilon^6  c_0 \hat{\Gamma}_2^2 + \epsilon^7 \hat{h}_0 \left(\hat{\Gamma}_2, \hat{\Psi}_1, \hat{\Gamma}_3; \epsilon, \mu \right)$, where $\epsilon = \frac{1}{L_2}$, $\mu = \frac{L_2}{L_3}$, and where the $C^r$ norms of $\hat{h}_0$  and $\hat{F}_j$ are uniformly bounded in $\epsilon, \mu$ for $j=0,1$. 
\end{enumerate}
\end{theorem}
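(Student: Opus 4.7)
The theorem breaks naturally into four stages, which I would arrange in three paragraphs: (i) produce the normally hyperbolic invariant cylinder $\Lambda$ via Fenichel persistence; (ii) parametrise $\Lambda$ as a graph over $\tilde\Lambda_0$ and verify non-degeneracy of $\Omega|_{\Lambda}$; (iii) bring $\Omega|_{\Lambda}$ into Darboux form by Moser's trick; (iv) remove the angular dependence of the inner Hamiltonian by iterative Lie-series averaging.

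First, I would view $F_{\mathrm{sec}}$ as a perturbation of its dominant term $\epsilon^6\alpha_0^{12}H_0^{12}$, whose normally hyperbolic cylinder $\tilde\Lambda_0$ was built in Section~\ref{section_analysisofh0}. Rescaling time by $\epsilon^{-6}$ turns the principal Hamiltonian into an $\epsilon$-independent one with normal hyperbolicity rates of order~$1$; every other term from Proposition~\ref{proposition_secularexpansion} is of the form $\epsilon^{i}\mu^{j}$ with $i\geq 1$ or $j\geq 6$, and the inequalities~\eqref{eq_ordersofmagnitude} guarantee that each becomes uniformly $o(1)$ in $C^r$ after the time rescaling. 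Standard Fenichel persistence \cite{fenichel1971persistence,fenichel1974asymptotic} then yields a $C^r$ manifold $\Lambda$ within $C^r$-distance $O(L_2^{-1})$ of $\tilde\Lambda_0$, with codimension-one stable and unstable manifolds. Because $\tilde\Lambda_0=\{\xi=\eta=0\}$ is graphed over the base variables $(\tilde\gamma_2,\tilde\Gamma_2,\tilde\psi_1,\tilde\Psi_1,\tilde\gamma_3,\tilde\Gamma_3)$, the same is true of $\Lambda$, via a small graph $(\xi,\eta)=(\xi_*,\eta_*)$. Pulling back the ambient symplectic form yields $\Omega|_\Lambda=\Omega_0+d\xi_*\wedge d\eta_*$, automatically closed and, as a $C^r$-small perturbation of the non-degenerate $\Omega_0$, itself non-degenerate on the relevant domain. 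This proves assertion~(1).

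Next I would apply Moser's trick to canonise $\Omega|_\Lambda$. Writing the correction as $d\alpha$ with $\alpha=\xi_*\, d\eta_*$, the linear path $\omega_t=\Omega_0+t(\Omega|_\Lambda-\Omega_0)$ stays non-degenerate, and the time-one flow $\Phi$ of the vector field $X_t$ solving $i_{X_t}\omega_t=-\alpha$ satisfies $\Phi^*\Omega|_\Lambda=\Omega_0$. In the new coordinates, the leading piece of the restricted Hamiltonian is the value of $\epsilon^6\alpha_0^{12}H_0^{12}$ on $\{\Gamma_1=L_1\}$, namely $\epsilon^6\alpha_0^{12}\hat\Gamma_2^2/L_1^2$, so $c_0=\alpha_0^{12}/L_1^2\neq 0$. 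Having canonical structure in place, I would then perform iterative Lie-series averaging. The three angles $\hat\gamma_2,\hat\psi_1,\hat\gamma_3$ have frequencies of orders $\epsilon^6$, $\max(\epsilon^7,\mu^6\epsilon^2)$, and $\mu^6\epsilon^3$ respectively, all polynomially separated in $(\epsilon,\mu)$, so no genuine small-divisor issues arise. Solving the cohomological equation at each step by a single quadrature in the corresponding angle pushes the angular dependence to arbitrarily high order, producing the decomposition~\eqref{eq_innerhamiltonianave} with uniformly $C^r$-bounded $\hat F_0$ and $\hat F_1$.

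\textbf{Main obstacle.} The delicate point is the careful bookkeeping of $(\epsilon,\mu)$-orders throughout, in particular to certify the announced $O(L_2^{11}L_3^{-6})$ size of the full coordinate transformation. The ratio $L_2^{11}L_3^{-6}=\mu^6\epsilon^{-5}$ is precisely the size of the small parameter encountered when averaging $\hat\psi_1$ (terms of order $\mu^6\epsilon^2$ against a frequency of order $\epsilon^7$), so one must track both Moser's trick and the averaging carefully and show that their combined contribution, together with the graph perturbation $(\xi_*,\eta_*)$, does not exceed this threshold. A secondary subtlety is the non-smoothness of Deprit variables at circular orbits, which is precisely why the Poincar\'e variables~\eqref{eq_poincarevariables} were introduced; one must verify that Moser's construction and the averaging transformations remain smooth across $\xi=\eta=0$, and that no hidden cancellation kills the coefficient $c_0$ of the leading action $\hat\Gamma_2^2$.
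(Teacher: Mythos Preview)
Your proposal is correct and follows essentially the same route as the paper: Fenichel persistence for the cylinder (Remark~\ref{remark_fenicheltheory}), graph parametrisation over the tilde variables (Lemma~\ref{lemma_fenichelgraphexp}), Moser's trick to canonise $\Omega|_\Lambda$ (Lemma~\ref{lemma_straightsympform}), and iterative Lie-series averaging exploiting the hierarchy of angular frequencies (Lemma~\ref{lemma_averaging}); you also correctly identify the $\mu^6\epsilon^{-5}=L_2^{11}L_3^{-6}$ threshold as arising from averaging the $\tilde\psi_1$-dependent term $\epsilon^2\mu^6 H_0^{23}$ against the $\epsilon^7$ frequency of $\tilde\psi_1$. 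The only substantive difference is that the paper does considerably finer bookkeeping than Theorem~\ref{theorem_innerdynamics} itself requires---expanding the graph $\rho$ according to which variables appear at each order (Lemma~\ref{lemma_fenichelgraphexp}), performing Moser's trick in two stages so as to track which actions are affected, and computing the leading terms of the averaging transformation explicitly (equations~\eqref{eq_psi1symptransf}--\eqref{eq_gamma3symptransf})---because this information is needed later to compute the scattering maps in hat coordinates (Lemma~\ref{lemma_scatteringhatcoords}).
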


\begin{remark}\label{remark_fenicheltheory}
The existence of the normally hyperbolic invariant manifold $\Lambda$ for $F_{\mathrm{sec}}$ follows from Fenichel theory. Indeed, $F_{\mathrm{sec}}$ is $O( L_2^{-7})$ close to its first-order term $\alpha_0^{12} L_2^{-6} H_0^{12}$ (see Proposition \ref{proposition_secularexpansion}). Since $L_2^{-1}$ is the small parameter in this instance, and since Fenichel theory applies to vector fields of order $O(1)$, we must scale $F_{\mathrm{sec}}$ by $L_2^6$ (by scaling time). We thus obtain a Hamiltonian that is $O(L_2^{-1})$ close to its first-order term $\alpha_0^{12} H_0^{12}$. Since $H_0^{12}$ has a normally hyperbolic invariant manifold $\tilde{\Lambda}_0$ defined by \eqref{eq_nhim0tilde}, Fenichel theory implies that $L_2^6 F_{\mathrm{sec}}$ has a normally hyperbolic invariant manifold $\Lambda$ that is $O(L_2^{-1})$ close to $\tilde{\Lambda}_0$ in the $C^r$ topology \cite{fenichel1971persistence,fenichel1974asymptotic,fenichel1977asymptotic}. Clearly $\Lambda$ is a normally hyperbolic invariant manifold for the secular Hamiltonian $F_{\mathrm{sec}}$ itself. The smoothness $r$ of $\Lambda$ in Theorem \ref{theorem_innerdynamics} can be made as large as required by increasing $L_2^*$ if necessary; indeed, $\tilde{\Lambda}_0$ is $C^{\infty}$, and the Lyapunov exponents of the flow of $H_0^{12}$ on $\tilde{\Lambda}_0$ are 0 in the directions transverse to the flow. We fix some sufficiently large value of $r$, and choose $L_2^*$ large enough so that $\Lambda$ is $C^r$ smooth. 
\end{remark}

\begin{remark}\label{remark_shrinkingnhim}
To be precise, whenever we make a coordinate transformation (see \eqref{eq_tildehatcoordtransf} for example; the same applies to Lemmas \ref{lemma_straightsympform} and \ref{lemma_averaging} below), we must shrink the range of the actions (see \eqref{eq_nhim0tilde}) on the cylinder $\Lambda$ in order to continue to use the results of Section \ref{section_analysisofh0}: there is a small $\varrho>0$ such that for all sufficiently large $L_2, \, L_3$, the new cylinder is defined for $\hat \Psi_1 \in [-1 + \varrho, 1 - \varrho], \, \hat \Gamma_2 \in [\zeta_1 + \varrho, \zeta_2 - \varrho], \hat \Gamma_3 \in [-1 + \varrho, 1 - \varrho]$. To maintain simplicity of notation, we continue to refer to this cylinder as $\Lambda$. 
\end{remark}

The rest of the section is dedicated to the proof of Theorem \ref{theorem_innerdynamics}. Then, in Section \ref{sec:Hessian}, we analyse the second derivatives $\hat{h}_0$. Such analysis will be used later on in Section \ref{sec:mapreduction} to show that certain Poincar\'e map associated to the flow $\hat{F_0}$ has non-degenerate torsion.

By Remark \ref{remark_fenicheltheory}, Fenichel theory guarantees the existence of a function $\rho : \tilde{\Lambda}_0 \to \mathbb{R}^2$ such that 
\begin{equation}\label{eq_nhimdef}
\Lambda = \mathrm{graph} (\rho) = \left\{ ( \rho(x), x) : x = 
\left(\tilde{\gamma}_2, \tilde{\Gamma}_2, \tilde{\psi}_1, \tilde{\Psi}_1, 
\tilde{\gamma}_3, \tilde{\Gamma}_3 \right) \, \mathrm{ with } \, \tilde \gamma_2, \tilde \psi_1, \tilde \gamma_3 \in \mathbb{T}; \, \tilde \Gamma_2 \in [\zeta_1, \zeta_2]; \, \tilde \Psi_1, \, \tilde \Gamma_3 \in [-1,1] \right\}
\end{equation}
where $\zeta_1, \zeta_2$ satisfy \eqref{eq_nhimparametersdef} and such that $\rho$ is $O \left( L_2^{-1} \right)$ close to 0 in the $C^r$ topology. Note that $\rho$ represents the values of the Poincar\'e variables $\xi, \eta$ on the cylinder $\Lambda$, expressed as a function of the variables $\left(\tilde{\gamma}_2, \tilde{\Gamma}_2, \tilde{\psi}_1, \tilde{\Psi}_1, \tilde{\gamma}_3, \tilde{\Gamma}_3 \right)$, and so we can consider these variables as coordinates on $\Lambda$. The following lemma provides information regarding the orders at which each of these variables first appears in the Taylor expansion of $\rho$. 
\begin{lemma}\label{lemma_fenichelgraphexp}
The function $\rho$ can be expanded in the form
\begin{equation}\label{eq_fenichelgraphexp}
\rho = \frac{1}{L_2} \rho_0 + \frac{1}{L_2^2} \rho_1 + \frac{L_2^{10}}{L_3^6} \rho_2+ \hat{\epsilon} \, \rho_3
\end{equation}
where $\hat{\epsilon} \lesssim \frac{L_2^9}{L_3^6}$ and
\begin{equation}\label{eq_fenichelgraphdependence}
\begin{dcases}
\rho_0 = \rho_0 \left(\tilde{\Gamma}_2 \right), \\
\rho_1 = \rho_1 \left( \tilde{\gamma}_2, \tilde{\Gamma}_2, \tilde{\Psi}_1 \right), \\
\rho_2 = \rho_2 \left( \tilde{\gamma}_2, \tilde{\Gamma}_2, \tilde{\psi}_1, \tilde{\Psi}_1\right),\\
\rho_3 = \rho_3 \left( \tilde{\gamma}_2, \tilde{\Gamma}_2, \tilde{\psi}_1, \tilde{\Psi}_1, \tilde{\gamma}_3, \tilde{\Gamma}_3 \right),
\end{dcases}
\end{equation}
where the $C^r$ norms of $\rho_i$, $i=0,1,2,3$, are uniformly bounded with respect to $L_2$ and $L_3$.
\end{lemma}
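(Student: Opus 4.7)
The plan is to characterise $\rho$ through the Fenichel invariance equation for the graph $(\xi,\eta) = \rho(x)$ and then to expand order by order in the small parameters $\epsilon = 1/L_2$ and $\mu = L_2/L_3$, reading off the variable dependencies from Proposition \ref{proposition_secularexpansion}. Concretely, I would write the graph condition as the system $\partial_\eta F_{\mathrm{sec}}(\rho(x),x) = D\rho^\xi(x) \cdot X_x(\rho(x),x)$ and $-\partial_\xi F_{\mathrm{sec}}(\rho(x),x) = D\rho^\eta(x) \cdot X_x(\rho(x),x)$, where $X_x = J_x\nabla_x F_{\mathrm{sec}}$ is the $x$-component of the Hamiltonian field. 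The leading piece $F_{6,0} = \alpha_0^{12} L_2^{-6} H_0^{12}$ provides a hyperbolic, $\tilde\Gamma_2$-dependent but otherwise $x$-independent Hessian of order $L_2^{-6}$ at $(\xi,\eta)=0$, while the restriction $X_x|_{\tilde\Lambda_0}$ is dominated by $\dot{\tilde\gamma}_2 = \partial_{\tilde\Gamma_2}F_{6,0}|_0$; every other component of $X_x$ appears only at strictly higher orders in $\epsilon$ and $\mu$.

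Substituting the ansatz $\rho = \frac{1}{L_2}\rho_0 + \frac{1}{L_2^2}\rho_1 + \frac{L_2^{10}}{L_3^6}\rho_2 + \hat\epsilon\,\rho_3$ into the invariance equation and matching orders, I obtain at each level a linear equation of the form $A\rho_i = S_i$, where $A$ is the invertible hyperbolic Hessian coming from $F_{6,0}$ and $S_i$ is a source built from (i) those terms in the $(\xi,\eta)$-Taylor expansion of $F_{\mathrm{sec}}$ whose coefficient sits at the current order after division by the scale $L_2^{-6}$ of $A$, and (ii) couplings $D\rho_j\cdot X_x$ with $j < i$. Because $A$ does not mix variables, the variable dependence of $\rho_i$ is exactly the union of variables appearing in $S_i$.

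Reading off the dependencies is now a direct application of Proposition \ref{proposition_secularexpansion}. At the first order, only $H_0^{12}$ contributes, and it depends on the $x$-variables solely through $\tilde\Gamma_2$, so $\rho_0 = \rho_0(\tilde\Gamma_2)$. At order $1/L_2^2$, new contributions come from $H_1^{12}$, which introduces $\tilde\Psi_1$ (item \ref{item_propquad12}), and from $H_2^{12}$, which introduces $\tilde\gamma_2$ (item \ref{item_propoct12}); hence $\rho_1 = \rho_1(\tilde\gamma_2,\tilde\Gamma_2,\tilde\Psi_1)$. At order $L_2^{10}/L_3^6$, the first source involving $\tilde\psi_1$ together with $(\xi,\eta)$-dependence is $H_2^{23}$ (item \ref{item_propquad23p2}), so $\rho_2 = \rho_2(\tilde\gamma_2,\tilde\Gamma_2,\tilde\psi_1,\tilde\Psi_1)$. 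Finally, $\tilde\gamma_3$ and $\tilde\Gamma_3$ first enter any source — either directly through $\tilde H_3$, $H_3^{23}$, $H_5^{23}$ (items \ref{item_propactiongamma3}, \ref{item_propoct23}, \ref{item_propoct23p2}), or indirectly through the coupling to the now $\tilde\psi_1$-dependent $\rho_2$ — at an order which the inequalities \eqref{eq_ordersofmagnitude} translate to $\hat\epsilon \lesssim L_2^9/L_3^6$.

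The main obstacle I foresee is the indirect contribution: I must verify that no $x$-variable is introduced earlier than announced through the coupling $D\rho_j\cdot X_x$ with some lower-order $\rho_j$. This is controlled by the fact that all components of $X_x|_{\tilde\Lambda_0}$ beyond the leading $\dot{\tilde\gamma}_2$ are themselves ordered by Proposition \ref{proposition_secularexpansion} (for instance, $\dot{\tilde\Psi}_1 = \partial_{\tilde\psi_1}F_{\mathrm{sec}}|_0$ is only nonzero starting from $\mu^6$), so the induction closes at each step at the claimed order. Uniform $C^r$-boundedness of each $\rho_i$ follows from the rescaled Fenichel contraction argument sketched in Remark \ref{remark_fenicheltheory}, applied to $L_2^6 F_{\mathrm{sec}}$, together with the uniform $C^r$-boundedness of the coefficients $F_{ij}$ in the expansion \eqref{eq_secularhamexpansions}.
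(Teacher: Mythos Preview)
Your approach---write the graph invariance equation, substitute the ansatz, and read off the variable dependence of each source from Proposition~\ref{proposition_secularexpansion}---is essentially the paper's. The paper organises the same induction as a sequence of Fenichel perturbations (first for $H_0^{12}+L_2^{-1}H_1^{12}$, then adding the rest of $L_2^6 F_{\mathrm{sec}}^{12}$, then the $(\tilde\gamma_3,\tilde\Gamma_3)$-independent part of $F_{\mathrm{sec}}^{23}$, and finally the remainder), solving the invariance equation at each stage by a contraction in the operator $\mathcal{L}h = DX^{\nn}(0)\,h - Dh\cdot X^{\ii}_0$, following \cite{delshams2006biggaps}; but the content is the same.

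One technical point to correct: the linearised equation at each level is not the purely algebraic $A\rho_i = S_i$. After rescaling by $L_2^6$, the leading inner frequency $\dot{\tilde\gamma}_2 = \partial_{\tilde\Gamma_2}H_0^{12}\big|_{(\xi,\eta)=0}$ is of the \emph{same} order as the hyperbolic Hessian $A$, so the coupling $D\rho_j\cdot X_x$ contributes at $j=i$, not only $j<i$ as you wrote. The operator to invert is therefore $A - \omega_0(\tilde\Gamma_2)\,\partial_{\tilde\gamma_2}$, a first-order transport-plus-hyperbolic operator rather than a matrix---this is exactly the paper's $\mathcal{L}$. It remains invertible by the hyperbolicity of $A$ (e.g.\ mode by mode in $\tilde\gamma_2$), and since its coefficients involve only $\tilde\Gamma_2$, your conclusion that $\rho_i$ inherits precisely the variable dependence of $S_i$ survives unchanged.
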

\begin{proof}
As already explained in Remark \ref{remark_fenicheltheory}, the existence of the $C^r$ normally hyperbolic invariant manifold and its graph parameterization is a direct consequence of Fenichel Theory. It only remains to compute its expansion. We follow the approach considered in \cite{delshams2006biggaps} by Delshams, de la Llave and Seara (see also \cite{hirsch1970invariant}).

The actions $\tilde{\Psi}_1$ and $\tilde{\Gamma}_3$ first appear in the terms of order $\frac{1}{L_2^7}$ and $\frac{L_2^3}{L_3^6}$, respectively, in the expansion of the  secular Hamiltonian (see Proposition \ref{proposition_secularexpansion}). It is therefore clear that $\rho_0, \rho_1$ do not depend on $\tilde{\gamma}_3, \tilde{\Gamma}_3$. However it is not clear a priori that $\rho_0$ does not depend on $\tilde{\psi}_1, \tilde{\Psi}_1$, and that $\rho_1$ does not depend on $\tilde{\gamma}_3, \tilde{\Gamma}_3$. 


We make the proof in  steps to analyse each of the terms $\rr_i$, $i=0,1,2,3$.

For the first term, it is enough to point out that $H_0^{12}+L_2^{-1}H_1^{12}$ is an integrable Hamiltonian and its vector field only depends on $\tilde\gamma_1$, $\tilde \Gamma_1$ and $\tilde \Gamma_2$. Then,  it can be seen as a 1 degree of freedom Hamiltonian depending on the parameter $\tilde \Gamma_2$. If one writes this vector field in Poincar\'e coordinates $(\eta,\xi)$, it  has a saddle close to $(\eta,\xi)=(0,0)$ which is of the form 
\[
 (\eta,\xi)=\frac{1}{L_2}\rr_0(\tilde\Gamma_2).
\]
In the full phase space, this saddle defines the normally hyperbolic invariant cylinder of $H_0^{12}+L_2^{-1}H_1^{12}$.

The second step is to analyse $\rr_1$ by means of perturbative arguments. Let us consider the vector field $X$ associated to $L_2^6F_{\mathrm{sec}}^{12}$ expressed in $(\eta,\xi)$ coordinates, which can be written as 
$ X=X_0+\delta X_1$ where $X_0$ is the vector field associated to Hamiltonian $H_0^{12}+L_2^{-1}H_1^{12}$ analysed in the first step, 
$X_1$ is the vector field associated to the rest of the terms in $L_2^6F_{\mathrm{sec}}^{12}$ and $\delta=L_2^{-2}$.

We look for an equation for $\rr_1$. To this end, let us denote $x=( \tilde{\gamma}_2, \tilde{\Gamma}_2, \tilde{\psi}_1, \tilde{\Psi}_1, \tilde{\gamma}_3, \tilde{\Gamma}_3)$ and write $X^\ii$ for the $x$-components and $X^\nn$ for the $(\eta,\xi)$-components. The superscripts $\ii$ and $\nn$ stand for inner and normal. One can define analgously the components of $X_0$ and $X_1$.

Fenichel Theory ensures that the vector field $X=X_0+\delta X_1$ has a normally hyperbolic invariant manifold. Since $\delta$ is small, we look for a parameterization of it of the form
\[
 (\eta,\xi)=\frac{1}{L_2}\rr_0(\tilde\Gamma_2)+\de \rr_1(x).
\]
We consider the invariance equation to show that $\rr_1$ only depends on $(\tilde{\gamma}_2, \tilde{\Gamma}_2, \tilde{\Psi}_1)$. Indeed, it reads
\[
 X^\nn\left(L_2^{-1}\rr_0(\tilde\Gamma_2)+\de \rr_1(x)\right)=\left(L_2^{-1}\rr_0(\tilde{\Gamma}_2)+D\rr_1(x)\right)X^\ii\left(L_2^{-1}\rr_0(\tilde\Gamma_2)+\de \rr_1(x)\right).
\]
Note that $DX^\nn_0$ is a hyperbolic matrix independent of $L_2$. One can obtain the solution of this equation by means of a fixed point argument inverting the operator
\[
\mathcal{L}h=\left[ DX^\nn(0)-D\right]h.
\]
Then, since $X^\nn$ and $X^\ii$ only depend on $(\tilde{\gamma}_2, \tilde{\Gamma}_2, \tilde{\Psi}_1)$, it is straightforward to show that $\rr_1$ only depends on these variables. 

One can proceed analgously to analyse $\rr_2$ and $\rr_3$.  It is enough to use the particular form and size of all the terms in the expansion of the Hamiltonian given in Proposition \ref{proposition_secularexpansion}.
\end{proof}

Denote by $F_{\mathrm{in}} = \left. F_{\mathrm{sec}} \right|_{\Lambda}$ the restriction of the secular Hamiltonian to the normally hyperbolic invariant manifold $\Lambda$. 
\begin{corollary}\label{lemma_hampullbackerrors}
The expansion of the inner Hamiltonian $F_{\mathrm{in}}$ in powers of $\epsilon = \frac{1}{L_2}$ and $\mu = \frac{L_2}{L_3}$ satisfies the following properties:
\begin{enumerate}
\item
The lowest order term containing $\tilde{\Gamma}_2$ is
\begin{equation}\label{eq_innerhamleadingterm}
\epsilon^6 \bar{H}_0^{12} = \epsilon^6 \tilde{\Gamma}_2^2,
\end{equation}
up to a nonzero multiplicative constant independent of $\epsilon$ and $\mu$, which is the leading term in the expansion of $F_{\mathrm{in}}$. 
\item
The lowest order terms containing the actions $\tilde{\Psi}_1$ and $\tilde{\Gamma}_3$ respectively, up to nonzero multiplicative constants independent of $\epsilon$ and $\mu$, are $\epsilon^7 \bar{H}_1^{12}$ and $\epsilon^3 \mu^6 \tilde{H}_3$, where
\begin{equation} \label{eq_innerh012h112}
\bar{H}_1^{12} =  \left(3 \, \frac{\tilde{\Gamma}_2^2}{L_1^2} - 1 \right) \tilde{\Psi}_1 + H_1' \left( \tilde{\Gamma}_2 \right),
\end{equation}
for some function $H_1'$, and where $\tilde{H}_3$ is defined by \eqref{eq_lowestorderGamma3}. 
\item
The lowest order term containing the angle $\tilde{\gamma}_2$ is at least of order $\epsilon^8$. 
\item
The lowest order terms containing the angles $\tilde{\psi}_1$ and $\tilde{\gamma}_3$ respectively, up to nonzero multiplicative constants, are $\epsilon^2 \mu^6 H_0^{23}$ and $\epsilon^2 \mu^8 H_3^{23}$, where $H_0^{23}$ is defined by \eqref{eq_H023H123def} and $H_3^{23}$ is defined by \eqref{eq_H323def}. 
\end{enumerate}
\end{corollary}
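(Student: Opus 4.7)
The plan is to substitute the graph parametrisation $\rho = \epsilon\rho_0 + \epsilon^2\rho_1 + (L_2^{10}/L_3^6)\rho_2 + \hat\epsilon\,\rho_3$ from Lemma~\ref{lemma_fenichelgraphexp} into the expansion of $F_{\mathrm{sec}}$ given by Proposition~\ref{proposition_secularexpansion}, and to identify, for each of the six cylinder coordinates, the lowest-order term of $F_{\mathrm{in}}(x) = F_{\mathrm{sec}}(\rho(x),x)$ in which it appears. The driving observation is that $(\xi,\eta)=(0,0)$ is a critical point of the leading Hamiltonian $\tilde H_0^{12}$ in~\eqref{eq_H012inpoincarevariables}, so that in the Taylor expansion of $H_0^{12}$ around $(0,0)$ the linear term vanishes and $H_0^{12}(\rho,x) = H_0^{12}(0,x) + O(|\rho|^2)$. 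For each subsequent term $\epsilon^i \mu^j F_{ij}$, a linear contribution in $\rho$ is permitted but is automatically dressed by a factor $|\rho| = O(\epsilon)$; this is enough, in each case, to push the pullback corrections below the ``direct'' contributions $\epsilon^i \mu^j F_{ij}|_{\xi=\eta=0}$.

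For Point~(1), restricting $H_0^{12}$ to $\Gamma_1 = L_1$ (equivalently $\xi=\eta=0$) in~\eqref{eq_H012def} gives $H_0^{12}|_{\xi=\eta=0} = \tilde\Gamma_2^2/L_1^2$; the coefficient $\epsilon^6 \alpha_0^{12}/L_1^2$ is absorbed into the nonzero constant $c_0$. For Point~(2), evaluating $H_1^{12}$ in~\eqref{eq_H112def} at $\xi=\eta=0$ replaces $H_0^{12}$ by $\tilde\Gamma_2^2/L_1^2$ and yields the formula~\eqref{eq_innerh012h112} with $H_1'(\tilde\Gamma_2) = 2\tilde\Gamma_2 - 4\tilde\Gamma_2^3/L_1^2$. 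The only candidate earlier $\tilde\Psi_1$-contribution is the quadratic-in-$\rho$ pullback of $\epsilon^6 H_0^{12}$ using $\rho_1$ (the first piece of $\rho$ carrying $\tilde\Psi_1$); but the cross term $\epsilon^6 \cdot (\epsilon\rho_0)(\epsilon^2\rho_1)$ is of order $\epsilon^9$, well below $\epsilon^7$. For $\tilde\Gamma_3$, the expression $\tilde H_3$ in~\eqref{eq_lowestorderGamma3} is independent of $(\gamma_1,\Gamma_1)$ and hence passes untouched to $F_{\mathrm{in}}$; and any $\tilde\Gamma_3$-dependence entering through $\rho_3$ contributes at size $\epsilon^7\hat\epsilon = O(\epsilon^4\mu^6)$, which is higher order than $\epsilon^3\mu^6$ once the regime inequality $\mu\gg\epsilon$ (from $L_3 \ll L_2^2$) is invoked.

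For Point~(3), Proposition~\ref{proposition_secularexpansion} says that $\tilde\gamma_2$ first appears directly in $\epsilon^8 H_2^{12}$. Expressed in Poincaré variables $H_2^{12}$ vanishes at $(\xi,\eta)=(0,0)$ because the prefactor $\sqrt{1-\Gamma_1^2/L_1^2}$ in~\eqref{eq_foct12firstorder} does, and is linear in $\rho$ to leading order, so its pullback contributes a $\tilde\gamma_2$-term at order $\epsilon^9$; likewise the pullback of $\epsilon^6 H_0^{12}$ through the cross term $\rho_0 \cdot \rho_1$ introduces $\tilde\gamma_2$ at order $\epsilon^9$. Both are at least of order $\epsilon^8$, as claimed. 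For Point~(4), the leading terms $H_0^{23}$ and $H_3^{23}$ in~\eqref{eq_H023H123def} and~\eqref{eq_H323def} are independent of $(\gamma_1,\Gamma_1)$, so the direct contributions $\epsilon^2\mu^6 H_0^{23}$ and $\epsilon^2\mu^8 H_3^{23}$ pass unchanged to $F_{\mathrm{in}}$; any competing pullback of $\epsilon^6 H_0^{12}$ through $\rho_2$ (resp.~$\rho_3$) produces $\tilde\psi_1$ (resp.~$\tilde\gamma_3$) at order $\epsilon^3\mu^6$ (resp.~at order at most $\epsilon^4\mu^6$), and $\mu \gg \epsilon$ again ensures these sit strictly higher in the hierarchy. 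The main obstacle throughout is the bookkeeping of these competing orders coming from the many terms in $F_{\mathrm{sec}}$ and the four pieces of $\rho$, controlled by the inequalities in~\eqref{eq_ordersofmagnitude} together with the hypothesis $L_2^{11}/L_3^6 \ll 1$.
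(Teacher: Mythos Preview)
Your proof is correct and follows essentially the same approach as the paper: both arguments rest on the observation that $(\xi,\eta)$ enter $H_0^{12}$ only quadratically near the origin (equation~\eqref{eq_H012inpoincarevariables}), so that substituting $\rho$ from Lemma~\ref{lemma_fenichelgraphexp} into $F_{\mathrm{sec}}$ produces pullback corrections governed by the cross terms $\rho_0\rho_1$, $\rho_0\rho_2$, $\rho_0\rho_3$, whose orders are then compared with the direct contributions from Proposition~\ref{proposition_secularexpansion}. Your version is in fact slightly more explicit than the paper's in tracking these individual cross terms and in computing $H_1'(\tilde\Gamma_2)$, whereas the paper summarises the four pullback orders in a bulleted list and defers the comparison to Proposition~\ref{proposition_secularexpansion}.
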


\begin{proof}
The Poincar\'e variables $(\xi, \eta)$ first appear in $H_0^{12}$, where they always appear at least quadratically (see \eqref{eq_H012inpoincarevariables}). Squaring the formula \eqref{eq_fenichelgraphexp} for the function $\rho$, using \eqref{eq_fenichelgraphdependence}, and recalling that the order of $H_0^{12}$ is $\epsilon^6 = L_2^{-6}$, we see that the function $\rho$ satisfies the following properties:
\begin{itemize}
\item
The first-order correction from $\rho$ to the inner Hamiltonian that contains $\tilde{\gamma}_2, \tilde{\Gamma}_2$ is of order $O\left( \epsilon^8 \right)$;
\item
The first-order correction from $\rho$ to the inner Hamiltonian that contains $\tilde{\Psi}_1$ is of order $O\left( \epsilon^9 \right)$;
\item
The first-order correction from $\rho$ to the inner Hamiltonian that contains $\tilde{\psi}_1$ is of order $O\left( \epsilon^3\mu^6\right)$;
\item
The first-order correction from $\rho$ to the inner Hamiltonian that contains $\tilde{\gamma}_3, \tilde{\Gamma}_3$ is of order $O\left( \hat{\epsilon} \, \epsilon^7  \right)$ where $\hat{\epsilon} \lesssim \frac{L_2^9}{L_3^6}$. 
\end{itemize}
Comparing these orders with the first order of appearance of each variable $\tilde{\gamma}_2, \tilde{\Gamma}_2, \tilde{\psi}_1, \tilde{\Psi}_1, \tilde{\gamma}_3, \tilde{\Gamma}_3$ in the expansion of the Hamiltonian (see Proposition \ref{proposition_secularexpansion}), we see that parts 1-4 of the corollary follow. The formulas \eqref{eq_innerhamleadingterm} for $\bar{H}_0^{12}$, and \eqref{eq_innerh012h112} for $\bar{H}_1^{12}$ come from restricting formula \eqref{eq_H012def} for $H_0^{12}$ and formula \eqref{eq_H112def} for $H_1^{12}$ to $\tilde{\Lambda}_0 = \{ \xi = \eta = 0 \}$. 
\end{proof}

Due to Corollary \ref{lemma_hampullbackerrors} and Proposition \ref{proposition_secularexpansion}, we have the expansion
\[F_{\mathrm{in}} = F_{\mathrm{in}}^{12} + F_{\mathrm{in}}^{23} \]
with
\begin{equation} \label{eq_innersecularhamiltonians}
\begin{dcases}
F_{\mathrm{in}}^{12}= c_0^{12} + \frac{1}{L_2^6} \, \alpha_0^{12} \, \bar{H}_0^{12} + \frac{1}{L_2^7} \, \alpha_1^{12} \, \bar{H}_1^{12} + \cdots \\
F_{\mathrm{in}}^{23}= c_0^{23} + \frac{L_2^4}{L_3^6} \, \alpha_0^{23} \, \bar{H}_0^{23} + \frac{L_2^5}{L_3^7} \, \alpha_1^{23} \, \bar{H}_1^{23} + \cdots
\end{dcases}
\end{equation}
for some nontrivial constants $\alpha_n^{j,j+1}$, where $\bar{H}_0^{12}$ is defined by \eqref{eq_innerhamleadingterm}, and $\bar{H}_1^{12}$ is defined by \eqref{eq_innerh012h112}. 
The term $F_{\mathrm{in}}^{12}$ contains the $L_3$-independent terms and $F_{\mathrm{in}}^{23}$ contains the rest.

The following lemma allows us to find a coordinate transformation that straightens the symplectic form.  

\begin{lemma} \label{lemma_straightsympform}
There is a change of coordinates
\begin{equation}
\left(\tilde{\gamma}_2, \tilde{\Gamma}_2, \tilde{\psi}_1, \tilde{\Psi}_1, \tilde{\gamma}_3, \tilde{\Gamma}_3 \right) \mapsto \left(\gamma_2', \Gamma_2', \psi_1', \Psi_1', \gamma_3', \Gamma_3' \right)
\end{equation}
on $\Lambda$ that is $O \left( L_2^{-2} \right)$ close to the identity satisfying
\begin{equation}
 \begin{dcases}
  \Gamma_2' &=\tilde{\Gamma}_2+ 
  \frac{1}{L_2^3} P_2' \left( \tilde{\gamma}_2, \tilde{\Gamma}_2, \tilde{\psi}_1, \tilde{\Psi}_1  \right) + \frac{\hat{\epsilon}}{L_2} N_2'\left( \tilde{\gamma}_2, \tilde{\Gamma}_2, \tilde{\psi}_1, \tilde{\Psi}_1,  \tilde{\gamma}_3, \tilde{\Gamma}_3 \right),\\
  \Psi_1'&=\tilde{\Psi}_1+ \frac{1}{L_2^3} P'_1 \left( \tilde{\gamma}_2, \tilde{\Gamma}_2, \tilde{\psi}_1, \tilde{\Psi}_1 \right) + \frac{\hat{\epsilon}}{L_2} N_1' \left( \tilde{\gamma}_2, \tilde{\Gamma}_2, \tilde{\psi}_1, \tilde{\Psi}_1,  \tilde{\gamma}_3, \tilde{\Gamma}_3 \right),\\
  \Gamma_3' &=\tilde{\Gamma}_3 + \frac{\hat{\epsilon}}{L_2} N_3' \left( \tilde{\gamma}_2, \tilde{\Gamma}_2, \tilde{\psi}_1, \tilde{\Psi}_1,  \tilde{\gamma}_3, \tilde{\Gamma}_3 \right),
 \end{dcases}
\end{equation}
where $\hat{\epsilon} \lesssim \frac{L_2^9}{L_3^6}$ comes from Lemma \ref{lemma_fenichelgraphexp}, such that
\begin{equation} \label{eq_straightsympform}
\left. \Omega \right|_{\Lambda} = d \Gamma_2' \wedge d \gamma_2' + d \Psi_1' \wedge d \psi_1' + d \Gamma_3' \wedge d \gamma_3'.
\end{equation}
\end{lemma}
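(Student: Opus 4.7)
The strategy is Moser's trick, adapted to preserve the announced block structure. Parametrising $\Lambda$ as the graph $(\xi, \eta) = \rho(x) =: (u(x), v(x))$ in the coordinates $x = (\tilde\gamma_2, \tilde\Gamma_2, \tilde\psi_1, \tilde\Psi_1, \tilde\gamma_3, \tilde\Gamma_3)$, pulling back the ambient symplectic form to $\Lambda$ yields
\[
\Omega|_\Lambda = \Omega_0 + \delta, \qquad \Omega_0 = d\tilde\Gamma_2 \wedge d\tilde\gamma_2 + d\tilde\Psi_1 \wedge d\tilde\psi_1 + d\tilde\Gamma_3 \wedge d\tilde\gamma_3, \qquad \delta = du \wedge dv.
\]
Because $u, v$ are globally defined smooth functions on $\Lambda$, the correction $\delta$ is globally exact with explicit primitive $\alpha := u\,dv$. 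From Lemma \ref{lemma_fenichelgraphexp} and the scale relations $L_2^{11/6} \ll L_3 \ll L_2^2$ (which force $\hat\epsilon \ll L_2^{-2}$), the $C^r$ norm of $\delta$ is $O(L_2^{-2})$, so $\Omega_t := \Omega_0 + t\delta$ is closed and non-degenerate on $\Lambda$ for all $t \in [0,1]$ when $L_2$ is sufficiently large.

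Define the time-dependent vector field $X_t$ on $\Lambda$ by $\iota_{X_t}\Omega_t = -\alpha$, and let $\Phi_t$ denote its flow. Since $d\alpha = \delta$, Cartan's formula gives $\tfrac{d}{dt}\Phi_t^*\Omega_t = 0$, so that $\Phi_1^*(\Omega|_\Lambda) = \Omega_0$; the diffeomorphism $\Phi_1^{-1}$ is the coordinate change sought. Its overall $O(L_2^{-2})$ closeness to the identity is controlled by the largest component of $\alpha$, namely the coefficient of $d\tilde\Gamma_2$, which equals $u \cdot \partial_{\tilde\Gamma_2} v$ and is of size $O(L_2^{-2})$ thanks to the leading term $L_2^{-1}\rho_0(\tilde\Gamma_2)$.

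The structural claims about the action corrections are read off directly from $\alpha = u\,dv$, using the variable-dependence features of $\rho$ recorded in \eqref{eq_fenichelgraphdependence}. Since $\rho_0, \rho_1, \rho_2$ are independent of $(\tilde\gamma_3, \tilde\Gamma_3)$, the partial $\partial v/\partial\tilde\gamma_3$ is of order $\hat\epsilon$; hence the $d\tilde\gamma_3$-coefficient of $\alpha$ is of order $\hat\epsilon/L_2$ and depends on all six coordinates through $\rho_3$. Integration of the Moser ODE then produces $\Delta\tilde\Gamma_3 = \hat\epsilon L_2^{-1} N_3'$ with no $L_2^{-3}$ term, as required. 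Analogously the $d\tilde\gamma_2$- and $d\tilde\psi_1$-coefficients of $\alpha$ split into a piece arising from $\rho_0, \rho_1, \rho_2$ of size $L_2^{-3}$ and depending only on $(\tilde\gamma_2, \tilde\Gamma_2, \tilde\psi_1, \tilde\Psi_1)$, plus a piece arising from $\rho_3$ of size $\hat\epsilon/L_2$ and depending on all six coordinates, producing the announced decompositions for $\Delta\tilde\Gamma_2$ and $\Delta\tilde\Psi_1$.

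The main obstacle is that integrating the Moser ODE from $t=0$ to $t=1$ could a priori entangle the two blocks and contaminate the $L_2^{-3}$ component of $\Delta\tilde\Gamma_2, \Delta\tilde\Psi_1$ with spurious $(\tilde\gamma_3, \tilde\Gamma_3)$-dependence. I handle this with a two-step Moser procedure: first, straighten the part of $\delta$ coming from $\rho_0, \rho_1, \rho_2$ by running Moser with a primitive supported in the block $(\tilde\gamma_2, \tilde\Gamma_2, \tilde\psi_1, \tilde\Psi_1)$, whose associated vector field vanishes in the $(\tilde\gamma_3, \tilde\Gamma_3)$-directions and so preserves those variables; second, remove the remainder of $\delta$, which involves $\rho_3$ and is of the strictly smaller size $O(\hat\epsilon)$, with a second Moser step whose full contribution can be absorbed into the $N_i'$ terms without affecting the $L_2^{-3}$ components already obtained. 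The composition of the two steps yields the transformation with the block structure announced in the statement.
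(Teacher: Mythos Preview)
Your proposal is correct and follows the same two-step Moser strategy as the paper: first straighten the part of the pulled-back form $d\rho_\xi\wedge d\rho_\eta$ that is built only from $\rho_0,\rho_1,\rho_2$ (and hence lives entirely in the $(\tilde\gamma_2,\tilde\Gamma_2,\tilde\psi_1,\tilde\Psi_1)$-block), then mop up the $\rho_3$-remainder of size $O(\hat\epsilon/L_2)$ with a second Moser step. The paper phrases the homotopy differently (parametrising by the small scale $\epsilon=L_2^{-3}$ and solving $i_{X_t}\Omega'=-\tfrac{d}{d\epsilon}\lambda'$ rather than the more standard $\iota_{X_t}\Omega_t=-\alpha$ with $t\in[0,1]$), and it derives exactness from the pullback of the full Liouville form rather than from your explicit primitive $\alpha=u\,dv$; but these are cosmetic differences, and your choice of primitive is in fact cleaner for reading off the block structure. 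Your observation that the first-step vector field vanishes in the $(\tilde\gamma_3,\tilde\Gamma_3)$-directions is valid precisely because at that stage $\Omega_t=\Omega_0+tR_1$ is block-diagonal, which is the same point the paper uses implicitly.
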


\begin{proof}
Recall in Lemma \ref{lemma_fenichelgraphexp} we obtained the expansion \eqref{eq_fenichelgraphexp} for the function $\rho$. Denote by $U$ the neighbourhood of the origin in $\mathbb{R}^2$ in which the Poincar\'e variables $(\xi, \eta)$ are valid, and by $V$ the subset of $\mathbb{R}^3$ to which the actions $(\tilde{\Gamma}_2, \tilde{\Psi}_1, \tilde{\Gamma}_3)$ belong. Define the inclusion
\begin{equation}
P :  \mathbb{T}^3 \times V  \longrightarrow U \times \left( \mathbb{T}^3 \times V \right) 
\end{equation}
by $(\xi, \eta, x) =P(x)= (\rho (x), x)$. Then
$\Omega_1 = P^* \Omega,
$ is the pullback of the canonical form in the tilde coordinates

We claim that $\Omega_1$ is exact. Indeed, we have $\Omega = d \lambda$ where
\begin{equation}
\lambda = \xi \, d \eta +  \tilde{\Gamma}_2 \, d \tilde{\gamma}_2 + \tilde{\Psi}_1 \, d \tilde{\psi}_1 + \tilde{\Gamma}_3 \, d \tilde{\gamma}_3
\end{equation}
is the Liouville 1-form. Let $\lambda_1 = P^* \lambda$. Then $\Omega_1 = d \lambda_1$ since the pullback commutes with the differential, and so $\Omega_1$ is exact. 

Now, denote by $\rho_{\xi}, \rho_{\eta}$ the $\xi, \eta$ components of $\rho$, and by $\rho_{j,\xi}, \rho_{j,\eta}$ the $\xi, \eta$ components of $\rho_j$ for each $j=0,1,2,3$. Recall $\Omega_0$ is defined by \eqref{eq_omega0}. We have
\begin{equation}
\Omega_1 = P^* \Omega = d \rho_{\xi} \wedge d \rho_{\eta} +\Omega_0 = \Omega_0 + R_1 + R_2
\end{equation}
where, since $\rr_0$ only depends on  $\tilde{\Gamma}_2$,
\begin{align}
R_1 ={}& \frac{1}{L_2^3} \left[ d \rho_{0,\xi} \wedge \left(d \rho_{1,\eta}+\frac{L_2^8}{L_3^6}d \rho_{2,\eta}\right) +\left( d \rho_{1,\xi} +\frac{L_2^8}{L_3^6}d \rho_{2,\xi}\right)\wedge d \rho_{0,\eta} \right]\\
{}&+ \frac{1}{L_2^4}\left( d \rho_{1,\xi} +\frac{L_2^8}{L_3^6}d \rho_{2,\xi}\right) \wedge\left(\frac{L_2^8}{L_3^6}d \rho_{2,\eta}\right) \\
R_2 ={}& d \rho_{\xi} \wedge d \rho_{\eta} - R_1
\end{align}
Then $R_1$ is of order $\frac{1}{L_2^3}$ and does not depend on $\tilde{\gamma}_3, \tilde{\Gamma}_3$, and $R_2$ is of order $\frac{\hat{\epsilon}}{L_2}$. We first eliminate $R_1$ by a change of coordinates which does not depend on $\tilde{\gamma}_3, \tilde{\Gamma}_3$ and then we eliminate $R_2$.

Suppose there exists a coordinate transformation
\begin{equation}
h_0 : \left( \tilde{\gamma}_2, \tilde{\Gamma}_2, \tilde{\psi}_1, \tilde{\Psi}_1, \tilde{\gamma}_3, \tilde{\Gamma}_3 \right) \mapsto \left( \gamma_2^*, \Gamma_2^*, \psi_1^*, \Psi_1^*, \tilde{\gamma}_3, \tilde{\Gamma}_3 \right)
\end{equation}
so that
\begin{equation} \label{eq_straighttransf}
h_0^* \Omega' = \Omega_0\qquad \text{where}\qquad \Omega'=\Omega_0+R_1
\end{equation}
Suppose moreover there is a (nonautonomous) vector field $X_t$ such that $h_0=\phi_{\epsilon}$ where $\phi_t$ is the time-$t$ map of $X_t$, and where $\epsilon = L_2^{-3}$. Differentiating \eqref{eq_straighttransf} with respect to $\epsilon$ and using Cartan's magic formula, we get
\[0 ={} \frac{d}{d \epsilon} \left[ \phi_{\epsilon}^* \Omega' \right] = \phi_{\epsilon}^* \left[ \frac{d}{d \epsilon} \Omega' + \mathcal{L}_{X_t} \Omega' \right] \\
={} \phi_{\epsilon}^* \left[ \frac{d}{d \epsilon} \Omega' + i_{X_t} d \Omega' + d i_{X_t} \Omega' \right]\]
where $\mathcal{L}_{X_t}$ is the Lie derivative with respect to $X_t$ and $i_{X_t}$ is the contraction operator of $X_t$. Now, $\Omega'=\Omega_0+R_1$ is also exact, with $\Omega' = d \lambda'$ for some 1-form $\lambda'$, by an argument analogous to the above proof of exactness of $\Omega_1$. Therefore
\begin{equation}
di_{X_t} \Omega' =- \frac{d}{d \epsilon} \Omega'= - \frac{d}{d \epsilon} d \lambda'= - d \left( \frac{d}{d \epsilon} \lambda'\right).
\end{equation}
Then, to obtain a vector field $X_t$ with this property, it is enough to look for $X_t$ satisfying
\begin{equation}
i_{X_t} \Omega'= - \frac{d}{d \epsilon} \lambda'.
\end{equation}
It is easy to check that a solution $X_t$ to this equation exists; its flow exists at least for time $\epsilon$; and its time-$\epsilon$ map, by construction, is the required coordinate transformation $h_0$. 

In the new coordinates, the symplectic  $\Omega_1$ becomes of the form $\hat \Omega_1 = h_0^*\Omega_1= \Omega_0 + \hat{R}_2$, where $\hat{R}_2 = h_0^* R_2$. 
Then, since $\hat{R}_2$ is of order $\frac{\hat{\epsilon}}{L_2}$, we may repeat the procedure again to complete the proof. 
\end{proof}

Since the coordinate transformation provided by Lemma \ref{lemma_straightsympform} is $O\left( L_2^{-2} \right)$ close to the identity, the expansions \eqref{eq_innersecularhamiltonians} are unchanged (at least) up to the terms $\bar{H}_2^{j,j+1}$. The restriction of the secular Hamiltonian to $\Lambda$ can be written as
\begin{equation} \label{eq_finexpansion}
F_{\mathrm{in}} = F_{\mathrm{in}}^{12} + F_{\mathrm{in}}^{23} = c_0 + \sum_{n=0}^{\infty} \epsilon_n \alpha_n h_n
\end{equation}
where
\begin{equation}
\epsilon_0 = \frac{1}{L_2^6} \gg \epsilon_1 = \frac{1}{L_2^7} \gg \epsilon_2 = \frac{L_2^4}{L_3^6} \gg \cdots
\end{equation}
and
\begin{equation} \label{eq_innersecexpansion}
h_0 = \bar{H}_0^{12}, \quad h_1 = \bar{H}_1^{12}, \quad h_2 = \bar{H}_0^{23}, \ldots \quad \alpha_0 = \alpha_0^{12}, \quad \alpha_1 = \alpha_1^{12}, \quad \alpha_2 = \alpha_0^{23}, \ldots
\end{equation}
(see \eqref{eq_ordersofmagnitude}). In the next lemma we perform an arbitrary number $k$ of steps of symplectic averaging. 
\begin{lemma} \label{lemma_averaging}
Let $(k_1,k_2) \in \mathbb{N}^2$. There is a symplectic coordinate transformation 
\begin{equation}
\Phi : \left(\gamma_2', \Gamma_2', \psi_1', \Psi_1', \gamma_3', \Gamma_3' \right) \mapsto
\left(\hat{\gamma}_2, \hat{\Gamma}_2, \hat{\psi}_1, \hat{\Psi}_1, \hat{\gamma}_3, \hat{\Gamma}_3 \right)
\end{equation}
on $\Lambda$ that is $O \left(\frac{L_2^{11}}{L_3^6} \right)$ close to the identity such that the restriction $\hat{F}$ of the secular Hamiltonian to $\Lambda$, defined in \eqref{eq_finexpansion}, becomes
\begin{equation}
\hat{F} = \hat{F}_0 \left(\hat{\Gamma}_2, \hat{\Psi}_1, \hat{\Gamma}_3; \epsilon, \mu \right) + \epsilon^{k_1} \mu^{k_2} \hat{F}_1 \left( \hat{\gamma}_2, \hat{\Gamma}_2, \hat{\psi}_1, \hat{\Psi}_1, \hat{\gamma}_3, \hat{\Gamma}_3; \epsilon, \mu \right)
\end{equation}
where $\hat{F}_0 = \epsilon^6  c_0 \hat{\Gamma}_2^2 + \epsilon^7 \hat{h}_0 \left(\hat{\Gamma}_2, \hat{\Psi}_1, \hat{\Gamma}_3; \epsilon, \mu \right)$, where $\epsilon = \frac{1}{L_2}$, $\mu = \frac{L_2}{L_3}$, and where $\hat{F}_j$ is uniformly bounded in $\epsilon, \mu$ for $j=0,1$. 
Moreover, the transformations of the actions satisfy
\begin{align}
\hat{\Gamma}_2& = \Gamma_2' + O \left( \frac{L_2^9}{L_3^6} \right),  \label{eq_gamma2gamma3symptransf}\\
\hat{\Psi}_1 &= \Psi_1' - \frac{L_2^{11}}{L_3^6} \, \frac{\alpha_2}{\alpha_1} \, \frac{ \cos \left( 2 \psi_1' \right)}{ 2 \left(3 \frac{\left(\Gamma_2' \right)^2}{L_1^2} - 1 \right) } + \cdots \label{eq_psi1symptransf}\\
\hat{\Gamma}_3 &= \Gamma_3' + \frac{L_2^{13}}{L_3^8} \, \tilde{\alpha} \, \left( 3 \frac{\left(\Gamma_2' \right)^2}{L_1^2} - 1 \right)^{-1} \bigg[ \frac{1}{3} \, \nu_0 \, \cos \left( \gamma_3' + 3 \psi_1' \right) + \nu_1 \, \cos \left( \gamma_3' + \psi_1' \right) - \nu_2 \, \cos \left( \gamma_3' - \psi_1' \right) \\
& \qquad \qquad \qquad \qquad \qquad \qquad \qquad \qquad \qquad \qquad \qquad \qquad   - \frac{1}{3} \, \nu_3 \, \cos \left( \gamma_3' - 3 \psi_1' \right) \bigg] + \cdots\label{eq_gamma3symptransf}
\end{align}
where $\tilde{\alpha}$ is a nontrivial constant, and the constants $\nu_j$ are given by \eqref{eq_constantsmuline1}, \eqref{eq_constantsmuline2}. 
\end{lemma}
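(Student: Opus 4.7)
The plan is to apply standard Hamiltonian averaging iteratively, via a sequence of near-identity canonical transformations generated by appropriate generating functions, starting from the expansion \eqref{eq_finexpansion} of the inner Hamiltonian $F_{\mathrm{in}}$. The unperturbed part $F_0^{(0)} = \epsilon^6 \alpha_0 (\Gamma_2')^2$ is integrable, depending only on the actions, and the three frequencies associated to the angles $(\gamma_2',\psi_1',\gamma_3')$ occur at well-separated orders: $\omega_{\gamma_2} \sim \epsilon^6$ from $F_0^{(0)}$, $\omega_{\psi_1} \sim \epsilon^7 \alpha_1 (3(\Gamma_2')^2/L_1^2 - 1)$ from $\bar H_1^{12}$, and $\omega_{\gamma_3} \sim \epsilon^3 \mu^6$ from $\tilde H_3$ (see Corollary \ref{lemma_hampullbackerrors}). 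Under the hypothesis $L_3 \gg L_2^{11/6}$ one has $\omega_{\gamma_3} \ll \omega_{\psi_1} \ll \omega_{\gamma_2}$; after shrinking the action interval $[\zeta_1,\zeta_2]$ slightly to exclude the secular resonance $\Gamma_2' = L_1/\sqrt{3}$ at which $\omega_{\psi_1}$ vanishes (cf.\ Remark \ref{remark_shrinkingnhim}), any bounded nonzero integer combination of these frequencies is dominated by its leading term and is therefore nonvanishing.

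At each stage $j$, supposing the Hamiltonian has the form $F^{(j)} = F_0^{(j)}(I') + R^{(j)}(\theta',I')$ with $\theta' = (\gamma_2',\psi_1',\gamma_3')$, $I' = (\Gamma_2',\Psi_1',\Gamma_3')$, and with $R^{(j)}$ of some order $\epsilon^{a_j}\mu^{b_j}$, I would split $R^{(j)} = \langle R^{(j)}\rangle + \widetilde R^{(j)}$ (mean over $\theta'$ plus zero-mean remainder) and solve the cohomological equation $\{F_0^{(j)},S_j\} + \widetilde R^{(j)} = 0$ by Fourier series: writing $\widetilde R^{(j)} = \sum_{k \neq 0} r_k^{(j)}(I') e^{ik\cdot\theta'}$, one takes $S_j = -\sum_{k \neq 0} \tfrac{r_k^{(j)}}{ik\cdot\omega^{(j)}} e^{ik\cdot\theta'}$. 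Only finitely many Fourier modes appear at each step because the expansion has been truncated to finite order. The time-one map of the Hamiltonian flow of $S_j$ is the required near-identity symplectic transformation, after which the integrable part is augmented by $\langle R^{(j)}\rangle$ and the angle-dependent remainder is pushed to strictly higher order in $(\epsilon,\mu)$. Iterating a finite number of times drives the remainder below the target threshold $\epsilon^{k_1}\mu^{k_2}$, producing the normal form \eqref{eq_innerhamiltonianave}.

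The explicit formulas \eqref{eq_psi1symptransf}, \eqref{eq_gamma3symptransf} are to be extracted from the first iteration. For \eqref{eq_psi1symptransf} the leading $\psi_1'$-dependent piece is $\epsilon^2 \mu^6 \alpha_2 \bar H_0^{23}$ with $\bar H_0^{23} = \tfrac{1}{2} + \tfrac{1}{2}\cos 2\psi_1'$; the zero-mean part divided by the divisor $2\omega_{\psi_1}$ integrates to $S_1 \sim \tfrac{\epsilon^2\mu^6\alpha_2}{4\omega_{\psi_1}}\sin 2\psi_1'$, and the induced shift $\hat\Psi_1 - \Psi_1' = -\partial_{\psi_1'} S_1$ yields the coefficient $\tfrac{L_2^{11}}{L_3^6} \tfrac{\alpha_2}{\alpha_1} \tfrac{\cos 2\psi_1'}{2(3(\Gamma_2')^2/L_1^2 - 1)}$. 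For \eqref{eq_gamma3symptransf} the leading $\gamma_3'$-dependent piece is $\epsilon^2\mu^8\alpha_3^{23} H_3^{23}$, a sum of four cosines $\nu_n \cos(\gamma_3' + n\psi_1')$ with $n \in \{-3,-1,1,3\}$; the divisor for each mode is $\omega_{\gamma_3} + n\omega_{\psi_1} \approx n\omega_{\psi_1}$, which explains both the $\tfrac{1}{3}$ factors on the $n = \pm 3$ modes and the sign for $n < 0$. The estimate $\hat\Gamma_2 = \Gamma_2' + O(L_2^9/L_3^6)$ arises analogously by eliminating the leading $\gamma_2'$-dependent piece in $F_{\mathrm{in}}^{23}$.

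The main obstacle will be the uniform quantitative control of the small divisors and the bookkeeping of orders of the successive transformations. Because the three frequencies live at the well-separated scales $\epsilon^6, \epsilon^7, \epsilon^3\mu^6$, and since at each truncation order only finitely many Fourier modes appear, one has a uniform lower bound on the relevant divisors $|k\cdot\omega^{(j)}|$ (provided we stay away from the excluded resonance locus), which ensures that the composition of all successive generating flows remains $O(L_2^{11}/L_3^6)$-close to the identity in the $C^r$ topology. This is the standard bookkeeping of finite-order averaging of Nekhoroshev type, and in particular no Diophantine condition is required on the frequency vector.
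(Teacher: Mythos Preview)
Your proposal is correct and follows essentially the same approach as the paper: finite-order Hamiltonian averaging via time-one maps of generating Hamiltonians, exploiting the well-separated time scales of the three angles, with the explicit first-order computations for $\hat\Psi_1$ and $\hat\Gamma_3$ obtained by solving the leading cohomological equation. The only cosmetic difference is that the paper sets up the cohomological equation hierarchically, using the Poisson bracket with $h_1 = \bar H_1^{12}$ alone (i.e.\ the $\psi_1'$-frequency) to eliminate both the $\psi_1'$-dependent term $h_2$ and the $\gamma_3'$-dependent term $H_3^{23}$ (the latter works because $\langle H_3^{23}\rangle_{\psi_1'}=0$), whereas you phrase it as averaging over all three angles simultaneously with the full frequency vector and then invoke $\omega_{\gamma_3} + n\omega_{\psi_1} \approx n\omega_{\psi_1}$; these are equivalent at leading order.
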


\begin{proof}
The fact that such a coordinate transformation exists is a standard result from averaging theory and the fact that the frequencies $\left(\tilde{\gamma}_2, \tilde{\psi}_1,  \tilde{\gamma}_3\right)$ are at different time scales. We compute the first order. There is a Hamiltonian function $K$ such that the corresponding flow $\phi^t_K$ with respect to the standard symplectic form $\Omega_1$ (given by \eqref{eq_straightsympform}) satisfies $\phi^1_K = \Phi$. We can write
\begin{equation}
K = \sum_{n=2}^k \hat{\epsilon}_n K_n
\end{equation}
where the sum starts at $n=2$ because $h_0, h_1$ do not depend on the angles $\gamma_2', \psi_1', \gamma_3'$. In order to obtain a first order approximation of $\Phi$ we determine $\hat{\epsilon}_2$, $K_2$. The purpose of $K_2$ is to eliminate the angle $\psi_1'$ from $h_2$ by adjusting the action $\Psi_1'$, which first appears in $h_1$. Set
\begin{equation}
\hat{\epsilon}_2 = \frac{\epsilon_2}{\epsilon_1} = \frac{L_2^{11}}{L_3^6}.
\end{equation}
Suppose $K_2$ does not depend on $\gamma_2'$. Then, from \eqref{eq_innerh012h112} and \eqref{eq_innersecexpansion},
\begin{align}
F_{\mathrm{in}} \circ \phi^{- \hat{\epsilon}_2}_{K_2} ={}& c_0 + \epsilon_0 \alpha_0 h_0 \circ \phi^{- \hat{\epsilon}_2}_{K_2} + \epsilon_1 \alpha_1 h_1 \circ \phi^{- \hat{\epsilon}_2}_{K_2} + \epsilon_2 \alpha_2 h_2 \circ \phi^{- \hat{\epsilon}_2}_{K_2} + \cdots \\
={}& c_0 + \epsilon_0 \alpha_0 \hat{\Gamma}_2^2 + \epsilon_1 \alpha_1 h_1 + \epsilon_2 \left( \alpha_2 h_2 - \alpha_1 \left\{ h_1, K_2 \right\} \right) + \cdots \label{eq_rescaledfinexp}
\end{align}
where $\{ \cdot, \cdot \}$ is the Poisson bracket. We therefore require that $K_2$ solves the equation
\begin{equation} \label{eq_averagingrequirement}
\{ h_1, K_2 \} = \frac{\alpha_2}{\alpha_1} \left( h_2 - \left\langle h_2 \right\rangle \right) = \frac{\alpha_2}{\alpha_1} \left( \cos^2 \psi_1' - \frac{1}{2} \right).
\end{equation}
Let
\begin{equation}
K_2 = \frac{\alpha_2}{\alpha_1} \frac{\sin 2 \psi_1'}{4 \left( 3 \frac{\left( \Gamma_2' \right)^2}{L_1^2} - 1 \right)}.
\end{equation}
Since $K_2$ does not depend on $\gamma_2'$, equation \eqref{eq_rescaledfinexp} holds; moreover $K_2$ solves \eqref{eq_averagingrequirement}. From Hamilton's equations of motion we have
\begin{equation}
\hat{\Psi}_1 = \Psi_1' - \hat{\epsilon}_2 \, \frac{\partial K_2}{\partial \psi_1'} + \cdots
\end{equation}
which implies \eqref{eq_psi1symptransf}. 

Since $F_{\mathrm{oct}}^{12}$ has a factor of $e_1$ (see \eqref{eq_oct12unscaled}), it is zero on $\tilde{\Lambda}_0$, and thus $O\left( L_2^{-1} \right)$ on $\Lambda$. It follows that the angle $\gamma_2'$ does not appear in the inner secular Hamiltonian until the term of order $O \left( L_2^3 L_3^{-6} \right)$. Since the action $\Gamma_2'$ appears in the term of order $O \left( L_2^{-6} \right)$, we get the approximation \eqref{eq_gamma2gamma3symptransf}. 

Let us now prove formula \eqref{eq_gamma3symptransf}. By part \ref{item_propoct23} of Proposition \ref{proposition_secularexpansion}, the first-order term where we see $\tilde{\gamma}_3$ is $\frac{L_2^6}{L_3^8} \, \alpha_3^{23} \, H_3^{23}$ where $H_3^{23}$ is defined by \eqref{eq_H323def}. Notice that
\begin{equation}
\left\langle H_3^{23} \right\rangle_{\tilde{\psi}_1} = \frac{1}{2 \pi} \int_{\mathbb{T}} H_3^{23} \, d \tilde{\psi}_1 = 0.
\end{equation}
So this term is completely eliminated in the averaging process if one considers as integrable first order $L_2^{-6}\bar{H}_0^{12} +L_2^{-7}\bar{H}_1^{12}$ since then $\tilde\psi_1$ has  a non-trivial frequency (see \eqref{eq_innerh012h112}).

By similar logic to that used in the definition of $\hat{\epsilon}_2$ and $K_2$, we define $\hat{\epsilon} = \frac{L_2^{13}}{L_3^8}$, and we search for a Hamiltonian function $\hat{K} = \tilde{\alpha} \, \tilde{K}$, with $\tilde{\alpha} = \frac{\alpha_3^{23}}{\alpha_1^{12}}$, such that $H_3^{23} - \left\{ H_1^{12}, \tilde{K} \right\} = 0$. Notice that this equation is satisfied by the Hamiltonian function
\begin{equation}
\tilde{K} = - \left( 3 \frac{\left(\Gamma_2' \right)^2}{L_1^2} - 1 \right)^{-1} \bigg[ \frac{1}{3} \, \nu_0 \, \sin \left( \gamma_3' + 3 \psi_1' \right) + \nu_1 \, \sin \left( \gamma_3' + \psi_1' \right) - \nu_2 \, \sin \left( \gamma_3' - \psi_1' \right) - \frac{1}{3} \, \nu_3 \, \sin \left( \gamma_3' - 3 \psi_1' \right) \bigg].
\end{equation}
Differentiating this expression with respect to $\gamma_3'$ and using the formula
\begin{equation}
\hat{\Gamma}_3 = \Gamma_3' - \hat{\epsilon} \, \frac{\partial \hat{K}}{\partial \gamma_3'} + \cdots = \Gamma_3' - \hat{\epsilon} \, \tilde{\alpha} \, \frac{\partial \bar{K}}{\partial \gamma_3'} + \cdots
\end{equation}
yields \eqref{eq_gamma3symptransf}.
\end{proof}

This completes the proof of Theorem \ref{theorem_innerdynamics}. Now it only remains to compute the second derivatives of the averaged Hamiltonian in the cylinder. This will be used to show that certain Poincar\'e map associated to this Hamiltonian has nonzero torsion (see Section \ref{sec:mapreduction}).

\subsection{The Hessian of the averaged inner Hamiltonian}\label{sec:Hessian}

Now, consider the restriction of the secular Hamiltonian to $\Lambda$ given by Lemma \ref{lemma_averaging}. We have denoted this Hamiltonian by  $\hat{F}$ and it is expressed as a function of the ``hat'' coordinates $\left(\hat{\gamma}_2, \hat{\Gamma}_2, \hat{\psi}_1, \hat{\Psi}_1, \hat{\gamma}_3, \hat{\Gamma}_3 \right)$. 

The following lemma gives first-order expressions for the partial derivatives of $\hat{F}$ of orders 1 and 2 with respect to the actions $\hat{\Gamma}_2, \hat{\Psi}_1, \hat{\Gamma}_3$. Note that the Hamiltonian  $\hat{F}$ given in Lemma \ref{lemma_averaging} is just given by $\hat{F}_0$ plus very small terms. Therefore, we only need to analyse the derivatives of $\hat{F}_0$.

The Hessian matrix of $\hat{F}$ will be useful to determine the torsion of a certain Poincar\'e map of the inner dynamics. Indeed, the highest order term of the torsion will depend only on those derivatives.
 
\begin{lemma}\label{lemma_innerhamderivatives}
The first and second-order partial derivatives of $\hat{F}$ with respect to the actions $\hat{\Gamma}_2, \hat{\Psi}_1, \hat{\Gamma}_3$ are 
\begin{align*}
\frac{\partial \hat{F}}{\partial \hat{\Gamma}_2} &= \epsilon^6 C_{12} 
{{6\,\hat{\Gamma}_{2}}\over{L_{1}^2\,\delta_{1}^3}} + \cdots,& \quad 
\frac{\partial \hat{F}}{\partial \hat{\Psi}_1} &= 3\epsilon^7 C_{12} 
{{L_{1}^2-3\, \hat{\Gamma}_{2}^2}\over{L_{1}^2\,\delta_{1}^4}} + \cdots, &
\quad \frac{\partial \hat{F}}{\partial \hat{\Gamma}_3}&=\epsilon^3 \mu^6 
C_{23} {{\left(20 - 
12\,\delta_{1}^2\right)\,\delta_{3}}\over{\delta_{1}^2\,\delta_{2}^3}} + 
\cdots,\\
\frac{\partial^2 \hat{F}}{\partial \hat{\Gamma}_2^2} &= \epsilon^6 C_{12} 
{{6}\over{L_{1}^2\,\delta_{1}^3}} + \cdots, &\quad \frac{\partial^2 
\hat{F}}{\partial \hat{\Psi}_1^2}&= 12\epsilon^8 C_{12} \left({{3\, 
\hat{\Gamma}_{2}^2 - L_{1}^2}\over{L_{1}^2\,\delta_{1}^5}} \right) + \cdots, 
& \quad \frac{\partial^2 \hat{F}}{\partial \hat{\Gamma}_3^2}&=\epsilon^4 \mu^6 
C_{23} {{\left(20 - 12\,\delta_{1}^2\right)}\over{\delta_{1}^2\,\delta_{2}^3}}  
+ \cdots,\\
\frac{\partial^2 \hat{F}}{\partial \hat{\Gamma}_2 \partial \hat{\Psi}_1} &= - 
\epsilon^7 C_{12} {{18\, \hat{\Gamma}_{2}}\over{L_{1}^2\,\delta_{1}^4}} + 
\cdots, & \frac{\partial^2 \hat{F}}{\partial \hat{\Gamma}_2 \partial 
\hat{\Gamma}_3} &= \epsilon^4 \mu^6 C_{23} 
{{24\,\delta_{3}}\over{\delta_{1}\,\delta_{2}^3}} + \cdots,& 
\frac{\partial^2 \hat{F}}{ \partial \hat{\Psi}_1 \partial \hat{\Gamma}_3} &= - 
\epsilon^4 \mu^6 C_{23} {{40\,\delta_{3}}\over{\delta_{1}^3\,\delta_{2}^3}} + 
\cdots
\end{align*}
where $\epsilon = \frac{1}{L_2}$ and $\mu = \frac{L_2}{L_3}$, and where $C_{12}, 
C_{23}$ are nonzero constants independent of $L_2$ and $L_3$ coming from 
$F_{\mathrm{quad}}^{12}$, $F_{\mathrm{quad}}^{23}$ respectively. 
\end{lemma}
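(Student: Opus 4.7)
The plan is a direct computation. By Lemma~\ref{lemma_averaging}, $\hat F = \hat F_0(\hat\Gamma_2,\hat\Psi_1,\hat\Gamma_3;\epsilon,\mu) + \epsilon^{k_1}\mu^{k_2}\hat F_1$; choosing $k_1,k_2$ large enough, the non-integrable remainder is negligible at the orders under consideration, so all the listed derivatives are determined by $\hat F_0$. I then read off, for each entry, the unique leading-order term in the expansion of $\hat F_0$ using Proposition~\ref{proposition_secularexpansion}, Corollary~\ref{lemma_hampullbackerrors}, and the averaging produced by Lemma~\ref{lemma_averaging}, and check that all subdominant contributions are strictly of higher order using the hierarchy~\eqref{eq_ordersofmagnitude}.

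For the \emph{quad12} entries, the relevant leading-order pieces of $\hat F_0$ come from evaluating $H_0^{12}$, $H_1^{12}$ and the $\tilde\Psi_1^2$-part of $\tilde H_2$ (from Lemma~\ref{lemma_quad12expansion}) on the cylinder $\Lambda$, where $(\xi,\eta)=\rho=O(L_2^{-1})$ so that $H_0^{12}$ reduces to $\tilde\Gamma_2^2/L_1^2$ up to strictly smaller corrections. Differentiating $\epsilon^6\alpha_0^{12}\bar H_0^{12}$ gives $\partial_{\hat\Gamma_2}\hat F$ and $\partial^2_{\hat\Gamma_2^2}\hat F$; differentiating the $\hat\Psi_1$-linear piece $\epsilon^7\alpha_1^{12}\bar H_1^{12}=\epsilon^7\alpha_1^{12}\bigl[(3\hat\Gamma_2^2/L_1^2-1)\hat\Psi_1+H_1'(\hat\Gamma_2)\bigr]$ yields $\partial_{\hat\Psi_1}\hat F$ and $\partial^2_{\hat\Gamma_2\hat\Psi_1}\hat F$; and the $\hat\Psi_1^2$ coefficient of $\epsilon^8\tilde\alpha_2\tilde H_2$ on $\Lambda$, which is $(3\hat\Gamma_2^2/L_1^2-1)$ up to $O(L_2^{-1})$, gives $\partial^2_{\hat\Psi_1^2}\hat F$. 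The mass factors $\alpha_0^{12}, \alpha_1^{12}, \tilde\alpha_2$ are then absorbed into a single constant $C_{12}$. For the \emph{quad23} entries, the lowest-order term containing $\hat\Gamma_3$ is $\epsilon^3\mu^6\alpha_0^{23}\tilde c_3\tilde H_3$; the averaging in Lemma~\ref{lemma_averaging} replaces $\cos^2\tilde\psi_1$ in $\tilde H_3$ by $1/2$, and the resulting $\langle\tilde H_3\rangle_{\tilde\psi_1}$ is affine in $(\hat\Psi_1,\hat\Gamma_2,\hat\Gamma_3)$, giving the leading contribution to $\partial_{\hat\Gamma_3}\hat F$. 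The three mixed/diagonal second derivatives $\partial^2_{\hat\Gamma_3^2}, \partial^2_{\hat\Gamma_2\hat\Gamma_3}, \partial^2_{\hat\Psi_1\hat\Gamma_3}$ vanish at order $\epsilon^3\mu^6$ precisely because $\langle\tilde H_3\rangle_{\tilde\psi_1}$ is affine, and come from the next-order correction of order $\epsilon^4\mu^6$, obtained by extending the Taylor expansions of $\cos^2\tilde i_2, \sin^2\tilde i_2, e_2^2$ in $1/L_2$ one step further than in the proof of Lemma~\ref{lemma_quad23exp}, substituting into $K_0$ and averaging over $\tilde\psi_1$; the mass factors are then collected into $C_{23}$.

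The main obstacle is bookkeeping rather than calculation. I must verify that the $O(L_2^{-2})$ correction from the symplectic straightening (Lemma~\ref{lemma_straightsympform}), the $O(L_2^{11}/L_3^6)$ correction from the averaging (Lemma~\ref{lemma_averaging}), and the higher-order pieces of the graph parametrisation of $\Lambda$ (Lemma~\ref{lemma_fenichelgraphexp}) each produce contributions strictly below the displayed orders $\epsilon^6, \epsilon^7, \epsilon^8, \epsilon^3\mu^6, \epsilon^4\mu^6$. In each case this reduces to a comparison of powers of $\epsilon$ and $\mu$ using~\eqref{eq_ordersofmagnitude} together with the first-appearance information in Proposition~\ref{proposition_secularexpansion} and Corollary~\ref{lemma_hampullbackerrors}.
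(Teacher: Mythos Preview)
Your proposal is correct and follows essentially the same strategy as the paper: reduce to $\hat F_0$, compute leading-order derivatives from the expansion of the averaged inner Hamiltonian, and verify that the graph parametrisation (Lemma~\ref{lemma_fenichelgraphexp}) and the two coordinate changes (Lemmas~\ref{lemma_straightsympform}, \ref{lemma_averaging}) only contribute strictly higher-order corrections.

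There are two minor procedural differences worth flagging. For the \emph{quad12} block, you differentiate the already-expanded pieces $H_0^{12}, H_1^{12}, \tilde H_2$ from Lemma~\ref{lemma_quad12expansion} restricted to $\Lambda$; the paper instead restricts the unexpanded $F_{\mathrm{quad}}^{12}$ to $\tilde\Lambda_0$ (using $e_1=0$, $\Gamma_1=L_1$), differentiates in the original Deprit actions $(\Gamma_2,\Psi_1)$, passes to tilde actions via the chain rule~\eqref{eq_K1chainrule1}--\eqref{eq_K1chainrule2}, and only then expands. Your route is a little cleaner since it avoids the chain-rule step. For the \emph{quad23} second derivatives at order $\epsilon^4\mu^6$, you propose to push the Taylor expansions of $\cos\tilde i_2$, $\sin\tilde i_2$, $e_2^2$ one step further inside $K_0$ and then average; the paper instead averages $P_2(\cos\zeta_2)$ over the four angles $\ell_2,\ell_3,\tilde\gamma_2,\tilde\psi_1$ at the integral level \emph{before} any expansion, producing a closed-form $K_2$ which is then expanded to yield the $\bar K_{0,2}$ term (Appendix~\ref{sec_appendixexpformulas}) from which the three $\epsilon^4\mu^6$ entries are read off directly. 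The paper's route is somewhat more systematic here since it packages all the $\epsilon^4\mu^6$ quadratic-in-action terms into a single computation, whereas your route requires tracking several separate Taylor corrections; but both are valid and yield the same coefficients.
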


To prove this lemma we proceed in several steps. 
First we analyse the averaged Hamiltonian 
\[
\langle F_{\mathrm{in}}\rangle_{(\tilde{\gamma}_2,\tilde{\psi}_1,\tilde{\gamma}_3)}
\]
where $F_{\mathrm{in}}$ is the Hamiltonian given by Corollary \ref{lemma_hampullbackerrors}. In particular, we analyse the   linear and quadratic terms  with respect to the actions of the averaged Hamiltonian.

Then, we prove that these terms remain unchanged (at first order) when one applies the change of coordinates given by Lemmas \ref{lemma_straightsympform} and \ref{lemma_averaging}. This implies that these terms give the first order of the first and second derivatives given in Lemma \ref{lemma_innerhamderivatives}.

\begin{proof}[Proof of Lemma~\ref{lemma_innerhamderivatives}]
The formulas for the first derivatives are straightforward using the expansion of the Hamiltonian $F_{\mathrm{sec}}$ given by Proposition \ref{proposition_secularexpansion} and the expansion of the normally hyperbolic invariant cylinder  given by Lemma \ref{lemma_fenichelgraphexp}. Indeed, the expansion of  $F_{\mathrm{sec}}$ given in Proposition \ref{proposition_secularexpansion} gives explicitly the first order of the linear terms in the actions $\tilde{\Gamma}_2, \tilde{\Psi}_1, \tilde{\Gamma}_3$ of the Hamiltonian. One can easily compute the pullback of these terms in the cylinder and then show that the changes of coordinates  given by Lemmas \ref{lemma_straightsympform} and \ref{lemma_averaging} do not alter these first orders.

Now we analyse the second derivatives. Note that Proposition \ref{proposition_secularexpansion} does not give terms which are quadratic in the actions and therefore it does not provide first orders of the second derivatives. To this end we have to do a deeper analysis of the Hamiltonian $\langle F_{\mathrm{in}}\rangle_{(\tilde{\gamma}_2,\tilde{\psi}_1,\tilde{\gamma}_3)}
$.

To analyse  it, we first analyse the Hamiltonian $\langle F_{\mathrm{sec}}\rangle_{(\tilde{\gamma}_2,\tilde{\psi}_1,\tilde{\gamma}_3)}
$, where 
 $F_{\mathrm{sec}}$ is the Hamiltonian analysed in Proposition \ref{proposition_secularexpansion}, restricted to  the normally hyperbolic invariant cylinder  given by Lemma \ref{lemma_fenichelgraphexp}. We use the expansions of the Hamiltonian given by the Proposition \ref{proposition_secularexpansion} and the  expansion of the parameterization of the cylinder  given in Lemma \ref{lemma_fenichelgraphexp}. 

The first orders in the actions in  the Hamiltonian $\langle F_{\mathrm{in}}\rangle_{(\tilde{\gamma}_2,\tilde{\psi}_1,\tilde{\gamma}_3)}
$ come from $F_{\mathrm{quad}}^{12}$ and $F_{\mathrm{quad}}^{23}$. In order to compute the derivatives, we go back to the original formulas \eqref{eq_quad12unscaled} for $F_{\mathrm{quad}}^{12}$ and \eqref{eq_quad23beforeaver} for $F_{\mathrm{quad}}^{23}$. We average out the angles $\tilde{\gamma}_2, \tilde{\psi}_1$ from these Hamiltonians (recall $F_{\mathrm{quad}}^{12}$ and $F_{\mathrm{quad}}^{23}$ do not depend on $\gamma_3$), and compute the derivatives with respect to the actions. We compute the expansions of these derivatives in powers of $\epsilon=\frac{1}{L_2}$ and $\mu=\frac{L_2}{L_3}$. 

First we compute the derivatives of $F_{\mathrm{quad}}^{12}$ restricted to $\tilde{\Lambda}_0$. Recall that $\Gamma_1=L_1$ on $\tilde{\Lambda}_0$; therefore $\left. e_1 \right|_{\tilde{\Lambda}_0}=0$  and so
\begin{equation}
K_1 = \left. F_{\mathrm{quad}}^{12} \right|_{\tilde{\Lambda}_0} = \frac{a_1^2}{8 \, a_{2}^3 \, (1-e_{2}^2)^{\frac{3}{2}}} \left(  3 \cos^2 i_{12} - 1 \right)
\end{equation}
where $e_2$ is defined by \eqref{eq_eccentricity}, and $\cos i_{12}$ is defined by \eqref{eq_cosi12def}. Therefore, using again the fact that $\left. \Gamma_1 \right|_{\tilde{\Lambda}_0} = L_1$, we see that the first and second partial derivatives of $K_1$ are
\begin{align}\label{eq_K1diffsoriginalactions}
\begin{dcases}
\frac{\partial^2 K_1}{\partial \Gamma_2^2} &=
\frac{a_1^2}{8 \, a_{2}^3} \left(
{{45\,L_{2}^3\,\Psi_{1}^4}\over{2\,\Gamma_{2}^7\,L_{1}^2}}-{{18\,
 L_{2}^3\,\Psi_{1}^2}\over{\Gamma_{2}^5\,L_{1}^2}}-{{45\,L_{2}^3\,
 \Psi_{1}^2}\over{\Gamma_{2}^7}}+{{45\,L_{1}^2\,L_{2}^3}\over{2\,
 \Gamma_{2}^7}}+{{3\,L_{2}^3}\over{2\,\Gamma_{2}^3\,L_{1}^2}}+{{6\,
 L_{2}^3}\over{\Gamma_{2}^5}}
 \right) \\
 \frac{\partial^2 K_1}{\partial \Psi_1^2} &= \frac{a_1^2}{8 \, a_{2}^3} \left(
 {{9\,L_{2}^3\,\Psi_{1}^2}\over{\Gamma_{2}^5\,L_{1}^2}}-{{3\,L_{2}^3
 }\over{\Gamma_{2}^3\,L_{1}^2}}-{{3\,L_{2}^3}\over{\Gamma_{2}^5}}
 \right) \\
  \frac{\partial^2 K_1}{\partial \Gamma_2 \partial \Psi_1} &= \frac{a_1^2}{8 \, a_{2}^3} \left(
 -{{15\,L_{2}^3\,\Psi_{1}^3}\over{\Gamma_{2}^6\,L_{1}^2}}+{{9\,L_{2}
 ^3\,\Psi_{1}}\over{\Gamma_{2}^4\,L_{1}^2}}+{{15\,L_{2}^3\,\Psi_{1}
 }\over{\Gamma_{2}^6}}
  \right) \\
  \frac{\partial K_1}{\partial \Gamma_2} &= \frac{a_1^2}{8 \, a_{2}^3} \left(
 -{{15\,L_{2}^3\,\Psi_{1}^4}\over{4\,\Gamma_{2}^6\,L_{1}^2}}+{{9\,
 L_{2}^3\,\Psi_{1}^2}\over{2\,\Gamma_{2}^4\,L_{1}^2}}+{{15\,L_{2}^3\,
 \Psi_{1}^2}\over{2\,\Gamma_{2}^6}}-{{15\,L_{1}^2\,L_{2}^3}\over{4\,
 \Gamma_{2}^6}}-{{3\,L_{2}^3}\over{4\,\Gamma_{2}^2\,L_{1}^2}}-{{3\,
 L_{2}^3}\over{2\,\Gamma_{2}^4}}
  \right) \\
  \frac{\partial K_1}{\partial \Psi_1} &= \frac{a_1^2}{8 \, a_{2}^3} \left(
 {{3\,L_{2}^3\,\Psi_{1}^3}\over{\Gamma_{2}^5\,L_{1}^2}}-{{3\,L_{2}^3
 \,\Psi_{1}}\over{\Gamma_{2}^3\,L_{1}^2}}-{{3\,L_{2}^3\,\Psi_{1}
 }\over{\Gamma_{2}^5}}
  \right). 
  \end{dcases}
 \end{align}
Due to \eqref{eq_changeofcoordstilde} we have
\begin{equation}\label{eq_K1chainrule1}
\frac{\partial K_1}{\partial \tilde{\Gamma}_2} = - \frac{\partial K_1}{\partial \Gamma_2}, \quad \frac{\partial K_1}{\partial \tilde{\Psi}_1} = \frac{\partial K_1}{\partial \Psi_1} + \frac{\partial K_1}{\partial \Gamma_2}, \quad \frac{\partial^2 K_1}{\partial \tilde{\Gamma}_2^2} = \frac{\partial^2 K_1}{\partial \Gamma_2^2},
\end{equation}
and
\begin{equation}\label{eq_K1chainrule2}
\frac{\partial^2 K_1}{\partial \tilde{\Psi}_1^2} = \frac{\partial^2 K_1}{\partial \Psi_1^2} + 2 \frac{\partial^2 K_1}{\partial \Gamma_2 \partial \Psi_1} + \frac{\partial^2 K_1}{\partial \Gamma_2^2}, \quad \frac{\partial^2 K_1}{\partial \tilde{\Gamma}_2 \partial \tilde{\Psi}_1} = - \frac{\partial^2 K_1}{\partial \Gamma_2 \partial \Psi_1} - \frac{\partial^2 K_1}{\partial \Gamma_2^2}.
\end{equation}
Combining \eqref{eq_K1diffsoriginalactions}, \eqref{eq_K1chainrule1}, and \eqref{eq_K1chainrule2}, effecting the coordinate transformation \eqref{eq_changeofcoordstilde}, and expanding in powers of $\epsilon = \frac{1}{L_2}$, we see that
\begin{align}\label{eq_quad12avederivatives}
\begin{dcases}
\frac{\partial^2 K_1}{\partial \tilde{\Gamma}_2^2} &= \epsilon^6 C_{12} {{6}\over{L_{1}^2\,\delta_{1}^3}} + O \left( \epsilon^7 \right) \\
\frac{\partial^2 K_1}{\partial \tilde{\Psi}_1^2} &= 12 \epsilon^8 C_{12} 
\left({{3\, \tilde{\Gamma}_{2}^2 -L_{1}^2}\over{L_{1}^2\,\delta_{1}^5}} 
\right) +O \left( \epsilon^9 \right) \\
\frac{\partial^2 K_1}{\partial \tilde{\Gamma}_2 \partial \tilde{\Psi}_1} &= - \epsilon^7 C_{12} {{18\, \tilde{\Gamma}_{2}}\over{L_{1}^2\,\delta_{1}^4}} + O \left( \epsilon^8 \right) \\
\frac{\partial K_1}{\partial \tilde{\Gamma}_2} &= \epsilon^6 C_{12} {{6\,\tilde{\Gamma}_{2}}\over{L_{1}^2\,\delta_{1}^3}} + O \left( \epsilon^7 \right) \\
\frac{\partial K_1}{\partial \tilde{\Psi}_1} &= 3\epsilon^7 C_{12} 
{{L_{1}^2-3\, \tilde{\Gamma}_{2}^2}\over{L_{1}^2\,\delta_{1}^4}} + O \left( 
\epsilon^8 \right)
\end{dcases}
\end{align}
where the coefficient $C_{12}$ is a nonzero constant depending on $L_1$ and $\mu_j, M_j$ for $j=1,2$ (but independent of $L_2$ and $L_3$). 

Now, the restriction to $\tilde{\Lambda}_0$ of $F_{\mathrm{quad}}^{23}$ depends on $\gamma_2, \psi_1$. Since we have shown in Lemma \ref{lemma_averaging} that a $C^r$ near-to-the-identity coordinate transformation transforms the Hamiltonian to its average over $\tilde{\gamma}_2, \tilde{\psi}_2$, we instead consider
\begin{equation}
K_2 = \left. \left[ \int_{\mathbb{T}^4} P_2 \left( \cos \zeta_{2} \right) \frac{\| q_2 \|^2}{\| q_3 \|^3} \, d \ell_2 \, d \ell_{3} \, d \tilde{\gamma}_2 \, d \tilde{\psi}_1 \right] \right|_{\Gamma_1=L_1} = \left. \left[ \int_{\mathbb{T}^2} \left\langle P_2 \left( \cos \zeta_{2} \right) \right\rangle_{\tilde{\gamma}_2, \tilde{\psi}_1} \frac{\| q_2 \|^2}{\| q_3 \|^3} \, d \ell_2 \, d \ell_{3} \right] \right|_{\Gamma_1 = L_1}
\end{equation}
where we have taken formula \eqref{eq_quad23beforeaver} for $F_{\mathrm{quad}}^{23}$, restricted attention to $\tilde{\Lambda}_0$ by setting $\Gamma_1=L_1$ (and noticing that $F_{\mathrm{quad}}^{23}$ does not depend on $\gamma_1$), averaged over the variables $\tilde{\gamma}_2, \tilde{\psi}_1$, and used the fact that $\| q_2 \|, \| q_3\|$ do not depend on $\tilde{\gamma}_2, \tilde{\psi}_1$. Recall from the proof of Lemma \ref{lemma_quad23exp} and the coordinate transformation \eqref{eq_changeofcoordstilde} that
\begin{equation}
\cos \zeta_2  = \left( \R_1 (\tilde{i}_2) \, \R_3 (\tilde{\psi}_1 + \tilde{\gamma}_2) \, I_3 \, \R_1 (i_2) \, \bar{Q}_2 \right) \cdot  \left( I_3 \, \R_1 (i_3) \, \bar{Q}_3 \right)
\end{equation}
where $\R_1, \R_3$ is the rotation around the $x,z$-axis respectively, $I_3 = \R_3 ( \pi)$, the angles $\tilde{i}_2, i_2, i_3$ are defined by \eqref{eq_inclinationsdef1} and \eqref{eq_inclinationsdef2}, and $\bar{Q}_j= \left(\cos \left( v_j - \tilde{\gamma}_j \right), \sin \left( v_j - \tilde{\gamma}_j \right),0 \right)$ with $v_j$ denoting the true anomaly of the $j^{\mathrm{th}}$ fictitious body. We can thus compute $\left\langle P_2 \left( \cos \zeta_{2} \right) \right\rangle_{\tilde{\gamma}_2, \tilde{\psi}_1} = \int_{\mathbb{T}_2} P_2 \left( \cos \zeta_{2} \right) \, d \tilde{\gamma}_2 \, d \tilde{\psi}_1$, and use the resulting expression to obtain
\begin{align}
K_2 ={}& \frac{-4 \, \pi^2 \, a_2^2}{32 \, a_3^3 \, (1 - e_3^2)^{\frac{3}{2}}} \left(3\,e_{2}^2+2\right)\,\left(3\,\cos ^2i_{2}-1\right)\, \Big[ 6
 \,\cos \tilde{i}_2 \,\sqrt{1-\cos ^2 \tilde{i}_2}\,\cos i_{3}\,\sqrt{1-\cos ^2
 i_{3}}\\
& \quad -6\,\cos ^2\tilde{i}_2 \,\cos ^2i_{3}+3\,\cos ^2i_{3}+3\,\cos ^2 \tilde{i}_2
 -2 \Big]. 
\end{align}
To compute this integral we have used again the technique described in Appendix C of \cite{fejoz2002quasiperiodic} (used above in Lemmas \ref{lemma_quadoct12comp}, \ref{lemma_quad23exp}, and \ref{lemma_oct23exp}). We now effect the coordinate transformation \eqref{eq_changeofcoordstilde}, and expand $K_2$ in powers of $\mu = \frac{L_2}{L_3}$ and $\epsilon=\frac{1}{L_2}$ to obtain
\begin{align}\label{eq_fquadaveragedexp}
K_2 ={}& \epsilon^2 \mu^6  C_{23} \Big[ \left( \bar{K}_{0,0} + \epsilon \bar{K}_{0,1} + \epsilon^2 \bar{K}_{0,2} + \cdots \right) + \mu \left( \bar{K}_{1,0} + \epsilon \bar{K}_{1,1} + \epsilon^2 \bar{K}_{1,2} + \cdots \right) \\
\quad &  + \mu^2 \left( \bar{K}_{2,0} + \epsilon \bar{K}_{2,1} + \epsilon^2 \bar{K}_{2,2} + \cdots \right) \Big]
\end{align}
where the formulas for $\bar{K}_{i,j}$ can be found in Appendix \ref{sec_appendixexpformulas}. Therefore we can compute
\begin{align}\label{eq_quad23avederivatives}
\begin{dcases}
\frac{\partial^2 K_2}{\partial \tilde{\Gamma}_3^2} &=\epsilon^4 \mu^6 C_{23} {{\left(20 - 12\,\delta_{1}^2\right)}\over{\delta_{1}^2\,\delta_{2}^3}}  + O \left( \epsilon^5 \mu^6, \epsilon^4 \mu^7 \right) \\
\frac{\partial^2 K_2}{\partial \tilde{\Gamma}_3 \partial \tilde{\Psi}_1} &= - \epsilon^4 \mu^6 C_{23} {{40\,\delta_{3}}\over{\delta_{1}^3\,\delta_{2}^3}} + O \left( \epsilon^5 \mu^6, \epsilon^4 \mu^7 \right) \\
\frac{\partial^2 K_2}{\partial \tilde{\Gamma}_3 \partial \tilde{\Gamma}_2} &= \epsilon^4 \mu^6 C_{23} {{24\,\delta_{3}}\over{\delta_{1}\,\delta_{2}^3}} + O \left( \epsilon^5 \mu^6, \epsilon^4 \mu^7 \right) \\
\frac{\partial K_2}{\partial \tilde{\Gamma}_3} &= \epsilon^3 \mu^6 C_{23} {{\left(20 - 12\,\delta_{1}^2\right)\,\delta_{3}}\over{\delta_{1}^2\,\delta_{2}^3}} + O \left( \epsilon^4 \mu^6, \epsilon^3 \mu^7 \right). 
\end{dcases}
\end{align}


%
%
Note that these second derivatives are  the derivatives of the pullback of the Hamiltonian to the unperturbed cylinder $\Lambda_0$. To ensure that the first order of the  derivatives remain unchanged if one considers the pullback to the perturbed normally hyperbolic invariant cylinder given by Lemma \ref{lemma_fenichelgraphexp} it is enough to use the expansion of its parameterization given by this lemma. 

The sizes of each of the terms in the Hamiltonian and the cylinder parameterization are gathered in Table \ref{table_orderappproducts}. In each case the dominant term is the term coming from the averaged secular Hamiltonian; this can be checked using the assumption \eqref{eq_assumption1} and the inequality $\hat{\epsilon} \ll \frac{L_2^9}{L_3^6}$.

\begin{table}[!ht]
  \begin{center}
    \begin{tabular}{|c|c|c|c|c|c|c|c|c|c|}
      \hline 
      & \vphantom{$\Big[$} $\tilde{\Gamma}_2 \tilde{\Psi}_1$ & $\tilde{\Gamma}_2 \tilde{\Gamma}_3$ & $\tilde{\Psi}_1 \tilde{\Gamma}_3$ & $\tilde{\Gamma}_2^2$ & $\tilde{\Psi}_1^2$ & $\tilde{\Gamma}_3^2$ \\
      \hline
      From $\langle F_{\mathrm{sec}}\rangle_{(\tilde{\gamma}_2,\tilde{\psi}_1,\tilde{\gamma}_3)}
$ \vphantom{$\Big[$} & $\epsilon^7$ & $\epsilon^4 \mu^6$ & $\epsilon^4 \mu^6$ & $\epsilon^6$ & $\epsilon^8$ & $\epsilon^4 \mu^6$ \\
      From $\rho$ \vphantom{$\Big[$} & $ \epsilon^9$ & $\hat{\epsilon} \, \epsilon^7$ & $\hat{\epsilon} \, \epsilon^7$ & $\epsilon^8$ & $\epsilon^9$ & $\hat{\epsilon} \, \epsilon^7$  \\
      \hline 
    \end{tabular}
    \caption{\label{table_orderappproducts}The order at which the relevant products of the actions appear in the inner Hamiltonian, as a result of appearing explicitly in the averaged secular Hamiltonian $\langle F_{\mathrm{sec}}\rangle_{(\tilde{\gamma}_2,\tilde{\psi}_1,\tilde{\gamma}_3)}
$, and as a result of restricting the Hamiltonian $\langle F_{\mathrm{sec}}\rangle_{(\tilde{\gamma}_2,\tilde{\psi}_1,\tilde{\gamma}_3)}
$ to the manifold $\Lambda$ using the function $\rho$.
    }
  \end{center}
\end{table}


To complete the proof of Lemma \ref{lemma_innerhamderivatives} it is enough to ensure that the change of coordinates given in  \eqref{eq_tildehatcoordtransf} does not alter the first order of the second derivatives given in  \eqref{eq_quad12avederivatives} and \eqref{eq_quad23avederivatives}. This change of coordinates is the composition of the changes given by Lemmas \ref{lemma_straightsympform} and \ref{lemma_averaging}. Then, it is enough to use the estimates given in these lemmas on how close to the identity are these changes.
\end{proof}


\section{The outer dynamics}
\label{sec:scattering}

Recall from Section \ref{sec:innerdyn} that Theorem \ref{theorem_innerdynamics} ensures the existence of the normally 
hyperbolic (weakly) invariant cylinder $\Lambda$ (see \eqref{eq_nhimdef} and \eqref{eq_nhimparametersdef}) with stable and unstable invariant 
manifolds.  The purpose of this section is to show that these invariant manifolds 
intersect along two homoclinic channels. Such channels allow us to  establish 
the existence of two scattering maps. This is done in Section \ref{sec:scatteringsub} were we provide asymptotic formulas for those maps. These formulas are given by an associated Poincar\'e-Melnikov potential. Its computation is contained in Section \ref{section_melnikovcomp}.

\subsection{The scattering map}\label{sec:scatteringsub}
Recall from Section \ref{sec:innerdyn} that Theorem \ref{theorem_innerdynamics} ensures the existence of the normally 
hyperbolic (weakly) invariant cylinder $\Lambda$ (see \eqref{eq_nhimdef} and \eqref{eq_nhimparametersdef}) with stable and unstable invariant 
manifolds.  The purpose of this section is to show that these invariant manifolds 
intersect along two homoclinic channels. Such channels allow us to  establish 
the existence of two scattering maps $S_{\pm} : \Lambda \to \Lambda'$, for some 
$\Lambda'$ such that $\Lambda \subset \Lambda' \subset \mathbb{T}^3 \times 
\mathbb{R}^3$ (see Appendix \ref{sec:heuristics} for the definition). Then, we derive a 
first-order approximation of the changes in the actions $\hat{\Psi}_1$, 
$\hat{\Gamma}_3$ in the `hat' coordinates defined by Theorem 
\ref{theorem_innerdynamics}. The following theorem is the main result of this 
section. 

\begin{theorem}\label{theorem_outerdynamics}
The stable and unstable invariant manifolds of the normally hyperbolic 
invariant cylinder $\Lambda$ introduced in Theorem \ref{theorem_innerdynamics} 
intersect transversally along (at least) two homoclinic channels. Associated to these channels, there exist two 
scattering maps $S_{\pm} : \Lambda \to \Lambda'$ such that
\begin{equation}
S_{\pm} : \left(\hat{\gamma}_2, \hat{\Gamma}_2, \hat{\psi}_1, \hat{\Psi}_1, \hat{\gamma}_3, \hat{\Gamma}_3 \right) \longmapsto \left(\hat{\gamma}_2^*, \hat{\Gamma}_2^*, \hat{\psi}_1^*, \hat{\Psi}_1^*, \hat{\gamma}_3^*, \hat{\Gamma}_3^* \right)
\end{equation}
with
\begin{equation}
\hat{\Psi}_1^* = \hat{\Psi}_1 + \frac{L_2^9}{L_3^6} \S_1^{\pm} \left( \hat{\psi}_1, \hat{\Gamma}_2 \right) + \cdots, \quad \hat{\Gamma}_3^* = \hat{\Gamma}_3 + \frac{L_2^{11}}{L_3^8} \S^{\pm}_3 \left( \hat{\psi}_1, \hat{\gamma}_3, \hat{\Gamma}_2 \right) + \cdots
\end{equation}
where
\begin{align}
\S_1^{\pm}\left( \hat{\psi}_1, \hat{\Gamma}_2 \right) ={}& \mp \alpha_2^{23} \, \kappa \left( \frac{\pi \, \hat{\Gamma}_2}{A_2 \, L_1^2} \right) \, c_2 \, \cos \hat{\psi}_1 + \frac{\alpha_0^{23}}{\alpha_1^{12}} \, \beta_2 \frac{L_1}{6} \, \sqrt{ \frac{3}{2} } \hat{\Gamma}_2 \sqrt{1 - \frac{5}{3} \frac{\hat{\Gamma}_2^2}{L_1^2}} \frac{ \sin 2 \hat{\psi}_1}{3 \frac{\hat{\Gamma}_2^2}{L_1^2} - 1}, \label{eq_scatteringpsi1} \\
\S_3^{\pm}  \left( \hat{\psi}_1, \hat{\gamma}_3, \hat{\Gamma}_2 \right) ={}& \mp \alpha_5^{23} \, \kappa \left( \frac{\pi \, \hat{\Gamma}_2}{A_2 \, L_1^2} \right) \, \bigg[ 50\,\delta_{1}\,\delta_{3}\,\sin \hat{\gamma}_{3}\,\cos \hat{\psi}_{1}\,\sin 
 \hat{\psi}_{1}- \frac{70\,\delta_{3}}{\delta_1}\,\sin \hat{\gamma}_{3}\,\cos \hat{\psi}_{1}\,
 \sin \hat{\psi}_{1} \\
 & +\frac{105\,\delta_{3}^2}{\delta_1^2}\,\cos \hat{\gamma}_{3}\,\cos ^2\hat{\psi}_{1} -75\,\delta_{3}^2\,\cos \hat{\gamma}_{3}\,\cos ^2\hat{\psi}_{1} +25\,\delta_{1}^2\,\cos \hat{\gamma}_{3}\,\cos^2\hat{\psi}_{1} -35\,\cos \hat{\gamma}_{3}\,\cos ^2\hat{\psi}_{1} \\
  & - \frac{105\, \delta_{3}^2}{\delta_1^2} \,\cos \hat{\gamma}_{3} +60\,\delta_{3}^2\, \cos \hat{\gamma}_{3}-17\,\delta_{1}^2\,\cos \hat{\gamma}_{3}+28\,\cos \hat{\gamma}_{3} \bigg] \label{eq_scatteringgamma3} \\
  & +  \frac{L_1}{6} \sqrt{ \frac{3}{2}} \, \beta_2 \, \hat{\Gamma}_2 \sqrt{1 - \frac{5}{3} \frac{\hat{\Gamma}_2^2}{L_1^2}} \, \tilde{\alpha} \, \left( 3 \frac{\hat{\Gamma}_2^2}{L_1^2} - 1 \right)^{-1} \bigg[  \, \nu_0 \, \cos \left( \hat{\gamma}_3 + 3 \hat{\psi}_1 \right) \\
  &+ \nu_1 \, \cos \left( \hat{\gamma}_3 + \hat{\psi}_1 \right) + \nu_2 \, \cos \left( \hat{\gamma}_3 - \hat{\psi}_1 \right)  + \nu_3 \, \cos \left( \hat{\gamma}_3 - 3 \hat{\psi}_1 \right) \bigg]. \\
\end{align}
where the function $\kappa$ is defined by 
\begin{equation}\label{eq_melnikovcoefficientfn}
\kappa \left( x \right) = \sqrt{\frac{2}{3}} \, \frac{L_1^2}{\chi} \left[1 - 
\frac{x}{\sinh x} \right],
\end{equation}
the constant $c_2$ is defined in \eqref{eq_c1c2def}, and $\chi, \, 
A_2$ are the constants defined in \eqref{eq_chia2def}, the constants $\alpha_{ij}^ k$ come from the expansion of the secular Hamiltonian, and are defined in Section \ref{section_secularexpansion}, the constant $\beta_2$ is defined in Appendix \ref{appendix_psi1phaseshift}, the nontrivial constant $\tilde \alpha$ comes from Lemma \ref{lemma_averaging}, and the constants $\nu_j$ are defined by \eqref{eq_constantsmuline1} and \eqref{eq_constantsmuline2}. 
\end{theorem}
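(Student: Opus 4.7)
The plan is to apply Poincar\'e--Melnikov theory to the separatrix of $H_0^{12}$ that we identified in Lemma \ref{lemma_separatrixformulas}, viewing the full secular Hamiltonian $F_{\mathrm{sec}}$ as a perturbation of $\epsilon^6 \alpha_0^{12} H_0^{12}$. In the Poincar\'e variables \eqref{eq_poincarevariables}, the unperturbed system has the normally hyperbolic invariant cylinder $\tilde\Lambda_0$ whose stable and unstable manifolds coincide along a homoclinic manifold parametrized by $(t,\tilde\gamma_2^0,\tilde\Gamma_2,\tilde\psi_1,\tilde\Psi_1,\tilde\gamma_3,\tilde\Gamma_3)$ via \eqref{eq_cosgamma1separatrix}--\eqref{eq_gamma22separatrix}. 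Since the symplectic structure is in Darboux form on a neighborhood of $\tilde\Lambda_0$, the splitting distance between $W^{s,u}(\Lambda)$ along directions transverse to the separatrix is, at first order in the perturbation parameters, controlled by the gradient of the Poincar\'e--Melnikov potential
\[
\mathcal{L}(\tilde\gamma_2^0,\tilde\Gamma_2,\tilde\psi_1,\tilde\Psi_1,\tilde\gamma_3,\tilde\Gamma_3)
= \int_{-\infty}^{+\infty}\bigl[F_{\mathrm{sec}} - F_{\mathrm{sec}}\!\!\upharpoonright_{\tilde\Lambda_0}\bigr]\bigl(Z^0(t,\tilde\gamma_2^0),\cdot\bigr)\,dt.
\]

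Using Proposition \ref{proposition_secularexpansion}, the leading $(\tilde\gamma_2^0,\tilde\psi_1,\tilde\gamma_3)$--dependent contributions come from exactly three terms: $\epsilon^8\alpha_2^{12}H_2^{12}$ (responsible for the splitting computed in \cite{fejoz2016secular} and giving nothing in the $\tilde\psi_1,\tilde\gamma_3$ directions by itself), $\epsilon^2\mu^6\alpha_2^{23} H_2^{23}$ (first order term of $F_{\mathrm{quad}}^{23}$ coupling $\tilde\psi_1$ with $\tilde\gamma_2^0$), and $\epsilon^2\mu^8\alpha_5^{23} H_5^{23}$ (first order term of $F_{\mathrm{oct}}^{23}$ coupling $\tilde\gamma_3$ with $\tilde\gamma_2^0$). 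In addition, the $\tilde\psi_1$ and $\tilde\gamma_3$ dependence in the $(\tilde\gamma_2^0)$--independent pieces $\epsilon^2\mu^6 H_0^{23}$ and $\epsilon^2\mu^8 H_3^{23}$ contributes through the change of coordinates to the hat variables via Lemma \ref{lemma_averaging}, as I explain below. I would substitute the separatrix \eqref{eq_cosgamma1separatrix}--\eqref{eq_gamma22separatrix} into each of these, using that $\tilde\gamma_2(t)= \tilde\gamma_2^0 + \tilde\gamma_2^1(t)+\tilde\gamma_2^2(t)$ with $\tilde\gamma_2^1$ linear and $\tilde\gamma_2^2$ tending to finite limits at $t\to\pm\infty$.

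The key analytic step is the evaluation of the Melnikov integrals. The one involving $H_2^{12}$ is identical to \cite{fejoz2016secular} (modulo the errata in Appendix \ref{appendix_errata}); the two new integrals are deferred to Section \ref{section_melnikovcomp}, where complex-time contour integration together with the residue theorem yields the coefficient function $\kappa$ defined in \eqref{eq_melnikovcoefficientfn} and the phase constant $\beta_2$ appearing in the statement. I would then verify that the resulting Melnikov functions $\partial_{\tilde\gamma_2^0}\mathcal{L}$ have non-degenerate (simple) zeros in $\tilde\gamma_2^0\in\mathbb{T}$, yielding at least two branches parametrized by these zeros. By the standard Melnikov argument (see for instance \cite{DelshamsLS06b}), each non-degenerate zero gives rise to a transverse homoclinic intersection for all sufficiently small parameters; collecting them over the cylinder yields two transverse homoclinic channels, and the associated scattering maps $S_\pm:\Lambda\to\Lambda'$ exist by the Delshams--de la Llave--Seara construction.

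The final step is to write down the first order formulas for the components $\hat\Psi_1^*$ and $\hat\Gamma_3^*$ of $S_\pm$ in the hat coordinates. In tilde coordinates, the jumps $\Delta\tilde\Psi_1$ and $\Delta\tilde\Gamma_3$ are given respectively by $-\partial_{\tilde\psi_1}\mathcal{L}$ and $-\partial_{\tilde\gamma_3}\mathcal{L}$ evaluated at the corresponding zero, producing the first (``$\mp\kappa$'') summands in \eqref{eq_scatteringpsi1} and \eqref{eq_scatteringgamma3}. The second summand in each formula arises because the change of coordinates \eqref{eq_psi1symptransf}--\eqref{eq_gamma3symptransf} between tilde and hat variables is oscillating in $\tilde\psi_1$ and $\tilde\gamma_3$ with amplitudes of order $L_2^{11}/L_3^6$ and $L_2^{13}/L_3^8$; pulling back the identity $\hat{X}^* - \hat X = \Delta\tilde X + [\Phi_\Lambda^{-1}(\cdot,X^*)-\Phi_\Lambda^{-1}(\cdot,X)]$ along the scattering, and using that the $\tilde\gamma_2^0$ at the homoclinic point differs from the base $\tilde\gamma_2$ by the phase $\beta_2$ identified in Appendix \ref{appendix_psi1phaseshift}, reproduces the second terms with the factor $\frac{L_1}{6}\sqrt{3/2}\,\hat\Gamma_2\sqrt{1-\tfrac53\hat\Gamma_2^2/L_1^2}$ (which is, up to a nontrivial constant, the asymptotic slope of $\tilde\gamma_2^2$ along the separatrix, cf.\ \eqref{eq_gammafrequency}--\eqref{eq_chia2def}).

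The main obstacle I anticipate is twofold: (i) showing that the dominant Melnikov contributions are indeed those claimed, i.e.\ that lower order terms of $F_{\mathrm{sec}}$ either have vanishing Melnikov integral (because they are $\tilde\gamma_2^0$--independent, or because they come from integrable pieces) or give strictly smaller contributions; and (ii) carefully bookkeeping the conversion between tilde and hat variables, since the averaging transformation of Lemma \ref{lemma_averaging} is itself of the same order as the splitting in the $\hat\Psi_1,\hat\Gamma_3$ directions and therefore genuinely contributes to $S_\pm$. The computation of the two new Melnikov integrals by residues (Section \ref{section_melnikovcomp}) will be the most technical piece but is a standard application of the method once the integrand is expressed in terms of $\sinh(A_2 t)$ and $\cosh(A_2 t)$ via Lemma \ref{lemma_separatrixformulas}.
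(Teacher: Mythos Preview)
Your overall strategy matches the paper's proof closely: Melnikov theory applied to the separatrix of Section~\ref{section_analysisofh0}, identification of the leading terms $H_2^{12},H_2^{23},H_5^{23}$ via Proposition~\ref{proposition_secularexpansion}, non-degenerate zeros of $\partial_{\tilde\gamma_2}\L$ giving two channels, and then conversion from tilde to hat coordinates. The paper organizes this as Proposition~\ref{proposition_melnikovtildeexp} (Melnikov expansion), Lemma~\ref{lemma_scatteringtildecoords} (scattering in tilde coordinates, following \cite{delshams2006biggaps}), and Lemma~\ref{lemma_scatteringhatcoords} (conjugation by $\Phi$).

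There is, however, a genuine confusion in your explanation of the second summand in $\S_1^\pm$ and $\S_3^\pm$. You attribute it to ``the $\tilde\gamma_2^0$ at the homoclinic point differing from the base $\tilde\gamma_2$ by the phase $\beta_2$'' and interpret the factor $\tfrac{L_1}{6}\sqrt{3/2}\,\hat\Gamma_2\sqrt{1-\tfrac{5}{3}\hat\Gamma_2^2/L_1^2}$ as related to $\tilde\gamma_2^2$. This is not correct. The coordinate transformation $\Phi$ of Lemma~\ref{lemma_averaging} does not depend on $\tilde\gamma_2$ at the relevant order (the generator $K_2$ depends only on $\tilde\psi_1,\tilde\Gamma_2$, and $\tilde K$ on $\tilde\psi_1,\tilde\gamma_3,\tilde\Gamma_2$). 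What actually produces the second summand is the \emph{phase shift in $\tilde\psi_1$}, namely $\tilde\psi_1^* - \tilde\psi_1 = L_2^{-2}\Delta_1^\pm(\tilde\Gamma_2)+\cdots$, computed in Appendix~\ref{appendix_psi1phaseshift}; the constant $\beta_2$ and the factor you quote are precisely $\Delta_1^\pm$. In the conjugation $\hat\Psi_1^* - \hat\Psi_1 = (\tilde\Psi_1^*-\tilde\Psi_1) + \tfrac{L_2^{11}}{L_3^6}\bigl[\Phi_1(\tilde\psi_1^*,\tilde\Gamma_2^*)-\Phi_1(\tilde\psi_1,\tilde\Gamma_2)\bigr]$, the bracket is at leading order $\partial_{\tilde\psi_1}\Phi_1\cdot L_2^{-2}\Delta_1^\pm$, so the two contributions land at the same order $L_2^9/L_3^6$. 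The $\tilde\gamma_2$ phase shift $2\arctan\chi^{-1}$ plays no role here. If you follow your stated mechanism literally you will not recover the second term.

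A minor point: the jumps are $+\partial_{\tilde\psi_1}\L^*_\pm$ and $+\partial_{\tilde\gamma_3}\L^*_\pm$ (with the \emph{reduced} potential $\L^*_\pm$ evaluated at the critical $\tau^*_\pm$), not $-\partial\mathcal L$; and the paper takes the full integrable $\tilde F_{\mathrm{quad}}^{12}$ (not just $H_0^{12}$) as the unperturbed piece, which is convenient since then the base separatrix is exact rather than approximate.
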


The proof of  Theorem \ref{theorem_outerdynamics} is done in several steps. 
The first step is to prove that the invariant manifolds of the cylinder 
intersect. This analysis is performed in the 
`tilde' coordinates introduced in  \eqref{eq_changeofcoordstilde}. The analysis of the transversality of these invariant manifolds allows us to 
derive first-order expressions for the associated scattering maps in 
`tilde' coordinates. Then, the second step is to  express the scattering 
maps in the  `hat' variables provided by Theorem \ref{theorem_innerdynamics}.

The first step is achieved by means of a suitable Poincar\'e-Melnikov Theory. 
We follow the approach by Delshams, de la Llave and Seara in 
\cite{delshams2006biggaps} which deals with the transverse intersection of the 
invariant manifolds of normally hyperbolic invariant manifolds in nearly 
integrable regimes.

Recall that the Hamiltonian $F_{\mathrm{quad}}^{12}$ analysed in 
Lemma \ref{lemma_quad12expansion}  is integrable. We show that the higher order 
terms in the Hamiltonian make the invariant manifolds of the cylinder split. 

Recall that in Section 
\ref{section_analysisofh0} we established the existence of two hyperbolic 
periodic orbits $Z^0_{\min}, Z^0_{\max}$ for the Hamiltonian $H_0^{12}$; 
moreover we found that the stable and unstable manifolds of these saddles 
coincide along a heteroclinic trajectory $Z^0$. Furthermore, in Poincar\'e 
variables \eqref{eq_poincarevariables} the two saddles are reduced to a single 
hyperbolic periodic orbit, which we denote by $Z^0_*$, and this hyperbolic 
periodic orbit possesses a homoclinic connection, which for convenience we 
continue to denote by $Z^0$. Write $\tilde{F}_{\mathrm{quad}}^{12} = L_2^6 
F_{\mathrm{quad}}^{12}$. Then $\tilde{F}_{\mathrm{quad}}^{12}$ possesses a 
hyperbolic periodic orbit $Z^{\mathrm{quad}}_*$ that is $O(L_2^{-1})$ close to 
$Z^0_*$; moreover there is a homoclinic orbit $Z^{\mathrm{quad}}$ to 
$Z^{\mathrm{quad}}_*$ that is $O(L_2^{-1})$ close to $Z^{0}$. Since 
$\tilde{F}_{\mathrm{quad}}^{12}$ is integrable, the homoclinic trajectory 
$Z^{\mathrm{quad}}$ corresponds to a non-transverse homoclinic manifold 
intersection of the stable and unstable manifolds of $Z^{\mathrm{quad}}_*$. 
Note that, as happened in Section \ref{section_analysisofh0}, such objects are 
referred to the Hamiltonian $\tilde{F}_{\mathrm{quad}}^{12}$ seen as a 1 degree 
of freedom Hamiltonian in the $(\tilde \gamma_1,\tilde\Gamma_1)$ variables (or 
equivalently in $(\eta,\xi)$ variables). One can consider the same objects in 
the full phase space. Then, the hyperbolic periodic orbit becomes the normally 
hyperbolic invariant cylinder and the homoclinic becomes a 7 dimensional 
homoclinic manifold. Note that, by the particular form of 
$F_{\mathrm{quad}}^{12}$ provided by
Lemma \ref{lemma_quad12expansion}, the invariant cylinder of 
$F_{\mathrm{quad}}^{12}$ is just foliated by invariant 
quasiperiodic tori. Abusing notation, we use the same notation for the objects 
in the reduced and the full phase space.

Now, write $\bar{H} = L_2^6 F_{\mathrm{sec}} - \tilde{F}_{\mathrm{quad}}^{12}$, and define the Poincar\'e-Melnikov potential by
\begin{align}
\begin{split}\label{eq_melnikovdef}
\L \left( \tilde{\gamma}_2, \tilde{\psi}_1, \tilde{\gamma}_3, \tilde{\Gamma}_2, \tilde{\Psi}_1, \tilde{\Gamma}_3 \right) ={}& \int_{- \infty}^{\infty} \Big( \bar{H} \left( Z^{\mathrm{quad}} \left( t, \tilde{\gamma}_2, \tilde{\psi}_1, \tilde{\gamma}_3, \tilde{\Gamma}_2, \tilde{\Psi}_1, \tilde{\Gamma}_3 \right) \right)  \\
& \quad- \bar{H} \left( Z_*^{\mathrm{quad}} \left( t, \tilde{\gamma}_2, 
\tilde{\psi}_1, \tilde{\gamma}_3, \tilde{\Gamma}_2, \tilde{\Psi}_1, 
\tilde{\Gamma}_3 \right) \right) \Big) \, dt. 
\end{split}
\end{align}
As with $\bar{H}$ itself, the Poincar\'e-Melnikov potential $\L$ can be 
expanded in ratios of powers of $L_2$ and $L_3$. The following result gives an 
expression for the first-order term at which each angle $\tilde{\gamma}_2, 
\tilde{\psi}_1, \tilde{\gamma}_3$ appears in the expansion of $\L$. The 
proposition is proved in Section \ref{section_melnikovcomp}. 

\begin{proposition}\label{proposition_melnikovtildeexp}
The expansion of the Poincar\'e-Melnikov potential $\L$ satisfies the following properties, where the notation $\alpha^{ij}_k$, $H^{ij}_k$ is as in Proposition \ref{proposition_secularexpansion}.
\begin{enumerate}
\item
The first nontrivial term in the expansion of $\L$ is $\frac{1}{L_2^2} \alpha_2^{12} \L_2^{12}$ where
\begin{align}
\L_2^{12}\left( \tilde{\gamma}_2, \tilde{\Gamma}_2 \right) ={}& \int_{- \infty}^{\infty} \left( H_2^{12} \left( Z^0 \left( t, \tilde{\gamma}_2, \tilde{\psi}_1, \tilde{\gamma}_3, \tilde{\Gamma}_2, \tilde{\Psi}_1, \tilde{\Gamma}_3 \right) \right) - H_2^{12} \left( Z^0_* \left( t, \tilde{\gamma}_2, \tilde{\psi}_1, \tilde{\gamma}_3, \tilde{\Gamma}_2, \tilde{\Psi}_1, \tilde{\Gamma}_3 \right) \right)\right) \, dt \\
={}& \tilde{\L}_2^{12} \left( \tilde{\Gamma}_2\right) \sin \tilde{\gamma}_2
\end{align}
and where $\tilde{\L}_2^{12}$ is an analytic function of $\tilde{\Gamma}_2$ that is nonvanishing for $\tilde \Gamma_2 \in [\zeta_1,\zeta_2]$ (see \eqref{eq_nhimdef}, \eqref{eq_nhimparametersdef}, and Appendix \ref{appendix_errata}). 
\item
The first term in the expansion of $\L$ that depends on $\tilde{\psi}_1$ is $\frac{L_2^9}{L_3^6} \alpha_2^{23} \L_2^{23}$ where
\begin{align}
\L_2^{23} \left(\tilde{\gamma}_2, \tilde{\psi}_1, \tilde{\Gamma}_2 \right) ={}& \int_{- \infty}^{\infty} \left( H_2^{23} \left( Z^0 \left( t, \tilde{\gamma}_2, \tilde{\psi}_1, \tilde{\gamma}_3, \tilde{\Gamma}_2, \tilde{\Psi}_1, \tilde{\Gamma}_3 \right) \right) - H_2^{23} \left( Z^0_* \left( t, \tilde{\gamma}_2, \tilde{\psi}_1, \tilde{\gamma}_3, \tilde{\Gamma}_2, \tilde{\Psi}_1, \tilde{\Gamma}_3 \right) \right)\right) \, dt \\
={}& \kappa \left( \frac{ \pi \, \tilde{\Gamma}_2}{A_2 \, L_1^2} \right) \left[ c_1 \, \cos \tilde{\gamma}_2 \, \cos \tilde{\psi}_1 + c_2 \, \sin \tilde{\gamma}_2 \, \sin \tilde{\psi}_1 \right]
\end{align}
where the function $\kappa$ is defined in \eqref{eq_melnikovcoefficientfn}, the constants $c_1, \, c_2$ are defined in \eqref{eq_c1c2def}, and $\chi, \, A_2$ are the constants defined in \eqref{eq_chia2def}.
\item
The first term in the expansion of $\L$ that depends on $\tilde{\gamma}_3$ is $\frac{L_2^{11}}{L_3^8} \alpha_5^{23} \L_5^{23}$ where
\begin{align}
\L_5^{23} \left(\tilde{\gamma}_2, \tilde{\psi}_1, \tilde{\gamma}_3, \tilde{\Gamma}_2 \right) ={}& \int_{- \infty}^{\infty} \left( H_5^{23} \left( Z^0 \left( t, \tilde{\gamma}_2, \tilde{\psi}_1, \tilde{\gamma}_3, \tilde{\Gamma}_2, \tilde{\Psi}_1, \tilde{\Gamma}_3 \right) \right) - H_5^{23} \left( Z^0_* \left( t, \tilde{\gamma}_2, \tilde{\psi}_1, \tilde{\gamma}_3, \tilde{\Gamma}_2, \tilde{\Psi}_1, \tilde{\Gamma}_3 \right) \right)\right) \, dt \\
={}& \kappa \left( \frac{ \pi \, \tilde{\Gamma}_2}{A_2 \, L_1^2} \right) \left( J_1 \left( \tilde{\psi}_1, \tilde{\gamma}_3 \right) \cos \tilde{\gamma}_2 + J_2 \left( \tilde{\psi}_1, \tilde{\gamma}_3 \right) \sin \tilde{\gamma}_2 \right)
\end{align}
where again $\kappa$ is the function defined by \eqref{eq_melnikovcoefficientfn}, and where
\[
\begin{split}
J_1 \left( \tilde{\psi}_1, \tilde{\gamma}_3 \right) &= 
30\,\delta_{3}^2\,\sin \tilde{\gamma}_{3}\,\cos \tilde{\psi}_{1}\,\sin 
 \tilde{\psi}_{1}-10\,\delta_{1}^2\,\sin \tilde{\gamma}_{3}\,\cos \tilde{\psi}_{1}\,
 \sin \tilde{\psi}_{1}-20\,\delta_{1}\,\delta_{3}\,\cos \tilde{\gamma}_{3}\,\cos ^2
 \tilde{\psi}_{1}+10\,\delta_{1}\,\delta_{3}\,\cos \tilde{\gamma}_{3}\\
J_2 \left( \tilde{\psi}_1, \tilde{\gamma}_3 \right) &= 
-50\,\delta_{1}\,\delta_{3}\,\cos \tilde{\gamma}_{3}\,\cos \tilde{\psi}_{1}\,\sin 
 \tilde{\psi}_{1}+ \frac{70\,\delta_{3}}{\delta_1}\,\cos \tilde{\gamma}_{3}\,\cos \tilde{\psi}_{1}\,
 \sin \tilde{\psi}_{1}+\frac{105\,\delta_{3}^2}{\delta_1^2}\,\sin 
 \tilde{\gamma}_{3}\,\cos ^2\tilde{\psi}_{1} \\
 & -75\,\delta_{3}^2\,
 \sin \tilde{\gamma}_{3}\,\cos ^2\tilde{\psi}_{1}+25\,\delta_{1}^2\,\sin \tilde{\gamma}_{3}\,\cos 
 ^2\tilde{\psi}_{1}-35\,\sin \tilde{\gamma}_{3}\,\cos ^2\tilde{\psi}_{1}- \frac{105\,
 \delta_{3}^2}{\delta_1^2} \,\sin \tilde{\gamma}_{3} \\
 & +60\,\delta_{3}^2\,
 \sin \tilde{\gamma}_{3}-17\,\delta_{1}^2\,\sin \tilde{\gamma}_{3}+28\,\sin \tilde{\gamma}_{3}.
\end{split}
\]
\end{enumerate}
\end{proposition}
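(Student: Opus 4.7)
The plan is to expand $\bar H = L_2^6 F_{\mathrm{sec}} - \tilde F^{12}_{\mathrm{quad}}$ monomial by monomial using Proposition~\ref{proposition_secularexpansion}, interchange summation and integration in the defining formula \eqref{eq_melnikovdef}, and then identify which monomials survive. A crucial simplification is that the trajectories $Z^{\mathrm{quad}}$ and $Z^{\mathrm{quad}}_*$ are $O(L_2^{-1})$-perturbations of the unperturbed separatrix $Z^0$ and periodic orbit $Z^0_*$ of $H_0^{12}$ from Lemma~\ref{lemma_separatrixformulas}; replacing them with $Z^0$, $Z^0_*$ introduces an additional factor of $L_2^{-1}$ which can be absorbed into the $\cdots$ terms. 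Along these unperturbed orbits, only the coordinates $\gamma_1, \Gamma_1, \tilde\gamma_2$ vary in time, the remaining variables $\tilde\Gamma_2, \tilde\psi_1, \tilde\Psi_1, \tilde\gamma_3, \tilde\Gamma_3$ being parameters; moreover $\Gamma_1(t)\equiv L_1$ on $Z^0_*$ while $\tilde\gamma_2$ differs from its periodic counterpart precisely by the phase shift $\tilde\gamma_2^2(t)=\arctan(\chi^{-1}\tanh(A_2 t))$ from \eqref{eq_gamma22separatrix}. Hence any monomial of $\bar H$ that depends \emph{only} on $(\tilde\Gamma_2,\tilde\psi_1,\tilde\Psi_1,\tilde\gamma_3,\tilde\Gamma_3)$ contributes $0$ to $\mathcal{L}$. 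In particular, by Lemma~\ref{lemma_quad23exp} and \eqref{eq_H023H123def}, the Hamiltonians $H_0^{23}$ and $H_1^{23}$ depend only on $(\tilde\psi_1,\tilde\Psi_1,\tilde\Gamma_2,\tilde\Gamma_3)$ and therefore drop out entirely, despite being the formally lowest order $\tilde\psi_1$-dependent terms in $F_{\mathrm{sec}}$.

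For part~1, the lowest surviving term in the $12$-sector is $\epsilon^8 \alpha_2^{12} H_2^{12}$, which after the $L_2^6$ rescaling produces the prefactor $L_2^{-2}\alpha_2^{12}$. Since $H_2^{12}$ is identical to the octupolar Hamiltonian of the three-body problem and the parametrization of $Z^0$ matches the one used in \cite{fejoz2016secular}, the integral $\mathcal{L}_2^{12}$ is exactly the Melnikov integral computed there (with the corrections of Appendix~\ref{appendix_errata}), yielding the factorized form $\tilde{\mathcal{L}}_2^{12}(\tilde\Gamma_2)\sin\tilde\gamma_2$ with $\tilde{\mathcal{L}}_2^{12}$ analytic and nonvanishing on $[\zeta_1,\zeta_2]$.

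For parts~2 and~3, the first surviving $\tilde\psi_1$-dependent monomial is $\epsilon^3\mu^6 \alpha_2^{23} H_2^{23}$ and the first surviving $\tilde\gamma_3$-dependent monomial is $\epsilon^2\mu^8 \alpha_5^{23} H_5^{23}$, giving the stated prefactors after the $L_2^6$ rescaling. Using the explicit forms \eqref{eq_H223def} and \eqref{eq_H523def}, the Melnikov integrands reduce to linear combinations of $\sqrt{\Gamma_1(t)^2-\tilde\Gamma_2^2}\cdot \{\cos\tilde\gamma_2(t),\sin\tilde\gamma_2(t)\}$ (minus the corresponding periodic-orbit quantities), multiplied by trigonometric polynomials in $\tilde\psi_1$ and $\tilde\gamma_3$ that factor out. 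Inserting \eqref{eq_cosgamma1separatrix}, \eqref{eq_Gamma1separatrix}, \eqref{eq_gamma22separatrix}, using the addition formula to split $\tilde\gamma_2(t) = \tilde\gamma_2^1(t) + \tilde\gamma_2^2(t) + \tilde\gamma_2^0$, and expressing $\cos\tilde\gamma_2^2(t), \sin\tilde\gamma_2^2(t)$ rationally in $\tanh(A_2 t)$ via the $\arctan$ identity, the integrand becomes a meromorphic function of $t$ with poles located at the zeros of $\cosh(A_2 t)$, i.e.\ at $t = i\pi(2k+1)/(2 A_2)$. I would then evaluate the integrals by closing the contour in the upper half-plane and applying the residue theorem; the $t/\sinh(\cdot)$ structure of $\kappa$ in \eqref{eq_melnikovcoefficientfn} is the familiar signature of such residue sums in a shifted-frequency Melnikov computation, and a direct book-keeping of the cosine/sine coefficients produces exactly the combinations $c_1\cos\tilde\gamma_2\cos\tilde\psi_1 + c_2\sin\tilde\gamma_2\sin\tilde\psi_1$ and $J_1\cos\tilde\gamma_2 + J_2\sin\tilde\gamma_2$ claimed in the proposition.

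The main obstacle, and the part I would expect to consume the most pages, is the explicit residue calculation for $\mathcal{L}_5^{23}$: the function $H_5^{23}$ already carries a long trigonometric expansion in $\tilde\psi_1,\tilde\gamma_3$, and after substitution of the separatrix one must carefully bookkeep many terms of the form $\sin^a\gamma_1(t)\cos^b\gamma_1(t)\cdot\{\cos,\sin\}\tilde\gamma_2(t)$ and collect them into a single $\kappa$-multiple. The computation is purely mechanical but error-prone, which is presumably why it is deferred to Section~\ref{section_melnikovcomp}; the structural reason every integral evaluates to the \emph{same} scalar factor $\kappa\bigl(\pi\tilde\Gamma_2/(A_2 L_1^2)\bigr)$ is that all singularities of the integrand come from the common factor $1/\cosh(A_2 t)$ inherited from $\Gamma_1(t)$ and its companions in \eqref{eq_cosgamma1separatrix}–\eqref{eq_Gamma1separatrix}, and the oscillatory $e^{i\,\tilde\gamma_2^1(t)} = e^{2i\tilde\Gamma_2 t/L_1^2}$ factor sets the exponential shift appearing in the argument of $\kappa$.
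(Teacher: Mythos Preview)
Your overall architecture is correct and matches the paper's: reduce to the unperturbed separatrix $Z^0$, observe that any monomial depending only on $(\tilde\Gamma_2,\tilde\psi_1,\tilde\Psi_1,\tilde\gamma_3,\tilde\Gamma_3)$ contributes zero (so $H_0^{23},H_1^{23},H_3^{23},H_4^{23}$ all drop out), cite \cite{fejoz2016secular} for part~1, and for parts~2 and~3 exploit the common factor $\sqrt{\Gamma_1^2-\tilde\Gamma_2^2}$ in \eqref{eq_H223def} and \eqref{eq_H523def} to pull the $\tilde\psi_1,\tilde\gamma_3$ dependence outside the integral. This is precisely the decomposition \eqref{eq_melnikovsplitintwo} in the paper.

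However, you undercut your own key observation. Having correctly factored out $c_1,c_2,J_1,J_2$, both $\L_2^{23}$ and $\L_5^{23}$ reduce to the \emph{same} two building-block integrals
\[
\L_1=\int\Bigl(\sqrt{\Gamma_1^2-\tilde\Gamma_2^2}\cos\tilde\gamma_2\Bigr)_{\text{sep}-\text{per}}\,dt,\qquad
\L_2=\int\Bigl(\sqrt{\Gamma_1^2-\tilde\Gamma_2^2}\sin\tilde\gamma_2\Bigr)_{\text{sep}-\text{per}}\,dt,
\]
so there is \emph{no} separate residue calculation for $\L_5^{23}$; it requires exactly zero additional contour work beyond $\L_2^{23}$. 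Your ``main obstacle'' paragraph therefore misdiagnoses the difficulty. The paper in fact does a single contour computation, for $\L_1^*$ (the Fourier coefficient of $\L_1$), and obtains $\L_2^*=-i\L_1^*$ for free from the symmetry $\F_2=-\partial/\partial\tilde\gamma_2\,\F_1$ type relation between the integrands.

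The contour strategy also differs. You propose closing in the upper half-plane and summing residues at $i\pi(2k{+}1)/(2A_2)$; this would produce a geometric series, but the integrand on each half-line is $(\tanh\tau\mp1)e^{i\omega\tau}$, which does not decay on the opposite half-line, so you cannot simply combine them into a single contour over $\mathbb{R}$ without convergence gymnastics. The paper instead integrates $f^+$ and $f^-$ over rectangular contours $C^\pm$ of height $i\pi$, exploiting the exact quasi-periodicity $f^\pm(\tau+i\pi)=e^{-\pi\omega}f^\pm(\tau)$; this collapses the problem to a \emph{single} residue at $i\pi/2$ plus an elementary segment integral, and yields $\kappa$ directly without summing a series. (Minor slip: $H_5^{23}$ sits in $F_{3,8}$, i.e.\ at order $\epsilon^3\mu^8$, not $\epsilon^2\mu^8$; your final prefactor $L_2^{11}/L_3^8$ is nonetheless correct.)
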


The following result guarantees the existence of two homoclinic channels, and thus two scattering maps; furthermore it provides first-order approximations of the scattering maps in `tilde' variables. 

\begin{lemma}\label{lemma_scatteringtildecoords}
The secular Hamiltonian has two homoclinic channels $\Gamma_{\pm}$ corresponding to the normally hyperbolic invariant manifold $\Lambda$, and there are two scattering maps defined globally on $\Lambda$ (see \eqref{eq_nhimdef} and \eqref{eq_nhimparametersdef}) in the variables \eqref{eq_changeofcoordstilde} by
\begin{equation}
\tilde{S}_{\pm} : \left( \tilde{\gamma}_2, \tilde{\psi}_1, \tilde{\gamma}_3, \tilde{\Gamma}_2, \tilde{\Psi}_1, \tilde{\Gamma}_3 \right) \longmapsto \left( \tilde{\gamma}_2^*, \tilde{\psi}_1^*, \tilde{\gamma}_3^*, \tilde{\Gamma}_2^*, \tilde{\Psi}_1^*, \tilde{\Gamma}_3^* \right)
\end{equation}
with
\begin{equation}
\tilde{\gamma}_2^* = \tilde{\gamma}_2 + \Delta_2^{\pm} \left( \tilde{\Gamma}_2; \cdots \right), \quad
\tilde{\psi}_1^* = \tilde{\psi}_1 + \frac{1}{L_2^2} \Delta_1^{\pm} \left( \tilde{\Gamma}_2 ; \cdots \right), \quad
\tilde{\gamma}_3^* = \tilde{\gamma}_3 + \frac{L_2^{10}}{L_3^7} \Delta_3^{\pm} \left( \tilde{\gamma}_2, \tilde{\psi}_1, \tilde{\gamma}_3, \tilde{\Gamma}_2, \tilde{\Psi}_1, \tilde{\Gamma}_3 \right) 
\end{equation}
and
\begin{equation}
\tilde{\Gamma}_2^* = \tilde{\Gamma}_2 + \frac{L_2^8}{L_3^6} \Theta_2^{\pm} \left( \tilde{\psi}_1, \tilde{\Gamma}_2; \cdots \right), \quad
\tilde{\Psi}_1^* = \tilde{\Psi}_1 + \frac{L_2^9}{L_3^6} \Theta_1^{\pm}\left( \tilde{\psi}_1, \tilde{\Gamma}_2; \cdots \right), \quad
\tilde{\Gamma}_3^* = \tilde{\Gamma}_3 + \frac{L_2^{11}}{L_3^8} \Theta_3^{\pm} \left( \tilde{\psi}_1, \tilde{\gamma}_3, \tilde{\Gamma}_2; \cdots \right)
\end{equation}
where the ellipsis after the semicolon denotes dependence on the remaining variables at higher order, and where
\begin{equation}
\Delta_2^{\pm} \left( \tilde{\Gamma}_2 ; \cdots \right) = 2 \, \arctan 
\chi^{-1} + \cdots, \quad \Delta_1^{\pm} \left( \tilde{\Gamma}_2 ; \cdots 
\right) = \frac{L_1}{6} \sqrt{\frac{3}{2}}  \, \beta_2 
\tilde{\Gamma}_2 \sqrt{1 - \frac{5}{3} \frac{\tilde{\Gamma}_2^2}{L_1^2}} + 
\cdots,\quad \Delta_3^{\pm}=O(1)
\end{equation}
and
\begin{align}
\Theta_2^{\pm} \left( \tilde{\psi}_1, \tilde{\Gamma}_2; \cdots \right) ={}& \pm  \alpha_1^{12} \, \alpha_2^{23} \, \kappa \left( \frac{\pi \, \tilde{\Gamma}_2}{A_2 \, L_1^2} \right) \, c_2 \, \left(3 \frac{\tilde{\Gamma}_2^2}{L_1^2} - 1 \right) \frac{L_1^2}{2 \, \tilde{\Gamma}_2} \, \cos \tilde{\psi}_1 + \cdots \\
\Theta_1^{\pm}\left( \tilde{\psi}_1, \tilde{\Gamma}_2; \cdots \right) ={}& \mp \alpha_2^{23} \, \kappa \left( \frac{\pi \, \tilde{\Gamma}_2}{A_2 \, L_1^2} \right) \, c_2 \, \cos \tilde{\psi}_1 + \cdots \\
\Theta_3^{\pm} \left( \tilde{\psi}_1, \tilde{\gamma}_3, \tilde{\Gamma}_2; \cdots \right) ={}& \mp  \alpha_5^{23} \, \kappa \left( \frac{\pi \, \tilde{\Gamma}_2}{A_2 \, L_1^2} \right) \, \frac{\partial J_2}{\partial \tilde{\gamma}_3} \left( \tilde{\psi}_1, \tilde{\gamma}_3 \right) + \cdots
\end{align}
where $\kappa$ is the function defined by \eqref{eq_melnikovcoefficientfn}. 
\end{lemma}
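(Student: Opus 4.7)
The plan is to view $L_2^6 F_{\mathrm{sec}} = \tilde F_{\mathrm{quad}}^{12} + \bar H$ as a perturbation of the integrable Hamiltonian $\tilde F_{\mathrm{quad}}^{12}$ whose normally hyperbolic cylinder $\Lambda^{\mathrm{quad}} \supset Z_*^{\mathrm{quad}}$ has coincident stable and unstable manifolds along the homoclinic manifold traced by $Z^{\mathrm{quad}}$. Proposition~\ref{proposition_secularexpansion} guarantees $\bar H$ is small, and classical Poincar\'e--Melnikov theory in the version of Delshams--de la Llave--Seara~\cite{delshams2006biggaps} then says that transverse homoclinic intersections of the perturbed invariant manifolds of $\Lambda$ correspond, at first order, to non-degenerate critical points of the Melnikov potential $\L$ in~\eqref{eq_melnikovdef} with respect to a suitably chosen angle on $\Lambda$ (the angle conjugate to the ``symplectic direction'' along which one slides the reference point on $Z^{\mathrm{quad}}$).

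First I would apply part~1 of Proposition~\ref{proposition_melnikovtildeexp}: the dominant term of $\L$ is $L_2^{-2} \alpha_2^{12} \tilde \L_2^{12}(\tilde\Gamma_2) \sin \tilde \gamma_2$, and $\tilde \L_2^{12}$ is nonzero on the range $[\zeta_1,\zeta_2]$. Thus the function $\tilde \gamma_2 \mapsto \L$ has exactly two non-degenerate critical points, $\tilde\gamma_2 = \pi/2$ and $\tilde\gamma_2 = 3\pi/2$, which by the implicit function theorem persist as smooth critical manifolds $\tilde\gamma_2^{\pm}(\tilde\Gamma_2, \tilde\Psi_1, \tilde\Gamma_3, \tilde\psi_1, \tilde\gamma_3) = \pm \pi/2 + \cdots$ once the higher-order corrections from parts~2 and~3 of Proposition~\ref{proposition_melnikovtildeexp} are included. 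Each critical manifold yields a homoclinic channel $\Gamma_\pm$ on which the stable and unstable manifolds of $\Lambda$ intersect transversally, and consequently a globally defined scattering map $\tilde S_\pm : \Lambda \to \Lambda$.

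Next, I would compute the first-order formulas for $\tilde S_\pm$. The DLS formulas express the action jumps as partial derivatives of $\L$ with respect to the conjugate angles, evaluated on the critical manifold $\tilde\gamma_2^\pm$, and divided by the appropriate symplectic normalization. Concretely, $\tilde\Psi_1^* - \tilde\Psi_1 = \partial_{\tilde\psi_1} \L\bigl|_{\tilde\gamma_2 = \tilde\gamma_2^\pm}$ up to higher orders; substituting $\sin \tilde\gamma_2^\pm = \pm 1$, $\cos \tilde\gamma_2^\pm = 0$ into part~2 of Proposition~\ref{proposition_melnikovtildeexp} gives the announced $\mp \alpha_2^{23}\kappa(\pi\tilde\Gamma_2/(A_2 L_1^2)) c_2 \cos \tilde\psi_1$ and analogously yields $\Theta_2^\pm$ (from $\partial_{\tilde\gamma_2}$, which picks up the factor $(3\tilde\Gamma_2^2/L_1^2 - 1) L_1^2/(2\tilde\Gamma_2)$ coming from Hamilton's equation for $\tilde\gamma_2$ under $H_1^{12}$) and $\Theta_3^\pm$ (from $\partial_{\tilde\gamma_3}$ applied to part~3, together with the fact that $J_1 \cos \tilde\gamma_2^\pm$ vanishes). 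The angle shifts $\Delta_j^\pm$ come from a different mechanism: they are the asymptotic phase shifts of the unperturbed angles along $Z^{\mathrm{quad}}$ relative to $Z_*^{\mathrm{quad}}$. For $\tilde\gamma_2$ this is $2\arctan \chi^{-1}$, obtainable directly from~\eqref{eq_gamma22separatrix} by taking the difference $\tilde\gamma_2^2(+\infty) - \tilde\gamma_2^2(-\infty)$; for $\tilde\psi_1$ it requires the corresponding asymptotic shift computed from $\dot{\tilde\psi}_1 = \partial_{\tilde\Psi_1} H_1^{12}$ integrated along the separatrix, producing the constant $\beta_2$ defined in Appendix~\ref{appendix_psi1phaseshift} and giving the stated $\Delta_1^\pm$; finally $\Delta_3^\pm$ is $O(1)$ but multiplied by the small prefactor $L_2^{10}/L_3^7$, so only its order matters here.

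The main obstacle I foresee is book-keeping of the multiple time scales: the Melnikov potential contains terms of vastly different sizes (from $L_2^{-2}$ down to $L_2^{11}/L_3^8$), and one has to ensure that the higher-order corrections to the critical manifold $\tilde\gamma_2^\pm$ do not spoil the announced leading orders of $\Theta_1^\pm$ and $\Theta_3^\pm$. In particular, when evaluating $\partial_{\tilde\psi_1}\L$ at $\tilde\gamma_2^\pm$, the correction $\tilde\gamma_2^\pm - (\pm \pi/2)$ is itself of order $L_2^7/L_3^6$ times the $\tilde\psi_1$-dependent part, and composing this with the $O(L_2^{-2})$ leading term could produce spurious contributions of the same order as $\Theta_1^\pm$; a careful expansion shows these cancel because the leading term of $\L$ is $\tilde\psi_1$-independent. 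A parallel verification is needed for $\tilde\gamma_3$, using part~3 of Proposition~\ref{proposition_melnikovtildeexp}. Once this is done, transferring from the tilde to the hat coordinates of Theorem~\ref{theorem_innerdynamics} is purely a computation with the near-identity changes~\eqref{eq_psi1symptransf}--\eqref{eq_gamma3symptransf}, and will be carried out in the next lemma to produce formulas~\eqref{eq_scatteringpsi1}--\eqref{eq_scatteringgamma3}.
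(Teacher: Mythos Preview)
Your proposal is correct and follows essentially the same approach as the paper: apply the Delshams--de la Llave--Seara Melnikov theory to $\tilde F_{\mathrm{quad}}^{12}$ perturbed by $\bar H$, find two non-degenerate critical points of $\L$ in the $\tilde\gamma_2$-direction from Proposition~\ref{proposition_melnikovtildeexp}~(1), and read off the action jumps as partial derivatives of the reduced potential. The only presentational difference is that the paper parametrizes by the inner-flow time $\tau$ (so that the critical-point equation is $\frac{d}{d\tau}\L(\tilde\gamma_2-\omega_0\tau,\tilde\psi_1-\omega_1\tau,\tilde\gamma_3-\omega_2\tau,\cdot)=0$) and then defines the reduced potential $\L^*_\pm$ by substituting $\tau=\tau^*_\pm$, whereas you slide in $\tilde\gamma_2$ directly; since $\omega_0$ is the dominant frequency these are equivalent at the orders needed.

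One small imprecision: in your description of $\Theta_2^\pm$, the factor $L_1^2/(2\tilde\Gamma_2)$ is $1/\omega_0$ coming from $H_0^{12}$ (not $H_1^{12}$), while $(3\tilde\Gamma_2^2/L_1^2-1)$ is the $\tilde\psi_1$-frequency coming from $H_1^{12}$. The mechanism is the ratio $\omega_1/\omega_0$, obtained either by expressing $\partial_{\tilde\gamma_2}\L^*$ via the criticality relation or (as the paper does) by differentiating the shifted argument $\tilde\psi_1-\omega_1\tau^*_\pm$ with respect to $\tilde\gamma_2$ and using $\partial\tau^*_\pm/\partial\tilde\gamma_2=1/\omega_0$.
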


\begin{proof}
Denote by $\left( \omega_0, \omega_1, \omega_2 \right)$ the frequency vector of 
the angles $\left( \tilde{\gamma}_2, \tilde{\psi}_1, \tilde{\gamma}_3 \right)$ 
on a torus on $\Lambda$ corresponding to fixed values of the actions 
$\tilde{\Gamma}_2, \tilde{\Psi}_1, \tilde{\Gamma}_3$ for the Hamiltonian 
$\tilde{F}_{\mathrm{quad}}^{12}$. Its particular form, given in Lemma \ref{lemma_quad12expansion}, implies that 
\[
\omega_0=2\alpha_0^{12}\frac{\tilde\Gamma_2}{L_1^2}+O\left(\frac{1}{L_2}
\right)\qquad \omega_1=O\left(\frac{1}{L_2}
\right),\qquad \omega_2=0.
\]
Consider the function
\begin{equation}\label{eq_melnikovfrequencymap}
\tau \longmapsto \L \left( \tilde{\gamma}_2 - \omega_0 \, \tau, \tilde{\psi}_1 - \omega_1 \, \tau, \tilde{\gamma}_3 - \omega_2 \, \tau, \tilde{\Gamma}_2, \tilde{\Psi}_1, \tilde{\Gamma}_3 \right) 
\end{equation}
where $\L$ is the Poincar\'e-Melnikov potential defined by 
\eqref{eq_melnikovdef}. Results  of \cite{delshams2006biggaps} imply that 
nondegenerate critical points of \eqref{eq_melnikovfrequencymap} correspond to 
transverse homoclinic intersections of the stable and unstable manifolds of 
$\Lambda$. Equation \eqref{eq_melnikovdef}, Proposition 
\ref{proposition_secularexpansion}, and Proposition 
\ref{proposition_melnikovtildeexp} imply that 
\begin{equation}
\L = \frac{1}{L_2^2} \alpha_2^{12} \L_2^{12} + O \left( \frac{L_2^9}{L_3^6}\right). 
\end{equation}
The function $\tau \mapsto \L_2^{12} \left( \tilde{\gamma}_2 - \omega_0 \, \tau, \tilde{\Gamma}_2 \right)$ has nondegenerate critical points $\tau_{\pm}$ where $\omega_0 \tau_{\pm} = \tilde{\gamma}_2 \pm \frac{\pi}{2} $. It follows that there are functions 
\begin{equation}
\tau^*_{\pm} \left( \tilde{\gamma}_2, \tilde{\psi}_1, \tilde{\gamma}_3, \tilde{\Gamma}_2, \tilde{\Psi}_1, \tilde{\Gamma}_3 \right) = \frac{1}{\omega_0} \left( \tilde{\gamma}_2 \pm \frac{\pi}{2} \right) + O \left( \frac{L_2^{11}}{L_3^6} \right)
\end{equation}
such that
\begin{equation}
\left. \frac{d}{d \tau} \right|_{\tau = \tau^*_{\pm}} \L \left( \tilde{\gamma}_2 - \omega_0 \, \tau, \tilde{\psi}_1 - \omega_1 \, \tau, \tilde{\gamma}_3 - \omega_2 \, \tau, \tilde{\Gamma}_2, \tilde{\Psi}_1, \tilde{\Gamma}_3 \right)  = 0. 
\end{equation}
We now introduce the reduced Poincar\'e-Melnikov potentials
\begin{equation}
\L^*_{\pm} \left( \tilde{\gamma}_2, \tilde{\psi}_1, \tilde{\gamma}_3, \tilde{\Gamma}_2, \tilde{\Psi}_1, \tilde{\Gamma}_3 \right) = \L \left( \tilde{\gamma}_2 - \omega_0 \, \tau^*_{\pm}, \tilde{\psi}_1 - \omega_1 \, \tau^*_{\pm}, \tilde{\gamma}_3 - \omega_2 \, \tau^*_{\pm}, \tilde{\Gamma}_2, \tilde{\Psi}_1, \tilde{\Gamma}_3 \right). 
\end{equation}
Now, following again \cite{delshams2006biggaps}, the changes in the actions 
coming from the scattering maps $\tilde{S}_{\pm}$ are defined using the 
functions $\L^*_{\pm}$ via
\begin{equation}
\tilde{\Gamma}_2^* = \tilde{\Gamma}_2 + \frac{\partial \L^*_{\pm}}{\partial 
\tilde{\gamma}_2} + \cdots, \quad \tilde{\Psi}_1^* = \tilde{\Psi}_1 + 
\frac{\partial \L^*_{\pm}}{\partial \tilde{\psi}_1} + \cdots, \quad 
\tilde{\Gamma}_3^* = \tilde{\Gamma}_3 + \frac{\partial \L^*_{\pm}}{\partial 
\tilde{\gamma}_3} + \cdots
\end{equation}
Note that the cylinder frequencies in the model in \cite{delshams2006biggaps} 
all have the same time scale and moreover the first order of the perturbation 
depends on all the angles. On the contrary, in our model all have 
different speeds and the angles $\tilde\psi_1$ and $\tilde\gamma_3$ appear 
only at higher order terms (see Proposition 
\ref{proposition_secularexpansion}). Still, one can easily check that the 
statements in \cite{delshams2006biggaps} are still valid in the present setting. 
The only difference is that the first order of the scattering maps in the 
actions $\tilde\Psi_1$ and $\tilde\Gamma_3$ come from higher orders of the 
Melnikov potential.

Indeed, we have
\begin{align}
\frac{\partial \L^*_{\pm}}{\partial \tilde{\gamma}_2} ={}& \frac{L_2^9}{L_3^6} \, \alpha_2^{23} \, \frac{\partial \L_2^{23}}{\partial \tilde{\psi}_1} \, \frac{\partial}{\partial \tilde{\gamma}_2} \left( \tilde{\psi}_1 - \tau_{\pm}^* \, \omega_1 \right) + \cdots \\
={}& - \frac{L_2^9}{L_3^6} \, \alpha_2^{23} \, \kappa \left( \frac{\pi \, \tilde{\Gamma}_2}{A_2 \, L_1^2} \right) \, c_2 \, \sin \left( \mp \frac{\pi}{2} \right) \cos \left( \tilde{\psi}_1 \right) \omega_1 \, \frac{\partial \tau_{\pm}^*}{\partial \tilde{\gamma}_2} + \cdots \\
={}& \pm \frac{L_2^8}{L_3^6} \, \alpha_1^{12} \, \alpha_2^{23} \, \kappa \left( \frac{\pi \, \tilde{\Gamma}_2}{A_2 \, L_1^2} \right) \, c_2 \, \left(3 \frac{\tilde{\Gamma}_2^2}{L_1^2} - 1 \right) \frac{1}{\omega_0} \, \cos \tilde{\psi}_1 + \cdots \\
={}& \pm \frac{L_2^8}{L_3^6} \, \alpha_1^{12} \, \alpha_2^{23} \, \kappa \left( \frac{\pi \, \tilde{\Gamma}_2}{A_2 \, L_1^2} \right) \, c_2 \, \left(3 \frac{\tilde{\Gamma}_2^2}{L_1^2} - 1 \right) \frac{L_1^2}{2 \, \tilde{\Gamma}_2} \, \cos \tilde{\psi}_1 + \cdots,
\end{align}
\begin{equation}
\frac{\partial \L^*_{\pm}}{\partial \tilde{\psi}_1} = \frac{L_2^9}{L_3^6} \, \alpha_2^{23} \, \frac{\partial \L_2^{23}}{\partial \tilde{\psi}_1} + \cdots = \mp \frac{L_2^9}{L_3^6} \, \alpha_2^{23} \, \kappa \left( \frac{\pi \, \tilde{\Gamma}_2}{A_2 \, L_1^2} \right) \, c_2 \, \cos \tilde{\psi}_1 + \cdots,
\end{equation}
and
\begin{equation}
\frac{\partial \L^*_{\pm}}{\partial \tilde{\gamma}_3} = \frac{L_2^{11}}{L_3^8} \, \alpha_5^{23} \, \frac{\partial \L_5^{23}}{\partial \tilde{\gamma}_3} + \cdots = \mp \frac{L_2^{11}}{L_3^8} \, \alpha_5^{23} \, \kappa \left( \frac{\pi \, \tilde{\Gamma}_2}{A_2 \, L_1^2} \right) \, \frac{\partial J_2}{\partial \tilde{\gamma}_3} \left( \tilde{\psi}_1, \tilde{\gamma}_3 \right) + \cdots
\end{equation}

For the angles $\tilde{\gamma}_2, \tilde{\psi}_1, \tilde{\gamma}_3$, the first-order term under application of the scattering map is a so-called \emph{phase shift}. This is a change in the angle that comes from the integrable part of the Hamiltonian along the separatrix, and does not necessarily depend on the functions $\L^*_{\pm}$ at first order. The phase shift in $\tilde{\gamma}_2$ comes from \eqref{eq_gamma22separatrix} as follows:
\begin{equation}
\Delta_2 \left( \tilde{\Gamma}_2; \cdots \right) = \lim_{t \to + \infty} \left( \tilde{\gamma}_2^2 (t) - \tilde{\gamma}_2^2 (-t) \right) = 2 \, \arctan \chi^{-1}. 
\end{equation}
The phase shift in $\tilde{\psi}_1$ is computed in Appendix \ref{appendix_psi1phaseshift}. As for the phase shift in $\tilde{\gamma}_3$, we simply estimate that it cannot be larger than $O \left( \frac{L_2^{10}}{L_3^7}\right)$ for the following reason: the first order term in the expansion of the secular Hamiltonian containing $\tilde{\Gamma}_3$ is of order $\frac{L_2^3}{L_3^6}$ (see Proposition \ref{proposition_secularexpansion}). Since this term provides no phase shift in $\tilde{\gamma}_3$ at first order, the largest possible term that can produce a phase shift has another factor of $\frac{L_2}{L_3}$. Since we normalise the entire secular Hamiltonian by $L_2^6$, we see that the phase shift in $\tilde{\gamma}_3$ cannot be larger than terms of order $\frac{L_2^3}{L_3^6} \frac{L_2}{L_3} L_2^6 = \frac{L_2^{10}}{L_3^7}$. 
\end{proof}

\begin{remark}
Note that we do not give an expression for $\Delta_3$ in Lemma \ref{lemma_scatteringtildecoords}. Indeed we require only an estimate on its order; as we will see in the proof of Lemma \ref{lemma_scatteringhatcoords}, the phase shift in $\tilde{\gamma}_3$ is small enough that we can ignore it. 
\end{remark}

\sloppy Next, we combine Lemma \ref{lemma_scatteringtildecoords} and the coordinate 
transformation $\left( \tilde{\gamma}_2, \tilde{\psi}_1, \tilde{\gamma}_3, 
\tilde{\Gamma}_2, \tilde{\Psi}_1, \tilde{\Gamma}_3 \right) \mapsto \left( 
\hat{\gamma}_2, \hat{\psi}_1, \hat{\gamma}_3, \hat{\Gamma}_2, \hat{\Psi}_1, 
\hat{\Gamma}_3 \right)$ provided by Theorem \ref{theorem_innerdynamics} (see also Remark \ref{remark_shrinkingnhim}) to 
produce an expression for the scattering maps $S_{\pm}$ in `hat' variables, thus 
completing the proof of Theorem \ref{theorem_outerdynamics}. 

\begin{lemma}\label{lemma_scatteringhatcoords}
In the `hat' coordinates, the scattering maps $S_{\pm} : \Lambda \to \Lambda'$ introduced in Lemma \ref{lemma_scatteringtildecoords} are given by 
\begin{equation}
S_{\pm}: \left( \hat{\gamma}_2, \hat{\psi}_1, \hat{\gamma}_3, \hat{\Gamma}_2, \hat{\Psi}_1, \hat{\Gamma}_3 \right) \longmapsto \left( \hat{\gamma}_2^*, \hat{\psi}_1^*, \hat{\gamma}_3^*, \hat{\Gamma}_2^*, \hat{\Psi}_1^*, \hat{\Gamma}_3^* \right)
\end{equation}
with
\begin{equation}
\hat{\Psi}_1^* = \hat{\Psi}_1 + \frac{L_2^9}{L_3^6} \S_1^{\pm} \left( \hat{\psi}_1, \hat{\Gamma}_2 \right) + \cdots, \quad \hat{\Gamma}_3^* = \hat{\Gamma}_3 + \frac{L_2^{11}}{L_3^8} \S^{\pm}_3 \left( \hat{\psi}_1, \hat{\gamma}_3, \hat{\Gamma}_2 \right) + \cdots
\end{equation}
where $\S_1^{\pm}$ and $\S_3^{\pm}$ are given by \eqref{eq_scatteringpsi1} and \eqref{eq_scatteringgamma3} respectively. 
\end{lemma}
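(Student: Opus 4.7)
The plan is to transport the tilde/prime-coordinate scattering map from Lemma \ref{lemma_scatteringtildecoords} to the hat coordinates via the near-identity change of variables $\Phi$ built in Lemmas \ref{lemma_straightsympform} and \ref{lemma_averaging}. Writing $\hat S_{\pm} = \Phi \circ \tilde S_{\pm} \circ \Phi^{-1}$ and $\Phi = \mathrm{Id} + \phi$ with $\phi$ small, the covariance identity gives, at leading order,
\[
\hat S_{\pm}(x) - x = \theta(x) + \bigl[\phi(x + \theta(x)) - \phi(x)\bigr] + \text{higher order},
\]
where $\theta$ is the shift produced by $\tilde S_{\pm}$ expressed in the prime variables. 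Taylor-expanding the bracket gives $D\phi(x)\,\theta(x) + O(\theta^{2})$, and the task reduces to deciding which entries of $D\phi$ multiplied by which components of $\theta$ survive at the orders stated in \eqref{eq_scatteringpsi1} and \eqref{eq_scatteringgamma3}.

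The central observation is the following. Although $\phi$ is quite small (the $\hat\Psi_1$-correction in Lemma \ref{lemma_averaging} is of order $L_2^{11}/L_3^6$, and the $\hat\Gamma_3$-correction of order $L_2^{13}/L_3^8$), the angle $\psi_1'$ enters these corrections non-trivially, and the scattering map shifts $\psi_1'$ by $\Delta_1^{\pm}/L_2^2$ at leading order. Consequently, $\partial_{\psi_1'}\phi \cdot (\psi_1'^{*} - \psi_1')$ contributes at order $L_2^{11}/L_3^6 \cdot L_2^{-2} = L_2^{9}/L_3^6$ for $\hat\Psi_1$, and $L_2^{13}/L_3^8 \cdot L_2^{-2} = L_2^{11}/L_3^8$ for $\hat\Gamma_3$, i.e.\ exactly the announced orders. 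Substituting the explicit $\psi_1'$-dependence of the corrections from Lemma \ref{lemma_averaging}, the phase shift $\Delta_1^{\pm} = \frac{L_1}{6}\sqrt{3/2}\,\beta_2\,\tilde\Gamma_2\sqrt{1-\tfrac{5}{3}\tilde\Gamma_2^2/L_1^2}$ from Appendix \ref{appendix_psi1phaseshift}, and the tilde-coordinate shifts $\Theta_1^{\pm}$, $\Theta_3^{\pm}$ from Lemma \ref{lemma_scatteringtildecoords}, and then replacing prime variables by hat variables (legitimate because the two sets of coordinates differ by $O(L_2^{-2})$ or smaller), I would collect the terms to recover $\mathcal{S}_1^{\pm}$ and $\mathcal{S}_3^{\pm}$.

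Before asserting this is the final answer, I would verify that all other entries of the product $D\phi \cdot \theta$ are strictly subdominant. For the $\hat\Psi_1$ block, $\partial_{\Gamma_2'}\phi_{\Psi_1} \cdot (\Gamma_2'^{*}-\Gamma_2')$ is of order $L_2^{11}/L_3^6 \cdot L_2^{8}/L_3^6 = L_2^{19}/L_3^{12}$; this is negligible against $L_2^{9}/L_3^6$ as soon as $L_3^{6} \gg L_2^{10}$, which is granted by the hierarchy \eqref{eq_assumption1}. For the $\hat\Gamma_3$ block, the contribution $\partial_{\gamma_3'}\phi_{\Gamma_3}\cdot(\gamma_3'^{*}-\gamma_3')$ is of order $L_2^{13}/L_3^8 \cdot L_2^{10}/L_3^7 = L_2^{23}/L_3^{15}$, and $\partial_{\Gamma_2'}\phi_{\Gamma_3}\cdot(\Gamma_2'^{*}-\Gamma_2')$ is of order $L_2^{21}/L_3^{14}$; both are strictly smaller than $L_2^{11}/L_3^{8}$ under $L_3 \gg L_2^{11/6}$. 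Finally, the Darboux-type change from Lemma \ref{lemma_straightsympform} is $O(L_2^{-2})$ close to the identity and has no $\psi_1'$- or $\gamma_3'$-dependence at that order, so the analogous Jacobian cross-terms it generates fall comfortably into the ellipses.

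The main obstacle here is therefore not conceptual but a careful accounting of the competing scales $\epsilon = L_2^{-1}$, $\mu = L_2/L_3$ and $\hat\epsilon \lesssim L_2^9/L_3^6$: one needs to identify the unique cross-term (namely $\partial_{\psi_1'}\phi$ multiplied by the slow phase shift $\Delta_1^{\pm}/L_2^2$) that is comparable in size to the direct contribution $\theta$, and to dismiss every other candidate cross-term using the window \eqref{eq_assumption1}. Once this bookkeeping is carried through and the explicit substitutions performed, the formulas \eqref{eq_scatteringpsi1} and \eqref{eq_scatteringgamma3} follow.
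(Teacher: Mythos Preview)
Your proposal is correct and follows essentially the same approach as the paper: conjugate the tilde-coordinate scattering map by the near-identity change $\Phi$ from Lemmas \ref{lemma_straightsympform}--\ref{lemma_averaging}, Taylor-expand $\Phi$ along the scattering shift, and observe that the only cross-term surviving at the stated orders is $\partial_{\psi_1'}\phi$ multiplied by the phase shift $\Delta_1^{\pm}/L_2^2$. Your order-bookkeeping for the other candidate cross-terms matches the paper's implicit estimates, and your final substitution step (replacing prime/tilde variables by hat variables modulo higher order) is exactly what the paper does.
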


\begin{proof}

Denote by $\hat{z}$ a point on $\Lambda$ in `hat' coordinates, by $\tilde{z}$ the same point expressed in `tilde' coordinates, and by $\Phi : \tilde{z} \mapsto \hat{z}$ the coordinate transformation provided by Theorem \ref{theorem_innerdynamics}. The maps $\tilde{S}_{\pm}$, defined in Lemma \ref{lemma_scatteringtildecoords} send $\tilde{z} \mapsto \tilde{z}^*$. In `hat' coordinates, the scattering maps $S_{\pm}$ are therefore defined by
\begin{equation}
\hat{z}^* = S_{\pm} (\hat{z}) = \Phi \circ \tilde{S}_{\pm} \circ \Phi^{-1} (\hat{z}).
\end{equation}
We compute the effect of these maps in the $\hat{\Psi}_1$, $\hat{\Gamma}_3$ variables. Comparing Lemma \ref{lemma_straightsympform} and Lemma \ref{lemma_averaging} in the context of the coordinate transformation $\Phi$, notice that we can write
\begin{equation}
\hat{\Psi}_1 = \tilde{\Psi}_1 + \frac{L_2^{11}}{L_3^6} \Phi_1 \left( \tilde{\psi}_1, \tilde{\Gamma}_2; \cdots \right), \quad \hat{\Gamma}_3 = \tilde{\Gamma}_3 + \frac{L_2^{13}}{L_3^8} \Phi_3 \left( \tilde{\psi}_1, \tilde{\gamma}_3, \tilde{\Gamma}_2 ; \cdots \right),
\end{equation}
where (see equation \eqref{eq_innersecexpansion})
\begin{align}
\Phi_1 \left( \tilde{\psi}_1, \tilde{\Gamma}_2; \cdots \right) ={}& - \frac{\alpha_0^{23}}{\alpha_1^{12}} \, \frac{ \cos \left( 2 \tilde{\psi}_1 \right)}{ 2 \left(3 \frac{\tilde{\Gamma}_2 ^2}{L_1^2} - 1 \right) } + \cdots \\
\Phi_3 \left( \tilde{\psi}_1, \tilde{\gamma}_3, \tilde{\Gamma}_2 ; \cdots \right) ={}& \tilde{\alpha} \, \left( 3 \frac{\tilde{\Gamma}_2^2}{L_1^2} - 1 \right)^{-1} \bigg[ \frac{1}{3} \, \nu_0 \, \cos \left( \tilde{\gamma}_3 + 3 \tilde{\psi}_1 \right) + \nu_1 \, \cos \left( \tilde{\gamma}_3 + \tilde{\psi}_1 \right) \\
& \quad- \nu_2 \, \cos \left( \tilde{\gamma}_3 - \tilde{\psi}_1 \right) - \frac{1}{3} \, \nu_3 \, \cos \left( \tilde{\gamma}_3 - 3 \tilde{\psi}_1 \right) \bigg] + \cdots
\end{align}

where $\tilde \alpha$ is a nontrivial constant. Thus 
\begin{align}
\hat{\Psi}_1^* ={}& \tilde{\Psi}_1^* + \frac{L_2^{11}}{L_3^6} \Phi_1 \left( \tilde{\psi}_1^*, \tilde{\Gamma}_2^*; \cdots \right) \\
={}& \tilde{\Psi}_1 + \frac{L_2^9}{L_3^6} \Theta_1^{\pm} \left( \tilde{\psi}_1, \tilde{\Gamma}_2 ; \cdots \right) + \frac{L_2^{11}}{L_3^6} \Phi_1 \left( \tilde{\psi}_1 + \frac{1}{L_2^2} \Delta_1^{\pm} \left( \tilde{\Gamma}_2; \cdots \right), \tilde{\Gamma}_2 + \frac{L_2^8}{L_3^6} \Theta^{\pm}_2 \left( \tilde{\psi}_1, \tilde{\Gamma}_2 ; \cdots \right); \cdots \right) \\
={}& \hat{\Psi}_1 + \frac{L_2^9}{L_3^6} \left[ \Theta_1^{\pm} \left( \tilde{\psi}_1, \tilde{\Gamma}_2 ; \cdots \right) + \partial_{\tilde{\psi}_1} \Phi_1 \left( \tilde{\psi}_1, \tilde{\Gamma}_2 ; \cdots \right) \Delta_1^{\pm} \left( \tilde{\Gamma}_2 ; \cdots \right) \right] + \cdots \\
={}& \hat{\Psi}_1 + \frac{L_2^9}{L_3^6} \left[ \Theta_1^{\pm} \left( \hat{\psi}_1, \hat{\Gamma}_2 ; \cdots \right) + \partial_{\hat{\psi}_1} \Phi_1 \left( \hat{\psi}_1, \hat{\Gamma}_2 ; \cdots \right) \Delta_1^{\pm} \left( \hat{\Gamma}_2 ; \cdots \right) \right] + \cdots 
\end{align}
and
\begin{align}
\hat{\Gamma}_3^* ={}& \tilde{\Gamma}_3^* + \frac{L_2^{13}}{L_3^8} \Phi_3 \left( \tilde{\psi}_1^*, \tilde{\gamma}_3^*, \tilde{\Gamma}_2^* ; \cdots \right) \\
={}& \tilde{\Gamma}_3 + \frac{L_2^{11}}{L_3^8} \Theta_3^{\pm} \left( \tilde{\psi}_1, \tilde{\gamma}_3, \tilde{\Gamma}_2; \cdots \right) \\
& + \frac{L_2^{13}}{L_3^8} \Phi_3 \left( \tilde{\psi}_1 + \frac{1}{L_2^2} \Delta_1^{\pm} \left( \tilde{\Gamma}_2 ; \cdots \right), \tilde{\gamma}_3 + \frac{L_2^{10}}{L_3^7} \Delta_3^{\pm} \left( \cdots \right), \tilde{\Gamma}_2 + \frac{L_2^8}{L_3^6} \Theta_2^{\pm} \left( \tilde{\psi}_1, \tilde{\Gamma}_2 ; \cdots \right) ; \cdots \right) \\
={}& \hat{\Gamma}_3 + \frac{L_2^{11}}{L_3^8} \left[ \Theta_3^{\pm} \left( \tilde{\psi}_1, \tilde{\gamma}_3, \tilde{\Gamma}_2; \cdots \right) + \partial_{\tilde{\psi}_1} \Phi_3 \left( \tilde{\psi}_1, \tilde{\gamma}_3, \tilde{\Gamma}_2; \cdots \right) \Delta_1^{\pm} \left( \tilde{\Gamma}_2 ; \cdots \right) \right] + \cdots \\
={}& \hat{\Gamma}_3 + \frac{L_2^{11}}{L_3^8} \left[ \Theta_3^{\pm} \left( \hat{\psi}_1, \hat{\gamma}_3, \hat{\Gamma}_2; \cdots \right) + \partial_{\hat{\psi}_1} \Phi_3 \left( \hat{\psi}_1, \hat{\gamma}_3, \hat{\Gamma}_2; \cdots \right) \Delta_1^{\pm} \left( \hat{\Gamma}_2 ; \cdots \right) \right] + \cdots 
\end{align}
Therefore it remains only to compute
\begin{align}
\S_1^{\pm}\left( \hat{\psi}_1, \hat{\Gamma}_2 \right) ={}&\Theta_1^{\pm} \left( \hat{\psi}_1, \hat{\Gamma}_2 \right) + \partial_{\hat{\psi}_1} \Phi_1 \left( \hat{\psi}_1, \hat{\Gamma}_2 \right) \Delta_1^{\pm} \left( \hat{\Gamma}_2 ; \cdots \right) \\
={}& \mp \alpha_2^{23} \, \kappa \left( \frac{\pi \, \hat{\Gamma}_2}{A_2 \, L_1^2} \right) \, c_2 \, \cos \hat{\psi}_1 + \frac{\alpha_0^{23}}{\alpha_1^{12}} \, \beta_2 \frac{L_1}{6} \, \sqrt{ \frac{3}{2} } \hat{\Gamma}_2 \sqrt{1 - \frac{5}{3} \frac{\hat{\Gamma}_2^2}{L_1^2}} \frac{ \sin 2 \hat{\psi}_1}{3 \frac{\hat{\Gamma}_2^2}{L_1^2} - 1}
\end{align}
and
\begin{align}
\S^{\pm}_3 \left( \hat{\psi}_1, \hat{\gamma}_3, \hat{\Gamma}_2 \right) ={}& \Theta_3^{\pm} \left( \hat{\psi}_1, \hat{\gamma}_3, \hat{\Gamma}_2 \right) + \partial_{\hat{\psi}_1} \Phi_3 \left( \hat{\psi}_1, \hat{\gamma}_3, \hat{\Gamma}_2 \right) \Delta_1^{\pm} \left( \hat{\Gamma}_2 \right) \\
={}& \mp \alpha_5^{23} \, \kappa \left( \frac{\pi \, \hat{\Gamma}_2}{A_2 \, L_1^2} \right) \, \frac{\partial J_2}{\partial \hat{\gamma}_3} \left( \hat{\psi}_1, \hat{\gamma}_3 \right) + \frac{L_1}{6} \sqrt{ \frac{3}{2}} \, \beta_2 \, \hat{\Gamma}_2 \sqrt{1 - \frac{5}{3} \frac{\hat{\Gamma}_2^2}{L_1^2}} \tilde{\alpha} \, \left( 3 \frac{\hat{\Gamma}_2^2}{L_1^2} - 1 \right)^{-1} \\
& \times  \bigg[  \, \nu_0 \, \cos \left( \hat{\gamma}_3 + 3 \hat{\psi}_1 \right) + \nu_1 \, \cos \left( \hat{\gamma}_3 + \hat{\psi}_1 \right) + \nu_2 \, \cos \left( \hat{\gamma}_3 - \hat{\psi}_1 \right)  + \nu_3 \, \cos \left( \hat{\gamma}_3 - 3 \hat{\psi}_1 \right) \bigg] \\
={}& \mp \alpha_5^{23} \, \kappa \left( \frac{\pi \, \hat{\Gamma}_2}{A_2 \, L_1^2} \right) \, \bigg[ 50\,\delta_{1}\,\delta_{3}\,\sin \hat{\gamma}_{3}\,\cos \hat{\psi}_{1}\,\sin 
 \hat{\psi}_{1}- \frac{70\,\delta_{3}}{\delta_1}\,\sin \hat{\gamma}_{3}\,\cos \hat{\psi}_{1}\,
 \sin \hat{\psi}_{1} \\
 & +\frac{105\,\delta_{3}^2}{\delta_1^2}\,\cos \hat{\gamma}_{3}\,\cos ^2\hat{\psi}_{1} -75\,\delta_{3}^2\,\cos \hat{\gamma}_{3}\,\cos ^2\hat{\psi}_{1} +25\,\delta_{1}^2\,\cos \hat{\gamma}_{3}\,\cos^2\hat{\psi}_{1} -35\,\cos \hat{\gamma}_{3}\,\cos ^2\hat{\psi}_{1} \\
  & - \frac{105\, \delta_{3}^2}{\delta_1^2} \,\cos \hat{\gamma}_{3} +60\,\delta_{3}^2\, \cos \hat{\gamma}_{3}-17\,\delta_{1}^2\,\cos \hat{\gamma}_{3}+28\,\cos \hat{\gamma}_{3} \bigg] \\
  & +  \frac{L_1}{6} \sqrt{ \frac{3}{2}} \beta_2 \hat{\Gamma}_2 \sqrt{1 - \frac{5}{3} \frac{\hat{\Gamma}_2^2}{L_1^2}} \tilde{\alpha} \, \left( 3 \frac{\hat{\Gamma}_2^2}{L_1^2} - 1 \right)^{-1} \bigg[  \, \nu_0 \, \cos \left( \hat{\gamma}_3 + 3 \hat{\psi}_1 \right) \\
  &+ \nu_1 \, \cos \left( \hat{\gamma}_3 + \hat{\psi}_1 \right) + \nu_2 \, \cos \left( \hat{\gamma}_3 - \hat{\psi}_1 \right)  + \nu_3 \, \cos \left( \hat{\gamma}_3 - 3 \hat{\psi}_1 \right) \bigg]. 
\end{align}

\end{proof}

\subsection{Computation of the Poincar\'e-Melnikov potential}
\label{section_melnikovcomp}

In this section we address the proof of Proposition 
\ref{proposition_melnikovtildeexp}. First of all, note that part 1 of the 
proposition was proved in \cite{fejoz2016secular} (see also Appendix \ref{appendix_errata}), since $F_{\mathrm{sec}}^{12}$ 
coincides with the secular Hamiltonian from the three body problem. It remains 
to prove parts 2 and 3 of the proposition. By Proposition 
\ref{proposition_secularexpansion}, the first terms that could potentially split 
the separatrices in the $\tilde{\Psi}_1, \tilde{\Gamma}_3$ directions are 
$H_2^{23}, H_5^{23}$ respectively, since they are the first-order terms 
combining the separatrix variables $\gamma_1, \Gamma_1,\tilde{\gamma}_2$ with the angles 
$\tilde{\psi}_1, \tilde{\gamma}_3$ respectively. Since the variables 
$\tilde{\psi}_1, \tilde{\gamma}_3$ are constant with respect to $H_0^{12}$, we 
can easily write the periodic orbits $Z^0_{\mathrm{min, max}}$ and the 
heteroclinic orbit $Z^0$ as functions of $(t, \tilde{\gamma}_2, \tilde{\psi}_1, 
\tilde{\gamma}_3, \tilde{\Gamma}_2)$. We must therefore compute
\begin{align}
\L_j^{23} \left(\tilde{\gamma}_2, \tilde{\psi}_1, \tilde{\gamma}_3, \tilde{\Gamma}_2 \right) ={}& \int_0^{\infty} \left( H_j^{23} \circ Z^0 \left(t, \tilde{\gamma}_2, \tilde{\psi}_1, \tilde{\gamma}_3, \tilde{\Gamma}_2 \right) - H_j^{23} \circ Z_{\mathrm{min}}^0 \left(t, \tilde{\gamma}_2, \tilde{\psi}_1, \tilde{\gamma}_3, \tilde{\Gamma}_2 \right) \right) \, dt + \\
& \quad + \int^0_{- \infty} \left( H_j^{23} \circ Z^0 \left(t, \tilde{\gamma}_2, \tilde{\psi}_1, \tilde{\gamma}_3, \tilde{\Gamma}_2 \right) - H_j^{23} \circ Z_{\mathrm{max}}^0 \left(t, \tilde{\gamma}_2, \tilde{\psi}_1, \tilde{\gamma}_3, \tilde{\Gamma}_2 \right) \right) \, dt
\end{align}
for $j=2,5$ where $H_2^{23}$ is defined by \eqref{eq_H223def} and $H_5^{23}$ is defined by \eqref{eq_H523def}. Now, notice that, since $\tilde{\psi}_1, \tilde{\gamma}_3$ are constant with respect to $H_0^{12}$, we can write
\begin{equation}\label{eq_melnikovsplitintwo}
\begin{split}
\L_2^{23} \left(\tilde{\gamma}_2, \tilde{\psi}_1, \tilde{\Gamma}_2 \right)& = c_1 \, \cos \tilde{\psi}_1 \, \L_1 \left( \tilde{\gamma}_2, \tilde{\Gamma}_2 \right) + c_2 \, \sin \tilde{\psi}_1 \, \L_2 \left( \tilde{\gamma}_2, \tilde{\Gamma}_2 \right)\\
\L_5^{23} \left(\tilde{\gamma}_2, \tilde{\psi}_1, \tilde{\gamma}_3, \tilde{\Gamma}_2 \right) &= J_1 \left( \tilde{\psi}_1, \tilde{\gamma}_3 \right) \, \L_1 \left( \tilde{\gamma}_2, \tilde{\Gamma}_2 \right) + J_2 \left( \tilde{\psi}_1, \tilde{\gamma}_3 \right) \, \L_2 \left( \tilde{\gamma}_2, \tilde{\Gamma}_2 \right)
\end{split}
\end{equation}
(with the constants $c_1$, $c_2$ defined in Lemma \ref{lemma_quad23exp} and the expressions $J_1$, $J_2$ defined in Lemma \ref{lemma_oct23exp}) where we have discarded the higher-order terms for convenience, where
\begin{align}
\L_j \left(\tilde{\gamma}_2, \tilde{\Gamma}_2 \right) ={}& \int_0^{\infty} \left( \F_j \circ Z^0 \left(t, \tilde{\gamma}_2, 0, 0, \tilde{\Gamma}_2 \right) - \F_j \circ Z^0_{\mathrm{min}} \left(t, \tilde{\gamma}_2, 0, 0, \tilde{\Gamma}_2 \right) \right) \, dt + \\
& \quad + \int_{- \infty}^0 \left( \F_j \circ Z^0 \left(t, \tilde{\gamma}_2, 0, 0, \tilde{\Gamma}_2 \right) - \F_j \circ Z^0_{\mathrm{max}} \left(t, \tilde{\gamma}_2, 0, 0, \tilde{\Gamma}_2 \right) \right) \, dt,
\end{align}
and where the functions
\begin{equation}
\F_1 = \sqrt{\Gamma_1^2 - \tilde{\Gamma}_2^2} \, \cos \tilde{\gamma}_2, \quad \F_2 = \sqrt{\Gamma_1^2 - \tilde{\Gamma}_2^2} \, \sin \tilde{\gamma}_2
\end{equation}
do not depend on $\tilde{\psi}_1, \tilde{\gamma}_3$. 

For the rest of this section, to simplify notation, we write $\gamma, \Gamma$ instead of $\tilde{\gamma}_2, \tilde{\Gamma}_2$, respectively. Recall from Lemma 5 that $\gamma=\gamma^0+\gamma^1+\gamma^2$ along the separatrix, where $\gamma^2$ is defined by \eqref{eq_gamma22separatrix}. Since $\gamma^2$ does not tend to 0 as $t$ goes to $\pm \infty$, we must take into account the phase shifts
\begin{equation} \label{eq_phaseshifts}
\Delta_{\pm} = \lim_{t \to \pm \infty} \arctan \left( \chi^{-1} \tanh \left( A_2 \, t \right)\right) = \pm \arctan \chi^{-1} 
\end{equation}
along the periodic orbits $Z^0_{\mathrm{min},\mathrm{max}}$ in order to ensure that the integrals $\L_j$ are well-defined. This is done by taking $\gamma=\gamma^0+\gamma^1+\Delta_+$ on $Z^0_{\mathrm{min}}$ and $\gamma=\gamma^0+\gamma^1+\Delta_-$ on $Z^0_{\mathrm{max}}$.

Expanding the Fourier series of $\L_j$ in $\gamma^0$ for $j=1,2$, we have
\begin{equation}\label{eq_ljfourier}
\L_j \left( \gamma^0, \Gamma \right) = \L^*_j \, e^{i \gamma^0} + \overline{\L^*_j} \, e^{-i \gamma^0}
\end{equation}
where
\begin{equation}
\L_j^* = \frac{1}{2} \int_0^{\infty} \left(\xi^+_j(t) + i \, \eta^+_j(t) \right) \, e^{i \gamma^1(t)} \, dt + \frac{1}{2} \int_{- \infty}^0 \left(\xi^-_j(t) + i \, \eta^-_j(t) \right) \, e^{i \gamma^1(t)} \, dt 
\end{equation}
with
\begin{equation} \label{eq_xietadef1}
\begin{dcases}
\xi^{\pm}_1 (t) = -\eta^{\pm}_2 (t) =\sqrt{\Gamma_1(t)^2 - \Gamma^2} \, \cos \gamma^2(t) - \sqrt{L_1^2 - \Gamma^2} \, \cos \Delta_{\pm} \\
\eta^{\pm}_1 (t) =\xi^{\pm}_2 (t) = \sqrt{\Gamma_1(t)^2 - \Gamma^2} \, \sin \gamma^2(t) - \sqrt{L_1^2 - \Gamma^2} \, \sin \Delta_{\pm}
\end{dcases}
\end{equation}
and where $\Gamma_1(t)$, which describes the motion of $\Gamma_1$ along the separatrix, is defined by \eqref{eq_Gamma1separatrix}. 

\begin{lemma}
We can write
\begin{equation}\label{eq_l1starintegraltau}
\L_1^* = \frac{i}{2} \, \sqrt{\frac{2}{3}} \, \frac{\Gamma}{A_2 \, \chi} \left[ \int_0^{\infty} f^+ (\tau) \, d \tau + \int_{-\infty}^0 f^- (\tau) \, d \tau \right]
\end{equation}
where
\begin{equation}\label{eq_fpmdef}
f^{\pm} (\tau) = \left( \tanh \tau \mp 1 \right) \, e^{i \frac{2 \, \Gamma}{A_2 \, L_1^2} \tau}
\end{equation}
and where $\tau = A_2 \, t$. Furthermore
\begin{equation}\label{eq_l2starintermsofl1star}
\L_2^* = -i \, \L^*_1.
\end{equation}
\end{lemma}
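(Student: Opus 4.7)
The plan is to compute $\xi_1^{\pm}(t) + i \eta_1^{\pm}(t)$ explicitly along the separatrix and to observe that the apparently complicated expression collapses to $i \sqrt{L_1^2 - 5\Gamma^2/3}(\tanh(A_2 t) \mp 1)$, after which the change of variable $\tau = A_2 t$ gives the claim. The identity $\L_2^* = -i \L_1^*$ I will dispatch first and for free: equations \eqref{eq_xietadef1} give $\xi_2^{\pm} = \eta_1^{\pm}$ and $\eta_2^{\pm} = -\xi_1^{\pm}$, so $\xi_2^{\pm} + i \eta_2^{\pm} = -i (\xi_1^{\pm} + i \eta_1^{\pm})$ pointwise, which upon integration yields the relation.

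For the main identity, I would write $\xi_1^{\pm}(t) + i \eta_1^{\pm}(t) = \sqrt{\Gamma_1(t)^2 - \Gamma^2}\, e^{i \gamma^2(t)} - \sqrt{L_1^2 - \Gamma^2}\, e^{i \Delta_{\pm}}$. First I would handle the dynamic part. Setting $s = \sinh(A_2 t)$, $c = \cosh(A_2 t)$, $T = \tanh(A_2 t)$, the separatrix formula \eqref{eq_Gamma1separatrix} gives
\[
\Gamma_1(t)^2 - \Gamma^2 = \frac{1}{c^2}\Bigl[\tfrac{2\Gamma^2}{3} + (L_1^2 - \Gamma^2)\, s^2\Bigr] = \frac{L_1^2 - \tfrac{5}{3}\Gamma^2}{c^2}\Bigl[\chi^2 + (1+\chi^2)\, s^2\Bigr],
\]
where in the last step I used the two identities $\chi^2 (L_1^2 - \tfrac{5}{3}\Gamma^2) = \tfrac{2}{3}\Gamma^2$ and $(1+\chi^2)(L_1^2 - \tfrac{5}{3}\Gamma^2) = L_1^2 - \Gamma^2$, both immediate from the definition \eqref{eq_chia2def} of $\chi$. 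From \eqref{eq_gamma22separatrix}, $\tan \gamma^2 = \chi^{-1} T$, so $e^{i \gamma^2} = (\chi + i T)/\sqrt{\chi^2 + T^2}$, and using $\chi^2 + T^2 = (\chi^2 + (1+\chi^2) s^2)/c^2$ the two square roots cancel, leaving the clean identity
\[
\sqrt{\Gamma_1(t)^2 - \Gamma^2}\, e^{i \gamma^2(t)} = \sqrt{L_1^2 - \tfrac{5}{3}\Gamma^2}\,\bigl(\chi + i T\bigr).
\]

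Next I would treat the constant (periodic-orbit) part. From \eqref{eq_phaseshifts}, $\tan \Delta_{\pm} = \pm \chi^{-1}$, so $e^{i \Delta_{\pm}} = (\chi \pm i)/\sqrt{1+\chi^2}$, and the identity $\sqrt{1+\chi^2} = \sqrt{L_1^2 - \Gamma^2}/\sqrt{L_1^2 - \tfrac{5}{3}\Gamma^2}$ gives
\[
\sqrt{L_1^2 - \Gamma^2}\, e^{i \Delta_{\pm}} = \sqrt{L_1^2 - \tfrac{5}{3}\Gamma^2}\,\bigl(\chi \pm i\bigr).
\]
Subtracting, the $\chi$'s cancel and I obtain
\[
\xi_1^{\pm}(t) + i \eta_1^{\pm}(t) = i \sqrt{L_1^2 - \tfrac{5}{3}\Gamma^2}\,\bigl(\tanh(A_2 t) \mp 1\bigr),
\]
which, combined with $\gamma^1(t) = \tfrac{2\Gamma}{L_1^2}\, t$ from \eqref{eq_gammafrequency}, the change of variable $\tau = A_2 t$, and the identity $\sqrt{L_1^2 - \tfrac{5}{3}\Gamma^2} = \sqrt{2/3}\,\Gamma/\chi$ (a restatement of $\chi^2 (L_1^2 - \tfrac{5}{3}\Gamma^2) = \tfrac{2}{3}\Gamma^2$), yields the stated formula for $\L_1^*$ with $f^{\pm}$ as defined.

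This proof is essentially a calculation, so there is no conceptual obstacle; the only thing to watch is the bookkeeping of the three algebraic identities relating $\chi$, $A_2$, $\Gamma$ and $L_1$. The "miraculous'' cancellation in which $\sqrt{\Gamma_1^2 - \Gamma^2}\, e^{i \gamma^2}$ reduces to $\sqrt{L_1^2 - \tfrac{5}{3}\Gamma^2}(\chi + iT)$ is not really a coincidence: it reflects the fact that both factors are tailored to the very same separatrix, so that $\sqrt{\Gamma_1^2 - \Gamma^2}$ and the denominator of $e^{i \gamma^2}$ share the same hyperbolic building block. The whole point of isolating $f^{\pm}$ in this form is that $\tanh \tau \mp 1$ decays exponentially as $\tau \to \pm \infty$, which makes the integrals in \eqref{eq_l1starintegraltau} absolutely convergent and, more importantly, amenable to an evaluation by residues (deforming the $\tau$-contour into the upper half-plane and picking up the poles of $\tanh$ at $\tau = i(k + \tfrac{1}{2})\pi$); this last evaluation is presumably the content of the next lemma, and is what produces the exponentially small factor $x/\sinh x$ appearing in $\kappa$ in \eqref{eq_melnikovcoefficientfn}.
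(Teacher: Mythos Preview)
Your proof is correct and reaches the same formulas. The paper's proof is organized slightly differently: it passes through the intermediate variable $\gamma_1$ (using \eqref{eq_cosgamma1separatrix}--\eqref{eq_gamma22separatrix} to express $\cos\gamma^2$, $\sin\gamma^2$, $\Gamma_1$ in terms of $\gamma_1$), establishes $\xi_1^{\pm}=0$ and computes $\eta_1^{\pm}$ separately, and only then rewrites everything in terms of $\tau$. You instead package $\xi_1^{\pm}+i\eta_1^{\pm}$ as $\sqrt{\Gamma_1^2-\Gamma^2}\,e^{i\gamma^2}-\sqrt{L_1^2-\Gamma^2}\,e^{i\Delta_\pm}$ from the outset and work directly with the hyperbolic parametrisation, so the vanishing of $\xi_1^{\pm}$ emerges automatically as the observation that the result is purely imaginary. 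Both routes use the same three algebraic identities relating $\chi$, $\Gamma$, $L_1$; yours is somewhat more streamlined since it avoids the detour through $\gamma_1$ and keeps the cancellation of the square-root factors in one line.
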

\begin{proof}
As in \cite{fejoz2016secular} (see Lemma 5.1, 5.2, and 5.3 of that paper) we first write the integrands of $\L_1^*$ in terms of $\gamma_1$ and then in terms of $\tau=A_2 \, t$. It follows directly from \eqref{eq_cosgamma1separatrix}, \eqref{eq_Gamma1separatrix}, and \eqref{eq_gamma22separatrix} (see also Lemma 5.1 of \cite{fejoz2016secular}) that, on the separatrix, we have
\begin{equation}\label{eq_cossingamma2wrtgamma1}
\cos \gamma^2 = \sqrt{1 - \frac{5}{3} \, \cos^2 \gamma_1}, \quad  \sin \gamma^2 = \sqrt{\frac{5}{3}} \, \cos \gamma_1, \quad
\Gamma_1 = \sqrt{\frac{5}{3}} \, \Gamma \, \frac{\sin \gamma_1}{\sqrt{1 - \frac{5}{3} \, \cos^2 \gamma_1}}.
\end{equation}
It follows  that
\begin{equation}\label{eq_sqrtgamma1gammasqdiff}
\sqrt{\Gamma_1^2 - \Gamma^2} = \sqrt{\frac{2}{3}} \, \Gamma \, \frac{1}{\sqrt{1 - \frac{5}{3} \, \cos^2 \gamma_1}}.
\end{equation}
Moreover, from \eqref{eq_chia2def} and \eqref{eq_phaseshifts} we get
\begin{equation}\label{eq_phaseshiftcosine}
\cos \Delta_{\pm} = \frac{\chi}{\sqrt{1 + \chi^2}} = \sqrt{\frac{2}{3}} \, \Gamma \, \frac{1}{\sqrt{L_1^2 - \Gamma^2}},\qquad 
\sin \Delta_{\pm} = \pm \frac{1}{\sqrt{1 +\chi^2}} = \pm \sqrt{\frac{2}{3}} \, \frac{\Gamma}{\chi} \, \frac{1}{\sqrt{L_1^2 - \Gamma^2}}.
\end{equation}
Combining \eqref{eq_xietadef1} with \eqref{eq_cossingamma2wrtgamma1}, \eqref{eq_sqrtgamma1gammasqdiff}, and \eqref{eq_phaseshiftcosine}
yields
\begin{equation}\label{eq_xieta1pmintermsofgamma1}
\xi_1^{\pm} = 0, \quad \eta_1^{\pm} = \sqrt{\frac{2}{3}} \, \sqrt{\frac{5}{3}} \, \Gamma \, \frac{\cos \gamma_1}{\sqrt{1 - \frac{5}{3} \, \cos^2 \gamma_1}} \mp \sqrt{\frac{2}{3}} \, \frac{\Gamma}{\chi}.
\end{equation}
Now that $\eta_1^{\pm}$ is written in terms of $\gamma_1$, we proceed to write it in terms of $\tau$. Equation \eqref{eq_cosgamma1separatrix} implies that
\begin{equation}
\sqrt{1 - \frac{5}{3} \, \cos^2 \gamma_1} = \frac{\chi \, \cosh A_2 \, t}{\sqrt{\chi^2+(1+\chi^2)\, \sinh^2 (A_2 \, t)}}.
\end{equation}
This expression, together with \eqref{eq_gammafrequency}, \eqref{eq_cosgamma1separatrix}, and \eqref{eq_xieta1pmintermsofgamma1} proves \eqref{eq_l1starintegraltau} and \eqref{eq_fpmdef}. Finally, comparing \eqref{eq_xietadef1}  proves \eqref{eq_l2starintermsofl1star}.
\end{proof}

In order to determine the integral of $f^{\pm}$ over the positive and negative halves of the real line, we instead integrate $f^{\pm}$ over appropriately chosen contours $C^{\pm}$ in the complex plane using the residue theorem. Initially we assume the contour has ``width'' $R>0$. Then, taking the limit as $R \to \infty$ allows us to derive an expression for $\L_1^*$. 

\begin{figure}[h]
    \centering
    \begin{subfigure}[b]{0.36\textwidth}
        \includegraphics[width=\textwidth]{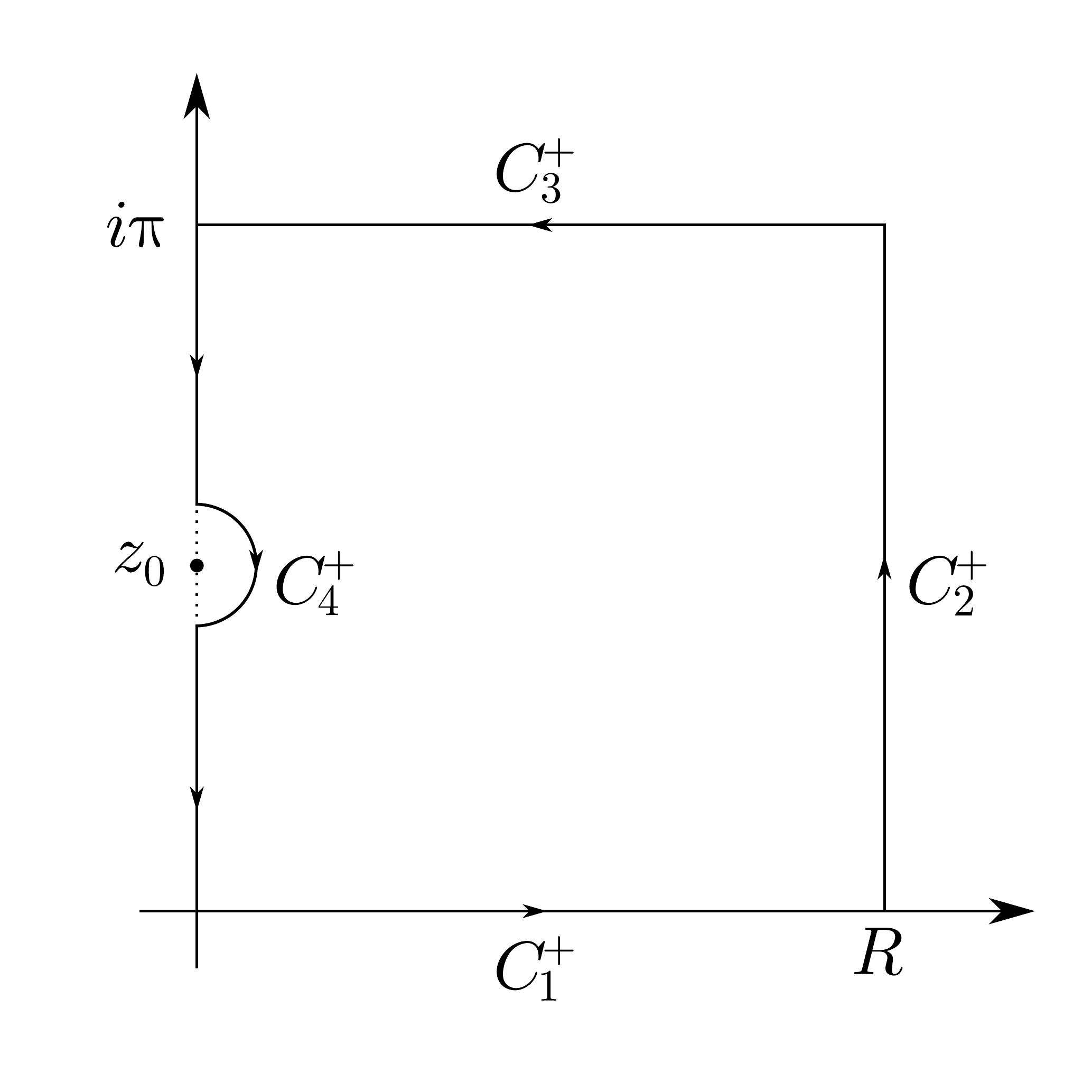}
        \caption{}
        \label{figure_contour_cplus}
    \end{subfigure} 
    \begin{subfigure}[b]{0.36\textwidth}
        \includegraphics[width=\textwidth]{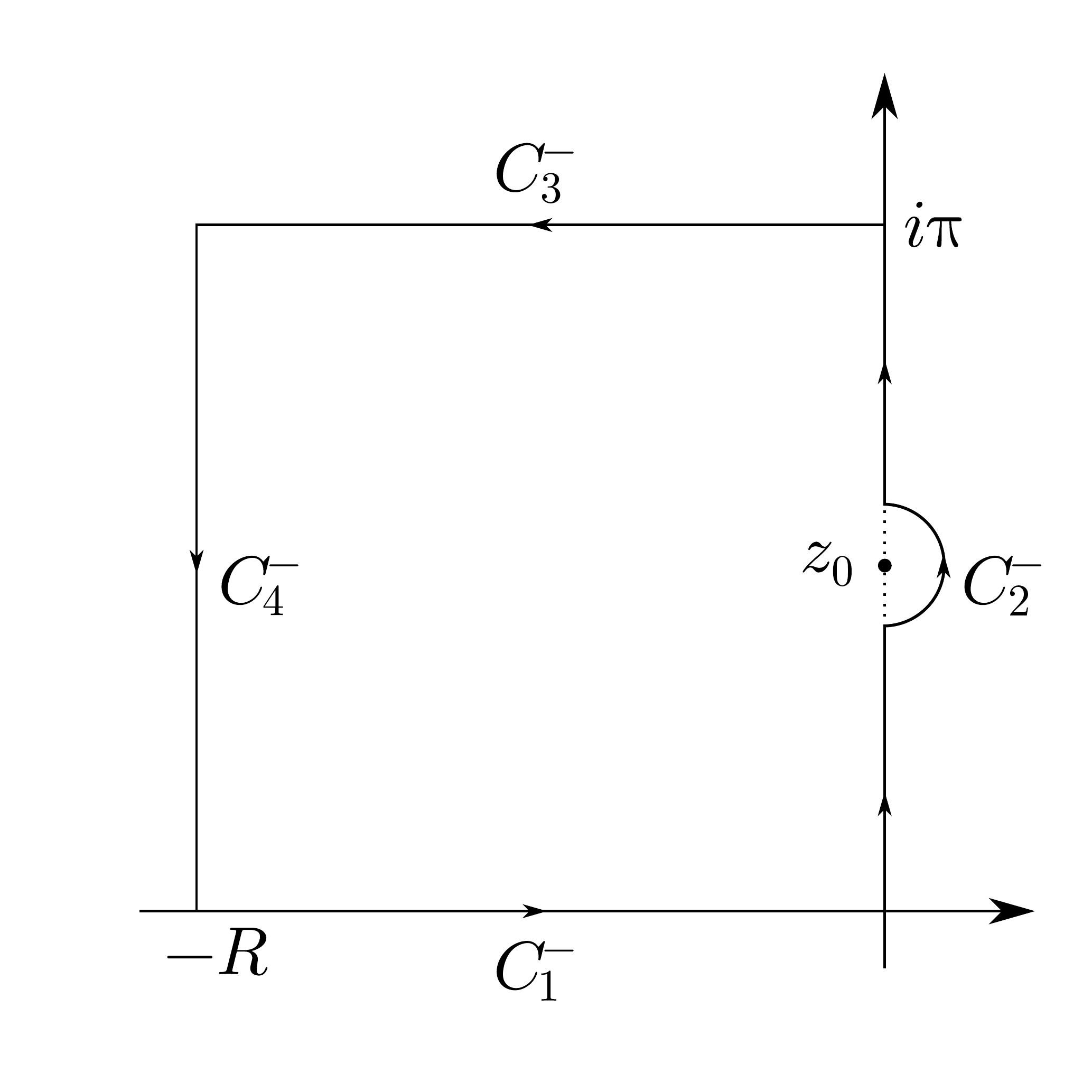}
        \caption{}
        \label{figure_contour_cminus}
    \end{subfigure}
   
    \caption{The contours $C^{\pm}$ over which we integrate $f^{\pm}$.}\label{figure_contours}
\end{figure}

From \eqref{eq_fpmdef}, it can be seen that
\begin{equation}\label{eq_fpmperiodicity}
f^{\pm} (\tau + i \, \pi) = e^{- \frac{2 \, \pi \, \Gamma}{A_2 \, L_1^2}} f^{\pm} (\tau).
\end{equation}
Moreover the only singularity of $\tanh \tau$, and thus of $f^{\pm}$, in the region $\{ 0 < \mathrm{Im} \, \tau < \pi \}$ is
$z_0 = i \, \frac{\pi}{2}$,
and the residue of $f^{\pm}$ at $z_0$ is
\begin{equation}
\mathrm{Res}(f^{\pm},z_0) = e^{- \frac{\pi \, \Gamma}{A_2 \, L_1^2}}.
\end{equation}
We therefore define the contours $C^{\pm}$ as follows. Fix some $R>0$, and denote by $C^+_1$ the segment of the real line from $0$ to $R$; denote by $C_2^+$ the straight line from $R$ to $R + i \, \pi$; and denote by $C_3^+$ the straight line from $R+i \, \pi$ to $i \, \pi$. The last part $C^+_4$ of the contour $C^+$ is composed of two straight lines and a small semicircle centred at $z_0$, so as to exclude $z_0$ from the interior of the region bounded by $C^+$ (see Figure \ref{figure_contours}(a)). Similarly, $C_1^-$ is the segment of the real line from $-R$ to $0$; $C_2^-$ is $C_4^+$ traversed in the opposite direction; $C_3^-$ is the straight line from $i \, \pi$ to $-R + i \, \pi$; and $C_4^-$ is the straight line from $-R + i \, \pi$ to $-R$ (see Figure \ref{figure_contours}(b)). Thus we define
\begin{equation}
C^{\pm}=C^{\pm}_1 \cup C^{\pm}_2 \cup C^{\pm}_3 \cup C^{\pm}_4.
\end{equation}
The residue theorem implies that
\begin{equation}
\int_{C^+} f^+(\tau) \, d \tau =0, \quad \int_{C^-} f^-(\tau) \, d \tau = 2 \pi i \, e^{- \frac{\pi \, \Gamma}{A_2 \, L_1^2}}.
\end{equation}
In the limit as $R \to \infty$ we have
\begin{equation}
\lim_{R \to \infty} \int_{C_2^+} f^+ (\tau) \, d \tau = 0 = \lim_{R \to \infty} \int_{C_4^-} f^- (\tau) \, d \tau
\end{equation}
and
\begin{equation}
\lim_{R \to \infty} \int_{C_1^\pm \cup C_3^\pm} f^\pm (\tau) \, d \tau = \pm\left(1 - e^{- \frac{2 \, \pi \, \Gamma}{A_2 \, L_1^2}} \right) \int_0^{\pm\infty} f^\pm (\tau) \, d \tau \\
\end{equation}
%
due to \eqref{eq_fpmperiodicity}. The last part of the integral is
\begin{equation}
\int_{C_4^+} f^+ (\tau) \, d \tau + \int_{C_2^-} f^- (\tau) \, d \tau = \int_{C_2^-} \left(f^-(\tau) - f^+(\tau)\right) \, d \tau = 2 \int_{C_2^-} e^{i \frac{2 \, \Gamma}{A_2 \, L_1^2} \tau} \, d \tau.
\end{equation}
As the integrand of the last expression is holomorphic, this equals
\begin{equation}
2 \int_{i \, [0, \pi]} e^{i \frac{2 \, \Gamma}{A_2 \, L_1^2} \tau} \, d \tau = i \, \frac{A_2 \, L_1^2}{\Gamma} \left(1 - e^{- \frac{2 \, \pi \, \Gamma}{A_2 \, L_1^2}} \right).
\end{equation}
Therefore summing the integral of $f^+$ over $C^+$ and the integral of $f^-$ over $C^-$, and taking the limit as $R \to \infty$ yields
\begin{equation}
2 \pi i \, e^{- \frac{\pi \, \Gamma}{A_2 \, L_1^2}} = \left(1 - e^{- \frac{2 \, \pi \, \Gamma}{A_2 \, L_1^2}} \right) \left[ \int_0^{\infty} f^+ (\tau) \, d \tau + \int_{- \infty}^0 f^- (\tau) \, d \tau + i \, \frac{A_2 \, L_1^2}{\Gamma}  \right].
\end{equation}
Rearranging terms and using \eqref{eq_l1starintegraltau} gives
\begin{equation}
\L^*_1 = \frac{1}{2} \, \kappa \left( \frac{\pi \, \Gamma}{A_2 \, L_1^2} \right)
\end{equation}
where $\kappa$ is the function defined by \eqref{eq_melnikovcoefficientfn}. As this is real-valued, we find from \eqref{eq_ljfourier} and \eqref{eq_l2starintermsofl1star} that
\begin{equation}
\L_1 \left( \gamma^0 \right) = \kappa \left( \frac{\pi \, \Gamma}{A_2 \, L_1^2} \right) \cos \gamma^0, \quad  \L_2 \left( \gamma^0 \right) = \kappa \left( \frac{\pi \, \Gamma}{A_2 \, L_1^2} \right) \sin \gamma^0
\end{equation}
which, together with \eqref{eq_melnikovsplitintwo}, completes the proof of Proposition \ref{proposition_melnikovtildeexp}.

\section{Reduction to a Poincar\'e map and the shadowing argument}
\label{sec:mapreduction}

The purpose of this section is to show that there are orbits of the four body problem along which the quantities $\Psi_1$ and $\Gamma_3$ drift along any itinerary we choose. The idea is to apply the shadowing results proved by the authors in \cite{clarke2022topological} (see Appendix \ref{appendix_shadowing} of the present paper for a summary of those results). As the results of that paper are for maps (as is typically more convenient for shadowing results), we must first reduce the secular system to a Poincar\'e map; this is done in Section \ref{subsec_poincaremap}. In doing so, we show that the map satisfies the assumptions of Theorem \ref{theorem_main1}. We then show in the subsequent sections that the Poincar\'e map of the full four body problem thus satisfies the assumptions of Theorem \ref{theorem_main2}, and so, there are drifting orbits.

\subsection{The Poincar\'e map}\label{subsec_poincaremap}

In this section we show that we can extend the coordinates $\left(\hat{\gamma}_2, \hat{\Gamma}_2, \hat{\psi}_1, \hat{\Psi}_1, \hat{\gamma}_3, \hat{\Gamma}_3 \right)$on $\Lambda$ to (probably non-symplectic) coordinates on a subset of the secular phase space, and show that we can define a Poincar\'e map in the region in which these coordinates are defined, within an energy level of the secular Hamiltonian. We show that the inner map satisfies a twist condition, and deduce first-order expressions for the scattering maps in the action variables. 

Consider the region $\tilde \cD$ of the phase space in which the variables $\left(\xi, \eta, \tilde{\gamma}_2, \tilde{\Gamma}_2, \tilde{\psi}_1, \tilde{\Psi}_1, \tilde{\gamma}_3, \tilde{\Gamma}_3 \right)$ (introduced in Sections \ref{section_secularexpansion} and \ref{section_analysisofh0}) satisfy $\tilde \gamma_2, \tilde \psi_1, \tilde \gamma_3 \in \T$, $\tilde \Gamma_2 \in [\zeta_1 , \zeta_2 ]$, $\tilde \Psi_1, \tilde \Gamma_3 \in [-1, 1 ]$, and $(\xi, \eta)$ belongs to the open ball of radius $\sqrt{2 \, L_1}$ centred at the origin in $\mathbb{R}^2$ (see \eqref{eq_poincarevariables}). Recall the constants $\zeta_1, \zeta_2$ were defined in \eqref{eq_nhimparametersdef}. We now further restrict these constants by assuming that 
\begin{equation}\label{eq_isoenernondeg}
0 < \zeta_1 < \zeta_2 < \frac{L_1}{\sqrt{3}}.
\end{equation}
The reason for this further restriction is to guarantee that the Poincar\'e map which we will construct in this section satisfies a twist condition; see Lemma \ref{lemma_twistcondition}.

By slightly shrinking this region $\tilde \cD$ in the sense of Remark \ref{remark_shrinkingnhim}, we obtain a domain $\cD$ in which there is a (non-symplectic) coordinate transformation 
\begin{equation}\label{eq_globaltildehatcoordtransf}
:\left(\xi, \eta, \tilde{\gamma}_2, \tilde{\Gamma}_2, \tilde{\psi}_1, \tilde{\Psi}_1, \tilde{\gamma}_3, \tilde{\Gamma}_3 \right) \mapsto \left(\xi, \eta, \hat{\gamma}_2, \hat{\Gamma}_2, \hat{\psi}_1, \hat{\Psi}_1, \hat{\gamma}_3, \hat{\Gamma}_3 \right)
\end{equation}
where $\left(\hat{\gamma}_2, \hat{\Gamma}_2, \hat{\psi}_1, \hat{\Psi}_1, \hat{\gamma}_3, \hat{\Gamma}_3 \right)$ are the coordinates on $\Lambda$ constructed in Section \ref{sec:innerdyn}. Define the subset $\E \subset \mathbb{R}$ by $\E = \left\{ F_{\mathrm{sec}} (z) : z \in \cD \right\}$. The following theorem is the main result of this section. 

\begin{theorem}\label{theorem_reductiontomap}
Fix some energy level $\{ F_{\mathrm{sec}} = E_0 \}$ of the secular Hamiltonian where $E_0 \in \E$, and consider the Poincar\'e section $M = \cD \cap  \{ F_{\mathrm{sec}} = E_0 \} \cap \{ \hat{\gamma}_2 = 0 \}$. 
\begin{enumerate}
\item
There is a well-defined Poincar\'e map $F:M \to M$, and the set $\hat{\Lambda} = \Lambda \cap M$ is a normally hyperbolic invariant manifold for $F$.
\item
The variables $\left( \hat{\psi}_1, \hat{\gamma}_3, \hat{\Psi}_1, \hat{\Gamma}_3 \right)$ define coordinates on $\hat{\Lambda}$, and the inner map $f= \left. F \right|_{\hat{\Lambda}}$ has the form
\begin{equation}\label{eq_innerpoincaremap0}
f:
\begin{dcases}
\left( \hat{\psi}_1^*, \hat{\gamma}_3^* \right) =& \left( \hat{\psi}_1, \hat{\gamma}_3 \right) + g \left( \hat{\Psi}_1, \hat{\Gamma}_3 \right) + O \left( \epsilon^{k_1 - 6} \mu^{k_2}\right) \\
\left( \hat{\Psi}_1^*, \hat{\Gamma}_3^* \right) =& \left( \hat{\Psi}_1, \hat{\Gamma}_3 \right) + O \left( \epsilon^{k_1 - 6} \mu^{k_2} \right)
\end{dcases}
\end{equation}
where $\epsilon=\frac{1}{L_2}, \, \mu = \frac{L_2}{L_3}$, where $k_1, k_2 \in \mathbb{N}$ come from part 2 of Theorem \ref{theorem_innerdynamics}, and where
\begin{equation}
\det D g \left( \hat{\Psi}_1, \hat{\Gamma}_3 \right) \neq 0.
\end{equation}
Moreover the bottom eigenvalue of $D g \left( \hat{\Psi}_1, \hat{\Gamma}_3 \right)$ is of order $\frac{L_2^8}{L_3^6}$.
\item
There are two scattering maps 
\begin{equation}
\hat{S}_{\pm} : \hat{\Lambda} \longrightarrow \hat{\Lambda}'
\end{equation}
where $\hat{\Lambda}'$ is an open cylinder in $\mathbb{T}^2 \times \mathbb{R}^2$ containing $\hat \Lambda$. Moreover the actions $\left(\hat{\Psi}_1^*, \hat{\Gamma}_3^* \right)$ of the image of a point $\left( \hat{\psi}_1, \hat{\gamma}_3, \hat{\Psi}_1, \hat{\Gamma}_3 \right) \in \hat \Lambda$ under the scattering maps $\hat{S}_{\pm}$ is given by
\begin{equation}
\hat{\Psi}_1^* = \hat{\Psi}_1 + \frac{L_2^9}{L_3^6} \S_1^{\pm} \left( \hat{\psi}_1, \hat{\Gamma}_2 \right) + \cdots, \quad \hat{\Gamma}_3^* = \hat{\Gamma}_3 + \frac{L_2^{11}}{L_3^8} \S^{\pm}_3 \left( \hat{\psi}_1, \hat{\gamma}_3, \hat{\Gamma}_2 \right) + \cdots
\end{equation}
where $\S_1^{\pm}$ is defined by \eqref{eq_scatteringpsi1} and $\S_3^{\pm}$ is defined by \eqref{eq_scatteringgamma3}. 
\end{enumerate}
\end{theorem}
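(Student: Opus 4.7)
The proof splits naturally into three stages corresponding to the three parts of the theorem.

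\textbf{Stage 1: existence of $F$ and normal hyperbolicity of $\hat\Lambda$.} The set $M = \cD \cap \{F_{\mathrm{sec}}=E_0\} \cap \{\hat\gamma_2=0\}$ is a valid Poincar\'e section provided the flow is transverse to $\{\hat\gamma_2=0\}$. On $\Lambda$ the inner Hamiltonian from Theorem \ref{theorem_innerdynamics} gives $\dot{\hat\gamma}_2 = \partial\hat F/\partial\hat\Gamma_2$, whose leading term by Lemma \ref{lemma_innerhamderivatives} is $\epsilon^6 C_{12}\cdot 6\hat\Gamma_2/(L_1^2\delta_1^3)$, bounded away from zero since $\hat\Gamma_2 \in [\zeta_1+\varrho,\zeta_2-\varrho]$ with $\zeta_1>0$. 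Transversality extends to a neighbourhood of $\Lambda$ in $\cD$, so $F:M\to M$ is well defined (on a neighbourhood of $\hat\Lambda$ large enough to contain the homoclinic channels of Theorem \ref{theorem_outerdynamics}). Since $\Lambda$ is flow-invariant, $\hat\Lambda = \Lambda\cap M$ is $F$-invariant, and normal hyperbolicity is inherited from $\Lambda$.

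\textbf{Stage 2: coordinates on $\hat\Lambda$ and the twist.} The equation $\hat F = E_0$ together with $\partial\hat F/\partial\hat\Gamma_2 \neq 0$ allows the implicit function theorem to solve for $\hat\Gamma_2 = \hat\Gamma_2(\hat\Psi_1,\hat\Gamma_3;E_0)$, so $(\hat\psi_1,\hat\gamma_3,\hat\Psi_1,\hat\Gamma_3)$ are coordinates on $\hat\Lambda$. Since $\hat F_0$ is independent of the angles, under its flow the actions $\hat\Gamma_2,\hat\Psi_1,\hat\Gamma_3$ are conserved, the return time is $T = 2\pi/\omega_2$ with $\omega_2 = \partial\hat F_0/\partial\hat\Gamma_2$, and the angle increments form \eqref{eq_innerpoincaremap0} with $g = 2\pi(\omega_\psi,\omega_3)/\omega_2$ evaluated at the reduced $\hat\Gamma_2$; the perturbation $\hat F_1$ contributes the $O(\epsilon^{k_1}\mu^{k_2})$ error. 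A direct calculation (using that along the energy surface $\partial\hat\Gamma_2/\partial\hat\Psi_1 = -\omega_\psi/\omega_2$ and $\partial\hat\Gamma_2/\partial\hat\Gamma_3 = -\omega_3/\omega_2$) gives
\begin{equation}
\det Dg \;=\; \frac{(2\pi)^2}{\omega_2^4}\bigl(\hat F_{\Psi_1}^2\hat F_{\Gamma_3\Gamma_3} - 2\hat F_{\Psi_1}\hat F_{\Gamma_3}\hat F_{\Psi_1\Gamma_3} + \hat F_{\Gamma_3}^2\hat F_{\Psi_1\Psi_1}\bigr),
\end{equation}
the bordered Hessian of $\hat F_0$ in $(\hat\Psi_1,\hat\Gamma_3)$. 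Plugging in the orders from Lemma \ref{lemma_innerhamderivatives} and using the hierarchical inequality $\mu^6 \ll \epsilon^5$ (equivalent to $a_2^{11/6} \ll a_3$), the first term $\hat F_{\Psi_1}^2\hat F_{\Gamma_3\Gamma_3} \sim \epsilon^{18}\mu^6$ strictly dominates the other two, each of order $\epsilon^{14}\mu^{12}$, by a factor $\epsilon^4/\mu^6 \gg 1/\epsilon$. The explicit formula $\hat F_{\Psi_1} \sim \epsilon^7 (L_1^2-3\hat\Gamma_2^2)$ shows that the dominant term vanishes exactly at $\hat\Gamma_2 = L_1/\sqrt 3$, and the strengthened condition \eqref{eq_isoenernondeg} is precisely what forces $L_1^2 - 3\hat\Gamma_2^2 > 0$ throughout $\hat\Lambda$. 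An order count on the individual entries of $Dg$ (using $L_3 \gg L_2^{11/6}$, and thus $L_3^6 \gg L_2^{10}$) shows that the $(1,1)$ entry is of order $L_2^{-2}$, the off-diagonal entries of order $L_2^8/L_3^6 \ll L_2^{-2}$, and $\det Dg \sim L_2^6/L_3^6$; hence the top eigenvalue is $\sim L_2^{-2}$ and the bottom eigenvalue is $\det Dg/\mathrm{tr}\,Dg \sim L_2^8/L_3^6$, as announced.

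\textbf{Stage 3: the scattering maps for $F$.} The two homoclinic channels $\Gamma_\pm$ for the secular flow from Theorem \ref{theorem_outerdynamics} are transverse to $M$ (again by $\dot{\hat\gamma}_2\neq 0$). Intersecting with $M$ produces two homoclinic channels $\hat\Gamma_\pm \subset M$ for $F$ relative to $\hat\Lambda$, and the associated scattering maps $\hat S_\pm$ are well defined on $\hat\Lambda$. Because $\hat\Psi_1$ and $\hat\Gamma_3$ are exactly preserved by the unperturbed inner flow of $\hat F_0$, the jumps in these actions depend only on the homoclinic excursion and not on the (long but finite) time spent wandering along $\Lambda$ between consecutive crossings of $\{\hat\gamma_2=0\}$; hence the jumps induced by $\hat S_\pm$ coincide, up to errors already absorbed in the ellipsis in Theorem \ref{theorem_outerdynamics}, with those given there for $S_\pm$, yielding the claimed formulas involving $\S_1^\pm$ and $\S_3^\pm$.

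\textbf{Main obstacle.} The only real subtlety is Stage 2. Because multiple small parameters $\epsilon,\mu$ appear simultaneously, one has to verify which single term of the bordered Hessian dominates and to check that its vanishing locus is exactly the boundary excluded by \eqref{eq_isoenernondeg}. The subsequent extraction of the bottom eigenvalue requires care with cancellations in the $2\times 2$ matrix, and relies essentially on the sharper hierarchical hypothesis $L_3\gg L_2^{11/6}$ to ensure $L_2^8/L_3^6 \ll L_2^{-2}$.
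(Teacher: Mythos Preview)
Your three-stage decomposition mirrors the paper's exactly, and Stages~1 and~3 are essentially correct and follow the paper's route. (For Stage~3 the paper makes precise your phrase ``not depending on the time spent wandering along $\Lambda$'' by invoking a result of Delshams--Gidea--Seara: $\hat S_\pm(\hat z)=\hat\phi^{\tau_\pm(\hat z)}\circ S_\pm(\hat z)$ with return times $\tau_\pm=O(1)$, so the inner flow contributes only $O(\epsilon^{k_1-6}\mu^{k_2})$ to the actions.)

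Stage~2 contains a genuine error. Your displayed formula for $\det Dg$ is wrong: it omits every second derivative involving $\hat\Gamma_2$. Because $\hat\Gamma_2=\alpha(\hat\Psi_1,\hat\Gamma_3)$ is eliminated via the energy constraint, each entry of $Dg$ carries \emph{four} terms,
\[
\hat\omega_0^3(Dg)_{ij}=\hat\omega_0^2\,\hat F_{P_iP_j}-\hat\omega_0\hat\omega_j\,\hat F_{P_0P_i}-\hat\omega_0\hat\omega_i\,\hat F_{P_0P_j}+\hat\omega_i\hat\omega_j\,\hat F_{P_0P_0}
\]
(with $P_0=\hat\Gamma_2$), and for $i=j=1$ all four are of the \emph{same} order $\epsilon^{20}$; dropping any one is not a harmless truncation. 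Your formula also gives the wrong order of magnitude: plugging in Lemma~\ref{lemma_innerhamderivatives} yields $\det Dg\sim \epsilon^{18}\mu^6/\omega_2^4=\epsilon^{-6}\mu^6$, whereas a few lines later you correctly state $\det Dg\sim L_2^6/L_3^6=\mu^6$; this internal inconsistency is a signal that the formula is broken. The paper instead computes each entry via the identity above, finds $(Dg)_{11}\propto(L_1^2-3\hat\Gamma_2^2)(L_1^2+\hat\Gamma_2^2)$ (so the factor you identified is genuinely there, but does not come from $\hat F_{\Psi_1}^2$ alone) and $(Dg)_{22}\propto\hat\Gamma_2^2(12\delta_1^2-20)$, checks both leading coefficients are nonzero under \eqref{eq_isoenernondeg} and $\delta_1\in(0,1)$, and concludes $\det Dg\sim\mu^6$ from the diagonal product dominating the off-diagonal product $\epsilon^{-4}\mu^{12}$ (using $L_3\gg L_2^{5/3}$). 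Your subsequent entry-by-entry order count and eigenvalue extraction are correct and agree with this; the gap is that the nonvanishing of $\det Dg$ cannot be read off the formula you wrote.
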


We divide the proof of Theorem \ref{theorem_reductiontomap} into three lemmas. The following lemma establishes the existence of coordinates in a neighbourhood of $\Lambda$ involving the `hat' variables, and the existence of a Poincar\'e map. 

\begin{lemma}
The variables $\left(\hat{\xi}, \hat{\eta}, \hat{\psi}_1, \hat{\Psi}_1, \hat{\gamma}_3, \hat{\Gamma}_3 \right)$ define coordinates on the section $M = \cD \cap \{ F_{\mathrm{sec}} = E_0 \} \cap \{ \hat{\gamma}_2 = 0 \}$ for $E_0 \in \E$, and there is a well-defined Poincar\'e map $F:M \to M$. Furthermore the set $\hat{\Lambda} = \Lambda \cap M$ is a normally hyperbolic invariant manifold for $F$.
\end{lemma}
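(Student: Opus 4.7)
The plan is to handle four ingredients in sequence: extending the hat-coordinates off $\Lambda$, reducing by the energy, verifying transversality and global well-definedness of the return map, and inheriting normal hyperbolicity.

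\textbf{First,} I would extend the coordinate transformation of Theorem~\ref{theorem_innerdynamics} from $\Lambda$ to a tubular neighbourhood of $\Lambda$ in $\cD$, using Fenichel's invariant stable and unstable foliations together with the graph parameterisation from Lemma~\ref{lemma_fenichelgraphexp}. This produces $C^r$ coordinates $(\hat\xi,\hat\eta,\hat\gamma_2,\hat\Gamma_2,\hat\psi_1,\hat\Psi_1,\hat\gamma_3,\hat\Gamma_3)$ on $\cD$, not necessarily symplectic off $\Lambda$, in which $\Lambda=\{\hat\xi=\hat\eta=0\}$ and whose restriction to $\Lambda$ is exactly the coordinate system built in Section~\ref{sec:innerdyn}.

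\textbf{Second,} I would reduce by the energy. By Theorem~\ref{theorem_innerdynamics}, on $\Lambda$ the secular Hamiltonian has leading term $\epsilon^6 c_0 \hat\Gamma_2^2$, hence
\[
\partial_{\hat\Gamma_2}F_{\mathrm{sec}}=2\epsilon^6 c_0\hat\Gamma_2+O(\epsilon^7).
\]
The choice \eqref{eq_isoenernondeg} together with Remark~\ref{remark_shrinkingnhim} forces $\hat\Gamma_2\geq\zeta_1+\varrho>0$, so this derivative is uniformly bounded away from zero on $\cD$. The implicit function theorem then solves $\hat\Gamma_2$ as a smooth function of the remaining variables on $\{F_{\mathrm{sec}}=E_0\}$ for every $E_0\in\E$, and intersecting further with $\{\hat\gamma_2=0\}$ removes one angle and leaves the six coordinates claimed in the lemma.

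\textbf{Third,} for the Poincar\'e map I would check transversality and completeness of the return. Transversality follows from the same computation: $\dot{\hat\gamma}_2=\partial_{\hat\Gamma_2}F_{\mathrm{sec}}\neq 0$ uniformly on $M$, so $\hat\gamma_2$ is strictly monotone along orbits and the return time to $\{\hat\gamma_2=0\}\pmod{2\pi}$ is well defined and of order $\pi/(\epsilon^6 c_0\hat\Gamma_2)$. One must check that orbits stay inside $\cD$ during one return: this is where the $\varrho$-buffer of Remark~\ref{remark_shrinkingnhim} is used. By \eqref{eq_innerhamiltonianave} the drift of the other actions and angles in one return time is $O(\epsilon^{k_1-6}\mu^{k_2})$ along $\hat\Lambda$, and of the same order in a neighbourhood by continuity; choosing $k_1,k_2$ large (and $L_2$ large enough) makes this drift smaller than $\varrho$, which guarantees that orbits remain in $\cD$.

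\textbf{Finally,} since $\Lambda$ is flow-invariant, $\hat\Lambda=\Lambda\cap M$ is $F$-invariant. At each point of $\hat\Lambda$ the ambient normally hyperbolic splitting $T\Lambda\oplus E^s\oplus E^u$ intersects $TM$ transversally along the flow direction, producing a splitting $T\hat\Lambda\oplus E^s\oplus E^u$ of $TM|_{\hat\Lambda}$; the contraction and expansion rates of $DF$ on the $E^{s,u}$ summands are just the flow rates integrated over the (bounded) return time, and they dominate those along $T\hat\Lambda$ by the same factor as they did for the flow. The main (and only substantial) difficulty in the whole argument is the uniform control in the third step; it is handled cleanly by the slow--fast separation of time scales encoded in Proposition~\ref{proposition_secularexpansion}, so no delicate estimate is needed.
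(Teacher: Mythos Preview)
Your overall strategy matches the paper's, but there is a genuine gap in your second and third steps. You compute $\partial_{\hat\Gamma_2}F_{\mathrm{sec}}$ using the formula $\hat F_0=\epsilon^6 c_0\hat\Gamma_2^2+\cdots$ from Theorem~\ref{theorem_innerdynamics}; that expansion is the \emph{inner} Hamiltonian, i.e.\ the restriction of $F_{\mathrm{sec}}$ to $\Lambda$. It tells you nothing about the derivative at points of $\cD$ where $(\xi,\eta)$ are not small --- and $\cD$ allows $(\xi,\eta)$ in a ball of radius $\sqrt{2L_1}$, so the section $M$ and the return map must be defined far from $\Lambda$ (in particular near the homoclinic channel). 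Your conclusion ``so this derivative is uniformly bounded away from zero on $\cD$'' therefore does not follow from the formula you wrote. The paper instead differentiates the full leading term $H_0^{12}$ of \eqref{eq_H012def} with respect to $\tilde\Gamma_2$, obtaining
\[
\partial_{\hat\Gamma_2}\big(L_2^6 F_{\mathrm{sec}}\big)=\hat\Gamma_2\left[\frac{2}{L_1^2}+\frac{10}{\Gamma_1^2}\Big(1-\frac{\Gamma_1^2}{L_1^2}\Big)\sin^2\gamma_1\right]+O\!\left(\frac{L_2^{11}}{L_3^6}\right),
\]
which is manifestly positive for all $(\gamma_1,\Gamma_1)$ with $\Gamma_1\in(0,L_1)$ as long as $\hat\Gamma_2>0$. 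This is exactly what is needed to apply the implicit function theorem and to establish transversality of $\{\hat\gamma_2=0\}$ throughout $\cD$. A related issue is that the hat-coordinates are not symplectic off $\Lambda$, so $\dot{\hat\gamma}_2$ is not literally $\partial_{\hat\Gamma_2}F_{\mathrm{sec}}$; the paper notes that the two agree up to higher order because the coordinate change is close to the identity.

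Your first step is also more elaborate than necessary and potentially problematic: a Fenichel-foliation extension straightening $\Lambda$ to $\{\hat\xi=\hat\eta=0\}$ is only guaranteed on a tubular neighbourhood of $\Lambda$, which need not cover all of $\cD$. The paper simply uses the product extension \eqref{eq_globaltildehatcoordtransf}, leaving $(\xi,\eta)$ untouched and applying the Section~\ref{sec:innerdyn} transformation to the remaining six variables; this is automatically defined on all of $\cD$ and is all that is required.
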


\begin{proof}
Denote by $\hat \Phi$ the coordinate transformation \eqref{eq_globaltildehatcoordtransf}. The transformation $\hat \Phi$ may not be symplectic, but the flow possesses an integral $\hat{H} = L_2^6 \, F_{\mathrm{sec}} \circ \hat{\Phi}$ nonetheless. Fix some level set $\{ \hat{H} = E_0 \}$ of the integral. Since $\hat{\Phi}$ is $O \left( \frac{L_2^{11}}{L_3^6} \right)$ close to the identity, we have (differentiating \eqref{eq_H012def} with respect to $\tilde{\Gamma}_2$ and effecting the coordinate transformation)
\begin{equation}\label{eq_globalhatintegralderivative}
\frac{\partial \hat{H}}{\partial \hat{\Gamma}_2} = \hat{\Gamma}_2 \, \left[ \frac{2}{L_1^2} + \frac{10}{\Gamma_1^2} \left( 1 - \frac{\Gamma_1^2}{L_1^2} \right) \, \sin^2 \gamma_1 \right] + O \left( \frac{L_2^{11}}{L_3^6} \right). 
\end{equation}
Observe that this is nonzero as long as $\hat{\Gamma}_2  \neq 0$; indeed the term $1 - \frac{\Gamma_1^2}{L_1^2}$ is $e_1^2$ and thus belongs to $(0,1)$, and so the term inside the square brackets is strictly positive. Therefore, by the implicit function theorem, we may express $\hat{\Gamma}_2$ as a function of the other variables in the energy level $\{ \hat{H} = E_0 \}$. Moreover, notice that $\frac{d \hat{\gamma}_2}{dt}$ is equal, up to higher order terms, to \eqref{eq_globalhatintegralderivative}. Since the time derivative of $\hat{\Gamma}_2$ is of higher order, it follows that the return map $F$ to the section $M = \cD \cap \{ F_{\mathrm{sec}} = E_0 \} \cap \{ \hat{\gamma}_2 = 0 \}$ is well-defined as long as $\hat{\Gamma}_2 \neq 0$, which is true on $\cD$. 

Finally, the fact that $\hat{\Lambda} = \Lambda \cap M$ is a normally hyperbolic invariant manifold for the return map $F$ follows immediately from the fact that $\Lambda$ is a normally hyperbolic invariant manifold for the flow. 
\end{proof}

\begin{lemma}\label{lemma_twistcondition}
The inner map $f= \left. F \right|_{\hat{\Lambda}}$ has the form
\begin{equation}\label{eq_innerpoincaremap}
f:
\begin{dcases}
\left( \hat{\psi}_1^*, \hat{\gamma}_3^* \right) =& \left( \hat{\psi}_1, \hat{\gamma}_3 \right) + g \left( \hat{\Psi}_1, \hat{\Gamma}_3 \right) + O \left( \epsilon^{k_1 - 6} \mu^{k_2} \right) \\
\left( \hat{\Psi}_1^*, \hat{\Gamma}_3^* \right) =& \left( \hat{\Psi}_1, \hat{\Gamma}_3 \right) + O \left( \epsilon^{k_1 - 6} \mu^{k_2} \right)
\end{dcases}
\end{equation}
where $\epsilon=\frac{1}{L_2}, \, \mu = \frac{L_2}{L_3}$, where $k_1, k_2 \in \mathbb{N}$ come from part 2 of Theorem \ref{theorem_innerdynamics}, and where
\begin{equation}\label{eq_twistproperty}
\det D g \left( \hat{\Psi}_1, \hat{\Gamma}_3 \right) \neq 0.
\end{equation}
Moreover the bottom eigenvalue of $D g \left( \hat{\Psi}_1, \hat{\Gamma}_3 \right)$ is of order $\frac{\mu^6}{\epsilon^2}$.
\end{lemma}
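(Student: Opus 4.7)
The strategy is to start from the normal form $\hat F = \hat F_0(\hat \Gamma_2, \hat \Psi_1, \hat \Gamma_3;\epsilon,\mu) + \epsilon^{k_1}\mu^{k_2}\hat F_1$ provided by Theorem \ref{theorem_innerdynamics}, where the integrable part $\hat F_0$ depends only on the actions. I would first compute the Poincar\'e map of the purely integrable flow of $\hat F_0$ on the isoenergetic level $\{\hat F_0=E_0\}$. Since $\hat\Gamma_2,\hat\Psi_1,\hat\Gamma_3$ are conserved along this flow and the angles $\hat\gamma_j$ advance at frequencies $\omega_j = \partial_{\hat I_j}\hat F_0$, the return time to $\{\hat\gamma_2=0\}$ is $T=2\pi/\omega_2$ and the map takes the form \eqref{eq_innerpoincaremap} with zero error and
\[
g(\hat\Psi_1,\hat\Gamma_3) = \frac{2\pi}{\omega_2}\bigl(\omega_\psi,\omega_3\bigr)
\qquad \text{with } \omega_2=\partial_{\hat\Gamma_2}\hat F_0,\ \omega_\psi=\partial_{\hat\Psi_1}\hat F_0,\ \omega_3=\partial_{\hat\Gamma_3}\hat F_0,
\]
where $\hat\Gamma_2$ is determined by the energy constraint $\hat F_0(\hat\Gamma_2,\hat\Psi_1,\hat\Gamma_3)=E_0$ (invertible thanks to \eqref{eq_globalhatintegralderivative}). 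The contribution of the perturbation $\epsilon^{k_1}\mu^{k_2}\hat F_1$ over one return time $T\sim \epsilon^{-6}$ is $O(\epsilon^{k_1-6}\mu^{k_2})$ in both actions and angles by a standard variation-of-constants estimate, which yields the error term displayed in \eqref{eq_innerpoincaremap}.

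Next I would compute $Dg$. Implicit differentiation of $\hat F_0=E_0$ gives $\partial^{\mathrm{iso}}_{\hat\Psi_1}\hat\Gamma_2=-\omega_\psi/\omega_2$ and $\partial^{\mathrm{iso}}_{\hat\Gamma_3}\hat\Gamma_2=-\omega_3/\omega_2$, and a direct application of the chain rule produces the classical bordered-Hessian formula
\[
Dg = \frac{2\pi}{\omega_2^3}\,
\begin{pmatrix}
\omega_2^2\omega_{\psi\psi}-2\omega_2\omega_{2\psi}\omega_\psi+\omega_{22}\omega_\psi^2
& \omega_2^2\omega_{\psi 3}-\omega_2(\omega_{2\psi}\omega_3+\omega_{23}\omega_\psi)+\omega_{22}\omega_\psi\omega_3 \\[2pt]
\text{(symmetric)} &
\omega_2^2\omega_{33}-2\omega_2\omega_{23}\omega_3+\omega_{22}\omega_3^2
\end{pmatrix}.
\]
Inserting the leading orders from Lemma \ref{lemma_innerhamderivatives}, namely $\omega_2,\omega_{22}\sim\epsilon^6$, $\omega_\psi,\omega_{2\psi}\sim\epsilon^7$, $\omega_{\psi\psi}\sim\epsilon^8$, and $\omega_3,\omega_{23},\omega_{\psi 3},\omega_{33}\sim\epsilon^{3+j}\mu^6$ for the appropriate $j\in\{0,1\}$, one reads off
\[
Dg \sim \begin{pmatrix} \epsilon^2 & \mu^6/\epsilon^2 \\ \mu^6/\epsilon^2 & \mu^6/\epsilon^2 \end{pmatrix},
\qquad
\det Dg \sim \mu^6,
\]
where the second estimate uses $\mu^{12}/\epsilon^4\ll \mu^6$, which is equivalent to the assumption $L_3\gg L_2^{11/6}$ in \eqref{eq_assumption1}. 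The eigenvalues then satisfy $\lambda_+ +\lambda_-\sim \mu^6/\epsilon^2$ and $\lambda_+\lambda_-\sim\mu^6$, and since $\mu^6/\epsilon^2=L_2^8/L_3^6\ll 1/L_2^2=\epsilon^2$ again by $L_3\gg L_2^{11/6}$, one eigenvalue is of order $\epsilon^2$ and the smaller one is of order $\mu^6/\epsilon^2=L_2^8/L_3^6$, as claimed.

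The main obstacle is verifying that the leading-order coefficient of $\det Dg$ (at order $\mu^6$) does not vanish. The dominant contribution comes from the product $\partial_{\hat\Psi_1}g_1\cdot \partial_{\hat\Gamma_3}g_2$. Using the explicit leading coefficients in Lemma \ref{lemma_innerhamderivatives}, a short computation gives
\[
\partial_{\hat\Psi_1}g_1 = \frac{2\pi\,\epsilon^2}{\delta_1^2\,\hat\Gamma_2}\,(L_1^2-3\hat\Gamma_2^2)\left[\,1 + \tfrac{L_1^2-3\hat\Gamma_2^2}{4\hat\Gamma_2^2}\,\right] + \cdots,
\]
which is strictly nonzero precisely because of assumption \eqref{eq_isoenernondeg} ($\hat\Gamma_2\in[\zeta_1,\zeta_2]$ with $\zeta_2<L_1/\sqrt3$), so that $L_1^2-3\hat\Gamma_2^2>0$ uniformly; and an analogous computation gives a strictly nonzero leading coefficient for $\partial_{\hat\Gamma_3}g_2$ coming from the constants $C_{23}$ and the $\delta_i$'s. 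The off-diagonal term $\partial_{\hat\Gamma_3}g_1$ has the same order $\mu^6/\epsilon^2$ but its square contributes only at order $\mu^{12}/\epsilon^4\ll\mu^6$, so no cancellation occurs and $\det Dg\neq 0$ at leading order, completing the proof.
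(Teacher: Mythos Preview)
Your proof is correct and follows essentially the same route as the paper: reduce to the integrable flow of $\hat F_0$, compute the isoenergetic Poincar\'e map via the bordered-Hessian formula for $Dg$, insert the derivative data from Lemma~\ref{lemma_innerhamderivatives}, and check non-vanishing using \eqref{eq_isoenernondeg} together with $\delta_1\in(0,1)$. The paper carries out the same chain-rule computation (deriving the formula for $\hat\omega_0^3(Dg)_{ij}$ rather than quoting it) and arrives at the identical leading expressions; your factorisation of $(Dg)_{11}$ agrees with theirs, and the $(Dg)_{22}$ term is nonzero because $12\delta_1^2-20<0$ for $\delta_1\in(0,1)$, which you allude to but could state explicitly.

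One slip: you write $\lambda_++\lambda_-\sim\mu^6/\epsilon^2$, but the trace of your matrix is $\epsilon^2+\mu^6/\epsilon^2\sim\epsilon^2$ (precisely by the inequality $\mu^6/\epsilon^2\ll\epsilon^2$ you invoke just afterwards). Your conclusion on the eigenvalue orders is unaffected, since $\lambda_+\sim\epsilon^2$ and $\lambda_-\sim\det Dg/\lambda_+\sim\mu^6/\epsilon^2$.
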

\begin{proof}
Consider the normalised inner Hamiltonian $L_2^6 \hat{F}$ where $\hat{F}$ is defined by \eqref{eq_innerhamiltonianave}. By part 2 of Theorem \ref{theorem_innerdynamics}, the first order term of this Hamiltonian is $c_0 \, \hat{\Gamma}_2^2$, and so the rate of change of $\hat{\gamma}_2$ is $2 \, c_0 \, \hat{\Gamma}_2 + O \left( \epsilon \right)$. Since this is of order 1, the return time of the flow associated with the Hamiltonian function $L_2^6 \hat{F}$ to the Poincar\'e section $\{ \hat{\gamma}_2 = 0\}$ is itself of order 1. Due to equation \eqref{eq_innerhamiltonianave}, the angles $\hat{\psi}_1, \hat{\gamma}_3$ do not appear in the Hamiltonian $L_2^6 \hat{F}$ before the terms of order $\epsilon^{k_1 - 6} \mu^{k_2}$, and so the return map $f$ has the form \eqref{eq_innerpoincaremap}. It remains to prove the formula \eqref{eq_twistproperty}, and to determine the order of the bottom eigenvalue of $Dg$.

Write $P_0 = \hat{\Gamma}_2$, $P_1 = \hat{\Psi}_1$, $P_2 = \hat{\Gamma}_3$, and $P = (P_1,P_2)$. Write
\begin{equation}
\omega_i \left( P_0, P \right) = \frac{\partial \hat{F}_0}{\partial P_i}
\end{equation}
for $i=0,1,2$, where $\hat F_0$ is defined in \eqref{eq_innerhamiltonianave}. By the implicit function theorem, in the energy level $\{ \hat{F}_0 = E_0 \}$ where $E_0 \in \E$, we can write $P_0 = \alpha (P)$, and the derivatives of $\alpha$ are given by
\begin{equation}
\frac{\partial \alpha}{\partial P_j} (P) = - \frac{\hat{\omega}_j (P)}{\hat{\omega}_0 (P)}
\end{equation}
where we have defined $\hat{\omega}_i (P) = \omega_i (\alpha(P),P)$. Using this notation we have $g_i(P) = \hat{\omega}_0(P)^{-1} \hat{\omega}_i(P)$. In the following computation, for convenience, we suppress the dependence of functions and their derivatives on $P$ and $P_0 = \alpha (P)$. 
\begin{align}
\hat{\omega}_0^3 \left( D g \right)_{ij} ={}& \hat{\omega}_0^3 \frac{\partial g_i}{\partial P_j} = \hat{\omega}_0^2 \left( \frac{\partial \omega_i}{\partial P_0} \frac{\partial \alpha}{\partial P_j} + \frac{\partial \omega_i}{\partial P_j} \right) - \hat{\omega}_0 \, \hat{\omega}_i \left( \frac{\partial \omega_0}{\partial P_0} \frac{\partial \alpha}{\partial P_j} + \frac{\partial \omega_0}{\partial P_j} \right) \\
={}& - \hat{\omega}_0 \, \hat{\omega}_j \frac{\partial \omega_i}{\partial P_0} + \hat{\omega}_0^2 \frac{\partial \omega_i}{\partial P_j} + \hat{\omega}_i \, \hat{\omega}_j \frac{\partial \omega_0}{\partial P_0} - \hat{\omega}_0 \, \hat{\omega}_i \frac{\partial \omega_0}{\partial P_j} \\
={}& - \frac{\partial \hat{F}_0}{\partial P_0} \frac{\partial \hat{F}_0}{\partial P_j} \frac{\partial^2 \hat{F}_0}{\partial P_0 \partial P_i} + \left( \frac{\partial \hat{F}_0}{\partial P_0} \right)^2 \frac{\partial^2 \hat{F}_0}{\partial P_j \partial P_i} + \frac{\partial \hat{F}_0}{\partial P_i} \frac{\partial \hat{F}_0}{\partial P_j} \frac{\partial^2 \hat{F}_0}{\partial P_0^2} - \frac{\partial \hat{F}_0}{\partial P_0} \frac{\partial \hat{F}_0}{\partial P_i} \frac{\partial^2 \hat{F}_0}{\partial P_j \partial P_0}. 
\end{align}
Combining this formula with the derivatives given in Lemma \ref{lemma_innerhamderivatives}, we see that
\begin{align}
\hat{\omega}_0^3 \left( D g \right)_{11} ={}& \epsilon^{20} \, C_{12}^3\, {{54\, \left(L_{1}^2-3\,\hat{\Gamma}_{2}^2\right)\,\left(L_{1}^2+\hat{\Gamma}_{2} ^2\right)}\over{L_{1}^6\,\delta_{1}^{11}}}+ \cdots\\
\hat{\omega}_0^3 \left( D g \right)_{12} ={}& - \epsilon^{
 16}\,\mu^6 \, C_{12}^2\,C_{23} \, {{72 \,\left(3\,L_{1}^2\,\delta_{1}^2+9\, \hat{\Gamma}_{2}^2\,
 \delta_{1}^2-5\,L_{1}^2+5\, \hat{\Gamma}_{2}^2\right)\,\delta_{3}}\over{L_{1}^4\,\delta_{1}^9\,\delta_{2}^3}} + \cdots \\
\hat{\omega}_0^3 \left( D g \right)_{21} ={}& - \epsilon^{
 16}\,\mu^6 \, C_{12}^2\,C_{23} \, {{72 \,\left(3\,L_{1}^2\,\delta_{1}^2+9\, \hat{\Gamma}_{2}^2\,
 \delta_{1}^2-5\,L_{1}^2+5\, \hat{\Gamma}_{2}^2\right)\,\delta_{3}}\over{L_{1}^4\,\delta_{1}^9\,\delta_{2}^3}}  + \cdots \\
\hat{\omega}_0^3 \left( D g \right)_{22} ={}&  - \epsilon^{16}\,\mu^6 \, C_{12}^2\,C_{23}\, {{36\, \hat{\Gamma}_{2}^2\,\left(12\,\delta_{1}^2-20\right)}\over{L_{1}^4\,\delta_{1}^8\,\delta_{2}^3}} + \cdots
\end{align}
As a result of \eqref{eq_assumption1} we have $\epsilon^{20} \gg \epsilon^{16} \mu^6$. Since $O \left( \hat{\omega}_0^3 \right) = \epsilon^{18}$, the top eigenvalue of $Dg$ is of order $O \left( (Dg)_{11} \right) = \epsilon^2$. Moreover, the determinant is equal, up to higher order terms, to the product of the entries on the diagonal of $Dg$; therefore it is of order $\mu^6$, and it is nonzero as long as 
\begin{equation}
\hat{\Gamma}_{2}^2 \left(L_{1}^2-3\,\hat{\Gamma}_{2}^2\right)\,\left(L_{1}^2+\hat{\Gamma}_{2} ^2\right) \left(12\,\delta_{1}^2-20\right)  \neq 0. 
\end{equation} 
This is true whenever $\hat \Gamma_2 \in (\zeta_1, \zeta_2)$ where $\zeta_1, \zeta_2$ satisfy condition \eqref{eq_isoenernondeg} is satisfied since the second last factor is strictly positive, and since $\delta_1^2 \in (0,1)$ implies that the last factor never vanishes. The fact that the bottom eigenvalue is of order $\epsilon^{-2} \mu^6$ follows from the fact that the top eigenvalue is of order $\epsilon^2$ and the determinant is of order $\mu^6$. 
\end{proof}

\begin{remark}
The fact that condition \eqref{eq_isoenernondeg} implies the twist property \eqref{eq_twistproperty} can also be obtained via Arnold's \emph{isoenergetic nondegeneracy} condition; that is, the condition that the so-called bordered Hessian of $\hat{F}_0$ (where $\hat{F}_0$ is defined by \eqref{eq_innerhamiltonianave}) has nonzero determinant (see Appendix 8 of \cite{arnol1978mathematical} and the references therein). The reason we have computed explicitly the derivative of the frequency vector $g$ is that we need to know the order of the eigenvalues of $Dg$ in order to obtain estimates on the diffusion time; the bordered Hessian and its determinant do not readily provide this data. 
\end{remark}

\begin{lemma}
There are two scattering maps 
\begin{equation}
\hat{S}_{\pm} : \hat{\Lambda} \longrightarrow \hat{\Lambda}'
\end{equation}
where $\hat{\Lambda}'$ is an open cylinder in $\mathbb{T}^2 \times \mathbb{R}^2$ containing $\hat \Lambda$. Moreover the actions $\hat{\Psi}_1^*, \hat{\Gamma}_3^* $ of the image of a point $\left( \hat{\psi}_1, \hat{\gamma}_3, \hat{\Psi}_1, \hat{\Gamma}_3 \right) \in \Lambda$ under the scattering maps $\hat{S}_{\pm}$ are given by
\begin{equation}
\hat{\Psi}_1^* = \hat{\Psi}_1 + \frac{L_2^9}{L_3^6} \S_1^{\pm} \left( \hat{\psi}_1, \hat{\Gamma}_2 \right) + \cdots, \quad \hat{\Gamma}_3^* = \hat{\Gamma}_3 + \frac{L_2^{11}}{L_3^8} \S^{\pm}_3 \left( \hat{\psi}_1, \hat{\gamma}_3, \hat{\Gamma}_2 \right) + \cdots
\end{equation}
where $\hat{\Gamma}_2$ is a function of the coordinates $\left( \hat{\psi}_1, \hat{\gamma}_3, \hat{\Psi}_1, \hat{\Gamma}_3 \right)$ and the energy $E_0 \in \E$ on the cylinder $\hat{\Lambda}$, where $\S_1^{\pm}$ is defined by \eqref{eq_scatteringpsi1} and $\S_3^{\pm}$ is defined by \eqref{eq_scatteringgamma3}. 
\end{lemma}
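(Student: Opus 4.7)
The plan is to obtain the Poincar\'e scattering maps $\hat{S}_{\pm}$ as the restrictions to the section of the flow scattering maps $S_{\pm}$ constructed in Theorem \ref{theorem_outerdynamics}, composed with the inner return flow, and then to transfer the first-order formulas for the jumps in $\hat{\Psi}_1$ and $\hat{\Gamma}_3$ from the flow picture to the map picture. First I would check that the two 7-dimensional homoclinic channels $\Gamma_{\pm}$ of Lemma \ref{lemma_scatteringtildecoords} intersect the level set $\{F_{\mathrm{sec}}=E_0\}$ transversally — this is immediate because $F_{\mathrm{sec}}$ has non-vanishing differential on $\Lambda$ thanks to \eqref{eq_globalhatintegralderivative} — and that the resulting 6-dimensional surfaces in turn meet the Poincar\'e section $\{\hat{\gamma}_2=0\}$ transversally, yielding 5-dimensional homoclinic channels $\hat{\Gamma}_{\pm}$ for the Poincar\'e map $F:M\to M$ relative to $\hat{\Lambda}=\Lambda\cap M$. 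Transversality with $\{\hat{\gamma}_2=0\}$ is automatic since by part 2 of Theorem \ref{theorem_innerdynamics} the secular flow on $\Lambda$ moves $\hat{\gamma}_2$ at the rate $2c_0\epsilon^6\hat{\Gamma}_2+\cdots$, which is bounded away from zero on the range $\hat{\Gamma}_2\in[\zeta_1+\varrho,\zeta_2-\varrho]$.

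Next, I would define $\hat{S}_{\pm}:\hat{\Lambda}\to\hat{\Lambda}'$ pointwise by: given $p\in\hat{\Lambda}$, take $S_{\pm}(p)\in\Lambda'$; since in general $S_{\pm}(p)\notin\{\hat{\gamma}_2=0\}$, push $S_{\pm}(p)$ along the inner flow on $\Lambda'$ for the (order one) time $\tau(p)$ needed for $\hat{\gamma}_2$ to return to $0$ modulo $2\pi$. By construction this composed map is the scattering map associated to the Poincar\'e map $F$ and the channel $\hat{\Gamma}_{\pm}$, and the variables $\hat{\Gamma}_2$ appearing in the flow formulas become, via the implicit function theorem applied to $\hat{F}_0=E_0$, explicit functions $\hat{\Gamma}_2=\hat{\Gamma}_2(\hat{\psi}_1,\hat{\gamma}_3,\hat{\Psi}_1,\hat{\Gamma}_3;E_0)$, again by virtue of \eqref{eq_globalhatintegralderivative}.

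The key remaining step — and the only place where anything could go wrong — is to verify that the inner return flow of duration $\tau(p)=O(1)$ does not perturb the dominant jumps in $\hat{\Psi}_1$ and $\hat{\Gamma}_3$. This is precisely where the averaging performed in part 2 of Theorem \ref{theorem_innerdynamics} pays off: the inner Hamiltonian $\hat{F}$ on $\Lambda$ depends on the angles $(\hat{\gamma}_2,\hat{\psi}_1,\hat{\gamma}_3)$ only through the remainder $\epsilon^{k_1}\mu^{k_2}\hat{F}_1$, with $(k_1,k_2)$ arbitrarily large. Hence along the return orbit the actions $\hat{\Psi}_1$ and $\hat{\Gamma}_3$ change by at most $O(\epsilon^{k_1}\mu^{k_2})$, which is negligible compared to the leading jumps $\frac{L_2^9}{L_3^6}\,\S_1^{\pm}$ and $\frac{L_2^{11}}{L_3^8}\,\S_3^{\pm}$ produced by $S_{\pm}$ (via Lemma \ref{lemma_scatteringhatcoords}) once $k_1,k_2$ are fixed large enough. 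Consequently the first-order expressions for $\hat{\Psi}_1^*-\hat{\Psi}_1$ and $\hat{\Gamma}_3^*-\hat{\Gamma}_3$ under $\hat{S}_{\pm}$ coincide with those of Theorem \ref{theorem_outerdynamics}, which is the claim.

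The genuinely delicate point in this argument is the bookkeeping of scales: one needs that the return time $\tau(p)$, which is set by the $O(1)$ inner frequency $\partial\hat{F}_0/\partial\hat{\Gamma}_2$ after the normalisation by $L_2^6$, does not grow with $\epsilon,\mu$, so that the small-in-$(k_1,k_2)$ remainder is not amplified to an unfavorable size. The uniform lower bound on $\hat{\Gamma}_2$ on the domain of interest takes care of this, and the rest of the estimates are contained in Theorems \ref{theorem_innerdynamics} and \ref{theorem_outerdynamics} and Lemma \ref{lemma_scatteringhatcoords}.
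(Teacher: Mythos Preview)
Your proposal is correct and follows essentially the same route as the paper: intersect the flow homoclinic channels $\Gamma_{\pm}$ with the section to get $\hat{\Gamma}_{\pm}$, define $\hat{S}_{\pm}$ as $S_{\pm}$ composed with an $O(1)$ inner return time $\tau_{\pm}$ (the paper invokes Proposition~3 of \cite{delshams2013transition} for this step, whereas you construct it directly), and then use the averaged form \eqref{eq_innerhamiltonianave} to show the return flow does not disturb the leading jumps. One small bookkeeping slip: once you normalise by $L_2^6$ so that the return time is $O(1)$, the angle-dependent remainder in the inner Hamiltonian becomes $\epsilon^{k_1-6}\mu^{k_2}\hat{F}_1$, so the action drift along the return orbit is $O(\epsilon^{k_1-6}\mu^{k_2})$ rather than $O(\epsilon^{k_1}\mu^{k_2})$ --- this is exactly what the paper writes in \eqref{eq_returnerrorsscattering}, and is harmless since $k_1,k_2$ are arbitrary.
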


\begin{proof}
By Lemma \ref{lemma_scatteringtildecoords}, there are two homoclinic channels $\Gamma_{\pm}$ corresponding to the normally hyperbolic invariant manifold $\Lambda$, which give rise to two globally defined scattering maps $S_{\pm} : \Lambda \to \Lambda'$ for some cylinder $\Lambda'$ such that $\Lambda \subset \Lambda' \subset \mathbb{T}^3 \times \mathbb{R}^3$. It follows that the sets $\hat{\Gamma}_{\pm} = \Gamma_{\pm} \cap \{ \hat{\gamma}_2 = 0 \} \cap \{ F_{\mathrm{sec}} = E_0 \}$ are homoclinic channels for the return map $F$ to the section $\{ \hat{\gamma}_2 = 0 \} \cap \{ F_{\mathrm{sec}} = E_0 \}$. Let $\hat{\Lambda}' = \Lambda' \cap \{ \hat{\gamma}_2 = 0 \} \cap \{ F_{\mathrm{sec}} = E_0 \}$, and denote by $\hat{S}_{\pm} : \hat{\Lambda} \to \hat{\Lambda}'$ the scattering maps corresponding to the homoclinic channels $\hat{\Gamma}_{\pm}$. Denote by $\hat{\phi}^t$ the time-$t$ map of the flow of the Hamiltonian function $L_2^6 \, \hat{F}$ where the inner Hamiltonian $\hat{F}$ is defined by \eqref{eq_innerhamiltonianave}. Proposition 3 of \cite{delshams2013transition} implies that there are smooth functions $\tau_{\pm} : \hat{\Lambda} \to \mathbb{R}$ such that 
\begin{equation}\label{eq_returnscatteringwrtnonreturn}
\hat{S}_{\pm} \left( \hat{z} \right) = \hat{\phi}^{\tau_{\pm} ( \hat{z} )} \circ S_{\pm}  \left( \hat{z} \right)
\end{equation}
and $\tau_{\pm} \left( \hat{z} \right) = O(1)$ for all $\hat{z} \in \hat{\Lambda}$. Indeed, the fact that $\tau_{\pm} ( \hat{z} ) = O(1)$ is due to the functions $\tau_{\pm}$ being constructed as return times with respect to the Hamiltonian $L_2^6 \, \hat{F}$ to the section $\{ \hat{\gamma}_2 = 0 \}$; since the time derivative of $\hat{\gamma}_2$ with respect to $L_2^6 \, \hat{F}$ is of order 1, so too is the return time.

Now, choose any point $\hat{z} = \left( \hat{\psi}_1, \hat{\gamma}_3, \hat{\Psi}_1, \hat{\Gamma}_3 \right) \in \hat{\Lambda}$. This gives rise to a point $z = \left( \hat{\gamma}_2, \hat{\psi}_1, \hat{\gamma}_3, \hat{\Gamma}_2, \hat{\Psi}_1, \hat{\Gamma}_3 \right)$ in $\Lambda$ where $\hat{\gamma}_2 = 0$ and $\hat{\Gamma}_2$ is a function of $\hat{\psi}_1, \hat{\gamma}_3, \hat{\Psi}_1, \hat{\Gamma}_3$ and the energy $E_0$. Write $\bar{z} = S_{\pm} \left( z \right)$. Then the $\bar{\Psi}_1, \, \bar{\Gamma}_3$ components of $\bar{z}$ are 
\begin{equation}\label{eq_returnerrorsscattering0}
\bar{\Psi}_1 = \hat{\Psi}_1 + \frac{L_2^9}{L_3^6} \, \S_1^{\pm} \left( \hat{\psi}_1, \hat{\Gamma}_2 \right) + \cdots, \quad \bar{\Gamma}_3 = \hat{\Gamma}_3 + \frac{L_2^9}{L_3^6} \, \S_3^{\pm} \left( \hat{\psi}_1, \hat{\gamma}_3, \hat{\Gamma}_2 \right) + \cdots
\end{equation}
Write $\hat z^* = \hat S_{\pm} \left( \hat z \right) = \hat \phi^{\tau_{\pm} \left( \hat z \right)} \left( \bar z \right)$. Since $\tau_{\pm} ( \hat{z} )$ is of order 1, part 2 of Theorem \ref{theorem_innerdynamics} implies that the $\hat{\Psi}_1^*, \hat{\Gamma}_3^*$ components of $\hat z^*$ satisfy
\begin{equation}\label{eq_returnerrorsscattering}
 \left( \hat{\Psi}_1^*, \hat{\Gamma}_3^* \right) = \left( \bar{\Psi}_1, \bar{\Gamma}_3 \right) + O \left( \epsilon^{k_1 - 6} \mu^{k_2} \right)
\end{equation}
where $k_1, k_2$ are assumed to be sufficiently large. Combining \eqref{eq_returnerrorsscattering0} and \eqref{eq_returnerrorsscattering} completes the proof of the lemma. 

\end{proof}

\subsection{Constructing transition chains of almost invariant tori}\label{sec_transitionchainconstruction}

Observe that the normally hyperbolic invariant manifold $\hat{\Lambda}$ constructed in Theorem \ref{theorem_reductiontomap} has a foliation by leaves
\begin{equation}\label{eq_foliationoflambdahat}
\L \left( \hat{\Psi}_1^*, \hat{\Gamma}_3^* \right) = \left\{ \left( \hat{\psi}_1, \hat{\gamma}_3, \hat{\Psi}_1, \hat{\Gamma}_3 \right) \in \hat{\Lambda} : \hat{\Psi}_1 = \hat{\Psi}_1^*, \, \hat{\Gamma}_3 = \hat{\Gamma}_3^* \right\},
\end{equation}
each of which is almost invariant by the inner map $f$, since it is of the form \eqref{eq_innerpoincaremap0}. The following result allows us to construct sequences of leaves $\{\L_j \}$ of the foliation connected by the scattering maps $\hat{S}_{\pm}$ (i.e. \emph{transition chains}) such that the image of each leaf $\L_j$ under one of the scattering maps $\hat{S}_{\beta_j}$ (where $\beta_j \in \{ +, - \}$) is mapped transversely across $\L_{j+1}$ in the following sense: there is a $z \in \L_j$ such that $\hat{S}_{\beta_j}(z)  \in \L_{j+1}$, and
\begin{equation}\label{eq_transversalityoffoliations}
T_{\hat{S}_{\beta_j}(z)} \Lambda = T_{\hat{S}_{\beta_j}(z)} \left( \hat{S}_{\beta_j}(\L_j) \right)  \oplus T_{\hat{S}_{\beta_j}(z)} \L_{j+1}. 
\end{equation}
This, combined with the results of Section \ref{subsec_poincaremap}, will allow us to apply shadowing results from \cite{clarke2022topological}. Recall that the first order terms $\S_1^{\pm}$, $\S_3^{\pm}$ in the expansion of the images $\bar{\Psi}_1$, $\bar{\Gamma}_3$ of a point $\left( \hat{\psi}_1, \hat{\gamma}_3, \hat{\Psi}_1, \hat{\Gamma}_3 \right)$ under the scattering maps $\hat{S}_{\pm} : \hat{\Lambda} \to \hat{\Lambda}'$ are given by \eqref{eq_scatteringpsi1}, \eqref{eq_scatteringgamma3} respectively due to Theorem \ref{theorem_reductiontomap}. 

\begin{lemma}\label{lemma_scatteringmapjumps}
Consider the set $\hat{\Lambda} = \Lambda \cap M$ defined in Theorem \ref{theorem_reductiontomap}. Then there are constants $\nu^{\pm}, \hat{\nu}^{\pm}, \xi^{\pm}, \hat{\xi}^{\pm}>0$ and $C^{\pm} >0$ such that for $L_2$ large enough (see \eqref{eq_semimajorassumptionsinlvariable}) and any leaf $\L^* = \L \left( \hat{\Psi}_1^*, \hat{\Gamma}_3^* \right)$ of the foliation of $\hat{\Lambda}$ there are open sets $U_j^{\pm} \subset \L^* \simeq \T^2$ for $j=1,2,3,4$ such that $\mu \left( U_j^{\pm} \right) > C^{\pm}$ where $\mu$ is the Lebesgue measure on $\T^2$, and:
\begin{enumerate}
\item
For all $z \in U_1^{\pm}$ we have $\S_1^{\pm} (z) > \nu^{\pm}$,  $\left| \partial_{\hat{\psi}_1} \S_1^{\pm} (z) \right| > \hat{\nu}^{\pm}$,  $\S_3^{\pm} (z) > \xi^{\pm}$,  $\left| \partial_{\hat{\gamma}_3} \S_3^{\pm} (z) \right| > \hat{\xi}^{\pm}.$
\item
For all $z \in U_2^{\pm}$ we have $\S_1^{\pm} (z) < - \nu^{\pm}$,  $\left| \partial_{\hat{\psi}_1} \S_1^{\pm} (z) \right| > \hat{\nu}^{\pm}$,  $\S_3^{\pm} (z) < - \xi^{\pm}$,  $\left| \partial_{\hat{\gamma}_3} \S_3^{\pm} (z) \right| > \hat{\xi}^{\pm}$.
\item
For all $z \in U_3^{\pm}$ we have $\S_1^{\pm} (z) > \nu^{\pm}$,  $\left| \partial_{\hat{\psi}_1} \S_1^{\pm} (z) \right| > \hat{\nu}^{\pm}$,  $\S_3^{\pm} (z) < - \xi^{\pm}$,  $\left| \partial_{\hat{\gamma}_3} \S_3^{\pm} (z) \right| > \hat{\xi}^{\pm}$.
\item
For all $z \in U_4^{\pm}$ we have $\S_1^{\pm} (z) < - \nu^{\pm}$,  $\left| \partial_{\hat{\psi}_1} \S_1^{\pm} (z) \right| > \hat{\nu}^{\pm}$,  $\S_3^{\pm} (z) > \xi^{\pm}$,  $\left| \partial_{\hat{\gamma}_3} \S_3^{\pm} (z) \right| > \hat{\xi}^{\pm}$.
\end{enumerate}
\end{lemma}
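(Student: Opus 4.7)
The starting observation is that on any leaf $\L^* = \L(\hat\Psi_1^*,\hat\Gamma_3^*)$, the value of $\hat\Gamma_2$ is constant: it is determined by the equation $\hat F_0(\hat\Gamma_2,\hat\Psi_1^*,\hat\Gamma_3^*)=E_0$, because $\hat F_0$ does not depend on the angles $\hat\psi_1,\hat\gamma_3$ (Theorem \ref{theorem_innerdynamics}). Since $\hat\Gamma_2$ ranges in the compact interval $[\zeta_1,\zeta_2]\subset(0,L_1/\sqrt 3)$ (see \eqref{eq_isoenernondeg} and Remark \ref{remark_shrinkingnhim}), all the coefficient functions of $\hat\Gamma_2$ appearing in \eqref{eq_scatteringpsi1}–\eqref{eq_scatteringgamma3}, in particular $\kappa(\pi\hat\Gamma_2/(A_2 L_1^2))$, $\sqrt{1-(5/3)\hat\Gamma_2^2/L_1^2}$ and $(3\hat\Gamma_2^2/L_1^2-1)^{-1}$, are smooth, bounded, and bounded away from zero, uniformly across all admissible leaves.

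The analysis of $\S_1^\pm$ is one-dimensional: on $\L^*$ formula \eqref{eq_scatteringpsi1} gives
\begin{equation}
\S_1^\pm(\hat\psi_1)=a(\hat\Gamma_2)\cos\hat\psi_1+b(\hat\Gamma_2)\sin 2\hat\psi_1,
\end{equation}
with $a(\hat\Gamma_2)=\mp\alpha_2^{23}\kappa(\pi\hat\Gamma_2/(A_2 L_1^2))c_2$ uniformly bounded away from $0$ (because $c_2=-(5-4\delta_1^2)\neq 0$ and $\kappa>0$), and $b$ bounded. This is a smooth nonconstant function on $\T$ with $\S_1^\pm(0)=a\neq 0$ and $\S_1^\pm(\pi)=-a$, so it changes sign; moreover its critical set is finite, so there exist open arcs $I^\pm_{\mathrm{pos}},I^\pm_{\mathrm{neg}}\subset\T$ of length bounded below uniformly in $\hat\Gamma_2\in[\zeta_1,\zeta_2]$ on which $\pm\S_1^\pm$ has a prescribed sign exceeding some $\nu^\pm>0$ and $|\partial_{\hat\psi_1}\S_1^\pm|$ exceeds some $\hat\nu^\pm>0$.

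For $\S_3^\pm$, the key structural observation is that \emph{every} term in \eqref{eq_scatteringgamma3} is sinusoidal in $\hat\gamma_3$ with frequency $1$: the trigonometric terms are either $\cos\hat\gamma_3$, $\sin\hat\gamma_3$, or, via $\cos(\hat\gamma_3\pm k\hat\psi_1)=\cos\hat\gamma_3\cos(k\hat\psi_1)\mp\sin\hat\gamma_3\sin(k\hat\psi_1)$, a linear combination thereof with $\hat\psi_1$-dependent coefficients. Hence
\begin{equation}
\S_3^\pm(\hat\psi_1,\hat\gamma_3)=A^\pm(\hat\psi_1)\cos\hat\gamma_3+B^\pm(\hat\psi_1)\sin\hat\gamma_3=R^\pm(\hat\psi_1)\cos\bigl(\hat\gamma_3-\phi^\pm(\hat\psi_1)\bigr),
\end{equation}
with $R^\pm=\sqrt{(A^\pm)^2+(B^\pm)^2}$. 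For each fixed $\hat\psi_1$ with $R^\pm(\hat\psi_1)>0$, the function $\hat\gamma_3\mapsto\S_3^\pm(\hat\psi_1,\hat\gamma_3)$ is a sinusoid of amplitude $R^\pm(\hat\psi_1)$, which therefore takes both signs on arcs of $\T$ with $|\partial_{\hat\gamma_3}\S_3^\pm|$ bounded below by $\gtrsim R^\pm(\hat\psi_1)/\sqrt 2$.

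The crux is then to show that $R^\pm$ is bounded away from zero on an open subset of $\T$: writing $A^\pm,B^\pm$ explicitly from \eqref{eq_scatteringgamma3} (they are trigonometric polynomials in $\hat\psi_1$ whose coefficients can be read off from $\nu_0,\ldots,\nu_3$ in \eqref{eq_constantsmuline1}–\eqref{eq_constantsmuline2}, the nontrivial factor $\tilde\alpha$ of Lemma \ref{lemma_averaging}, and the pre-factor $\alpha_5^{23}\kappa$), a direct inspection shows that $A^\pm$ and $B^\pm$ cannot simultaneously vanish identically; since they are real-analytic, $(R^\pm)^2$ has only finitely many zeros on $\T$, and on the complement of small neighbourhoods of these zeros $R^\pm$ is bounded below by a constant depending only on the masses and on $[\zeta_1,\zeta_2]$. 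This verification of non-degeneracy of the pair $(A^\pm,B^\pm)$ is the main obstacle: it must be carried out directly from the explicit formulas and is the only step that uses the mass conditions \eqref{def:massconditions:2} in an essential way (via the nonvanishing of $\tilde\alpha$ and of the $\nu_i$'s). Intersecting the arc of $\hat\psi_1$-values on which $\S_1^\pm$ has the desired sign/derivative bounds with the subset on which $R^\pm$ is bounded below, and then taking a product with a suitable open arc of $\hat\gamma_3$-values on which $\S_3^\pm$ has the desired sign and derivative bound, yields for each of the four sign combinations an open product $U_j^\pm\subset\T^2$ of area bounded below by a constant $C^\pm>0$ independent of the leaf. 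This gives the four required sets and completes the proof.
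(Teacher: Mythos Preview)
Your argument is essentially correct and follows the same overall strategy as the paper. Your sinusoidal decomposition $\S_3^{\pm}=A^{\pm}(\hat\psi_1)\cos\hat\gamma_3+B^{\pm}(\hat\psi_1)\sin\hat\gamma_3$ is a clean structural observation that the paper does not make explicit; the paper instead argues directly that $\S_3^{\pm}$ is a nonconstant zero-average trigonometric polynomial in $(\hat\psi_1,\hat\gamma_3)$ and then perturbs a point where $\S_3^{\pm}>0$ to also get $|\partial_{\hat\gamma_3}\S_3^{\pm}|>0$. The non-degeneracy check you defer to ``direct inspection'' is precisely the content the paper supplies: the constants $\nu_0,\ldots,\nu_3$ of \eqref{eq_constantsmuline1}--\eqref{eq_constantsmuline2} cannot all vanish (if $\nu_0=0$ then $\delta_1=\delta_3$, which forces $\nu_1=7-3\delta_1^2\neq 0$), and since the harmonics $\cos(\hat\gamma_3\pm k\hat\psi_1)$ with $k$ odd in the second bracket of \eqref{eq_scatteringgamma3} are distinct from the $k$-even harmonics produced by the first bracket, the nonvanishing of some $\nu_j$ already forces $\S_3^{\pm}\not\equiv 0$ and hence $(A^{\pm},B^{\pm})\not\equiv(0,0)$. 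One small correction to your attribution: the $\nu_i$'s depend only on $\delta_1,\delta_3$ and are mass-independent; the mass condition $m_0+m_1\neq m_2$ enters through the prefactors $\alpha_5^{23}$ and $\tilde\alpha$, both of which carry the factor $\tilde\sigma_{2,3}=(m_0{+}m_1{-}m_2)(m_0{+}m_1{+}m_2)/M_2^2$ from $F_{\mathrm{oct}}^{23}$. Also note that $\hat\Gamma_2$ is not exactly constant on a leaf (the full energy constraint $\hat F=E_0$ involves the $O(\epsilon^{k_1}\mu^{k_2})$ term $\hat F_1$), but this higher-order angle dependence is harmless for the first-order analysis here.
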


\begin{proof}
We prove part 1 of the lemma for the scattering map $\hat{S}_+$, as the cases for parts 2-4 of the lemma and for $\hat{S}_-$ are analogous. Since $\hat{\Gamma}_2 \in (\zeta_1, \zeta_2)$ (see \eqref{eq_isoenernondeg}), the coefficient of each term in \eqref{eq_scatteringpsi1} is bounded away from zero. It follows that $\S_1^+$ is a nonconstant trigonometric polynomial with zero average, and with two different harmonics; therefore we have $\partial_{\hat{\psi}_1} \S_1^+  =0$ only on a set of measure 0 in each fixed $\L^*$. Moreover, this implies that there is an open set $V_1^+ \subset \T$ of positive measure in $\T$ and constants $\nu^+,  \hat{\nu}^+>0$ such that, for each $\hat{\psi}_1 \in V_1^+$, we have $\S_1^+ \left( \hat{\psi}_1, \hat{\Gamma}_2 \right) > \nu^+$ and $\left| \partial_{\hat{\psi}_1} \S_1^+ \left( \hat{\psi}_1, \hat{\Gamma}_2 \right) \right| > \hat{\nu}^+$. 

Since $\hat{\Gamma}_2 \in (\zeta_1, \zeta_2)$ (see \eqref{eq_isoenernondeg}), the coefficients in front of the two expressions in square brackets in \eqref{eq_scatteringgamma3} are bounded away from zero. Recall the quantities $\nu_j$ for $j=0,1,2,3$ appearing in \eqref{eq_scatteringgamma3} are defined by \eqref{eq_constantsmuline1} and \eqref{eq_constantsmuline2}. Observe that $\nu_j$ cannot all be zero simultaneously. Indeed, $\nu_j$ depends only on $\delta_1, \delta_3$, where $\delta_1 \in (0,1)$ and $\delta_3 >0$. Therefore $\nu_0=0$ if and only if $\delta_1 = \delta_3$, in which case $\nu_1 \neq 0$. Since the harmonics appearing in the second set of square brackets in \eqref{eq_scatteringgamma3} are different from those appearing in the first set of square brackets, it follows that $\S_3^+$ is a nonconstant trigonometric polynomial with zero average for any fixed value of $\hat \Gamma_2$.

Now, suppose  $\hat{\psi}_1 \in V_1^+$, and choose $\hat{\gamma}_3 \in \T$ such that $\S_3^+ \left( \hat{\psi}_1, \hat{\gamma}_3, \hat{\Gamma}_2 \right) > 0$. By slightly adjusting $\hat{\psi}_1, \hat{\gamma}_3$ if necessary, we can obtain $\hat{\psi}_1', \hat{\gamma}_3', \hat{\Gamma}_2'$ where $\hat{\psi}_1' \in V_1^+$, such that $\S_3^+ \left( \hat{\psi}_1', \hat{\gamma}_3', \hat{\Gamma}_2' \right) > 0$ and $ \left| \partial_{\hat{\gamma}_3} \S_3^+ \left( \hat{\psi}_1', \hat{\gamma}_3', \hat{\Gamma}_2' \right) \right| > 0$. Therefore we can find constants $\xi^+, \hat{\xi}^+ >0$ and an open set $U_1^+ \subset \T^2$ such that part 1 of the lemma holds. 
\end{proof}

It is clear that Lemma \ref{lemma_scatteringmapjumps} provides us with large open sets $U_j^{\pm}$ in each leaf $\L$ of the foliation of $\hat{\Lambda}$ such that the scattering map can jump a fixed distance either up or down in the $\hat{\Psi}_1, \hat{\Gamma}_3$ directions. In the next lemma we show that the conditions satisfied by the scattering maps in these sets $U_j^{\pm}$ imply transversality of the foliations in the sense of \eqref{eq_transversalityoffoliations}. 

\begin{lemma}\label{lemma_transversalityoffoliations}
Suppose $\hat{z} \in U_j^{\pm} \cap \L_0$ and $\hat{z}^* = \hat{S}_{\pm} \left( \hat{z} \right) \in \L_1$, where $U_j^{\pm}$ are the open sets found in Lemma \ref{lemma_scatteringmapjumps} and $\L_j$ are leaves of the foliation of $\hat{\Lambda}$. Then $\hat{S}_{\pm}$ maps $\L_0$ transversely across $\L_1$ at the point $\hat{z}^* = \hat{S}_{\pm} \left( \hat{z} \right)$ in the sense that
\begin{equation}
T_{\hat{z}^*} \hat{\Lambda} = T_{\hat{z}^*} \left( \hat{S}_{\pm} \left( \L_0 \right) \right) + T_{\hat{z}^*} \L_1. 
\end{equation}
\end{lemma}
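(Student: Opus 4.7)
\medskip

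\noindent\textbf{Proof proposal.}
The manifold $\hat \Lambda$ is four-dimensional with coordinates
$\left(\hat\psi_1,\hat\gamma_3,\hat\Psi_1,\hat\Gamma_3\right)$, and each leaf
$\L\left(\hat\Psi_1^*,\hat\Gamma_3^*\right)$ is a two-dimensional torus on which
$\hat\Psi_1,\hat\Gamma_3$ are constant. Since $\hat S_\pm$ is a local diffeomorphism,
$\hat S_\pm(\L_0)$ is also a two-dimensional submanifold of $\hat\Lambda$,
while $\L_1$ is tangent to the plane $\{\partial_{\hat\psi_1},\partial_{\hat\gamma_3}\}$.
The plan is to reduce the transversality condition
$T_{\hat z^*}\hat\Lambda = T_{\hat z^*}(\hat S_\pm(\L_0)) + T_{\hat z^*}\L_1$
to a non-degeneracy statement on the $2\times 2$ block of $D\hat S_\pm|_{\L_0}$
that projects to the action directions. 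Concretely, writing
$\pi:\hat\Lambda\to\mathbb{R}^2$ for the projection
$(\hat\psi_1,\hat\gamma_3,\hat\Psi_1,\hat\Gamma_3)\mapsto(\hat\Psi_1,\hat\Gamma_3)$,
transversality is equivalent to asking that the matrix
\[
J \;=\; D\bigl(\pi\circ \hat S_\pm|_{\L_0}\bigr) \;=\;
\begin{pmatrix}
\partial_{\hat\psi_1}\hat\Psi_1^* & \partial_{\hat\gamma_3}\hat\Psi_1^* \\
\partial_{\hat\psi_1}\hat\Gamma_3^* & \partial_{\hat\gamma_3}\hat\Gamma_3^*
\end{pmatrix}
\]
be non-singular at $\hat z$.

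First I would substitute the asymptotic expressions for
$\hat\Psi_1^*$ and $\hat\Gamma_3^*$ from part~3 of Theorem~\ref{theorem_reductiontomap}.
A subtle point is that on the leaf $\L_0$ the coordinate $\hat\Gamma_2$ is not a free parameter:
it is determined implicitly by the energy equation
$\hat F\left(0,\hat\Gamma_2,\hat\psi_1,\hat\Psi_1,\hat\gamma_3,\hat\Gamma_3\right)=E_0$.
However, by the computation in Section~\ref{subsec_poincaremap} (see equation
\eqref{eq_globalhatintegralderivative}), the dominant part of $\hat F$ is
$\hat F_0(\hat\Gamma_2,\hat\Psi_1,\hat\Gamma_3)$, which depends on $\hat\Gamma_2$
and the two actions but not on the angles; so the derivatives
$\partial_{\hat\psi_1}\hat\Gamma_2$ and $\partial_{\hat\gamma_3}\hat\Gamma_2$ on the leaf
are of order $O\left(\epsilon^{k_1-6}\mu^{k_2}\right)$ and can be absorbed into the
remainder terms in \eqref{eq_scatteringpsi1}--\eqref{eq_scatteringgamma3}.

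The leading-order computation then yields
\[
J \;=\;
\begin{pmatrix}
\dfrac{L_2^{9}}{L_3^{6}}\,\partial_{\hat\psi_1}\S_1^{\pm}(\hat\psi_1,\hat\Gamma_2) & 0 \\[4pt]
\dfrac{L_2^{11}}{L_3^{8}}\,\partial_{\hat\psi_1}\S_3^{\pm}(\hat\psi_1,\hat\gamma_3,\hat\Gamma_2) & \dfrac{L_2^{11}}{L_3^{8}}\,\partial_{\hat\gamma_3}\S_3^{\pm}(\hat\psi_1,\hat\gamma_3,\hat\Gamma_2)
\end{pmatrix} + \text{(higher order)},
\]
which is upper-triangular (to leading order) with diagonal entries of order
$\epsilon^{-9}\mu^{6}$ and $\epsilon^{-11}\mu^{8}$. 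Consequently
\[
\det J \;=\; \frac{L_2^{20}}{L_3^{14}}\,
\partial_{\hat\psi_1}\S_1^{\pm}(\hat\psi_1,\hat\Gamma_2)\,
\partial_{\hat\gamma_3}\S_3^{\pm}(\hat\psi_1,\hat\gamma_3,\hat\Gamma_2)\,
\bigl(1+o(1)\bigr).
\]
By hypothesis, $\hat z\in U_j^{\pm}$ for some $j\in\{1,\dots,4\}$, so Lemma~\ref{lemma_scatteringmapjumps} gives
$\left|\partial_{\hat\psi_1}\S_1^{\pm}(\hat z)\right|>\hat\nu^{\pm}>0$ and
$\left|\partial_{\hat\gamma_3}\S_3^{\pm}(\hat z)\right|>\hat\xi^{\pm}>0$, both uniform in $L_2,L_3$.
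For $L_2$ sufficiently large (and $L_3$ in the allowed range) the higher-order corrections
are negligible, hence $\det J\neq 0$.

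The only delicate point is controlling the implicit $(\hat\psi_1,\hat\gamma_3)$-dependence of
$\hat\Gamma_2$ through the energy constraint and making sure it does not pollute the leading
order of the diagonal entries; this is why I insist on the sharp estimates
\eqref{eq_globalhatintegralderivative} and on the fact that the scattering map formulas
\eqref{eq_scatteringpsi1}--\eqref{eq_scatteringgamma3} remain valid on the energy surface
up to $O\left(\epsilon^{k_1-6}\mu^{k_2}\right)$ corrections. Invertibility of $J$ then yields
that $\pi|_{\hat S_\pm(\L_0)}:T_{\hat z^*}(\hat S_\pm(\L_0))\to\mathbb{R}^2$ is surjective,
which is exactly the transversality claim since $\ker\pi = T_{\hat z^*}\L_1$.
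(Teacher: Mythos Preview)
Your proof is correct and follows essentially the same approach as the paper: both arguments reduce transversality to the invertibility of the $2\times 2$ block $A_3 = \partial(\hat\Psi_1^*,\hat\Gamma_3^*)/\partial(\hat\psi_1,\hat\gamma_3)$, observe that it is (lower-)triangular at leading order because $\S_1^\pm$ does not depend on $\hat\gamma_3$, and invoke the bounds $|\partial_{\hat\psi_1}\S_1^\pm|>\hat\nu^\pm$, $|\partial_{\hat\gamma_3}\S_3^\pm|>\hat\xi^\pm$ from Lemma~\ref{lemma_scatteringmapjumps} on the sets $U_j^\pm$. Two cosmetic slips: the displayed matrix $J$ is \emph{lower}-triangular, not upper-triangular, and the diagonal entries have order $\epsilon^{-3}\mu^6$ and $\epsilon^{-3}\mu^8$ (not $\epsilon^{-9}\mu^6$, $\epsilon^{-11}\mu^8$), though your $L_2,L_3$ expressions and the determinant formula are correct; your explicit handling of the implicit $(\hat\psi_1,\hat\gamma_3)$-dependence of $\hat\Gamma_2$ through the energy constraint is a point the paper absorbs silently into the ellipsis.
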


\begin{proof}
We prove the lemma for $\hat{S}_+$, assuming $\hat{z} \in U_1^+$ and $\hat{z}^* = \hat{S}_+ (\hat{z})$. The set $T_{\hat{z}^*} \hat{\Lambda}$ is a 4-dimensional vector space, and its elements can be written in the form $v = (Q,P)$ where $Q \in \mathbb{R}^2$ is a tangent vector in the $\hat{\psi}_1, \hat{\gamma}_3$ directions, and $P \in \mathbb{R}^2$ is a tangent vector in the $\hat{\Psi}_1, \hat{\Gamma}_3$ directions. Since $\hat{S}_+$ is smooth we have $T_{\hat{z}^*} \left( \hat{S}_+ \left( \L_0 \right) \right) = D_{\hat{z}} \hat{S}_+ \left( T_{\hat{z}} \L_0 \right)$. With $\hat{z} = \left( \hat{\psi}_1, \hat{\gamma}_3, \hat{\Psi}_1, \hat{\Gamma}_3 \right)$ and $\hat{z}^* = \left( \hat{\psi}_1^*, \hat{\gamma}_3^*, \hat{\Psi}_1^*, \hat{\Gamma}_3^* \right)$, we can write
\begin{equation}
D_{\hat{z}} \hat{S}_+ = \left(
\begin{matrix}
A_1 & A_2 \\
A_3 & A_4
\end{matrix}
\right)
\end{equation}
where
\begin{equation}
A_1 = \frac{\partial \left( \hat{\psi}_1^*, \hat{\gamma}_3^* \right)}{\partial \left( \hat{\psi}_1, \hat{\gamma}_3 \right)}, \quad A_2 = \frac{\partial \left( \hat{\psi}_1^*, \hat{\gamma}_3^* \right)}{\partial \left( \hat{\Psi}_1, \hat{\Gamma}_3 \right)}, \quad A_3 = \frac{\partial \left( \hat{\Psi}_1^*, \hat{\Gamma}_3^* \right)}{\partial \left( \hat{\psi}_1, \hat{\gamma}_3 \right)}, \quad A_4 = \frac{\partial \left( \hat{\Psi}_1^*, \hat{\Gamma}_3^* \right)}{\partial \left( \hat{\Psi}_1, \hat{\Gamma}_3 \right)}. 
\end{equation}
Therefore
\begin{equation}
A_1 = \left(
\begin{matrix}
1 & 0 \\
0 & 1 \\
\end{matrix}
\right) + \cdots, \quad A_3 = \left(
\begin{matrix}
\frac{L_2^9}{L_3^6} \, \partial_{\hat{\psi}_1} \S_1^+ \left( \hat{z} \right) & 0 \\[6pt]
\frac{L_2^{11}}{L_3^8} \, \partial_{\hat{\psi}_1} \S_3^+ \left( \hat{z} \right) & \frac{L_2^{11}}{L_3^8} \, \partial_{\hat{\gamma}_3} \S_3^+ \left( \hat{z} \right)
\end{matrix}
\right)  + \cdots
\end{equation}
since $\S_1^+$ does not depend on $\hat{\gamma}_3$. 

Now, let $v^0 \in T_{\hat{z}} \L_0$ and $v^1 \in T_{\hat{z}^*} \L_1$. Due to the topology of the leaves $\L_j$ we have $v^j = (Q^j,0)$ for $j=0,1$. Therefore we have
\begin{equation}
\left(
\begin{matrix}
\bar{Q} \\
\bar{P}
\end{matrix}
\right) = D_{\hat{z}} \hat{S}_+ (v^0) + v^1 = \left(
\begin{matrix}
A_1 Q^0 + Q^1 \\
A_3 Q^0
\end{matrix}
\right) + \cdots
\end{equation}
Since $\hat{z} \in U_1^+$, the entries on the diagonal of $A_3$ are nontrivial by part 1 of Lemma \ref{lemma_scatteringmapjumps}, and therefore it is invertible. This completes the proof of the lemma. 
\end{proof}

\subsection{Application of the shadowing theorems}\label{section_applicshadowingresults}

In this section we show that the shadowing results of \cite{clarke2022topological} (which are summarised in Appendix \ref{appendix_shadowing} for convenience) can be applied to \eqref{eq_secularhamiltoniandef} and, consequently, to \eqref{eq_4bphamafterave}. In fact, we have already proved that the secular Hamiltonian $F_{\mathrm{sec}}$ satisfies the assumptions [A1-3] of Theorem \ref{theorem_main1}. Indeed, in Section \ref{subsec_poincaremap}, we constructed a Poincar\'e map $F$ to the section $\{ F_{\mathrm{sec}} = E_0 \} \cap \{ \hat{\gamma}_2 = 0 \}$; we showed that $F$ has a normally hyperbolic invariant cylinder $\hat{\Lambda} \simeq \mathbb{T}^2 \times [0,1]^2$, that the restriction $f = F |_{\hat{\Lambda}}$ is a near-integrable twist map in the sense of Definition \ref{def_nearlyintegrabletwist} (satisfying a non-uniform twist condition of order $\frac{L_2^8}{L_3^6}$), that the stable and unstable manifolds meet transversely (with an order of splitting equal to $\frac{1}{L_2^2}$), and that there are two scattering maps $\hat{S}_{\pm}: \hat{\Lambda} \to \hat{\Lambda}'$ (see Theorem \ref{theorem_reductiontomap}). This implies conditions [A1] and [A2]. In Section \ref{sec_transitionchainconstruction} we constructed a foliation of $\hat{\Lambda}$ by leaves of the form \eqref{eq_foliationoflambdahat}, each of which is almost invariant under the map $f$. In Lemma \ref{lemma_scatteringmapjumps}, we showed that we can make jumps of a fixed distance of order $\frac{L_2^{11}}{L_3^8}$ (either up or down) in each of the $\hat{\Psi}_1, \hat{\Gamma}_3$ directions using the scattering maps; choose some such sequence of leaves $\{ \L_j \}_{j \in \mathbb{N}}$ such that for each $j \in \mathbb{N}$ there is $\beta_j \in \{ +, - \}$ such that $\L_{j+1} \cap \hat{S}_{\beta_j} ( \L_j ) \neq \emptyset$. By Lemma \ref{lemma_transversalityoffoliations}, the scattering map $\hat{S}_{\beta_j}$ maps $\L_j$ transversely across $\L_{j+1}$, and the angle of transversality is of order $\frac{L_2^{11}}{L_3^8}$. Thus assumption [A3] of Theorem \ref{theorem_main1} is also satisfied. 

Recall in Section \ref{section_secularexpansion} we made the change of variables \eqref{eq_changeofcoordstilde} to the `tilde' variables, which were the basis for all further analysis in the paper. This change of variables, however, is local, whereas the drift in eccentricity and inclination described in Theorem \ref{thm:main} is global. In order to define these coordinates  we introduced constants $\delta_j$ for $j=1,2,3$. Here $\delta_2$ is the coefficient of total angular momentum, and is therefore fixed for the secular system. The constants $\delta_1, \delta_3$ on the other hand are allowed to vary, and by varying them we simply obtain a different system of `tilde' coordinates. It is not hard to see that the subsequent analysis of this paper holds equally for any value of the constants\footnote{More properly, one has to consider $\delta_1$, $\delta_3$ in closed intervals within the open intervals to be uniformly away from the singularities of the Deprit coordinates.} $\delta_1 \in (0,1)$ and $\delta_3 \in (-1,1)$. Denote by $\tilde{C}_{\delta_1, \delta_3}$ the system of `tilde' coordinates corresponding to the values $\delta_1, \delta_3$. Then the results of Section \ref{section_analysisofh0} apply in $\tilde{C}_{\delta_1, \delta_3}$ coordinates for each value of $\delta_1, \delta_3$, so we have a normally hyperbolic invariant manifold $\Lambda_{\delta_1, \delta_3}$ in each such system of coordinates. Moreover, since the cylinder depends smoothly on the parameters $\delta_1, \delta_3$, this construction allows us to obtain one large normally hyperbolic invariant cylinder $\Lambda^*$. 

Observe that the contents of Sections \ref{subsec_poincaremap} and \ref{sec_transitionchainconstruction} apply equally in each system of coordinates $\tilde{C}_{\delta_1, \delta_3}$. Furthermore, since the $\hat \gamma_2$ variable does not depend on $\delta_1, \delta_3$, the Poincar\'e section is global, and so we obtain a large 4-dimensional cylinder $\hat \Lambda^*$ for the return map to the Poincar\'e section. We now fix a global transition chain on the cylinder $\hat \Lambda^*$ such that the actions $\hat \Psi_1, \hat \Gamma_3$ drift by an amount of order $L_2$ along the chain, and choose some sequence $\{ \tilde{C}_{\delta_1^k, \delta_3^k} \}_{k =1, \ldots, K}$ so that we have an appropriate system of coordinates to apply the analysis of the earlier sections near each torus in the chain. The analysis of Sections \ref{subsec_poincaremap} and \ref{sec_transitionchainconstruction} applies in each coordinate system $\tilde{C}_{\delta_1^k, \delta_3^k}$. Note that the shadowing results of \cite{clarke2022topological} apply equally well using the many different coordinate systems, as the coordinates used in that paper are purely local. Thus the assumptions of Theorems \ref{theorem_main1} and \ref{theorem_main2} apply to the global transition chain on the cylinder $\hat \Lambda^*$.

Denote by $\{ \L_i \}$ the global transition chain. Then, by Theorem \ref{theorem_main1}, for any $\eta > 0$, all sufficiently large $L_2$, and all $L_3$ satisfying \eqref{eq_semimajorassumptionsinlvariable}, there exists a sequence $\{ z_j \}_{j \in \mathbb{N}}$ in the secular phase space and times $t_j > 0$ such that
\begin{equation}
z_{i+1} = \phi^{t_j}_{\mathrm{sec}} (z_i), \quad d(z_i, \L_i) < \eta
\end{equation}
for each $j \in \mathbb{N}$ where $\phi^t_{\mathrm{sec}}$ is the flow associated with the secular Hamiltonian (see \eqref{eq_secularhamexpansions}). Observe that this completes the proof of Proposition \ref{prop:SecularChains}. Moreover, the time to move a distance of order $L_2$ in the $\hat{\Psi}_1, \hat{\Gamma}_3$ directions (which is the time to move a distance of order 1 in the eccentricity $e_2$ and the inclination $\theta_{23}$) is of order 
\begin{equation}\label{eq_seculartimeestimate}
L_2 \, L_2^6 \, \frac{L_3^{16}}{L_2^{22}} \, \frac{L_3^6}{L_2^8} \, \frac{L_3^8}{L_2^{11}} = \frac{L_3^{30}}{L_2^{34}}.
\end{equation}
Indeed, this follows from the following facts: the distance we drift in the $\hat{\Psi}_1, \hat{\Gamma}_3$ variables is of order $L_2$; each $t_j$ obtained via Theorem \ref{theorem_main1} is of order $L_2^6$ as this is the order of the return time to the Poincar\'e section; the order of splitting of separatrices of the cylinder is $\frac{1}{L_2^2}$ in the $\hat{\Gamma}_3$ direction by Proposition \ref{proposition_melnikovtildeexp}; the twist property is of order $\frac{L_2^8}{L_3^6}$ by Theorem \ref{theorem_reductiontomap}; and the order of transversality of foliations is $\frac{L_2^{11}}{L_3^8}$ by the proof of Lemma \ref{lemma_transversalityoffoliations}. Combining these values with the formula \eqref{eq_timeestimate1} gives the time estimate \eqref{eq_seculartimeestimate}. 

Consider now the Hamiltonian $F_{\mathrm{4bp}}$ (see \eqref{eq_4bphamafterave}) of the four body problem after averaging the mean anomalies $\ell = (\ell_1, \ell_2, \ell_3) \in \mathbb{T}^3$. For $j=1,2,3$ fix some $L_j^{\pm} \in \mathbb{R}$ such that $0 < L_j^- < L_j^+$ and if $L = (L_1, L_2, L_3) \in [L_1^-, L_1^+] \times [L_2^-, L_2^+] \times [L_3^-, L_3^+]$ then \eqref{eq_semimajorassumptionsinlvariable} is satisfied. Write $\Sigma = \mathbb{T}^3 \times [L_1^-, L_1^+] \times [L_2^-, L_2^+] \times [L_3^-, L_3^+]$. Recall from the beginning of Section \ref{subsec_poincaremap} the definition of the domain $\cD$ in the secular phase space. Denote by $\E_{\mathrm{4bp}} = \left\{ F_{\mathrm{4bp}} (z, \ell, L) : z \in \cD, \, (\ell, L) \in \Sigma \right\}$ the set of values of the energy of the full four-body problem that we consider. Fix $E_1 \in \E_{\mathrm{4bp}}$, and denote by $\Psi$ the Poincar\'e map of the flow of $F_{\mathrm{4bp}}$ to the section $\cD \times \Sigma \cap \{ \hat{\gamma}_2 = 0 \} \cap \{F_{\mathrm{4bp}} = E_1 \}$. With $z$ denoting a point in $\cD$, we write $(\bar{z}, \bar{\ell}, \bar{L}) = \Psi (z, \ell, L)$ where $\bar{z} = G(z, \ell, L)$ and $(\bar{\ell}, \bar{L}) = \phi (z, \ell, L)$. Since the Hamiltonian $F_{\mathrm{4bp}}$ is obtained by averaging the mean anomalies, there are $\hat{k}_1, \hat{k}_2 \in \mathbb{N}$ such that the variables $\ell_j$ do not appear in $F_{\mathrm{4bp}}$ until terms of order $\epsilon^{\hat{k}_1} \, \mu^{\hat{k}_2}$ where $\epsilon = \frac{1}{L_2}$ and $\mu = \frac{L_2}{L_3}$, and where we can choose $\hat{k}_j$ to be as large as we like. It follows that the map $G$ can be written in the form $G(z, \ell, L) = \widetilde{G}(z ; L) + O \left(\epsilon^{\hat{k}_1 - 6} \, \mu^{\hat{k}_2} \right)$ where the higher-order terms are uniformly bounded in the $C^r$ topology (for any $r \in \mathbb{N}$), and where for any fixed values of $L$, the map $z \mapsto \widetilde{G}(z ; L)$ is a Poincar\'e map $F$ of the type constructed above in Theorem \ref{theorem_reductiontomap}, and therefore satisfies assumptions [A1-3] of Theorem \ref{theorem_main1}. This in turn implies that the map $\Psi$ satisfies assumption [B1] of Theorem \ref{theorem_main2}. Moreover, writing $\phi (z, \ell, L) = (\phi_1 (z, \ell, L), \phi_2 (z, \ell, L))$ such that $\bar{\ell} = \phi_1 (z, \ell, L)$ and $\bar{L} = \phi_2 (z, \ell, L)$, we have $\phi_2 (z, \ell, L) = L + O \left(\epsilon^{\hat{k}_1 - 6} \, \mu^{\hat{k}_2} \right)$ where the higher-order terms are uniformly bounded in the $C^r$ topology. Therefore assumption [B2] is also satisfied. As explained in Appendix \ref{appendix_shadowing}, as a consequence of results of \cite{delshams2008geometric}, the map $\Psi$ has a normally hyperbolic (locally) invariant manifold $\widetilde{\Lambda}$ that is close to $\hat{\Lambda} \times \Sigma$. We can thus use the coordinates $w, \ell, L$ on $\widetilde{\Lambda}$ where $w$ are the coordinates on $\hat{\Lambda}$, and define a foliation of $\widetilde{\Lambda}$ by leaves
\begin{equation}\label{eq_foliationoflambdatilde}
\tilde{\L} \left( \hat{\Psi}_1^*, \hat{\Gamma}_3^*, L^* \right) = \left\{ \left( w, \ell, L \right) \in \widetilde{\Lambda} : w \in \L \left( \hat{\Psi}_1^*, \hat{\Gamma}_3^* \right), \, L = L^* \right\}
\end{equation}
where $\L \left( \hat{\Psi}_1^*, \hat{\Gamma}_3^* \right)$ is the leaf of the foliation of $\hat{\Lambda}$ defined by \eqref{eq_foliationoflambdahat}. Fix $\eta > 0$, $K_1, K_2 \in \mathbb{N}$, and choose some initial values $L^1_* = (L_1^1, L_2^1, L_3^1)$ of the $L_j$ variables such that $(\ell, L_*^1) \in \mathrm{Int} \, \Sigma$ for any $\ell \in \mathbb{T}^3$. Choose $N \leq \epsilon^{-K_1} \, \mu^{-K_2}$, and values $P^1_*, \ldots, P^N_*$ of the actions $\hat{\Psi}_1, \hat{\Gamma}_3$ such that the leaves $\L_j = \L \left( P^j_* \right)$ of the foliation of $\hat{\Lambda}$ are connected by the scattering maps (corresponding to the secular Hamiltonian with $L = L^1_*$) in the sense of Lemma \ref{lemma_scatteringmapjumps}. Then by Theorem \ref{theorem_main2} there are $L^2_*, \ldots, L^N_* \in [L_1^-, L_1^+] \times [L_2^-, L_2^+] \times [L_3^-, L_3^+]$ such that, with $\tilde{\L}_j = \tilde{\L} (P^j_*, L^j_*)$, there are points $(z^1, \ell^1, L^1), \ldots, (z^N, \ell^N, L^N)$ in the phase space of the full four body problem and times $t_j^* >0$ such that 
\begin{equation}
(z^{i+1}, \ell^{i+1}, L^{i+1}) = \phi^{t_j^*}_{\mathrm{4bp}} (z^i, \ell^i, L^i), \quad d \left( (z^i,\ell^i,L^i), \tilde{\L}_i \right) < \eta
\end{equation}
where $\phi^{t_j}_{\mathrm{4bp}}$ is the flow of the Hamiltonian $F_{\mathrm{4bp}}$ of the full four body problem. Moreover, the time estimate \eqref{eq_seculartimeestimate} holds also in this case as a lower bound on the time to move a distance of order 1 in the inclination $\theta_{23}$ and the eccentricity $e_2$.

\section{Continuation in the planetary regime}
\label{sec:planetary}
We would like to discuss the robustness of our instability mechanism with respect to modifications of the four masses and the three semimajor axes. (There are other parameters, e.g. the eccentricities of the planets, but these do not play a similar role.) Due to the invariance of physical laws by change of mass and length units, without loss of generality we may set $m_0= a_1 = 1$ and are reduced to a 5 dimensional parameter space.

Up to now we have investigated the moderate hierarchical regime,
\begin{equation} \label{eq_assumption1bis}
O(1) = a_1 \ll a_2  \to \infty \qquad \text{and}\qquad
a_2^{\frac{11}{6}}  \ll a_3 \ll a_2^2
\end{equation} 
('moderate' meaning that while $a_3$ is much smaller than $a_2$, it is not allowed to be arbitrarily small). Weakening the hypothesis \eqref{eq_assumption1bis} and considering the general hierarchical regime $a_1 \ll a_2 \ll a_3$ would require another proof since the fast dynamics of planet 3 could be slower than the secular dynamics of planets 1 and 2, which would completely destroy the frequency hierarchy we have extensively used, for averaging and so on.

We have also assumed that $m_0 \neq m_1$ and $m_0+m_1\neq m_2$, because $m_1-m_0$ and $m_0+m_1- m_2$ are in factor of the two quadrupolar Hamiltonians respectively. Rather than fixing the masses, we may as well let  $(m_0,m_1,m_2,m_3)$ vary arbitrarily in some fixed compact subset of $K\subset M_\eta$ (see \eqref{eq:M})
and the conclusions our main theorems will hold, all our construction being uniform with respect to $K$.

\medskip Another important regime in the parameter space is the so-called planetary regime, where
\begin{equation}
  \label{eq:planetary}
  m_1, m_2,m_3 \to 0
\end{equation}
while, in turn, semimajor axes are fixed (or vary in some compact subset)~\cite{Fejoz:2015:nbp}.

\begin{proposition}
  The instability mechanism which we have shown to exist in the hierarchical regime in Sections~\ref{section_secularexpansion}-\ref{sec:mapreduction} continues in the planetary regime and, as $\rho$ tends to $0$, the instability time is of the order of $\rho^2$.
\end{proposition}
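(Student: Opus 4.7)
The plan is to reduce the planetary case to the hierarchical one by an explicit symplectic rescaling of actions, then verify that every geometric ingredient of Sections~\ref{section_secularexpansion}--\ref{sec:mapreduction} depends uniformly on $\rho$ as an additional small parameter. First I would perform the non-symplectic rescaling of Deprit actions $(\tilde L_j,\tilde\Gamma_j,\tilde\Psi_j)=\rho^{-1}(L_j,\Gamma_j,\Psi_j)$ (leaving the conjugate angles untouched), which multiplies the canonical symplectic form by $\rho$; absorbing this factor yields the dynamics of $\tilde H:=H/\rho$ on the tilde phase space equipped with the standard symplectic form, in the unchanged physical time $t$. With $m_0=1$ and $m_j=\rho\tilde m_j$, one computes $\mu_j=\rho\tilde m_j+O(\rho^2)$, $M_j=1+O(\rho)$, and $a_j=\tilde L_j^2/\tilde m_j^2+O(\rho)$, so the semimajor axes are of order one and can be chosen to satisfy the hierarchical assumption~\eqref{eq_assumption1bis} uniformly in $\rho$. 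A direct computation, exploiting the $O(\rho^2)$ cancellations built into the Jacobi reduction of $F_{\mathrm{per}}$, shows that $\tilde H=\tilde F_{\mathrm{Kep}}+\rho\,\tilde F_{\mathrm{per}}$, where both $\tilde F_{\mathrm{Kep}}$ and $\tilde F_{\mathrm{per}}$ are of order one and admit exactly the hierarchical expansion of Proposition~\ref{proposition_secularexpansion} in $\epsilon=1/\tilde L_2^0$ and $\mu=\tilde L_2^0/\tilde L_3^0$.

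Next I would apply the constructions of Sections~\ref{section_secularexpansion}--\ref{sec:mapreduction} to $\tilde H$, with $\rho$ treated as a third small parameter. Averaging out the mean anomalies yields a secular Hamiltonian $\rho\tilde F_{\mathrm{sec}}+O(\rho^k)$ for arbitrarily large $k$, where $\tilde F_{\mathrm{sec}}$ has the same structure as the hierarchical $F_{\mathrm{sec}}$. The quadrupolar Hamiltonian $\rho\tilde H_0^{12}$ retains its two hyperbolic equilibria, and Fenichel theory produces a normally hyperbolic invariant cylinder $\Lambda_\rho$ depending continuously on $\rho$. The Poincar\'e--Melnikov integrals of Section~\ref{sec:scattering} are \emph{invariant} under the rescaling: the factor $\rho$ in the perturbation is exactly cancelled by the $\rho^{-1}$-dilation of the separatrix time parametrization induced by the inner Hamiltonian $\rho\tilde H_0^{12}$. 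Consequently the splittings, the scattering maps $S_{\pm}$, the first-order jumps stated in Theorem~\ref{theorem_outerdynamics}, the twist property~\eqref{eq_twistproperty} and the transversality of the foliations on $\Lambda_\rho$ are all identical, as functions of the coordinates on $\Lambda_\rho$, to their hierarchical counterparts, uniformly in $\rho$. The shadowing theorems of~\cite{clarke2022topological} then apply without modification and produce, for any prescribed itinerary in the scale-invariant normalized angular momentum $\tilde C_2$, orbits of the full $4$-body Hamiltonian realizing that itinerary.

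The main obstacle is then the bookkeeping of the $\rho$-dependence of the diffusion time. In the rescaled Hamiltonian $\tilde H=\tilde F_{\mathrm{Kep}}+\rho\tilde F_{\mathrm{per}}$ the inner dynamics on $\Lambda_\rho$ is driven by $\rho\tilde H_0^{12}$, so the return map to $\{\hat\gamma_2=0\}$ has return time $\tilde L_2^6/\rho$ instead of the hierarchical $\tilde L_2^6$, contributing a first factor $\rho^{-1}$; and the slow secular motion governed by $\rho\tilde F_{\mathrm{sec}}$, which drives the drift in $\hat\Psi_1$ and $\hat\Gamma_3$, is itself $\rho$-slower than in the hierarchical case, contributing a second factor $\rho^{-1}$. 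Equivalently, as observed in Section~\ref{sec:planetary}, the composite transformation from planetary to hierarchical coordinates is precisely the rescaling of the Hamiltonian by $\rho^{-1}$ together with a rescaling of time by $\rho^{-2}$, so the hierarchical polynomial bound~\eqref{eq_seculartimeestimate} immediately converts to $T\sim(\tilde L_2^0)^\alpha/\rho^2$, matching~\eqref{def:timefinal2} and the statement of the proposition. The essential technical point is to verify that all the uniformity statements (persistence of $\Lambda_\rho$ under Fenichel theory, uniformity of the Melnikov constants, and uniformity of the shadowing constants of~\cite{clarke2022topological}) hold simultaneously as $\rho\to0$ and as $\tilde L_2^0,\tilde L_3^0\to\infty$ in the hierarchical sense, without any spurious resonant interaction between these three independent small parameters.
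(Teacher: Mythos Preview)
Your approach is essentially the paper's: both recognise that the entire secular expansion survives the limit $\rho\to 0$ because the coefficients $\tilde\sigma_{j,n}=\sigma_{0,j}^{\,n-1}+(-1)^n\sigma_{j,j}^{\,n-1}$ in the Legendre expansion of $F_{\mathrm{per}}^{j,j+1}$ remain of order $1$, so the hierarchy of quadrupolar and octupolar terms used throughout Sections~\ref{section_secularexpansion}--\ref{sec:mapreduction} is unchanged. The paper states this directly on the Legendre expansion; you encode it via the action rescaling $(\tilde L,\tilde\Gamma,\tilde\Psi)=\rho^{-1}(L,\Gamma,\Psi)$, yielding $\tilde H=\tilde F_{\mathrm{Kep}}+\rho\,\tilde F_{\mathrm{per}}$ with both pieces of order one in the tilded variables. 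Your systematic checklist (Fenichel persistence of $\Lambda_\rho$, invariance of the Melnikov integrals under the rescaling, uniformity of the twist and of the shadowing constants) is more explicit than the paper's two-paragraph proof but captures the same content.

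One caution on the time bookkeeping: your two separate $\rho^{-1}$ contributions are in danger of double-counting. Once you normalise $\rho\tilde F_{\mathrm{sec}}$ so that $\tilde H_0^{12}$ is the order-one leading term, the separatrix parametrisation, the Poincar\'e--Melnikov potentials, the scattering-map jumps, the twist and hence the \emph{number} of Poincar\'e iterations all coincide with the hierarchical analysis (as you yourself argue via ``Melnikov invariance''); the only remaining $\rho$-dependence is the conversion from normalised to physical time, which is the return-time factor you already listed as contribution~(a). Your contribution~(b) is not an independent effect. The paper's own justification (``secular frequencies are $\rho^2$-small compared to the Keplerian frequencies, [hence] the splitting is $O(\rho^2)$'') is equally terse and arrives at the same $\rho^{-2}$ recorded in~\eqref{def:timefinal2}, but neither argument makes the accounting fully transparent.
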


\begin{proof}
  Write
  \[m_j = \rho \bar m_j, \quad j=1,2,3,\]
  so that, when $\mu\ll 1$, 
  \[M_j \sim 1, \quad \sigma_{0,j} \sim 1, \quad \sigma_{ij} \sim \rho \quad \mbox{and} \quad \mu_j \sim \rho \quad (i,j=1,2,3)\]
  (notations defined in Section~\ref{sec:DepritResults}). Our proof in the hierarchical regime assumed $\rho$ constant, and we will now show why the construction holds when $\rho$ is small. 

  The key remark is the following. Consider for instance the part of the perturbing function $F_{\mathrm{per}}^{12}$ regarding planets $1$ and $2$ (first introduced in equation~\eqref{eq_perfn12}): 
  \[F_{\mathrm{per}}^{12} ={} \frac{\mu_2 M_2}{\| q_2 \|} - \frac{m_0 \, m_2}{\| q_2 + \sigma_{11} \, q_1 \|} - \frac{m_1 \, m_2}{\| q_2 - \sigma_{01} \, q_1 \|}.\]
  The first two terms are $O(\rho)$, while the third one (describing the interaction of planets $1$ and $2$) is $O(\rho^2)$; this could impair the construction of the hierarchical regime. But the third term is qualitatively so similar to the second one, that the relative smallness of $m_1m_2$ will not qualitatively change the dynamics. This can be seen explicitely in the expansion in Legendre polynomials: 
  \[F_{\mathrm{per}}^{12} = - \frac{\mu_1 m_2}{\| q_2 \|} \sum_{n=2}^{\infty} \tilde{\sigma}_{1,n} P_n (\cos \zeta_{1}) \left( \frac{\| q_1 \|}{\| q_2 \|} \right)^n\]
  where
  \[\tilde{\sigma}_{1,n} = \sigma_{01}^{n-1} + (-1)^n \sigma_{11}^{n-1} \sim 1,\]
  so that the hierarchy of terms inside the infinite sum is unchanged when we move away from the hierarchical regime to the planetary regime. Similar is the case of $F_{\mathrm{per}}^{23}$. 

  Because the secular frequencies are $\rho^2$-small compared to the Keplerian frequencies, the splitting is $O(\rho^2)$ in the secular directions (while it is exponentially small with respect to $\rho$ in the Keplerian directions).
\end{proof}


\section{From Deprit coordinates to elliptic elements: proof of Theorem \ref{thm:main}}
\label{sec:FromDepritToElements}
Theorem \ref{thm:main} and the behaviour described in \eqref{def:drifte2theta23}, \eqref{def:drifte1theta12}, \eqref{def:inclincation12relation}, \eqref{def:drifte3aj} are a direct consequence of Theorems \ref{thm:MainHierarch:Deprit} and \ref{thm:PlanetHierarch:Deprit}. 
 We first explain how to obtain the evolution of the orbital elements and then deduce Theorem \ref{thm:main}.

To obtain the evolution of $e_2$ in  \eqref{def:drifte2theta23}, one just has to take into account its definition in \eqref{def:eccentricity}. For  $\theta_{23}$, note that \eqref{eq_semimajorassumptionsinlvariable} implies $\theta_{23}=i_{23}+O(L_2^{-1})$ where $i_{23}$ is defined by \eqref{def:inclinationi23}. Then, the evolution of $\theta_{23}$ can be deduced from the evolution of the Deprit variables and \eqref{def:inclinationi23} (and taking $L_2$ large enough). The evolution of the node, even if not stated in Theorem \ref{thm:MainHierarch:Deprit} can be easily deduced from the shadowing argument explained in Section \ref{section_applicshadowingresults}, by iterating the Poincar\'e map along leaves of the almost invariant foliation of the normally hyperbolic cylinder.

The behaviour in \eqref{def:drifte1theta12}, \eqref{def:inclincation12relation} can be obtained as follows. First, note that the times  $\{t_k\}$ obtained in Theorem \ref{thm:MainHierarch:Deprit} are chosen so that the corresponding points are very close to the normally hyperbolic invariant cylinder. In particular, $\Gamma_1$ is very close to $L_1$. This implies that $e_1$ is very small. 

Moreover, note that the energy must be preserved. The Keplerian terms in the Hamiltonian are constant up to an arbitrarily high order. This implies that the next term in the Hamiltonian expansion (see Proposition \ref{proposition_secularexpansion}), that is the quadrupolar Hamiltonian must be preserved up to small errors. If one evaluates it (see \eqref{eq_quad12unscaled} in Lemma \ref{lemma_quadoct12comp}) at the normally hyperbolic invariant manifold, one obtains the expression
\[
 \frac{(1-e_1^2)\cos^2\theta_{12}}{(1-e_2^2)^{3/2}}+\ldots=\text{constant}.
\]
(note that for the first two planets $\theta_{12}$ coincides with $i_{12}$, see \eqref{def:inclinationi12}).
Since $e_1\sim 0$ on the cylinder,  the drift in $e_2$ implies a drift in the inclination $\theta_{12}$. This is precisely what is described in \eqref{def:drifte1theta12}, \eqref{def:inclincation12relation}.

The evolution of $e_3$ and $a_j$ in  \eqref{def:drifte3aj} is a direct consequence of the evolution of $\Gamma_3$ and $L_j$ in Theorems \ref{thm:MainHierarch:Deprit} and \ref{thm:PlanetHierarch:Deprit}.

Finally, note that \eqref{def:drifte2theta23}, \eqref{def:drifte1theta12}, \eqref{def:inclincation12relation}, \eqref{def:drifte3aj} imply Theorem \ref{thm:main} since the inclination, the node, and the eccentricity determine the normalised angular momentum vector. Note that in Theorem \ref{thm:main} we allow the orbit to shadow angular momenta corresponding to degeneracies of the Deprit variables. In order to do this, it is enough to choose $\delta$ and the ratio of the semimajor axes adequately. Clearly we can choose these ratios polynomially in $\delta$. Then, the time estimates in Theorems \ref{thm:MainHierarch:Deprit} and \ref{thm:PlanetHierarch:Deprit} imply the time estimates \eqref{def:timefinal} and \eqref{def:timefinal2}.

%


\appendix
\section{Deprit's coordinates}
\label{sec:app:Deprit}
This appendix is a reminder on the Deprit coordinates. These coordinates are well suited for the reduction by $4$ dimensions due to the symmetry of rotations. Here we show a direct way to check that they form a symplectic coordinate system, alternative to some other presentations~\cite{chierchia2011deprit, deprit1983}. We restrict to $3$ planets, as needed in this article, but the argument extends to any number of planets (it is not an induction on the number of bodies). 

Denote by 
\begin{equation}
C_j = q_j \times p_j
\end{equation} 
the angular momentum of the $j^{\mathrm{th}}$ fictitious Keplerian body (Keplerian refering to $F_{\mathrm{Kep}}$), and let $k_j$ be the $j^{\mathrm{th}}$ element of the standard orthonormal basis of $\mathbb{R}^3$. Define the nodes $\nu_j$ by
\begin{equation}
\nu_1=\nu_2=C_1 \times C_2, \quad \nu_3 = (C_1 + C_2) \times C_3, \quad \nu_4=k_3 \times C
\end{equation}
where
\begin{equation}
C = C_1 + C_2 + C_3
\end{equation}
is the total angular momentum vector. For a non-zero vector $z \in \mathbb{R}^3$ and two non-zero vectors $u,v$ lying in the plane orthogonal to $z$, denote by $\alpha_z (u,v)$ the oriented angle between $u,v$, with orientation defined by the right hand rule with respect to $z$. Denote by $\Pi_j$ the pericenter of $q_j$ on its Keplerian ellipse.
The Deprit variables $(\ell_j,L_j,\gamma_j,\Gamma_j,\psi_j,\Psi_j)_{j=1,2,3}$ are defined as follows:
\begin{itemize}
\item $\ell_j$ is the mean anomaly of $q_j$ on its Keplerian ellipse;
\item
$L_j = \mu_j \sqrt{M_j a_j}$;
\item
$\gamma_j= \alpha_{C_j}(\nu_j,\Pi_j)$;
\item
$\Gamma_j = \left\| C_j \right\|$;
\item
$\psi_1 = \alpha_{(C_1+C_2)}(\nu_3,\nu_2)$, $\psi_2=\alpha_C(\nu_4,\nu_3)$, $\psi_3 = \alpha_{k_3} (k_1, \nu_4)$;
\item
$\Psi_1 = \| C_1 + C_2 \|$, $\Psi_2 = \| C_1 + C_2 +C_3 \| = \| C \|$, $\Psi_3 = C \cdot k_3$.
\end{itemize}
The Deprit variables are analytic over the open subset $\cD$ over which the $3$ terms of $F_{\mathrm{Kep}}$ are negative, the eccentricities of the Keplerian ellipses lie strictly between 0 and 1, and the nodes $\nu_j$ are nonzero.

\begin{lemma}
  The Deprit variables form a symplectic analytic coordinate system over $\cD$.
\end{lemma}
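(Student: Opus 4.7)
\medskip

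The plan is to reduce to the well-known symplecticity of Delaunay coordinates and then express the passage from Delaunay to Deprit as a composition of Hamiltonian (hence symplectic) transformations. Analyticity on $\cD$ is immediate from the definitions: on $\cD$ the eccentricities lie strictly in $(0,1)$, which makes the Kepler-theoretic part $(\ell_j, L_j, \gamma_j, \Gamma_j)$ analytic, and the nodes $\nu_j$ are nonzero, which makes the angles $\psi_j$ analytic. So the entire task is checking that the pullback of the canonical $2$-form equals $\sum_j (d\ell_j \wedge dL_j + d\gamma_j \wedge d\Gamma_j + d\psi_j \wedge d\Psi_j)$.

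First I would isolate the three ``Keplerian'' blocks: the pair $(\ell_j,L_j)$ coincides with the corresponding pair from standard Delaunay coordinates for the $j$-th fictitious Kepler problem. Since $L_j$ depends only on the energy of that Kepler problem and $\ell_j$ is its mean anomaly, both are rotation-invariant and Poisson-commute with every function depending only on the directions of the vectors $C_i$ and on the perihelia — in particular with all $\Gamma_i,\Psi_i,\gamma_i,\psi_i$. This removes the $(\ell_j,L_j)$ variables from the verification and also pins down their contribution $d\ell_j \wedge dL_j$ to the symplectic form. What remains is a statement about the ``angular'' variables $(\gamma_j,\Gamma_j,\psi_j,\Psi_j)_{j=1,2,3}$: one has to show that their canonical Poisson brackets are the standard ones. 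The actions $\Gamma_j=|C_j|$ and $\Psi_1=|C_1+C_2|$, $\Psi_2=|C|$, $\Psi_3 = C\cdot k_3$ are built from the momentum-map components of the natural $SO(3)$-action; using the bracket relations $\{C_i^a,C_j^b\}=\delta_{ij}\,\epsilon^{abc}\,C_i^c$ one checks, by a direct computation of the type $\{|U|,|V|\}=|U|^{-1}|V|^{-1}(U\times V)\cdot\{U,V\}$, that all $\Gamma_i,\Psi_j$ pairwise Poisson-commute. The conjugacy relations $\{\gamma_j,\Gamma_j\}=1$ and $\{\psi_j,\Psi_j\}=1$ follow from the geometric meaning of the angles: $\Gamma_j$ generates the rotation about $C_j$ inside the orbital plane (which moves $\Pi_j$ with angular speed $1$ while fixing $\nu_j$), and $\Psi_1$ generates the rotation about $C_1+C_2$ (which moves $\nu_2=C_1\times C_2$ inside the plane orthogonal to $C_1+C_2$ while fixing $\nu_3$, as the direction of $C_1+C_2$ is preserved); analogously for $\psi_2,\psi_3$.

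The main obstacle is the vanishing of the \emph{cross} brackets $\{\gamma_i,\gamma_j\}$, $\{\gamma_i,\psi_k\}$ and $\{\psi_i,\psi_k\}$ for distinct indices, because these angles are read off in nested reference frames (the plane $C_j^\perp$, then $(C_1+C_2)^\perp$, then $C^\perp$, then the inertial frame), and a brute-force computation mixes many nontrivial terms. A clean way to dispose of them at once is to realise the change from Delaunay to Deprit as a composition of three \emph{explicit Hamiltonian flows}: (i) a rotation generated by the component of the partial angular momentum $C_1+C_2$ along a suitable axis, which reinterprets the angles of planets $1$ and $2$ in the frame attached to $C_1+C_2$; (ii) a rotation generated by $C=C_1+C_2+C_3$ which reinterprets $\psi_1$ and the node $\nu_3$ in the frame attached to $C$; (iii) the outer rotation returning to the inertial frame. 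Each of these is the time-$1$ map of a Hamiltonian flow generated by a linear combination of components of total or partial angular momentum, hence symplectic by construction; the explicit identifications of the images of the Delaunay angles under each flow then give the Deprit variables, and the composite map is symplectic as a composition of symplectomorphisms. Combined with the Keplerian step above, this yields the desired statement on $\cD$.
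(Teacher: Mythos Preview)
Your treatment of the action--action and action--angle brackets is correct and essentially matches the paper: the $\Gamma_j$, $\Psi_j$ generate rotations about $C_j$, $C_1+C_2$, $C$, $k_3$, and from this geometric interpretation the conjugacy relations $\{\Gamma_j,\gamma_j\}=1$, $\{\Psi_j,\psi_j\}=1$ and the commutation of each action with the remaining variables follow. The divergence, and the gap, is in your handling of the angle--angle cross brackets.

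Your proposed mechanism---writing the Delaunay $\to$ Deprit map as a composition of time-$1$ Hamiltonian flows ``generated by a linear combination of components of total or partial angular momentum''---does not work as stated. The time-$1$ flow of, say, $(C_1+C_2)\cdot \hat n$ for a \emph{fixed} axis $\hat n$ rotates bodies $1$ and $2$ rigidly by one radian about $\hat n$; it does not ``re-express angles in the frame attached to $C_1+C_2$''. The coordinate change you need rotates by an angle and about an axis that both \emph{depend on the phase-space point} (they are determined by the direction of $C_1+C_2$, which in Delaunay variables involves the node angles $h_1,h_2$, not just actions). Such a point-dependent rotation is not the time-$1$ map of an autonomous Hamiltonian built from momentum components; equivalently, the new actions $\Psi_1=|C_1+C_2|$ and $\Psi_2=|C|$ are \emph{not} functions of the old Delaunay actions $H_j=C_j\cdot k_3$ alone, so the transformation is not an action-preserving shear. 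A correct version of your idea exists (iterated Jacobi-type node reductions, as in Deprit's original paper or Chierchia--Pinzari, typically via generating functions), but it is a different and considerably more explicit argument than the one you sketch.

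The paper takes a different and rather elegant route for the angle--angle brackets. Once the action brackets are established, the Jacobi identity gives $\{I,\{\theta_j,\theta_k\}\}=0$ for every action $I$; since the action flows act transitively on the angle torus, $\{\theta_j,\theta_k\}$ depends only on the actions, and it suffices to evaluate it at a single point on each torus. The paper picks the configuration where all the Deprit angles vanish (planets at pericenter, all pericenters along $k_1$) and checks directly in Cartesian coordinates that $\{\gamma_j,\gamma_k\}$, $\{\gamma_j,\psi_k\}$, $\{\psi_j,\psi_k\}$ vanish there, using simple geometric observations about which partial derivatives $\partial\gamma_j/\partial q_{a,b}$, $\partial\psi_k/\partial p_{a,b}$ vanish in that configuration. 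This avoids any decomposition of the coordinate change and gives a self-contained verification.
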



That Deprit's variables are independent, follows from their symplectic character. So, we only need to prove symplecticity.

The flow of $L_i$ is a reparameterization of the Kepler flow of planet $i$, such that $\{L_i,\ell_i\}=1$ (as is known from the Delaunay coordinates) and the bracket of $L_i$ with any other Deprit variable vanishes.

\begin{figure}[h]
  \centering
  \begin{center}
    \begin{tikzpicture}[scale=0.5]
      \draw[thick,->] (0,0)
      -- (0,1.5) node[left]{$G_i$}
      -- (0,3) node[above]{$C_i$};
      \draw[thick,->] (0,0) -- (1.5,1.5) node[right]{$\Pi_i$};
      \draw[thick,->] (0,0) -- (3,-1) node[right]{$\nu_i$};
      \draw [domain=-18:45,->] plot ({cos(\x)}, {sin(\x)});
      \draw [domain=-18:10] plot ({cos(\x)}, {sin(\x)})
      node[right]{$\gamma_i$}; 
    \end{tikzpicture}
    \hspace{1cm}
    \begin{tikzpicture}[scale=0.5]
      \draw[thick,->] (0,0)
      -- (0,1.5) node[left]{$\Psi_1$}
      -- (0,3) node[above]{$C_1+C_2$};
      \draw[thick,->] (0,0) -- (2,2) node[right]{$\nu_2 = C_1 \times
        C_2$}; 
      \draw[thick,->] (0,0) -- (3,-1) node[right]{$\nu_3 = C_3 \times
        (C_1+C_2)$}; 
      \draw [domain=-18:45,->] plot ({cos(\x)}, {sin(\x)});
      \draw [domain=-18:10] plot ({cos(\x)}, {sin(\x)})
      node[right]{$\psi_1$}; 
    \end{tikzpicture}
  \end{center}

  \begin{center}
    \begin{tikzpicture}[scale=0.5]
      \draw[thick,->] (0,0)
      -- (0,1.5) node[left]{$\Psi_2$}
      -- (0,3) node[above]{$C$};
      \draw[thick,->] (0,0) -- (3,-1) node[right]{$\nu_4 = k_3 \times C$}; 
      \draw[thick,->] (0,0) -- (2,2) node[right]{$\nu_3$};
      \draw [domain=-18:45,->] plot ({cos(\x)}, {sin(\x)});
      \draw [domain=-18:10] plot ({cos(\x)}, {sin(\x)})
      node[right]{$\psi_2$}; 
    \end{tikzpicture}
    \hspace{1cm}
    \begin{tikzpicture}[scale=0.5]
      \draw[thick,->] (0,0)
      -- (0,1.5) node[left]{$\Psi_3$}
      -- (0,3) node[above]{$C \cdot k_3$};
      \draw[thick,->] (0,0) -- (3,-1) node[right]{$k_1$};
      \draw[thick,->] (0,0) -- (2,2) node[right]{$\nu_4 = k_3 \times C$};
      \draw [domain=-18:45,->] plot ({cos(\x)}, {sin(\x)});
      \draw [domain=-18:10] plot ({cos(\x)}, {sin(\x)})
      node[right]{$\psi_3$}; 
    \end{tikzpicture}
  \end{center}

  \caption{The flows of the $G_i$'s and the $\Psi_i$'s are rotations}
  \label{fig:flows}
\end{figure}
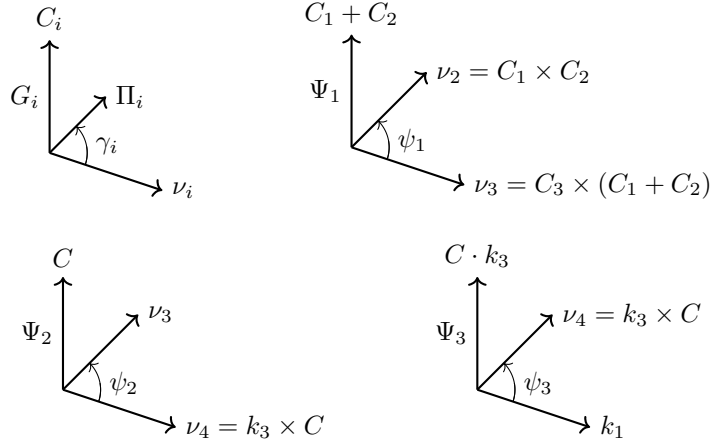

The following facts follow from the property that the angular momentum generates rotations, from rotational equivariance of the angular momentum and from the bare definition of the action variables (see Figure~\ref{fig:flows}): 

The time-$t$ map of the flow of $G_i$ is a rotation of angle $t$ of
the ellipse $i$ in its plane, around the center of attraction. So
$\{G_i,\gamma_i\}=1$ and the bracket of $G_i$ with any other
variable vanishes.

$\Psi_1=|C_1+C_2|$ generates rotations of ellipses $1$ and $2$ around
$C_1+C_2$. So $\{\Psi_1,\psi_1\}=1$ and the bracket of $\Psi_1$ with any
other variable vanishes.

$\Psi_2=|C|$ generates rotations of all three ellipses around $C$.  So
$\{\Psi_2,\psi_2\}=1$ and the bracket of $\Psi_2$ with any other
variable vanishes.

$\Psi_3=C \cdot k_3$ generates rotations of all three ellipses
around the third axis.  So $\{\Psi_3,\psi_3\}=1$ and the bracket of
$\Psi_3$ with any other variable vanishes.

It remains only to check that the brackets of pairs of angles vanish. Due to Jacobi's identity, it is sufficient to check these brackets when angles $(\ell_j,\gamma_j,\psi_j)_{j=1,2,3}$ all vanish (cf.~\cite{fejoz2013Poincare}), i.e. when planets are at their pericenters and all three pericenters $\Pi_i$ lie on the half line generated by $k_1$. From now on we restrict to this (alledgedly Lagrangian) submanifold. 

Recall that $q_a = (q_{a,b})_{b=1,2,3}$ and $p_a = (p_{a,b})_{b=1,2,3}$ are the position and impulsion of the $a$th fictitious Kepler body. Fix bodies $j$ and $k$.

First,
\[
  \{\gamma_j,\gamma_k\} =\sum_{a}
                         \left(
                         \begin{array}[h]{l}                           
                           \sum_{b = 1} \left( \cancel{\frac{\partial \gamma_j}{\partial q_{a,b}}} \displaystyle\frac{\partial \gamma_k}{\partial p_{a,b}}  -  \displaystyle\frac{\partial \gamma_j}{\partial p_{a,b}}   \cancel{\frac{\partial \gamma_k}{\partial q_{a,b}}} \right) + \\
  \sum_{b \neq 1} \left( \displaystyle\frac{\partial \gamma_j}{\partial q_{a,b}}   \cancel{\frac{\partial \gamma_k}{\partial p_{a,b}}} - \cancel{\frac{\partial \gamma_j}{\partial p_{a,b}}} \displaystyle\frac{\partial \gamma_k}{\partial q_{a,b}} \right)
                         \end{array} \right)
  =0;
\]
indeed,
\begin{itemize}
\item when $b=1$, $\frac{\partial \gamma_j}{\partial q_{a,b}}=0$ because a variation of $q_{a,b}$ leaves $p_j$ orthogonal to $q_j$ so the pericenter $\Pi_j$ remains on the $k_1$-axis,
\item when $b \neq1$, $\frac{\partial \gamma_j}{\partial p_{a,b}}=0$ because after a variation of $p_{a,b}$,  $p_j$ remains orthogonal to $q_j$,
\item $\gamma_j$ and $\gamma_k$ play symmetric roles. 
\end{itemize}

Second,
\[
  \{\gamma_j,\psi_k\} =\sum_{a}
  \left(
    \begin{array}[h]{l}                           
      \sum_{b = 1} \left( \cancel{\frac{\partial \gamma_j}{\partial q_{a,b}}}   \displaystyle\frac{\partial \psi_k}{\partial p_{a,b}}  - \displaystyle\frac{\partial \gamma_j}{\partial p_{a,b}}   \cancel{\frac{\partial \psi_k}{\partial q_{a,b}}} \right) + \\
      \sum_{b \neq 1} \left(
      \displaystyle\frac{\partial \gamma_j}{\partial q_{a,b}}   \cancel{\frac{\partial \psi_k}{\partial p_{a,b}}} - \cancel{\frac{\partial \gamma_j}{\partial p_{a,b}}} \displaystyle\frac{\partial \psi_k}{\partial q_{a,b}} \right)
    \end{array} \right) = 0\]
and 
\[
  \{\psi_j,\psi_k\} =\sum_{a}
  \left(
    \begin{array}[h]{l}                           
      \sum_{b = 1} \left( \cancel{\frac{\partial \psi_j}{\partial q_{a,b}}}   \displaystyle\frac{\partial \psi_k}{\partial p_{a,b}}  - \displaystyle\frac{\partial \psi_j}{\partial p_{a,b}}   \cancel{\frac{\partial \psi_k}{\partial q_{a,b}}} \right) + \\
      \sum_{b \neq 1} \left(
      \displaystyle\frac{\partial \psi_j}{\partial q_{a,b}}   \cancel{\frac{\partial \psi_k}{\partial p_{a,b}}} - \cancel{\frac{\partial \psi_j}{\partial p_{a,b}}} \displaystyle\frac{\partial \psi_k}{\partial q_{a,b}} \right)
    \end{array} \right) = 0\]
for similar reasons. 


\section{The scattering map of a normally hyperbolic invariant manifold}
\label{sec:heuristics}

In this section we denote by $M$ a $C^r$ smooth manifold, and by $\phi^t : M \to M$ a smooth flow with $\left. \frac{d}{dt} \right|_{t=0} \phi^t = X$ where $X \in C^r (M, TM)$. Let $\Lambda \subset M$ be a compact $\phi^t$-invariant submanifold, possibly with boundary. By $\phi^t$-invariant we mean that $X$ is tangent to $\Lambda$, but that orbits can escape through the boundary (a concept sometimes referred to as \emph{local} invariance). 

\begin{definition}
We call $\Lambda$ a \emph{normally hyperbolic invariant manifold} for $\phi^t$ if there is $0 < \lambda < \mu^{-1}$, a positive constant $C$ and an invariant splitting of the tangent bundle
\begin{equation}
T_{\Lambda} M = T \Lambda \oplus E^s \oplus E^u
\end{equation}
such that:
\begin{equation} \label{eq_normalhyperbolicity}
\def\arraystretch{1.5}
\begin{array}{c}
\| D \phi^t |_{E^s} \| \leq C \lambda^t \mbox{ for all } t \geq 0, \\
\| D \phi^t |_{E^{u}} \| \leq C \lambda^{-t} \mbox{ for all } t \leq 0, \\
\| D \phi^t |_{T \Lambda} \| \leq C \mu^{| t |} \mbox{ for all } t \in \mathbb{R}.
\end{array}
\end{equation}
Moreover, $\Lambda$ is called an $r$-\emph{normally hyperbolic invariant manifold} if it is $C^r$ smooth, and
\begin{equation}\label{eq_largespectralgap}
0 < \lambda < \mu^{-r} < 1
\end{equation}
 for $r \geq 1$. This is called a \emph{large spectral gap} condition.
\end{definition}

This definition guarantees the existence of stable and unstable invariant manifolds $W^{s,u} (\Lambda)\subset M$ defined as follows. The local stable manifold $W^{s}_{\mathrm{loc}}(\Lambda)$ is the set of points in a small neighbourhood of $\Lambda$ whose forward orbits never leave the neighbourhood, and tend exponentially to $\Lambda$. The local unstable manifold $W^{u}_{\mathrm{loc}}(\Lambda)$ is the set of points in the neighbourhood whose backward orbtis stay in the neighbourhood and tend exponentially to $\Lambda$.  We then define
\begin{equation}
W^s(\Lambda) = \bigcup_{t \geq 0}^{\infty} \phi^{-t} \left( W^{s}_{\mathrm{loc}}(\Lambda) \right), \quad W^u(\Lambda) = \bigcup_{t \geq 0}^{\infty} \phi^{t} \left( W^{u}_{\mathrm{loc}}(\Lambda) \right).
\end{equation}
On the stable and unstable manifolds we have the strong stable and strong unstable foliations, the leaves of which we denote by $W^{s,u}(x)$ for $x \in \Lambda$. For each $x \in \Lambda$, the leaf $W^s(x)$ of the strong stable foliation is tangent at $x$ to $E^s_x$, and the leaf $W^u(x)$ of the strong unstable foliation is tangent at $x$ to $E^u_x$. Moreover the foliations are invariant in the sense that $\phi^t \left( W^s (x) \right) = W^s \left( \phi^t (x) \right)$ and $\phi^t \left( W^u (x) \right) = W^u \left( \phi^t (x) \right)$ for each $x \in \Lambda$ and $t \in \mathbb{R}$. We thus define the \emph{holonomy maps} $\pi^{s,u} : W^{s,u} (\Lambda) \to \Lambda$ to be projections along leaves of the strong stable and strong unstable foliations. That is to say, if $x \in W^s (\Lambda)$ then there is a unique $x_+ \in \Lambda$ such that $x \in W^s(x_+)$, and so $\pi^s(x)=x_+$. Similarly, if $x \in W^u (\Lambda)$ then there is a unique $x_- \in \Lambda$ such that $x \in W^u(x_-)$, in which case $\pi^u(x)=x_-$.

Now, suppose that $x \in \left(W^s(\Lambda) \pitchfork W^u (\Lambda)\right) \setminus \Lambda$ is a transverse homoclinic point such that $x \in W^s(x_+) \cap W^u(x_-)$. We say that the homoclinic intersection at $x$ is \emph{strongly transverse} if
\begin{equation}
\begin{split} \label{eq_strongtransversality}
T_x W^s (x_+) \oplus T_x \left( W^s(\Lambda) \cap W^u(\Lambda) \right) = T_x W^s (\Lambda), \\
T_x W^u (x_-) \oplus T_x \left( W^s(\Lambda) \cap W^u(\Lambda) \right) = T_x W^u (\Lambda).
\end{split}
\end{equation}
In this case we can take a sufficiently small neighbourhood $\Gamma$ of $x$ in $W^s(\Lambda) \cap W^u(\Lambda)$ so that \eqref{eq_strongtransversality} holds at each point of $\Gamma$, and the restrictions to $\Gamma$ of the holonomy maps are bijections onto their images. We call $\Gamma$ a \emph{homoclinic channel} (see Figure \ref{figure_homoclinicchannel}). We can then define the scattering map as follows \cite{delshams2008geometric}.

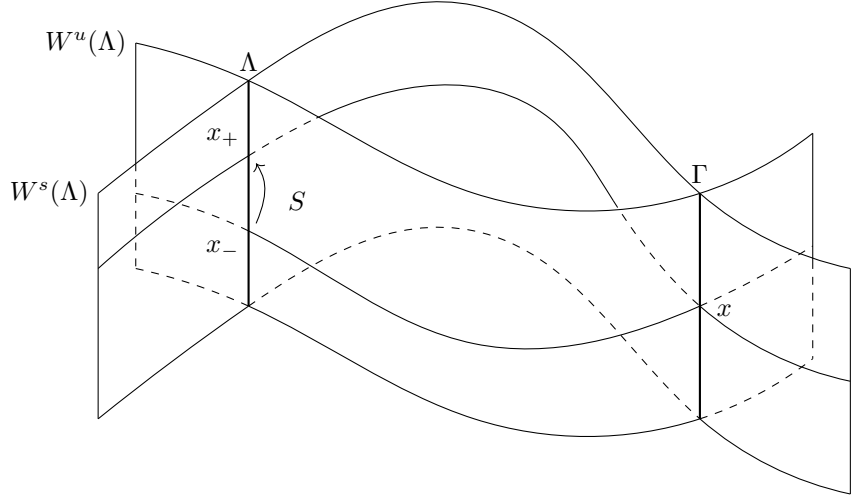
\begin{figure}
\centering
  \begin{tikzpicture}[use Hobby shortcut]

    \draw (0,0) .. (2,1.5) .. (5,2.5) .. (8,0) .. (10,-1);
    \draw (0,0) node[anchor=east] {$W^s(\Lambda)$};
    \draw (0,-3) .. (2,-1.5) .. ([blank=soft]5,-0.5) .. ([blank=soft]8,-3) .. (10,-4);
    \draw[dashed,use previous Hobby path={invert soft blanks,disjoint}]; 
    \draw (0.5,2) .. (2,1.5) .. (5,0) .. (8,0) .. (9.5,0.8);
    \draw (0.5,2) node[anchor=east] {$W^u(\Lambda)$};
    \draw (0.5,-1) .. ([blank=soft]2,-1.5) .. (5,-3) .. (8,-3)
    .. ([blank=soft]9.5,-2.2); 
    \draw[dashed,use previous Hobby path={invert soft blanks,disjoint}]; 
    \draw (0,0) -- (0,-3);
    \draw (10,-1) -- (10,-4);
    \draw (0.5,2) .. (0.5,0.4) .. ([blank=soft]0.5,-1);
    \draw[dashed,use previous Hobby path={invert soft blanks,disjoint}]; 
    \draw (9.5,0.8) .. (9.5,-0.9) .. ([blank=soft]9.5,-2.2);
    \draw[dashed,use previous Hobby path={invert soft blanks,disjoint}]; 
    \draw[thick] (2,-1.5) -- (2,1.5) node[anchor=south] {$\Lambda$};
    \draw[thick] (8,-3) -- (8,0) node[anchor=south] {$\Gamma$};
    \draw (0,-1) .. (2,0.5) .. ([blank=soft]2.9,1) .. (6,1)
    .. (6.85,-0.1) .. ([blank=soft]8,-1.5) .. (10,-2.5);
    \draw[dashed,use previous Hobby path={invert soft blanks,disjoint}]; 
    \draw (0.5,0) .. ([blank=soft]2,-0.5) .. (5,-2) .. (8,-1.5) .. ([blank=soft]9.5,-0.7);
    \draw[dashed,use previous Hobby path={invert soft blanks,disjoint}]; 
    \draw (2,-0.5) node[anchor=north east] {$x_-$};
    \draw (2,0.5) node[anchor=south east] {$x_+$};
    \draw[->] (2.1,-0.4) .. (2.4,-0.1) node [anchor=west] {$S$} .. (2.1,0.4);
    \draw (8.1,-1.55) node[anchor=west] {$x$};
  \end{tikzpicture}
  \caption{The scattering map $S$ takes a point $x_- \in \Lambda$, follows the unique leaf of the strong unstable foliation passing through $x_-$ to the point $x$ in the homoclinic channel $\Gamma$, and from there follows the unique leaf of the strong stable foliation passing through $x$ to the point $x_+$ on $\Lambda$.} \label{figure_homoclinicchannel}
\end{figure}

\begin{definition}\label{def:scattering}
Let $y_-\in \pi^u \left( \Gamma \right)$, let $y = \left(\left. \pi^u \right|_{\Gamma} \right)^{-1} (y_-)$, and let $y_+ = \pi^s(y)$. The \emph{scattering map} $S : \pi^u (\Gamma) \to \pi^s (\Gamma)$ is defined by
\begin{equation}
S = \pi^s \circ \left( \pi^u \right)^{-1} : y_- \longmapsto y_+.
\end{equation}
\end{definition}

Suppose now that the smoothness $r$ of $M$ and $X$ is at least $2$, suppose the normally hyperbolic invariant manifold $\Lambda$ is a $C^r$ submanifold of $M$, and suppose the large spectral gap condition \eqref{eq_largespectralgap} holds. This implies $C^{r-1}$ regularity of the strong stable and strong unstable foliations \cite{hirsch1970invariant}, which in turn implies that the scattering map $S$ is $C^{r-1}$ \cite{delshams2008geometric}.

In general, the scattering map may be defined only locally, as the transverse homoclinic intersection of stable and unstable manifolds can be very complicated; however in the setting of the present paper, the scattering maps we study turn out to be globally defined.

\section{A general shadowing argument}\label{appendix_shadowing} 
We follow the notation and exposition of \cite{clarke2022topological}. Let $M$ be a $C^r$ manifold of dimension $d=2(m+n)$ where $r \geq 4$. Let $F \in \Diff^4(M)$, and assume $F$ depends smoothly on a small parameter $\epsilon$, with uniformly bounded derivatives. Suppose $F$ has a normally hyperbolic invariant (or locally invariant) manifold $\Lambda \subset M$ of dimension $2n$ satisfying the large spectral gap condition \eqref{eq_largespectralgap}; suppose moreover that $\Lambda$ is diffeomorphic to $\mathbb{T}^n \times [0,1]^n$. Furthermore, we assume that $\dim W^s (\Lambda) = \dim W^u (\Lambda) = m + 2n$. In order to state the remaining assumptions and the shadowing theorems, we must consider some definitions. 

Suppose the scattering map $S$ is defined relative to a homoclinic channel $\Gamma$ for all sufficiently small $\epsilon >0$. We allow for the possibility that the angle between $W^{s,u}(\Lambda)$ along the homoclinic channel $\Gamma$ goes to 0 as $\epsilon \to 0$. Denote by $\alpha (v_1, v_2)$ the angle between two vectors $v_1, v_2$ in the direction that yields the smallest result (i.e. $\alpha (v_1,v_2) \in [0, \pi]$). For $x \in \Gamma$, let 
\begin{equation}
\alpha_{\Gamma} (x) = \inf \alpha (v_+, v_-)
\end{equation}
where the infimum is over all $v_+ \in T_x W^s (\Lambda)^{\perp}$ and $v_- \in T_x W^u (\Lambda)^{\perp}$ such that $\| v_{\pm} \| = 1$. 
\begin{definition}
For $\sigma \geq 0$, we say that \emph{the angle of the splitting along $\Gamma$ is of order $\epsilon^{\sigma}$} if there is a positive constant $C$ (independent of $\epsilon$) such that
\begin{equation}
\alpha_{\Gamma} (x) \geq C \epsilon^{\sigma}
\end{equation}
for all $x \in \Gamma$.
\end{definition}

Recall we have assumed that the normally hyperbolic invariant manifold $\Lambda$ is diffeomorphic to $\mathbb{T}^n \times [0,1]^n$, and denote by $(q,p) \in \mathbb{T}^n \times [0,1]^n$ smooth coordinates on $\Lambda$. Define $f \coloneqq F|_{\Lambda}$, which also depends on the small parameter $\epsilon$.
\begin{definition}\label{def_nearlyintegrabletwist}
We say that $f: \Lambda \to \Lambda$ is a \emph{near-integrable twist map} if there is some $k \in \mathbb{N}$ such that
\begin{equation}\label{eq_inttwistmap}
f:
\begin{cases}
\bar{q} = q + g(p) + O(\epsilon^k) \\
\bar{p} = p + O(\epsilon^k)
\end{cases}
\end{equation}
where
\begin{equation}
\det D g (p) \neq 0
\end{equation}
for all $p \in [0,1]^n$, and where the higher order terms are uniformly bounded in the $C^1$ topology. If the higher order terms are 0 then $f$ is an \emph{integrable twist map}.
\end{definition}

It follows from the definition that if $f : \Lambda \to \Lambda$ is a near-integrable twist map, then there exist twist parameters $T_+ > \widetilde{T}_- >0$ such that 
\begin{equation}\label{eq_twistcondition}
\widetilde{T}_- \| v \| \leq \left\| D g(p) v \right\| \leq T_+ \| v \|
\end{equation}
for all $p \in [0,1]^n$ and all $v \in \mathbb{R}^n$. We can always choose $T_+$ to be independent of $\epsilon$. Our formulation of the problem allows the parameter $\widetilde{T}_-$ to depend on $\epsilon$: there is $\tau \in \mathbb{N}_0$ and a strictly positive constant $T_-$ (independent of $\epsilon$) such that $\widetilde{T}_- = \epsilon^{\tau} T_-$. 
\begin{definition}
Suppose $f : \Lambda \to \Lambda$ is a near-integrable twist map. Denote by $T_+ > \widetilde{T}_- = \epsilon^{\tau} T_- >0$ the twist parameters. We say that $f$ satisfies:
\begin{itemize}
\item
A \emph{uniform twist condition} if $\tau=0$; 
\item
A \emph{non-uniform twist condition (of order $\epsilon^{\tau}$)} if $\tau>0$, and the order $\epsilon^k$ of the error terms in the definition of the near-integrable twist map $f$ is such that $k > \tau$.
\end{itemize}
\end{definition}

In the coordinates $(q,p)$, we may define a foliation of $\Lambda$, the leaves of which are given by
\begin{equation}\label{eq_foliationleaves}
\L (p^*) = \left\{ (q,p) \in \Lambda : p=p^* \right\}.
\end{equation}
If $f : \Lambda \to \Lambda$ is a near-integrable twist map in the sense of Definition \ref{def_nearlyintegrabletwist}, then each leaf of the foliation is almost invariant under $f$, up to terms of order $\epsilon^k$. Denote by $U \subset \Lambda$ the domain of definition of the scattering map $S$.

\begin{definition}
We say that the scattering map $S$ is \emph{transverse to leaves along leaves, and that the angle of transversality is of order $\epsilon^{\upsilon}$ (with respect to the leaves \eqref{eq_foliationleaves} of the foliation of $\Lambda$)} if there are $c, \, C > 0$ such that for all $p_0^* \in [0,1]^n$ and all $p^* \in [0,1]^n$ satisfying $\| p^* - p_0^* \| < c \, \epsilon^{\upsilon}$ we have 
\[
S \left( \L (p_0^*) \cap U \right) \pitchfork \L (p^*) \neq \emptyset
\]
and there is $x \in S \left( \L (p_0^*) \cap U \right) \pitchfork \L (p^*)$ such that
\[
\inf \alpha (v_0,v) \geq C \epsilon^{\upsilon}
\]
where the infimum is taken over all $v_0 \in T_x S \left( \L (p_0^*) \cap U \right)$ and $v \in T_x \L (p^*)$ such that $\| v_0 \| = \| v \| = 1$. 
\end{definition}

Using these definitions, we may now state the main assumptions of the first shadowing theorem, which will be applied to the secular Hamiltonian \eqref{eq_secularhamiltoniandef} to prove the existence of drifting orbits for the secular Hamiltonian defined by \eqref{eq_secularhamiltoniandef}. 
\begin{enumerate}[{[}{A}1{]}]
\item
The stable and unstable manifolds $W^{s,u} (\Lambda)$ have a strongly transverse homoclinic intersection along a homoclinic channel $\Gamma$, and so we have an open set $U \subseteq \Lambda$ and a scattering map $S : U \to \Lambda$. The angle of the splitting along $\Gamma$ is of order $\epsilon^{\sigma}$. 
\item
The inner map $f = F|_{\Lambda}$ is a near-integrable twist map with error terms of order $\epsilon^k$ satisfying a non-uniform (or uniform) twist condition of order $\epsilon^{\tau}$. 
\item
The scattering map $S$ is transverse to leaves along leaves (with respect to the leaves \eqref{eq_foliationleaves} of the foliation of $\Lambda$), and the angle of transversality is of order $\epsilon^{\upsilon}$. 
\end{enumerate}

\begin{theorem} \label{theorem_main1} 
Fix $\eta >0$, let $\epsilon > 0$ be sufficiently small, and suppose
$k \geq 2 \left( \rho + \tau \right) + 1$
where
$\rho = \max \{2 \sigma,  2\upsilon, \tau \}$.
Choose $\{ p_j^* \}_{j=1}^{\infty} \subset [0,1]^n$ such that 
\begin{equation}
S \left( \L_j \cap U \right) \cap \L_{j+1} \neq \emptyset,
\end{equation}
and $S \left( \L_j \cap U \right)$ is transverse to $\L_{j+1}$, where $\L_j = \L (p_j^*)$. Suppose the distance between $\L_j$ and $\L_{j+1}$ is of order $\epsilon^{\upsilon}$ for each $j$. Then there are $\{z_i\}_{i=1}^{\infty} \subset M$ and $n_i \in \mathbb{N}$ such that 
$z_{i+1} = F^{n_i} (z_i)$
and 
\begin{equation}
d ( z_i, \L_i) < \eta.
\end{equation}
Moreover, the time to move a distance of order 1 in the $p$-direction is bounded from above by a term of order
\begin{equation} \label{eq_timeestimate1}
\epsilon^{- \rho - \tau - \upsilon}.
\end{equation}
\end{theorem}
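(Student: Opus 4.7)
The plan is to prove the statement by a window/correct-alignment argument, essentially following the topological shadowing strategy of Gidea--Zgliczy\'nski but adapted to the regime where the splitting, the twist and the leaf-transversality can all be small in $\epsilon$. The key intuition is that each transition from $\L_j$ to $\L_{j+1}$ should be realized by a composition of \emph{inner} iterations of $f = F|_\Lambda$ that stretch a window across a large arc of $\L_j$, followed by a \emph{homoclinic excursion} shadowing the scattering map $S$ between $\L_j$ and $\L_{j+1}$.

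First I would construct, for each $j$, a window $W_j \subset M$ localized near the leaf $\L_j$. Using coordinates adapted to the normally hyperbolic splitting $T_\Lambda M = T\Lambda \oplus E^s \oplus E^u$ and to the product structure $\Lambda \simeq \mathbb{T}^n \times [0,1]^n$ of [A2], one chooses exit directions along $E^u$ together with the angle direction $q$ on $\Lambda$, and entry directions along $E^s$ together with the action direction $p$ on $\Lambda$. The normal sizes (along $E^s \oplus E^u$) are taken to be a fixed small constant; the sizes inside $\Lambda$ are tuned to $\epsilon$: the $q$-extent is small (so the window covers a short arc of $\L_j$), and the $p$-extent has to be at least of order $\epsilon^\upsilon$ in order to cross $\L_{j+1}$ transversely after the scattering jump.

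Second, I would verify correct alignment of $W_j$ with $W_{j+1}$ through the composition of inner dynamics and the scattering map. The non-uniform twist condition of [A2] gives, after $n_j$ iterations of $f$, an expansion of order $\epsilon^\tau n_j$ in the $q$-direction on $\Lambda$, so taking $n_j \sim \epsilon^{-\tau-\upsilon}$ stretches the $q$-extent to cover a full fundamental domain of the leaf while the $p$-drift is of order $\epsilon^{k-\tau}$, which by the hypothesis $k \ge 2(\rho+\tau)+1$ is negligible compared to the tolerance $\epsilon^{\rho}$. The scattering jump is then realized by a true homoclinic excursion: using the $\lambda$-lemma near $\Lambda$ together with the strong-transversality of $W^s(\Lambda)$ and $W^u(\Lambda)$ along $\Gamma$, an orbit that is $\epsilon^\sigma$-close to the unstable leaf of a point $x_- \in \L_j$ stays near $\Gamma$ for a time of order $\log(1/\epsilon^\sigma)$ and lands $\epsilon^{2\sigma}$-close to the stable leaf of $S(x_-)$. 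Together with [A3], this gives a correctly aligned image of $W_j$ across $W_{j+1}$, with the loss in alignment quality controlled by the three small parameters $\epsilon^\sigma, \epsilon^\tau, \epsilon^\upsilon$. The condition $k \ge 2(\rho+\tau)+1$ with $\rho = \max\{2\sigma,2\upsilon,\tau\}$ is exactly what ensures the near-integrable errors on $\Lambda$ remain smaller than all window tolerances over the full $O(\epsilon^{-\rho-\tau-\upsilon})$ iterations of $F$ required per transition.

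Third, iterating this construction along the chain $\{\L_j\}$ and invoking the abstract shadowing lemma for correctly aligned windows produces the desired orbit: there is $z_0 \in W_0$ and integers $n_j$ with $F^{n_j}(z_j) = z_{j+1} \in W_{j+1}$, hence $d(z_j,\L_j) < \eta$ for all $j$. The total time per transition is the sum of the inner time $O(\epsilon^{-\tau-\upsilon})$, the outer excursion time $O(\log \epsilon^{-1})$, and the overhead from matching the $\epsilon^\sigma$- and $\epsilon^\upsilon$-small transversalities, giving the claimed upper bound $\epsilon^{-\rho-\tau-\upsilon}$. The main difficulty, which is exactly the content of \cite{clarke2022topological}, is the quantitative bookkeeping: one has to prove that window alignment survives the joint degeneracy of splitting, twist, and leaf-transversality, balancing all small parameters against the near-integrable error $\epsilon^k$ in a single scheme; the standard Gidea--Zgliczy\'nski argument treats only the uniform case and needs to be upgraded accordingly.
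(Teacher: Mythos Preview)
The paper does not actually prove Theorem~\ref{theorem_main1}: it is stated in Appendix~\ref{appendix_shadowing} purely as a summary of the results of \cite{clarke2022topological}, and is applied as a black box in Section~\ref{section_applicshadowingresults}. So there is no proof in the present paper to compare your proposal against; what you have written is a plausible outline of the argument behind the cited reference, and indeed you say as much yourself in the last sentence.

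That said, as a standalone sketch your accounting of the time estimate is not quite coherent. You write that the inner time per transition is $n_j \sim \epsilon^{-\tau-\upsilon}$, and then that the ``total time per transition'' is $O(\epsilon^{-\rho-\tau-\upsilon})$, with the extra factor $\epsilon^{-\rho}$ attributed vaguely to ``overhead from matching the $\epsilon^\sigma$- and $\epsilon^\upsilon$-small transversalities''. But the bound \eqref{eq_timeestimate1} is the time to move an $O(1)$ distance in $p$, not the time per transition; since each scattering jump moves $p$ by $O(\epsilon^\upsilon)$, one needs $O(\epsilon^{-\upsilon})$ transitions, so the time \emph{per transition} should be $O(\epsilon^{-\rho-\tau})$. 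Your sketch does not explain where the factor $\epsilon^{-\rho}$ actually enters in a single transition (it comes from having to align windows whose transverse widths are governed by the worst of $\epsilon^{2\sigma}$, $\epsilon^{2\upsilon}$, $\epsilon^\tau$, which is precisely $\epsilon^\rho$), nor why $n_j$ should carry an $\epsilon^{-\upsilon}$ rather than an $\epsilon^{-\rho}$ factor. The overall architecture---windows with exit directions along $E^u$ and $q$, entry directions along $E^s$ and $p$, inner stretching by the twist, outer jump via the $\lambda$-lemma along $\Gamma$---is correct, but the quantitative balancing that pins down the exponents is exactly the nontrivial content that you defer to \cite{clarke2022topological}, so the sketch as written does not stand on its own.
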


Observe that Theorem \ref{theorem_main1} cannot be applied to \eqref{eq_4bphamafterave}. Indeed, a crucial assumption in Theorem \ref{theorem_main1} is that the scattering map $S$ is transverse to leaves along leaves. For \eqref{eq_4bphamafterave}, we have no information about the behaviour of the scattering map in the $L_i$ directions, and so we cannot check assumption [A3] for the Hamiltonian \eqref{eq_4bphamafterave}. Theorem \ref{theorem_main2} below generalises Theorem \ref{theorem_main1} to settings where transversality is only known in some directions, and thus allows us to complete the proof of Theorem \ref{thm:MainHierarch:Deprit}. 

To state Theorem \ref{theorem_main2} we consider, as before, a $C^r$ manifold $M$ of dimension $2(m+n)$ where $r \geq 4$ and $m,n \in \mathbb{N}$. Let $\Sigma = \mathbb{T}^{\ell_1} \times [0,1]^{\ell_2}$ for some $\ell_1, \ell_2 \in \mathbb{N}_0$, and denote by $(\theta, \xi) \in \mathbb{T}^{\ell_1} \times [0,1]^{\ell_2}$ coordinates on $\Sigma$. Write $\widetilde{M} = M \times \Sigma$. Suppose $\Psi \in \Diff^4 \left(\widetilde{M} \right)$ such that 
\begin{equation}
\Psi (z, \theta, \xi) = \left( G(z, \theta, \xi), \phi (z, \theta, \xi) \right)
\end{equation}
where $z \in M$, $G \in C^4 \left( \widetilde{M}, M \right)$, and $\phi \in C^4 \left( \widetilde{M}, \Sigma \right)$. Suppose $\Psi$ depends on a small parameter $\epsilon$. We make the following assumptions on $\Psi$. 
\begin{enumerate}[{[}{B}1{]}]
\item
There is some $L \in \mathbb{N}$ such that
\begin{equation}
G (z, \theta, \xi) = \widetilde{G}(z; \xi ) + O \left(\epsilon^L \right)
\end{equation}
where the higher order terms are uniformly bounded in the $C^4$ topology, and for each $\xi \in [0,1]^{\ell_2}$ the map
\begin{equation}
\widetilde{G} ( \cdot ; \xi ): z \in M \longmapsto \widetilde{G} ( z ; \xi ) \in M
\end{equation}
satisfies the assumptions [A1-3] of Theorem \ref{theorem_main1}.
\item
Moreover, the map $\phi$ has the form
\begin{equation}
\phi:
\begin{cases}
\bar{\theta} = \phi_1 (z, \theta, \xi) \\
\bar{\xi} = \phi_2(z, \theta, \xi) = \xi + O \left( \epsilon^L \right)
\end{cases}
\end{equation}
where the higher order terms are uniformly bounded in the $C^4$ topology. 
\end{enumerate}

Results from \cite{delshams2008geometric} imply that $\Psi$ has a normally hyperbolic invariant manifold $\widetilde{\Lambda}$ that is $O \left(\epsilon^L \right)$ close in the $C^4$ topology to $\Lambda \times \Sigma$ where $\Lambda \subset M$ is the normally hyperbolic invariant manifold of $\widetilde{G} ( \cdot ; \xi )$. Moreover there is an open set $\widetilde{U} \subset \widetilde{\Lambda}$ and a scattering map $\widetilde{S} : \widetilde{U} \to \widetilde{\Lambda}$ such that the $z$-component of $\widetilde{S}(z, \theta, \xi)$ is $O \left( \epsilon^L \right)$ close in the $C^3$ topology to $S \left( z ; \xi \right)$ where $S \left( \cdot ; \xi \right) : U \to \Lambda$ is the scattering map corresponding to $\widetilde{G} ( \cdot ; \xi )$. 

We use the coordinates $(q,p, \theta, \xi)$ on $\widetilde{\Lambda}$ where $(q,p)$ are the coordinates on $\Lambda$ and $(\theta, \xi)$ are the coordinates on $\Sigma$. Notice that the sets
\begin{equation}
\widetilde{\L} \left(p^*, \xi^* \right) = \left\{ (q,p, \theta, \xi) \in \widetilde{\Lambda}: p = p^*, \xi=\xi^* \right\} = \L \left(p^* \right) \times \mathbb{T}^{\ell_1} \times \left\{ \xi^* \right\}
\end{equation}
for $p^* \in [0,1]^n$ and $\xi^* \in [0,1]^{\ell_2}$ define the leaves of a foliation of $\widetilde{\Lambda}$, where $\L(p^*)$ are the leaves of the foliation of $\Lambda$ defined by \eqref{eq_foliationleaves}. 

\begin{theorem}\label{theorem_main2}
Fix $\eta>0$ and $K\in\mathbb{N}$ and let $\epsilon >0$ be sufficiently small. Choose $N \in \mathbb{N}$ satisfying
\[
 N\leq \frac{1}{\epsilon^K}
\]
$\xi^*_1 \in \mathrm{Int} \left([0,1]^{\ell_2} \right)$ so that $\widetilde{G} ( \cdot ; \xi^*_1 )$ satisfies assumptions [A1-3],
and $p_1^*, \ldots, p_N^* \in [0,1]^n$ as in Theorem \ref{theorem_main1} such that
\begin{equation}
S \left( \L_j \cap U ; \xi^*_1 \right) \cap \L_{j+1} \neq \emptyset
\end{equation}
and $S \left( \L_j \cap U; \xi^*_1 \right)$ is transverse to $\L_{j+1}$, where $\L_j = \L (p_j^*)$. Suppose the distance between $\L_j$ and $\L_{j+1}$ is of order $\epsilon^{\upsilon}$ for each $j$, and $L >0$ is sufficiently large, depending on $K$. Then there are $\xi^*_2, \ldots, \xi^*_N \in [0,1]^{\ell_2}$ such that, with $\widetilde{\L}_j = \widetilde{\L} \left(p^*_j, \xi^*_j \right)$, there are $w_1, \ldots, w_N \in \widetilde{M}$ and $n_i \in \mathbb{N}$ such that the $\xi$ component of $w_1$ is $\xi^*_1$,
\begin{equation}
w_{i+1} = \Psi^{n_i} (w_i),
\end{equation}
and
\begin{equation}
d \left(w_i, \widetilde{\L}_i \right) < \eta
\end{equation}
where $\rho, \sigma, \tau$ are as in the statement of Theorem \ref{theorem_main1}. Moreover, the time to move a distance of order 1 in the $p$-direction is of order
$\epsilon^{- \rho - \tau - \upsilon}$.
\end{theorem}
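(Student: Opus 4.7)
The plan is to treat $(\theta,\xi)$ as slow drivers that are effectively frozen on the time scale of the entire shadowing construction, apply Theorem \ref{theorem_main1} as a black box to the frozen map $\widetilde{G}(\cdot;\xi^*_1)$, and then transfer the resulting pseudo-orbit to the full system $\Psi$ by a perturbative correctly-aligned-windows argument in the extended phase space $\widetilde{M}=M\times\Sigma$. The quantitative key is the comparison between the exponent $L$ controlling the size of the $\xi$-drift per iteration in assumption [B2] and the total number of iterations required for the shadowing orbit. Since the time to pass from $\widetilde{\L}_j$ to $\widetilde{\L}_{j+1}$ is bounded by $\epsilon^{-\rho-\tau-\upsilon}$ via \eqref{eq_timeestimate1}, and we chain at most $N\le\epsilon^{-K}$ such transitions, the total drift of $\xi$ along the shadowing orbit is at most of order $\epsilon^{L-K-\rho-\tau-\upsilon}$. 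Taking $L$ sufficiently large with respect to $K,\rho,\tau,\upsilon$ (plus a margin absorbed by the error terms of the inner map, scattering map, and foliations) keeps $\xi$ within any prescribed tolerance $\eta$ of its initial value throughout.

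First I would invoke Theorem \ref{theorem_main1} for $\widetilde{G}(\cdot;\xi^*_1)$, which, by hypothesis [B1], satisfies [A1]--[A3] with the stated splitting/twist/transversality exponents. This produces points $z_1,\ldots,z_N\in M$, return times $n_i\in\mathbb{N}$, and windows in $M$ correctly aligned under the appropriate compositions of iterates of $\widetilde{G}(\cdot;\xi^*_1)$ and its scattering map $S(\cdot;\xi^*_1)$, such that $z_{i+1}=\widetilde{G}(\cdot;\xi^*_1)^{n_i}(z_i)$ with $d(z_i,\L_i)<\eta/2$. Next I would lift each window $W_j\subset M$ to an extended window $\widetilde{W}_j=W_j\times\mathbb{T}^{\ell_1}\times B(\xi^*_1,\delta)\subset\widetilde{M}$ with $\delta$ chosen satisfying $\epsilon^{L-K-\rho-\tau-\upsilon}\ll\delta\ll\eta$. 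Because $\widetilde{\Lambda}$ is $O(\epsilon^L)$-close to $\Lambda\times\Sigma$ (by the persistence result of \cite{delshams2008geometric} recalled right after [B2]) and because, on each $\xi$-fibre, the strong stable/unstable foliations, inner map and scattering map of $\Psi$ are $O(\epsilon^L)+O(\|\xi-\xi^*_1\|)$ perturbations of the corresponding objects for $\widetilde{G}(\cdot;\xi^*_1)$, the alignment of each pair $(\widetilde{W}_j,\widetilde{W}_{j+1})$ under the relevant iterate of $\Psi$ follows from the alignment already established for $\widetilde{G}(\cdot;\xi^*_1)$, since all perturbative errors are of strictly smaller order than $\epsilon^{\sigma},\epsilon^{\tau},\epsilon^{\upsilon}$ appearing in the geometric bounds of Theorem \ref{theorem_main1}.

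The intermediate targets $\xi^*_2,\ldots,\xi^*_N$ are not prescribed a priori but rather read off from the shadowing orbit: having produced $w_j\in\widetilde{W}_j$ such that $w_{j+1}=\Psi^{n_j}(w_j)\in\widetilde{W}_{j+1}$, one defines $\xi^*_j$ to be the $\xi$-component of $w_j$. By construction $\xi^*_j\in B(\xi^*_1,\delta)$ and $d(w_j,\widetilde{\L}(p^*_j,\xi^*_j))<\eta$. The time bound is inherited from Theorem \ref{theorem_main1} fibrewise, since the geometric constants (splitting angle, twist, transversality of foliations) are uniform in $\xi$ over the compact set $[0,1]^{\ell_2}$.

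The main obstacle is the quantitative bookkeeping: one has to (i) specify $L$ as an explicit function of $K,\rho,\tau,\upsilon$ ensuring every perturbation from $\widetilde{G}(\cdot;\xi^*_1)$ to $\Psi$ is subdominant to the geometric orders in Theorem \ref{theorem_main1}, (ii) verify that the $C^3$ estimates on the scattering map and holonomies $\pi^{s,u}$ persist uniformly in $\xi$ under the perturbation (which is where the large spectral gap \eqref{eq_largespectralgap} and the requirement $r\ge 4$ are used), and (iii) ensure that the $\theta$-direction, which is genuinely dynamical but absent from both the itinerary and the geometric obstructions, does not interfere with the window alignment. Neither (ii) nor (iii) introduces new geometric difficulties beyond those already handled in the proof of Theorem \ref{theorem_main1}; they merely require that the windows be taken as products with all of $\mathbb{T}^{\ell_1}$ in the $\theta$-direction and with small balls of radius $\delta$ in the $\xi$-direction.
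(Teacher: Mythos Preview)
The paper does not contain a proof of Theorem~\ref{theorem_main2}: Appendix~\ref{appendix_shadowing} is explicitly a summary of results from the companion paper \cite{clarke2022topological}, and Theorem~\ref{theorem_main2} is stated there without proof. So there is no ``paper's own proof'' against which to compare your attempt.

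That said, your sketch captures the standard strategy for this type of extension result and is consistent with how the theorem is \emph{used} in Section~\ref{section_applicshadowingresults}: freeze $\xi$ at $\xi^*_1$, apply the base shadowing result Theorem~\ref{theorem_main1} fibrewise, then argue that the correctly-aligned windows persist under the $O(\epsilon^L)$ perturbation because the accumulated drift in $\xi$ over at most $N\cdot\epsilon^{-\rho-\tau}$ iterations stays below the geometric thresholds $\epsilon^\sigma,\epsilon^\tau,\epsilon^\upsilon$ once $L$ is large enough relative to $K,\rho,\tau,\upsilon$. Your treatment of the $\theta$-direction (take windows that are full tori in $\theta$) and of the $\xi$-direction (small balls of intermediate radius $\delta$) is the natural one. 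The points you flag as obstacles---uniform $C^3$ control of the holonomies and scattering map in $\xi$, and the explicit choice of $L$---are indeed the places where the actual work lies, and your proposal does not resolve them beyond asserting they ``introduce no new geometric difficulties.'' Whether your argument is complete therefore depends on details of the window construction in \cite{clarke2022topological} that are not reproduced in this paper.
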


Note that the transition chain obtained in Theorem \ref{theorem_main2} is only of finite length, while the one obtained in Theorem \ref{theorem_main1} may be infinite.

\section{Computation of the phase shift in $\tilde{\psi}_1$}\label{appendix_psi1phaseshift}

Recall the secular Hamiltonian (see \eqref{eq_secularhamexpansions}) can be written
\begin{equation}
F_{\mathrm{sec}} = C_{12} F_{\mathrm{quad}}^{12} + O \left( \frac{1}{L_2^8} \right)\qquad \text{where}\qquad C_{12} = -\frac{\mu_1 m_2}{(2 \pi )^2}.
\end{equation}
Since $F_{\mathrm{sec}} \sim L_2^{-6}$ (Proposition \ref{proposition_secularexpansion}), we scale time to have an order one Hamiltonian, $\tilde F_{\mathrm{sec}}=L_2^6F_{\mathrm{sec}}$. From now on in this section, we use this tilde notation to denote the scaled (secular, quadrupolar, octupolar etc.) Hamiltonians.

Denote by $\Phi^t$ the flow of $\tilde F_{\mathrm{sec}}$ and let us consider points $z^\pm$ in $\Lambda$ such $z^+=S(z^-)$. Then, there exists a point $z^*$ in the homoclinic channel such that 
\[
\left| \Phi^t(z^*)- \Phi^t(z^\pm)\right|\lesssim e^{-\nu |t|}\qquad \text{as}\qquad  t\to\pm\infty
\]
for some $\nu>0$ independent of $L_2$. The change in the $\tilde{\psi}_1$ component from $z_-$ to $z_+$ is 
\begin{equation}\label{def:psi1integral}
\begin{split}
\frac{1}{L_2^2} \Delta \left( \tilde{\psi}_1 \right) = \tilde{\psi}_1^+-\tilde{\psi}_1^-=&\int^{+\infty}_0\left(\partial_{\tilde{\Psi}_1}\tilde F_{\mathrm{sec}}(\Phi^t(z^*))- \partial_{\tilde{\Psi}_1}\tilde F_{\mathrm{sec}}(\Phi^t(z^+))\right)dt\\
&+\int_{-\infty}^0\left(\partial_{\tilde{\Psi}_1}\tilde F_{\mathrm{sec}}(\Phi^t(z^*))- \partial_{\tilde{\Psi}_1}\tilde F_{\mathrm{sec}}(\Phi^t(z^-))\right)dt
\end{split}
\end{equation}
The phase shift is the first order of this integral. Since $\partial_{\tilde{\Psi}_1}\tilde F_{\mathrm{sec}}$ is of order $L_2^{-1}$, it may be expected that the phase shift will also be of order $L_2^{-1}$; however, we will see below that the first order term does not contribute to the integral \eqref{def:psi1integral}, and so we have to go the second order.

Now, notice that, since the first order of $\tilde F_{\mathrm{oct}}^{12}$ does not depend on $\tilde{\Psi}_1$ (Lemma \ref{lemma_oct12expansion}),
\[
\left|\partial_{\tilde{\Psi}_1}\tilde F_{\mathrm{sec}}- C_{12} \partial_{\tilde{\Psi}_1} \tilde F_{\mathrm{quad}}^{12}\right|\lesssim \frac{1}{L_2^{3}}
\]
and therefore, to compute $\Delta \left(\tilde{\psi}_1 \right)$, one can replace $F_{\mathrm{sec}}^{12}$ by $C_{12}\tilde F_{\mathrm{quad}}^{12}$ in the integrals \eqref{def:psi1integral}. Analogusly, one can replace the flow $\Phi^t$ by the flow $\Phi_{\mathrm{quad}}^t$ of $C_{12}\tilde F_{\mathrm{quad}}^{12}$. That is, one can conclude that 
\[
\begin{split}
\frac{1}{L_2^2} \Delta \left( \tilde{\psi}_1 \right)=C_{12} &\int^{+\infty}_0\left(\partial_{\tilde{\Psi}_1}\tilde F_{\mathrm{quad}}^{12}(\Phi^t(z^*))- \partial_{\tilde{\Psi}_1}\tilde F_{\mathrm{quad}}^{12}(\Phi_{\mathrm{quad}}^t(z^+))\right)dt\\
&+C_{12} \int_{-\infty}^0\left(\partial_{\tilde{\Psi}_1}\tilde F_{\mathrm{quad}}^{12}(\Phi_{\mathrm{quad}}^t(z^*))- \partial_{\tilde{\Psi}_1}\tilde F_{\mathrm{quad}}^{12}(\Phi_{\mathrm{quad}}^t(z^-))\right)dt+O(L_2^{-3}). 
\end{split}
\]
Now, recall that $\tilde F_{\mathrm{quad}}^{12}$ is an integrable Hamiltonian which has a saddle with a homoclinic connection (in Poincar\'e variables). This homoclinic connection is $L_2^{-1}$ close to that of $H_0^{12}$ given in Lemma \ref{lemma_separatrixformulas}. We denote by 
$z_{\mathrm{quad}}^*$ and $z_0^*$ the saddles of  $H_0^{12}$ and $F_{\mathrm{quad}}^{12}$  respectively and we denote by $z^h_{\mathrm{quad}}(t)$ and $z_0^h(t)$ their homoclinic connections to the saddle. Therefore we can rewrite the phase shift as
\begin{equation}\label{def:phaseshiftintegralquad}
\frac{1}{L_2^2} \Delta \left(\tilde{\psi}_1 \right) = C_{12} \int_{-\infty}^{+\infty}\left(\partial_{\tilde{\Psi}_1}\tilde F_{\mathrm{quad}}^{12}(z^h_{\mathrm{quad}}(t))- \partial_{\tilde{\Psi}_1}\tilde F_{\mathrm{quad}}^{12}(z^*_{\mathrm{quad}})\right)dt +O\left(\frac{1}{L_2^{3}}\right).
\end{equation}
Finally, it remains only to take advantage of the particular form of $\tilde F_{\mathrm{quad}}^{12}$ (see Lemma \ref{lemma_quad12expansion}) to compute a first order for this integral. Write $\hat{F}_{\mathrm{quad}}^{12}=\frac{L_2^6}{\alpha_0^{12}} \left( -\tilde{c}_0^{12} + \tilde F_{\mathrm{quad}}^{12} \right)$. Then we know from Lemma \ref{lemma_quad12expansion} that
\[
\hat{F}_{\mathrm{quad}}^{12}(\gamma_1,\Gamma_1,\tilde{\Gamma}_2,\tilde{\Psi}_1)=H_0^{12}(\gamma_1,\Gamma_1,\tilde{\Gamma}_2)+L_2^{-1} \beta_1 \,  H_1^{12} (\gamma_1,\Gamma_1,\tilde{\Gamma}_2,\tilde{\Psi}_1) + L_2^{-2} \beta_2 \, \tilde{H}_2^{12} (\gamma_1,\Gamma_1,\tilde{\Gamma}_2,\tilde{\Psi}_1) +O(L_2^{-3})
\]
where $\beta_j=\alpha_j^{12}/\alpha_0^{12}$. 

Write
\begin{align}
A \left( \gamma_1, \Gamma_1, \tilde{\Gamma}_2 \right) ={}& - 4 \tilde{\Gamma}_2 H_0^{12} \left( \gamma_1, \Gamma_1, \tilde{\Gamma}_2 \right) + 3 \tilde{\Gamma}_2 - \frac{\Gamma_1^2 \tilde{\Gamma}_2}{L_1^2} \\
B \left( \gamma_1, \Gamma_1, \tilde{\Gamma}_2 \right) ={}& \left( 6 - 8 H_0^{12} \left( \gamma_1, \Gamma_1, \tilde{\Gamma}_2 \right) - 2 \frac{\Gamma_1^2}{L_1^2} \right) \tilde{\Gamma}_2. 
\end{align}
It follows that
\[
\begin{split}
\partial_{\tilde{\Psi}_1} \hat{F}_{\mathrm{quad}}^{12}(\gamma_1,\Gamma_1,\tilde{\Gamma}_2,\tilde{\Psi}_1)=&L_2^{-1} \beta_1 \, (3H_0^{12}(\gamma_1,\Gamma_1,\tilde{\Gamma}_2)-1)\\
&+L_2^{-2} \beta_2 \, \left[2 \tilde{\Psi}_1(3H_0^{12}(\gamma_1,\Gamma_1,\tilde{\Gamma}_2)-1)+B(\gamma_1,\Gamma_1,\tilde{\Gamma}_2)\right]+O(L_2^{-3}).
\end{split}
\]
Notice that $\frac{\beta_1^2}{\beta_2} = \frac{1}{2}$. Using these formulas one can write $\partial_{\tilde{\Psi}_1} \hat{F}_{\mathrm{quad}}^{12}$ as
\begin{align}
 \begin{split}
\partial_{\tilde{\Psi}_1} \hat{F}_{\mathrm{quad}}^{12}(\gamma_1,\Gamma_1,\tilde{\Gamma}_2,\tilde{\Psi}_1)=&L_2^{-1} \beta_1 (3 \hat{F}_{\mathrm{quad}}^{12}(\gamma_1,\Gamma_1,\Gamma_2)-1)  \\
&+L_2^{-2}\beta_2 \left[ 2 \left( 3 \hat{F}_{\mathrm{quad}}^{12} - 1 \right) \tilde{\Psi}_1 + B(\gamma_1,\Gamma_1,\tilde{\Gamma}_2)-3 \frac{\beta_1^2}{\beta_2 \, }H_1^{12} \right]+O(L_2^{-3}) \\
=& L_2^{-1} \beta_1 (3 \hat{F}_{\mathrm{quad}}^{12}(\gamma_1,\Gamma_1,\Gamma_2)-1) \\
&+L_2^{-2} \, \beta_2 \left[ \frac{1}{2} \left( 3 \hat{F}_{\mathrm{quad}}^{12} - 1 \right) \tilde{\Psi}_1 + B(\gamma_1,\Gamma_1,\tilde{\Gamma}_2)- \frac{3}{2} A (\gamma_1,\Gamma_1,\tilde{\Gamma}_2) \right]+O(L_2^{-3}). 
\end{split}
\end{align}
Now, since $\hat{F}_{\mathrm{quad}}^{12}$ is constant along its solutions, 
\[
 \hat{F}_{\mathrm{quad}}^{12}(z^h_{\mathrm{quad}}(t))=\hat{F}_{\mathrm{quad}}^{12}(z^*_{\mathrm{quad}})\qquad \forall \quad t\in\mathbb{R}
\]
and therefore the integral \eqref{def:phaseshiftintegralquad} can be rewritten as
\[
 \Delta \left( \tilde{\psi}_1 \right) =C_{12} \, \beta_2 \left[ \int_{-\infty}^{+\infty}\left[B(z^h_{\mathrm{quad}}(t))- B(z^*_{\mathrm{quad}})\right]dt 
- \frac{3}{2}\int_{-\infty}^{+\infty}\left[A(z^h_{\mathrm{quad}}(t))- A(z^*_{\mathrm{quad}})\right]dt\right]
+O(L_2^{-1}). 
\]
Finally, using that the separatrix and saddle of $\hat{F}_{\mathrm{quad}}^{12}$ and $H_0^{12}$ are $L_2^{-1}$-close one can conclude that 
\[
 \Delta \left( \tilde{\psi}_1 \right) =C_{12}  \beta_2 \left[\int_{-\infty}^{+\infty}\left[B(z^h_0(t))- B(z^*_0)\right]dt - \frac{3}{2}  \int_{-\infty}^{+\infty}\left[A(z^h_0(t))- A(z^*_0)\right]dt\right]
+O(L_2^{-1}). 
\]
where $z^h_{0}$ is the separatrix analysed in Lemma \ref{lemma_separatrixformulas} and $z_0^*$ the saddle to which it is asymptotic\footnote{In Deprit coordinates it is asymptotic to two different saddles in the past and future due to the blow up of circular motions. However the values of $A$ and $B$ on the two saddles are the same and therefore we abuse notation and we identify them.}. Using the fact that $H_0^{12} \left(z_0^h(t) \right) = H_0^{12} \left(z_0^*(t) \right)$, and that $\tilde{\Gamma}_2$ is constant with respect to $H_0^{12}$, we see that
\begin{equation}
\Delta \left( \tilde{\psi}_1 \right) = -\frac{1}{2} \frac{\tilde{\Gamma}_2}{L_1^2} C_{12} \,  \beta_2 \, \int_{- \infty}^{+ \infty} \left( \Gamma_1 (t)^2 - L_1^2 \right) dt
\end{equation}
where the value $\Gamma_1 (t)$ of $\Gamma_1$ on the separatrix is given by \eqref{eq_Gamma1separatrix}. It follows that
\begin{equation}
\Delta \left( \tilde{\psi}_1 \right) = -\frac{1}{2 A_2} \frac{\tilde{\Gamma}_2}{L_1^2} C_{12} \, \beta_2 \, \int_{- \infty}^{+ \infty} \frac{ \frac{5}{3} \tilde{\Gamma}_2 - L_1^2}{\cosh^2 \tau} d \tau = \frac{L_1}{6} \sqrt{\frac{3}{2}} C_{12} \, \beta_2 \, \tilde{\Gamma}_2 \sqrt{1 - \frac{5}{3} \frac{\tilde{\Gamma}_2^2}{L_1^2}}
\end{equation}
where we have used the formula \eqref{eq_chia2def} for $A_2$.

\section{Expansion of the inner Hamiltonian}\label{sec_appendixexpformulas}

In the proof of Lemma \ref{lemma_innerhamderivatives}, we performed an expansion of the restriction $K_2$ of $F_{\mathrm{quad}}^{23}$ to $\Lambda$, after averaging over all of the angles. The terms defined in \eqref{eq_fquadaveragedexp} are as follows. 
\begin{align}
\bar{K}_{0,0} ={}&  -{{\left(18\,\delta_{1}^2-30\right)\,\delta_{3}^2-6\,\delta_{1}^4+
 10\,\delta_{1}^2}\over{3\,\delta_{1}^2\,\delta_{2}^3}} \\
\bar{K}_{0,1} ={}& {{
 \left(12\,\tilde{\Gamma}_{2}\,\delta_{1}^2-20\,\tilde{\Psi}_{1}\right)\,\delta_{3}^2+\left(-
 12\,\tilde{\Gamma}_{3}\,\delta_{1}^3+20\,\tilde{\Gamma}_{3}\,\delta_{1}\right)\,\delta_{3}+
 \left(4\,\tilde{\Psi}_{1}-4\,\tilde{\Gamma}_{2}\right)\,\delta_{1}^4
 }\over{\delta_{1}^3\,\delta_{2}^3}} \\
 \bar{K}_{0,2} ={}& - \frac{1}{\delta_{1}^4\,\delta_{2}^3}  \Big[ \left(\left(12\,\tilde{\Gamma}_{2}\,
 \tilde{\Psi}_{1}-9\,L_{1}^2+15\,\tilde{\Gamma}_{2}^2\right)\,\delta_{1}^2-30\,\tilde{\Psi}_{1}^2+15\,
 L_{1}^2-15\,\tilde{\Gamma}_{2}^2\right)\,\delta_{3}^2 
+\left(-24\,\tilde{\Gamma}_{2}\,\tilde{\Gamma}_{3}\,\delta_{1}^3+40\,\tilde{\Gamma}_{3}\,\tilde{\Psi}_{1}\,\delta_{1}\right)\,\delta_{3} \\
\quad&+\left(-2\,\tilde{\Psi}_{1}^2+4\,\tilde{\Gamma}_{2}\,\tilde{\Psi}_{1}+3\,L_{1}^2+6\,\tilde{\Gamma}_{3}^2-5\,\tilde{\Gamma}_{2}^2\right)\,
 \delta_{1}^4 
+\left(-5\,L_{1}^2-10\,\tilde{\Gamma}_{3}^2+5\,\tilde{\Gamma}_{2}^2\right)\,
 \delta_{1}^2\Big] \\
 \bar{K}_{1,0} ={}& -{{\left(24\,\delta_{1}^2
 -40\right)\,\delta_{3}^3+\left(-12\,\delta_{1}^4+20\,\delta_{1}^2
 \right)\,\delta_{3}}\over{\delta_{1}^2\,\delta_{2}^4}} \\
 \bar{K}_{1,1} ={}& {{
 \left(48\,\tilde{\Gamma}_{2}\,\delta_{1}^2-80\,\tilde{\Psi}_{1}\right)\,\delta_{3}^3+\left(-
 72\,\tilde{\Gamma}_{3}\,\delta_{1}^3+120\,\tilde{\Gamma}_{3}\,\delta_{1}\right)\,\delta_{3}^2+
 \left(24\,\tilde{\Psi}_{1}-24\,\tilde{\Gamma}_{2}\right)\,\delta_{1}^4\,\delta_{3}+12\,\tilde{\Gamma}_{3}
 \,\delta_{1}^5-20\,\tilde{\Gamma}_{3}\,\delta_{1}^3}\over{
 \delta_{1}^3\,\delta_{2}^4}} \\
 \bar{K}_{1,2} ={}& - \frac{1}{\delta_{1}^4\,\delta_{2}^4} \Big[ \left(\left(48\,\tilde{\Gamma}_{2}\,\tilde{\Psi}_{1}-36
 \,L_{1}^2+60\,\tilde{\Gamma}_{2}^2\right)\,\delta_{1}^2-120\,\tilde{\Psi}_{1}^2+60\,L_{1}^2-
 60\,\tilde{\Gamma}_{2}^2\right)\,\delta_{3}^3+\left(-144\,\tilde{\Gamma}_{2}\,\tilde{\Gamma}_{3}\,
 \delta_{1}^3+240\,\tilde{\Gamma}_{3}\,\tilde{\Psi}_{1}\,\delta_{1}\right)\,\delta_{3}^2 \\
\quad & +\left(\left(-12\,\tilde{\Psi}_{1}^2+24\,\tilde{\Gamma}_{2}\,\tilde{\Psi}_{1}+18\,L_{1}^2+72\,\tilde{\Gamma}_{3}^2-30
 \,\tilde{\Gamma}_{2}^2\right)\,\delta_{1}^4+\left(-30\,L_{1}^2-120\,\tilde{\Gamma}_{3}^2+30\,
 \tilde{\Gamma}_{2}^2\right)\,\delta_{1}^2\right)\,\delta_{3} \\
\quad & +\left(-24\,\tilde{\Gamma}_{3}\,
 \tilde{\Psi}_{1}+24\,\tilde{\Gamma}_{2}\,\tilde{\Gamma}_{3}\right)\,\delta_{1}^5 \Big] \\
 \bar{K}_{2,0} ={}& -{{
 \left(123\,\delta_{1}^2-205\right)\,\delta_{3}^4+\left(-78\,
 \delta_{1}^4+130\,\delta_{1}^2\right)\,\delta_{3}^2+3\,\delta_{1}^6-
 5\,\delta_{1}^4}\over{2\,\delta_{1}^2\,\delta_{2}^5}} \\
 \bar{K}_{2,1} ={}& \frac{1}{\delta_{1}^3\,\delta_{2}^5} \Big[\left(
 123\,\tilde{\Gamma}_{2}\,\delta_{1}^2-205\,\tilde{\Psi}_{1}\right)\,\delta_{3}^4+\left(-246
 \,\tilde{\Gamma}_{3}\,\delta_{1}^3+410\,\tilde{\Gamma}_{3}\,\delta_{1}\right)\,\delta_{3}^3+
 \left(78\,\tilde{\Psi}_{1}-78\,\tilde{\Gamma}_{2}\right)\,\delta_{1}^4\,\delta_{3}^2 \\
 & +\left(
 78\,\tilde{\Gamma}_{3}\,\delta_{1}^5-130\,\tilde{\Gamma}_{3}\,\delta_{1}^3\right)\,\delta_{3}+
 \left(-6\,\tilde{\Psi}_{1}+3\,\tilde{\Gamma}_{2}\right)\,\delta_{1}^6+5\,\tilde{\Psi}_{1}\,\delta_{1}^4
 \Big] \\
 \bar{K}_{2,2} ={}& - 1\frac{1}{4\,\delta_{1}^4\,\delta_{2}^5} \Big[
 \left(\left(492\,\tilde{\Gamma}_{2}\,\tilde{\Psi}_{1}-369\,L_{1}^2+615\,\tilde{\Gamma}_{2}^2\right)\,
 \delta_{1}^2-1230\,\tilde{\Psi}_{1}^2+615\,L_{1}^2-615\,\tilde{\Gamma}_{2}^2\right)\,
 \delta_{3}^4 \\
 &+\left(-1968\,\tilde{\Gamma}_{2}\,\tilde{\Gamma}_{3}\,\delta_{1}^3+3280\,\tilde{\Gamma}_{3}\,
 \tilde{\Psi}_{1}\,\delta_{1}\right)\,\delta_{3}^3+\Big( \left(-156\,\tilde{\Psi}_{1}^2+312
 \,\tilde{\Gamma}_{2}\,\tilde{\Psi}_{1}+234\,L_{1}^2+1476\,\tilde{\Gamma}_{3}^2-390\,\tilde{\Gamma}_{2}^2\right)\,
 \delta_{1}^4 \\
 & +\left(-390\,L_{1}^2-2460\,\tilde{\Gamma}_{3}^2+390\,\tilde{\Gamma}_{2}^2\right)\,
 \delta_{1}^2\Big)\,\delta_{3}^2+\left(-624\,\tilde{\Gamma}_{3}\,\tilde{\Psi}_{1}+624\,
 \tilde{\Gamma}_{2}\,\tilde{\Gamma}_{3}\right)\,\delta_{1}^5\,\delta_{3} \\
 & +\left(36\,\tilde{\Psi}_{1}^2-36\,
 \tilde{\Gamma}_{2}\,\tilde{\Psi}_{1}-9\,L_{1}^2-156\,\tilde{\Gamma}_{3}^2+15\,\tilde{\Gamma}_{2}^2\right)\,\delta_{1}^
 6+\left(-10\,\tilde{\Psi}_{1}^2+15\,L_{1}^2+260\,\tilde{\Gamma}_{3}^2-15\,\tilde{\Gamma}_{2}^2\right)\,
 \delta_{1}^4\Big]
\end{align}

\section{Corrigendum of \cite{fejoz2016secular}}\label{appendix_errata}

As this paper uses several ideas and formulas from \cite{fejoz2016secular}, we include here corrections to some errata in that paper. 
\begin{enumerate}
\item
In Lemma 2.1, equation (12), the sign of the last term $\frac{\Gamma^2}{L_1^2}$ should be $+$, and so the Hamiltonian $H_0$ should look the same as the Hamiltonian $H_0^{12}$ defined in equation \eqref{eq_H012def} of the present paper. 
\item
In Lemma 3.1, equation (26), the sign should be $+$, and so that lemma is equivalent to Lemma \ref{lemma_separatrixformulas} of the present paper. 
\item
In Lemma 5.1, the sign of $\sin \gamma^2$ should be $+$. 
\item
Lemma 5.2 should be as follows:
\begin{lemma*}
The function $\mathcal{F}^+$ can be written, on the separatrix, as a function of $g_1$ as $\F^+ = \frac{1}{2} \left( \mathcal{F}_1 + i \, \mathcal{F}_2 \right)$ with
\[
\begin{dcases}
\mathcal{F}_1 =& C_1 \, \frac{\sqrt{1 - \frac{5}{3} \, \left( 1 + \chi^2 \right) \, \cos^2 g_1}}{1 - \frac{5}{3} \, \cos^2 g_1} \, \cos g_1 \\
\mathcal{F}_2 =& C_2 \, \frac{\sqrt{1 - \frac{5}{3} \, \left( 1 + \chi^2 \right) \, \cos^2 g_1}}{1 - \frac{5}{3} \, \cos^2 g_1} \, \left( - \frac{21}{5} + \frac{1}{3} \, \frac{\Gamma^2}{L_1^2} \, \frac{15 - 13 \, \cos^2 g_1}{1 - \frac{5}{3} \, \cos^2 g_1} \right)
\end{dcases}
\]
where
\[
C_1 = \frac{20}{3} \, \sqrt{\frac{2}{3}} \, \frac{\Gamma^3}{L_1^3 \, \chi} \, A_{\mathrm{oct}}, \quad
C_2 = \frac{\sqrt{10}}{3} \, \frac{\Gamma}{L_1 \, \chi} \, A_{\mathrm{oct}}, \quad A_{\mathrm{oct}} = - \frac{15}{64} \, \frac{a_1^3}{a_2^4} \, \frac{e_2}{ \left( 1 - e_2^2 \right)^{\frac{5}{2}}}. 
\]
\end{lemma*}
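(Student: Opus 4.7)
The plan is to follow the strategy of the original proof of Lemma 5.2 in \cite{fejoz2016secular}, redone carefully so as to incorporate the sign correction to $\sin g^2$ stated in item 3 of the present errata. The argument is purely computational: substitute the explicit separatrix parametrisation into the expression for $\mathcal{F}^+$, separate real and imaginary parts, and simplify.

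First I would use the separatrix equation $(1 - \Gamma^2/\Gamma_1^2) \sin^2 g_1 = 2/5$ from the corrected Lemma 3.1 (equivalently Lemma \ref{lemma_separatrixformulas} of the present paper) to eliminate $\Gamma_1$ in favour of $g_1$. This yields the identity
\[
\sqrt{\Gamma_1^2 - \Gamma^2} \;=\; \sqrt{\tfrac{2}{3}}\,\Gamma\,\bigl(1 - \tfrac{5}{3}\cos^2 g_1\bigr)^{-1/2},
\]
which, combined with the corrected trigonometric identities $\cos g^2 = \sqrt{1-(5/3)\cos^2 g_1}$ and (with the now correct positive sign) $\sin g^2 = +\sqrt{5/3}\,\cos g_1$, reduces $\mathcal{F}^+$ to a rational expression in $\cos g_1, \sin g_1$ and the parameters $\Gamma, L_1$.

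Next I would separate real and imaginary parts. The real part $\mathcal{F}_1$ should collect the contributions coming from products of the form $\sqrt{\Gamma_1^2 - \Gamma^2}\cos g_1 \cos g^2$ and $\sqrt{\Gamma_1^2 - \Gamma^2}\sin g_1 \sin g^2$, while the imaginary part $\mathcal{F}_2$ collects those carrying an additional $\Gamma_1^2/L_1^2$ prefactor from the octupolar Hamiltonian; compare the expansion of $F_{\mathrm{oct}}^{12}$ in Lemma \ref{lemma_oct12expansion} restricted to the separatrix. Using the definition of $\chi$ from \eqref{eq_chia2def}, so that $\chi^2 = (2/3)\Gamma^2/(L_1^2 - (5/3)\Gamma^2)$, a direct computation shows that the square-root combinations arising naturally after clearing denominators factor as
\[
\bigl(1 - \tfrac{5}{3}\cos^2 g_1\bigr)^{-1} \sqrt{1 - \tfrac{5}{3}(1+\chi^2)\cos^2 g_1}
\]
times a rational expression in $\cos^2 g_1$ and $\Gamma^2/L_1^2$. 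Collecting the remaining constants yields the prefactors $C_1, C_2$ and $A_{\mathrm{oct}}$ as stated.

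The main obstacle is the algebraic bookkeeping, not the conceptual structure: the sign correction to $\sin g^2$ propagates through the $\sin g_1 \sin g^2$ term of the octupolar Hamiltonian and alters which combinations remain in the real part $\mathcal{F}_1$ versus the imaginary part $\mathcal{F}_2$. This is precisely the discrepancy the original paper mishandled, so the key consistency check is that $\mathcal{F}_1$ reduces to a simple linear expression in $\cos g_1$ (with no additional polynomial factor in $\cos^2 g_1$ or $\Gamma^2/L_1^2$), while $\mathcal{F}_2$ retains the full rational dependence displayed in the statement. Once this sign has been tracked correctly through the octupolar expansion, the formulas for $\mathcal{F}_1$ and $\mathcal{F}_2$ follow by direct simplification.
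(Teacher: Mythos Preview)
The paper does not actually prove this lemma: it appears only as item 4 of the errata appendix, stated as a corrected replacement for Lemma 5.2 of \cite{fejoz2016secular}, with no accompanying argument. Your proposal --- substitute the separatrix parametrisation from the corrected Lemma 3.1, use the corrected identities $\cos g^2 = \sqrt{1-\tfrac{5}{3}\cos^2 g_1}$ and $\sin g^2 = +\sqrt{5/3}\,\cos g_1$ (items 1--3 of the errata, equivalently \eqref{eq_cossingamma2wrtgamma1} and \eqref{eq_sqrtgamma1gammasqdiff} here), then simplify --- is exactly the computation the errata asks the reader to redo, and is the only reasonable route.

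One small caution on your bookkeeping: your description of which terms land in $\mathcal{F}_1$ versus $\mathcal{F}_2$ is slightly off. The splitting is governed by the Fourier expansion in $\gamma_2^0$ (so $\mathcal{F}^+$ is the coefficient of $e^{i\gamma_2^0}$), and both bracketed expressions in the octupolar Hamiltonian \eqref{eq_oct12unscaled} carry a $\Gamma_1^2/L_1^2$ contribution, not just the imaginary part. The real/imaginary separation instead tracks the $\cos\gamma_2$ versus $\sin\gamma_2$ structure of $F_{\mathrm{oct}}^{12}$, combined with $\gamma_2 = \gamma_2^0 + \gamma_2^1 + \gamma_2^2$ along the separatrix. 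Once you expand $e^{i\gamma_2^2}$ using the corrected $\cos g^2$, $\sin g^2$, the claimed factor $(1-\tfrac{5}{3}\cos^2 g_1)^{-1}\sqrt{1-\tfrac{5}{3}(1+\chi^2)\cos^2 g_1}$ does emerge (the extra $\chi^2$ entering through the $e_1 = \sqrt{1-\Gamma_1^2/L_1^2}$ prefactor on the separatrix). The rest is straightforward, if tedious, algebra.
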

\item
As a consequence, there are many cancellations in the Poincar\'e-Melnikov computation in Section 5 of \cite{fejoz2016secular}, the complex integrand has only one singularity, and the integral $\L \left( \gamma^0 \right)$ defined in equation (34) takes the simple form
\[
\L \left( \gamma^0 \right) = \sqrt{\frac{3}{2}} \,  \frac{ \pi \, A_{\mathrm{oct}} \, e^{\frac{\pi \, \Gamma}{A_2 \, L_1^2}}}{6 \, \sqrt{15} \, L_1 \, \left( 1 + e^{\frac{2 \, \pi \, \Gamma}{A_2 \, L_1^2}} \right) } \, \left( 24 \, L_1^2 - 37 \, \Gamma^2 \right) \, \sin \gamma^0. 
\]
Note that the notation from \cite{fejoz2016secular} is $\Gamma=\tilde{\Gamma}_2$, and moreover the function in front of $\sin\gamma^0$, called $\tilde{\L}_2^{12}$ in Proposition \ref{proposition_melnikovtildeexp} of the present paper, does note vanish under condition \eqref{eq_equilibriumcondition2}.
\end{enumerate}

{\small
\bibliographystyle{abbrv}
\bibliography{4bp_secular_diffusion_refs} 
}
\Addresses

\end{document}